\newcommand\tableofcontentsA{%
    \@starttoc{toca}%
}
\newtheorem{prop}{Proposition}
\newtheorem{cor}{Corollary}
\newtheorem{lem}{Lemma}
\newtheorem{theorem}{Theorem}
\newtheorem{definition}{Definition}
\theoremstyle{remark}
\newcommand{\secref}[1]{Section \ref{#1}}
\newcommand{\thmref}[1]{Theorem \ref{#1}}
\newcommand{\propref}[1]{Proposition \ref{#1}}
\newcommand{\lemref}[1]{Lemma \ref{#1}}
\newcommand{\defref}[1]{Definition \ref{#1}}
\newcommand{\figref}[1]{Figure \ref{#1}}
\newcommand{\nat}{\mathbb{N}}
\newcommand{\set}[1]{\left\{#1\right\}}
\newcommand{\eps}{\varepsilon}
\newcommand{\aeps}{\epsilon}
\newcommand{\st}{\star}
\renewcommand{\ast}{\blackdiamond}
\newcommand{\magma}{\mathcal{M}}
\newcommand{\magman}{\mathcal{N}}
\newcommand{\canmag}{\mathcal{W}}
\newcommand{\magp}{\magma^+}
\newcommand{\mprime}{\magma^0}
\DeclarePairedDelimiter\norm{\lVert}{\rVert}%
\newcommand{\val}{norm\ }
\newcommand{\vals}{norms\ }
\newcommand{\vald}{normed\ }
\newcommand{\valns}{norm}
\newcommand{\Valns}{Norm}
\newcommand{\pre}{\varphi}
\newcommand{\pdni}{\hat\pi}
\newcommand{\opp}[1]{{#1}^\text{\tiny op}}
\newcommand{\refp}[1]{{#1}^\text{\tiny ref}}
\newcommand{\revp}[1]{{#1}^\text{\tiny rev}}
\newcommand{\bst}{\st}
\newcommand{\lst}{\st_L}
\newcommand{\rst}{\st_R}
\DeclareMathOperator{\id}{id}  
\DeclareMathOperator{\revm}{rev}
\DeclareMathOperator{\opm}{op} 
\DeclareMathOperator{\refm}{ref} 
\newcommand{\ffm}{\mathbb{F}}
\newcommand{\fref}[1]{$\ffm_{\ref{#1}}$}
\newcommand{\dpn}{{\tiny\mbox{\ref{fam_dp}}}}
\newcommand{\ptn}{{\tiny\mbox{\ref{fam_pt}}}}
\newcounter{family} 
\newenvironment{family}[1][]{
    \refstepcounter{family} \par\medskip
    % \filbreak 
   \textbf{$\ffm_{\thefamily}\,:\,$ #1}
% \textbf{Catalan Family  . #1}
    }{\medskip}
\newcommand{\unimap}{\Upsilon} % Universal map
\newcommand{\leftpoint}{\overset{ {\scriptstyle L} }{\leftarrow}}
\newcommand{\rightpoint}{\overset{ \scriptstyle  R }{\rightarrow}}
\newcommand{\struct}{\mathbb{S}}
\newcommand{\usp}{ \tikz{\draw (0,0)--(0.2,0.2);}}
\newcommand{\dsp}{ \tikz{\draw (0,0.20)--(0.2,0);}}
\newcommand{\triple}{(\leftpoint,P,\rightpoint)}
\colorlet{productColor}{orange}
\colorlet{tripleColor}{blue!90}
\tikzset{color_product/.style={opacity=0.4,color=blue} }
\tikzset{color_left/.style={opacity=0.5,orange!90!black} }
\tikzset{color_right/.style={opacity=0.5,cyan!75!black} }
\tikzset{color_cbt/.style={blue!75!black} }
\tikzset{style_size1/.style={scale=1,line width=1pt,radius=5pt} }
\tikzset{style_size2/.style={scale=0.75,line width=0.75pt,radius=2.5pt,node font=\small} }
\tikzset{style_size3/.style={scale=0.5,line width=0.75pt,radius=2pt,node font=\small} }
\tikzset{style_size4/.style={scale=0.4,line width=0.75pt,radius=2pt,node font=\small} }
\tikzset{size_prod/.style={line width=1.2pt } }
\tikzset{color_triple/.style={color=blue!50} }
\tikzset{color_prefix/.style={color=reg} }
\tikzset{color_infix/.style={color=green} }
\tikzset{color_post/.style={color=blue} }
\tikzset{circle_nocolor/.pic={\fill  circle ; }}
\tikzset{circle_black/.pic={\fill[black] circle ; }}
\tikzset{circle_blue/.pic={\fill[blue] circle ; }}
\tikzset{circle_product/.pic={\fill[productColor,draw=productColor] circle ; }}
\tikzset{circle_white/.pic={\fill[white,draw=black] circle ; }}
\tikzset{cbt_style/.style={line width=1pt,radius=3.5pt} } % scale = 1.0
\tikzset{cbt_style2/.style={line width=0.75pt,radius=2pt} } % scale = 0.5
\tikzset{cbt_root_black/.pic={\fill[white,draw=black] circle ;\fill   circle (1pt) ; }}
\tikzset{cbt_internal_black/.pic={\fill[black] circle ; }}
\tikzset{cbt_leaf_black/.pic={\fill[fill=white, draw=black]  circle ; }}
\tikzset{cbt_root_blue/.pic={\fill[white,draw=blue] circle ;\fill   circle (1pt) ; }}
\tikzset{cbt_internal_blue/.pic={\fill[blue] circle ; }}
\tikzset{cbt_leaf_blue/.pic={\fill[fill=white, draw=blue]  circle ; }}
\tikzset{cbt_root_orange/.pic={\fill[white,draw=orange] circle ; }}
\tikzset{cbt_internal_orange/.pic={\fill[orange] circle ; }}
\tikzset{cbt_leaf_orange/.pic={\fill[fill=white, draw=orange]  circle ; }}
\tikzset{gentrt/.pic=
{
\draw (0,0.75) -- (0,0);
\path(0,0.75) pic{circle_white};
}
}
\tikzset{gentri/.pic=
    {
    \draw (90:0.25) -- (270:0.25);
    \draw[fill=white] (270:0.25) circle (2pt);
    \draw[fill=black] (90:0.25) circle (2pt);
    }
} 
\newcommand{\gentri}{
\tikz{\path pic{gentri};}
}
\tikzset{gencbt/.pic={
\draw circle (2pt);  
} }
\tikzset{gennm/.pic={
\draw[fill] (0,0) circle (2pt) -- (0.5,0);  
} }
\newcommand{\gennm}{
\tikz{\path pic{gennm};}
}
\tikzset{gensp/.pic={
 \fill[white,draw=black ]  (0.4,0) arc[start angle=180,end angle=0,radius=0.1cm];  
\draw (0,0) -- (1,0)  ;
} }
\newcommand{\gensp}{
\tikz{\path pic[scale=0.75]{gensp};}
}
\tikzset{tick/.pic={
\draw  (0,0)  --  (-0.15cm,0);
\draw  (0,0)  --  (0,0.15cm);
}
}
\tikzset{tick2/.pic={
\draw  (0,0)  --  (45: 0.15cm );
\draw  (0,0)  --  (135: 0.15cm );
}
}
\tikzset{genncc/.pic={\def\R{0.2cm}
\path (135:0.2cm) pic{tick};
    \draw  (0,0) circle (\R);
    % \draw[fill=black] (0,0.5) circle (2pt);
} }
\tikzset{genncp/.pic={\def\R{0.2cm}
\path (135:0.2cm) pic{tick};
    \draw  (0,0) circle (\R);
    % \draw[fill=black] (0,0.5) circle (2pt);
} }
\newcommand{\genncp}{
\tikz{\path pic{genncp};}
}
\tikzset{genfp/.pic={
 \fill[white,draw=black ]  (0.4,0) arc[start angle=180,end angle=360,radius=0.1cm];  
\draw (0,0) -- (1,0)  ;
} }
\newcommand{\genfp}{
\tikz{\path pic[scale=0.75]{genfp};}
}
\newcommand{\floorgen}{\genfp}
\newcommand{\trigen}{
\gentri
% \begin{tikzpicture}[ baseline=0.2cm] 
%     % \draw [help lines, gray!50] (0,0) grid(1,1);
%     \draw (0,0) -- (0,0.5);
%     \draw[fill=white] (0,0) circle (2pt);
%     \draw[fill=black] (0,0.5) circle (2pt);
% \end{tikzpicture}
}
\newcommand{\matchgen}{
\gennm
%  \begin{tikzpicture}
%     \draw [line width = 0.5pt] (0,0) -- (0.5,0);
%     \fill [ black] (0,0) circle (2pt); 
%     % \fill [black,draw=black] (0.5,0) circle (2pt);
%  \end{tikzpicture}
 }
 \newcommand{\stairgen}{
 \gensp
% \begin{tikzpicture}[line width = 1pt]
%     \draw (0,0) -- (0.5,0)  ;
%     \fill[white,draw=black] (0.25,0) circle [radius=2pt];
% \end{tikzpicture}
}
\tikzset{ 
up/.pic = { \path [pic actions]    -- +(1,1); }  
}
\tikzset{ 
down/.pic = { \path [pic actions]  -- +(1,-1); } 
}
\newcommand{\nmgenshape}{
\begin{tikzpicture}
 \fill circle [radius=0.1] (0,0);
  \draw [thick] (0,0)   -- (1,0);
\end{tikzpicture}
}
\newcommand{\nmprodshape}{
\begin{tikzpicture}
   \draw [thick] (0,0) arc [radius=0.5, start angle=0, end angle = 180] ;
   \fill circle [radius=0.1] (1,0);
  \draw [thick] (0,0)   -- (1,0);
\end{tikzpicture}
}
\tikzset{ 
rleaf/.pic = { \draw  (0,0) -- (0.5,-1); \fill[draw=black] ++(0.5,-1) circle (5pt); } 
}
\tikzset{ 
lleaf/.pic = {  \draw  (0,0) -- (-0.5,-1); \fill[draw=black] ++(-0.5,-1) circle (5pt); }
}
\tikzset{ 
root/.pic = {  \fill (150:1cm) -- (-90:1cm) -- (30:1cm) -- cycle ; }
}
\tikzset{ 
cleaf/.pic = {  \draw  (0,0) -- (0,-1); \fill[ draw=black] ++(0,-1) circle (5pt); }
}
\begin{document}
%=========================================%
%=========================================%

% \input{input/appendix.tex}
% \end{document}
% %=========================================%

\setcounter{page}{1}

\title{A  Universal Bijection for Catalan Magmas}

\author{R. Brak\thanks{rb1@unimelb.edu.au} 
    \vspace{0.15 in} \\
    School of Mathematics and Statistics,\\
    The University of Melbourne\\
    Parkville,  Victoria 3052,\\
    Australia\\
 }

\maketitle

\begin{abstract} 
% Every Catalan family is a free magma generated by a single element. 

Free magmas can be defined using the standard universal mapping principle for free structures. We provide an alternative characterisation of free magmas which is simpler to use when proving some combinatorial structure  is a  free magma: the magma has to have a norm and unique factorisation. This characterisation can be used to show conjectured Catalan families are (once a suitable product is defined) free magmas. 
The \val (a certain `size' function) partitions the magma base set into subsets enumerated by generalised Catalan numbers.
The universal isomorphism between free magmas gives a  universal bijection -- essentially one bijection algorithm for any pair of Catalan families.  The morphism property ensures the bijection is recursive. 
The universal bijection allows us to give some rigour to the idea of an ``embedding'' bijection between Catalan objects which, in many cases, show how to  embed an element of one Catalan family into one of a different family. 
Multiplication on the right (respectively left) by the free magma generator gives rise to the right (respectively left) Narayana statistic. 
We discuss the relation between the symbolic method for Catalan families and the magma structure on the base set defined by the symbolic method. This shows which ``atomic'' element is the magma generator. 
The appendix gives the magma structure for 14 Catalan families. 
 \end{abstract}
 
\vfill
\paragraph{Keywords:}  

\newpage

\tableofcontents
\newpage 

\section*{List of Catalan Families}
\doublespacing
\tableofcontentsA
\singlespacing
\newpage

%============================================
\section{Introduction}
%============================================
\label{sec1} 

The sequence of  Catalan numbers \cite{A000108:aa},   are   defined by 
\begin{equation}\label{eq_catnumbers}
  C_n=\frac{1}{n+1}\binom{2n}{n} 
\end{equation}
for $n\in\nat=\set{1,2,3,\dots}$ with $C_0=1$, and are well known to count  a large number of different families of objects. 
Sixty-six such families are given in problem set of chapter 6 of \cite{stanley:1999vw2,Koshy:2009aa}. 
A list of  214 families has been published by R.\ Stanley \cite{Stanley:2015aa}. 
A history of Catalan numbers by I.\ Pak can be found in the appendix of \cite{Stanley:2015aa} and a very extensive biography of Catalan (and Bell numbers ) was compiled by H. Gould \cite{Gould:aa}.  
New Catalan families are regularly being discovered, for example the Catalan floor plans \cite{Beaton:2015aa}.  

Each time a new family of objects are discovered and direct enumeration suggests they form a Catalan family, then proof of such a conjecture can be direct (eg.\ via generating function or sign reversing involution) or by bijection to a known Catalan family.
There is a long history of Catalan bijections. We make no attempt  to reference them all, but   defer to Chapter 3 (Bijective Solutions) of Stanley's ``Catalan Numbers'' book \cite{Stanley:2015aa}. In this paper we focus on such bijections. 

We will generalise all Catalan families to structures which are counted by  $p$-Catalan numbers, in particular the numbers
\begin{equation}\label{eq_gencatnumbers}
  C_n(p)=p^{n+1}\,\frac{1}{n+1}\binom{2n}{n} 
\end{equation}
where $p\in\nat$ is a constant which turns out to be the number of generators of the free magma.

The primary motivation of this paper  is to try and bring some structure to the large number of Catalan bijections. 
Once a new family has been bijected to any one of the known Catalan families, then of course, that proves it is a Catalan family, and a composition of the bijections will define a bijection between it and   other Catalan families. 
Whilst mathematically we thus then have bijections  between   pairs of families, it is not entirely satisfactory when bijecting between any pair to have to go via several other families. 
Is it possible to define some `direct' way of bijecting between any pair?

Another motivation is to provide some  sort of answer to the following question: given any two sets with the same cardinality, say $n$, then there exists $n!$ possible bijections between them -- however most of the known Catalan bijections pick out a few of these bijections as somehow more ``natural'' -- what is natural about these bijections?

The central idea presented in this paper is to replace ``bijection'' with ``isomorphism'', in particular, an isomorphism  between   free magmas. 
(For a definition of a magma -- see   \defref{def_magma}.) 

All free magmas with the same number of generators are isomorphic. It will be shown that Catalan families are single generator free  magmas (see \defref{def_free}) and thus all Catalan families are isomorphic. It is this isomorphism that gives us a bijection between any pair of families. Several additional results also follow from this formulation:

\begin{itemize}
    \item 
        The  algorithm defining the map arising from the isomorphism is essentially the same for any pair of free magmas which  leads to the notion  of a   `universal' \footnote{The bijection is called `universal' as it arises from a universal mapping principle.} bijection and hence a universal bijection between any pair of  Catalan families.

 \item 
        The universal bijection is a morphism, thus it respects the product structure of the magmas which turns out to be the ``natural'' recursive structure of Catalan objects \cite{Segner:1761aa}.  Thus the universal bijection is  natural  in that it respects the  recursive structure of Catalan objects.
        
    \item 
        For the `geometrical' families, the isomorphism    gives rise to the idea of an ``embedding'' bijection. 
        This is usually a  geometrical  way of embedding an object from one Catalan family into an object from another Catalan family -- in a sense it  shows how one object is ``hidden'' in another and thus shows manifestly why the two families have the same enumeration. For families which are sequences adding a bijection to  a geometrical family achieves a similar result.

    \item 
        Right (or left) multiplication by the  irreducible element  of a Catalan magma  defines the Narayana statistic which is preserved by the isomorphism. 
        Since the irreducible element usually has a natural interpretation in  any Catalan family the magma formulation   gives a direct identification of the part of the object  in any family carrying the Narayana statistic. 
\end{itemize}

In the last section we discuss the relationship between the Symbolic Method and the free magma representations.

The above ideas can be generalised \cite{Brak:2018aa}  to arbitrary (finite) choices of various $n$-arity maps. In \cite{Brak:2018aa}  the universal bijection is given for  Fibonacci,  Motzkin  \cite{motzkin:1948lr}, Schr\"oder  \cite{sulanke:1998ix} and ternary tree families.

%=============================================
\section{Free magmas}
%=============================================
\label{sec_magma}

In this section we summarise the standard definitions associated with  magmas (see Bourbaki \cite{Bourbaki:1981qy} Chapter 1, \S 1 and \S 7) and state some theorems useful for proving when a magma is associated with Catalan numbers. We are only interested in countable sets and hence all magmas discussed are assumed to be defined on such sets.
\begin{definition}[Magma] \label{def_magma}
    Let $\magma$ be a non-empty countable set called the \textbf{base set}. A \textbf{magma} defined on  $\magma$ is a pair $(\magma,\star)$ where $\st$ is a \textbf{product map}, 
    $$
    \st:\magma\times \magma\to  \magma.
    $$ 
    A \textbf{sub-magma}, $(\magman,\st)$, is a subset $\magman\subseteq \magma$ closed under $\st$. 
    If $(\magman,\bullet)$ is a  magma  then a \textbf{magma morphism} $\theta$ from $\magma$ to $\magman$  is a map $\theta: \magma\to \magman$ such that for all $m,m'\in\magma$, $\theta(m\st m')=\theta(m)\bullet\theta(m')$. 
\end{definition}
The above definition places no constraints on the map $\star$ (ie.\ associative, surjective etc.) however we will only be interested in product maps that satisfy the conditions necessary to be associated with Catalan (or similar)  numbers, in particular as will be shown,  the magma must be free.

A free magma is defined  using the usual universal mapping principle (UMP) for free structures. 
\begin{definition}[Free magma UMP]\label{def_free}
     The magma $(\magma ,\st)$ is free if the following is true: 
    There exists a   set $\mathcal{Y}$ and a map $i:\mathcal{Y}\to \magma $, such that for all magmas $(\magman,\bullet)$ and for all maps $f:\mathcal{Y}\to \magman$ there exists a unique magma morphism $\theta:\magma \to \magman$ such that $f=\theta\circ i$, that is, the   diagram:
\begin{equation}\label{eq_commdiag}
 \begin{tikzcd}
  \magma   \arrow[r,dotted, "\theta"]   & \magman \\
   \mathcal{Y} \arrow[u,"i"] \arrow[ru,"f"'] &                           
\end{tikzcd}   
\end{equation}
is commutative for all $f$ and $(\magman,\bullet)$.
The  image of the set $\mathcal{Y}$ in $\magma $, $X=i(\mathcal{Y})$, will be called the \textbf{set of generators} of $\magma$. 
\end{definition}

Clearly this definition can be used to prove  a magma is free (or not) however, we will provide an equivalent characterisation that is more  convenient  in this combinatorial context. 
In particular, we will prove (\thmref{thm_free}) that  a magma is free if it has a certain unique factorisation property   and possesses a norm (a positive, additive `size' function).

Free structures satisfying the  same universal mapping principle are known to be isomorphic.
We state the proof for the case of free magmas primarily because the proof is constructive and defines, in conjunction with \defref{def_free}, the isomorphism map.
It is this universal isomorphism that is used to define the universal bijection between any pair of free magmas and hence between any pair of Catalan families.

As an example of a free magma we will define a magma  which has arguably the  simplest product definition  and for which it is  straightforward to prove it is free. 
This magma, defined\footnote{In Bourbaki this is \emph{defined} as a free magma. 
Here we take the approach of defining free (\defref{def_free}) and then proving if a given magma is free or not.} in  Bourbaki \cite{Bourbaki:1981qy}, uses Cartesian product constructions  and hence we will call it a Cartesian magma.  
Note, a magma which also has a simple,  geometrically defined, product is that of binary trees \cite{Loday:2012aa}. 
However, to allow for more than one generator the trees  have to be generalised to   binary trees with multi-coloured leaves.

\begin{definition}[Cartesian Magma] \label{def_fm}
    Let $X$ be a non-empty finite set. Define the sequence $W_n(X)$  of sets of nested 2-tuples recursively by:
    \begin{subequations}\label{eq_freemagrec}
    \begin{align}
        W_1(X)&=X\label{eq_fm1}\\
        W_n(X)&=\bigcup_{p=1}^{n-1} W_p(X)\times W_{n-p}(X)\,,\qquad n>1\,\label{eq_fm2}
        \intertext{and}
        \canmag_X &=\bigcup_{n\ge1}W_n(X)\,.
    \end{align}
    \end{subequations}
    Define the product map 
    $
        \ast \,:\, \canmag_X \times \canmag_X \to \canmag_X 
    $
       by 
    \begin{equation}\label{eq_freeprod}
        m_1\ast m_2\mapsto (m_1,m_2)\,.
    \end{equation}
     The pair   $(\canmag_X,\ast)$ will be  called the \textbf{Cartesian magma} generated by $X$.
\end{definition} 

Thus, if  $X=\set{\aeps}$  the base set of the Cartesian magma, $(\canmag_\aeps,\ast)$ begins (sorting by number of generators used):
\begin{equation}\label{eq_cancatexpcar} 
    \aeps,\quad 
    (\aeps,\aeps), 
    \quad 
    (\aeps,(\aeps,\aeps)),
    \quad 
    ((\aeps, \aeps ),\aeps),
\end{equation}
\begin{equation*}
     (\aeps,(\aeps,(\aeps,\aeps))), 
     \quad
     ((\aeps,(\aeps,\aeps) ),\aeps),
    \quad
    (\aeps, ((\aeps,\aeps),\aeps)),  
    \quad
    (((\aeps,\aeps),\aeps ), \aeps), 
    \quad
    ((\aeps,\aeps),(\aeps,\aeps)) \,,\quad \dots
\end{equation*}
If we wish to make the product explicit we can use the left-hand side of \eqref{eq_freeprod}, that is 
\begin{equation}\label{eq_prodrep1} 
    \aeps,\quad \aeps\ast\aeps, \quad \aeps\ast(\aeps\ast\aeps) 
    ,\quad (\aeps\ast\aeps )\ast\aeps\,, \quad  \dots 
\end{equation}
% but note, strictly speaking the element in the base set of $\canmag_\aeps$ corresponding to, say $\aeps\st(\aeps\st\aeps) $ is $ (\aeps,(\aeps,\aeps))$.  
The form \eqref{eq_prodrep1} is also called infix order. 
There are two other common notations for binary products: prefix order and postfix order.  For prefix and postfix order   the brackets can be omitted without any ambiguity.
For example, using prefix notation \eqref{eq_prodrep1} becomes
\begin{equation}\label{eq_prodrep2} 
    \aeps,\quad 
    \ast  \aeps\aeps, \quad 
    \ast\aeps\!  \ast\!  \aeps\aeps,\quad 
    \ast\! \ast\! \aeps\aeps\aeps\,, \quad  \dots 
\end{equation}
or using postfix-order they become 
\begin{equation}\label{eq_prodrep3} 
    \aeps,\quad 
    \aeps\aeps\ast, \quad 
    \aeps\aeps\aeps\!   \ast\! \ast,\quad 
     \aeps \aeps\!  \ast\!  \aeps   \ast \,, \quad \dots \,.
\end{equation}
Thus  $\aeps\aeps\aeps\! \ast\! \ast$, $\ast \aeps\!\ast\!\aeps\aeps$ and $(\aeps\ast(\aeps\ast\aeps))$  are all the \emph{same} element of $\canmag_\aeps$, namely $(\aeps,(\aeps,\aeps))$.  Note, the set of `valid' product expressions can themselves be used as a base set to define additional Catalan families.

We   give a direct proof based on the universal mapping definition of free. 
This proof should be contrasted with  another (much simpler) proof based on \thmref{thm_free} in \secref{sec_factor}.
Any proof that a Catalan magma is free based on \defref{def_free} would be essentially the same as the proof of \propref{prop_cartfree}.
%=
\begin{prop}\label{prop_cartfree}
    The Cartesian magma $(\canmag_X,\ast)$  is a free magma generated by $X$.
\end{prop}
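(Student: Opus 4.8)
The plan is to verify the universal mapping property of \defref{def_free} directly, taking $\mathcal{Y}=X$ and $i\colon X\to\canmag_X$ the inclusion map, which is legitimate since $W_1(X)=X\subseteq\canmag_X$ by \eqref{eq_fm1}. So I fix an arbitrary magma $(\magman,\bullet)$ and an arbitrary map $f\colon X\to\magman$, and the task is to produce a \emph{unique} magma morphism $\theta\colon\canmag_X\to\magman$ satisfying $\theta\circ i=f$, i.e.\ with $\theta|_X=f$.

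First I would establish the unique factorisation that underpins the whole argument. Introduce the length $\ell\colon\canmag_X\to\nat$ counting the occurrences of generators in a nested tuple, and prove by induction on $n$ that $m\in W_n(X)$ implies $\ell(m)=n$; consequently the sets $W_n(X)$ are pairwise disjoint. The decisive structural claim is then that every $m\in\canmag_X\setminus X$ admits a unique expression $m=m_1\ast m_2=(m_1,m_2)$ with $m_1,m_2\in\canmag_X$ and $\ell(m_1),\ell(m_2)<\ell(m)$. Existence is immediate from \eqref{eq_fm2} and \eqref{eq_freeprod}; uniqueness follows because equality of nested $2$-tuples determines both components $m_1,m_2$, whereupon the split index $p=\ell(m_1)$ in \eqref{eq_fm2} is itself forced by the well-definedness of $\ell$. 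Here I also record that generators ($\ell=1$) are never products ($\ell\ge 2$), so the two clauses of the recursion below never clash.

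With this unique readability in hand, I would define $\theta$ by recursion on $\ell$: set $\theta(x)=f(x)$ for $x\in X$, and $\theta(m)=\theta(m_1)\bullet\theta(m_2)$ whenever $m=m_1\ast m_2$. The factorisation lemma makes this a genuine definition on all of $\canmag_X$, the defining equation $\theta(m_1\ast m_2)=\theta(m_1)\bullet\theta(m_2)$ is precisely the morphism condition of \defref{def_magma}, and the base clause yields $\theta\circ i=f$. For uniqueness, suppose $\theta'$ is any morphism with $\theta'\circ i=f$; then $\theta'$ and $\theta$ agree on $X$, and a straightforward induction on $\ell$ using $\theta'(m_1\ast m_2)=\theta'(m_1)\bullet\theta'(m_2)$ propagates the agreement to every element, giving $\theta'=\theta$. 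This shows the diagram \eqref{eq_commdiag} commutes for a unique $\theta$, establishing freeness with generating set $X=i(\mathcal{Y})$.

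The main obstacle is the unique factorisation step: one must verify that the recursive union in \eqref{eq_fm2} is effectively disjoint, that is, that $\ell$ is well-defined and that a generator is never itself a $2$-tuple. Because the product $\ast$ is literally ordered-pair formation, this reduces to the unambiguous parsing of nested tuples and is clean; but it is the hinge on which both the well-definedness and the uniqueness of $\theta$ turn, while everything else is routine structural induction.
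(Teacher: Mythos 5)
Your proof is correct and follows essentially the same route as the paper: a direct verification of the universal mapping property of \defref{def_free}, defining $\theta$ on generators via $f$ and extending recursively through the unique factorisation $m=(m_1,m_2)$. The only difference is one of detail — you make explicit (via the length function $\ell$, the disjointness of the $W_n(X)$, and the induction for uniqueness) the unique-readability and termination facts that the paper's proof simply asserts.
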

%=
\begin{proof} 
Let $Y$ be any set such that $|Y|=|X|$. 
Let $i: Y\to \canmag_X$ be a bijection from $Y$ to $X$. 
Let $(N,\bullet)$ be any magma and $f: Y\to N$ any map. 
Then, define a map $\theta: \magma\to N$ using: 
i) for all $y\in Y$, $\theta(i(y))=f(y)$ (this defines $\theta$ on $X$) and ii) for all $m\in \magma\setminus X$ recursively define $\theta(m)=\theta(m_1)\bullet \theta(m_2)$ where $m=(m_1,m_2)$. 
This uniquely defines $\theta$  as the factorisation $m=(m_1,m_2)$ is unique. 
From ii) we have $\theta(m_1\ast  m_2)=\theta(m_1)\bullet \theta(m_1)$. Thus $\theta$ is a uniquely defined magma morphism. 
Thus by \defref{def_free} the Cartesian magma, $(\canmag_X,\ast)$, is free.
\end{proof}

%==============================================
\section{Normed  magmas and Catalan numbers}
%==============================================

In this section we introduce a `size' map which we will call a norm. This will partition the base set of any free magma into subsets counted by  free magma  $p$-Catalan numbers,  \eqref{eq_gencatnumbers}. 
The norm, in conjunction with factorisation, will also give us an alternative characterisation of free magmas.

\begin{definition}[Norm] \label{def_norm}
    Let $(\magma,\star)$ be a magma.  A  \textbf{\valns} is  a super-additive map   $\norm{\cdot}  : \magma\to \nat$.
    If  $(\magma,\star)$ has a \val it will be called a \textbf{\vald  magma}.
 \end{definition}   
A norm is \textbf{super-additive} if it satisfies:  
$
   \text{for all $m_1,m_2\in\magma$},\,\norm{m_1\st m_2}\ge \norm{m_1}+\norm{m_2} 
$ and called \textbf{additive} if the equality always holds.

For example,  the conventional norm defined on a Cartesian magma is   the map $\norm{\cdot}\,:\, \canmag_X\to \nat$  defined by 
\begin{equation}\label{eq_cartnorm}
     \norm{m}=n\quad \text{when}\quad  m\in  W_n(X) . 
\end{equation}
With this definition all generators have norm one and the norm of other elements $m$ correspond  to the number of occurrences of the generators in the nested $2$-tuple form of $m$. From \eqref{eq_fm2} it is clear the norm is additive.

The connection with $p$-Catalan numbers is given in the following proposition. 

%=
\begin{prop} \label{thm_cat}
 Let $(\canmag_X,\ast)$  be the Cartesian magma of \defref{def_fm} with norm \eqref{eq_cartnorm}. If 
\[
    W_\ell=\{m\in \canmag_\aeps : \norm{m}=\ell \},\qquad \ell \ge 1\,,
\]
then the size of $W_\ell$ is,
\begin{equation}\label{eq_catnum2}
    |W_\ell|=|X|^\ell\, C_{\ell-1}=|X|^\ell\,\, \frac{1}{\ell}\binom{2\ell-2}{\ell-1}=C_{\ell-1}(p)\,.
\end{equation}

\end{prop}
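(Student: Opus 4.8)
The plan is to collapse the set recursion \eqref{eq_fm2} into a numerical recurrence for $w_\ell := |W_\ell(X)|$ and then solve it by induction against the standard Catalan (Segner) recurrence. First I would note that, by the norm definition \eqref{eq_cartnorm}, the set $W_\ell$ appearing in the statement is exactly $W_\ell(X)$. Implicit in \eqref{eq_cartnorm} being a well-defined function is that the sets $W_1(X), W_2(X), \dots$ are pairwise disjoint, so that every element $m$ carries a single norm; I would record this, since it is what makes the whole computation go through.

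Next I would turn \eqref{eq_fm2} into a cardinality recurrence. The key point is that the union $W_\ell(X)=\bigcup_{p=1}^{\ell-1} W_p(X)\times W_{\ell-p}(X)$ is \emph{disjoint}: a tuple $(m_1,m_2)$ can lie in $W_p(X)\times W_{\ell-p}(X)$ only for the single value $p=\norm{m_1}$, which is determined by the tuple itself. Hence cardinalities add while products multiply, yielding
\begin{equation*}
    w_1 = |X|, \qquad w_\ell = \sum_{p=1}^{\ell-1} w_p\, w_{\ell-p}\quad (\ell > 1).
\end{equation*}

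I would then factor out the powers of $|X|$ by writing $w_\ell = |X|^\ell c_\ell$. Substituting, each term on the right contributes $|X|^p\cdot|X|^{\ell-p}=|X|^\ell$, so the $|X|$-powers cancel and $c_\ell$ obeys the pure convolution $c_1=1$, $c_\ell=\sum_{p=1}^{\ell-1} c_p c_{\ell-p}$. An induction on $\ell$ then gives $c_\ell = C_{\ell-1}$: the base case $c_1 = 1 = C_0$ is immediate, and for the inductive step the reindexing $k=p-1$ sends $\sum_{p=1}^{\ell-1} C_{p-1}C_{\ell-p-1}$ to $\sum_{k=0}^{\ell-2} C_k C_{\ell-2-k}$, which is precisely the Segner recurrence for $C_{\ell-1}$. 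This establishes $w_\ell = |X|^\ell C_{\ell-1}$; the two remaining equalities in \eqref{eq_catnum2} are just the closed form \eqref{eq_catnumbers} for $C_{\ell-1}$ and the definition \eqref{eq_gencatnumbers} of $C_{\ell-1}(p)$ with $p=|X|$.

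The main obstacle is establishing the disjointness of the union in \eqref{eq_fm2}: without it the cardinality relation would only be an inequality (indeed only a super-additive bound, consistent with \defref{def_norm}), and the argument would collapse. Once disjointness is in hand, the remaining work (factoring out $|X|^\ell$ and matching the index shift to the Catalan recurrence) is routine bookkeeping.
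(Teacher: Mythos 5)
Your proposal is correct and follows essentially the same route as the paper: both reduce the set recursion \eqref{eq_fm2} to the Segner-type cardinality recurrence \eqref{eq_seg} with initial value $|W_1|=|X|$ and then match it against the Catalan numbers (the paper verifies via the generating function, you via induction, which is an immaterial difference). Your explicit attention to the disjointness of the union in \eqref{eq_fm2} --- the point that makes the cardinality relation an equality rather than an inequality --- is a detail the paper leaves implicit, and is worth recording.
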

%=
We will refer to $|X|^\ell\, C_{\ell-1}$ as a \textbf{free magma} $p$-\textbf{Catalan number} with $p=|X|$ and $\ell\ge1$.
The case $|X|=1$ is well known \cite{Segner:1761aa} and gives  the Catalan numbers \eqref{eq_catnumbers}.
The proof of \eqref{eq_catnum2} is a trivial generalisation of the conventional $|X|=1$ proof  which uses the recurrence 
\begin{equation}\label{eq_seg}
    |W_n| = \sum_{k=1}^{n-1} |W_k|\,|W_{n-k}|,\qquad n > 1
\end{equation}
which follows from equation \eqref{eq_freemagrec}, and then verifying  the Catalan number  \eqref{eq_catnumbers} satisfies the same recurrence with $|W_1|=1$. 
This is  usually done by using the generating function for Catalan numbers. For $|X|>1$   the same recurrence \eqref{eq_seg} holds but the initial value changes to $|W_1|=|X|$ which results in a factor   of $|X|^\ell$ in  \eqref{eq_catnum2}.

Note, unlike the conventional form of the Catalan recurrence, when using the \valns\ there is no shift of $+1$ in the subscripts of \eqref{eq_seg}. Result \eqref{eq_catnum2} shows that magmas generated by more than one generator are combinatorially a simple extension of the Catalan case (sequences A025225 ($|X|=2$),  and A025226 ($|X|=3$) in \cite{A02522X:aa}).

For each Catalan family, the \val is usually a simple function of the conventional Catalan  objects' ``size''  parameter   eg.\ for triangulations of a regular $n$ sided polygon (see  family   \fref{fam_pt} in the Appendix) the size  is usually the number of triangles ($=n-2$) or polygon edges ($=n$), but   neither size parameter is   additive over the  product. 
To achieve additivity the \val of the triangulation has to be $n-1$. For Dyck paths with $2n$ steps (see  family   \fref{fam_dp} in the Appendix) the norm is $n+1$.

We conclude this section with two examples of single generator Catalan families for which the recursive structure is well known:   Dyck paths  and  triangulations of regular $n$-sided polygons. We defer the proof that both families are free magmas until we have the method stated in \thmref{thm_free} of \secref{sec_factor}.

\medskip
\noindent\textit{Dyck Paths } (see \fref{fam_dp} in the Appendix). The following is the list of all of Dyck paths with six or less steps:
% \begin{center}
%     \includegraphics[scale=1]{figs/dyckPathExamples_B_lr.pdf}  
% \end{center}
\begin{center}
    \def\up{-- ++(1,1)}
    \def\dn{-- ++(1,-1)}
    \begin{tikzpicture}[scale=0.25,line width=1pt]
        \draw [help lines, gray!50] (0,0) grid(49,3);
        \draw[line width=0.5pt]  (0,0)    circle (4pt);    
        \draw  (1,0)     \up   \dn  ;
        \draw  (4,0)     \up   \dn \up   \dn;    
        \draw  (9,0)     \up   \up \dn   \dn; 
        \draw  (14,0)     \up   \dn \up   \dn \up \dn;   
        \draw  (21,0)     \up   \dn \up   \up \dn \dn;  
        \draw  (28,0)     \up   \up \dn   \dn \up \dn;  
        \draw  (35,0)     \up   \up \dn   \up \dn \dn;  
        \draw  (42,0)     \up   \up \up   \dn \dn \dn;  
    \end{tikzpicture}
\end{center}
Note, the single vertex is considered a zero step path.
The Dyck path magma generator  is
\[
\eps_\dpn=\tikz\draw[line width=0.5pt]  (0,0)    circle (2pt);   
\]
where $\circ$ represents a single vertex or, if preferred, the empty path. 
The \val of the generator is defined to be one, and the \val of a path $d$ is defined as    $\norm{d}=(\text{number of up steps})+1$.

The product $d_1\st_\dpn d_2$ of two paths $  d_1 $ and $ d_2$ is illustrated schematically as 
\begin{equation}\label{eq_dyck_prod}
    \begin{tikzpicture}[line width=0.75pt,scale=1] 
        \fill [color_left,draw=black] (0,0) arc [radius=0.75,start angle = 180, end angle =0] ;
        \draw (0,0) -- (1.5,0);
        \end{tikzpicture}
    \quad\star_\dpn\quad 
    \begin{tikzpicture}[line width=0.75pt,scale=1] 
        \fill [color_right,draw=black] (0,0) arc [radius=0.75,start angle = 180, end angle =0] ;
        \draw (0,0) -- (1.5,0);
    \end{tikzpicture}
    \quad=\quad
    \begin{tikzpicture}[line width=0.75pt,scale=1] 
        \fill [color_left,draw=black] (0,0) arc [radius=0.75,start angle = 180, end angle =0] ;
        \draw (0,0) -- (1.5,0);
        \draw[color=orange, line width=1pt] (1.5,0) -- (2,0.5);
        \fill [color_right,draw=black] (2,0.5) arc [radius=0.75,start angle = 180, end angle =0] ;
        \draw (2,0.5) -- (3.5,0.5);
        \draw[color=orange, line width=1pt] (3.5,0.5) -- (4,0);
    \end{tikzpicture}
\end{equation}
%
% \begin{equation}\label{eq_dyck_prod}
%     \includegraphics[scale=\fscale]{figs/DyckPathProd_lr.pdf}  
% \end{equation}
Thus the  product takes the left path $d_1$, appends  an up step on its right, then appends the right path and finally appends a down step. The different colour of the the two added steps is for clarity only.
Note, this is the standard (right) factorisation of Dyck paths but now used to define  a magma product \footnote{The left factorisation can be used to define a related (same base set), but different, magma -- see \secref{sec_opprev}.}.
For example, the product of  two generators is
\begin{equation*}
      \text{ \tikz \draw[line width=0.5pt] (0,0) circle (1.5pt); } 
      \star_\dpn
      \text{ \tikz \draw[line width=0.5pt] (0,0) circle (1.5pt); }
      =
      \begin{tikzpicture}[scale=0.4]
        \draw [help lines, gray!50] (0,0) grid(2,1);
        \draw[line width=1pt]  (0,0)   -- (1,1) -- (2,0);  
        \path[draw,fill=white] (0,0) circle (4pt);
         \path[draw,fill=white] (1,1) circle (4pt);
      \end{tikzpicture}
\end{equation*}
% \begin{center}
%  \includegraphics[scale=\fscale]{figs/dyckProductExample_B_lr.pdf}
% \end{center}
-- see \fref{fam_dp} in the appendix for more examples.
Note, the sum of the \vals of the left two paths ($=1+1$)  equals that of the product path. 

\medskip
\noindent\textit{Triangulations of an $n$-gon (see \fref{fam_pt} in the Appendix).}   
  The second Catalan family is the triangulation of regular polygons.  The following is the list of all triangulations with three or less triangles:
  \begin{equation*}
    \def\pa{72}
    \def\lv{54}
    \def\rd{4pt}
    \def\sc{0.4}
    \def\ro{0.7} %two
    \def\rt{1.0} % three
    \def\rf{1.27} %four
    \def\rv{1.2} %five
    %
    %two
    \begin{tikzpicture}[scale=\sc]
        \draw (90:\ro) -- (270:\ro);
        \draw[fill=white] (270:\ro) circle (\rd);
        \draw[fill=black] (90:\ro) circle (\rd);
    \end{tikzpicture}\quad\quad
    %
    % three
    \begin{tikzpicture}[scale=\sc]
    \draw (30:\rt) -- (150:\rt) -- (-90:\rt) -- cycle;
        \draw[fill=white] (30:\rt) circle (\rd);
        \draw[fill=black] (150:\rt) circle (\rd);
        \draw[fill=white] (-90:\rt) circle (\rd);
    \end{tikzpicture} \quad\quad
    %
    % four
    \begin{tikzpicture}[scale=\sc]
        \draw (45:\rf) -- (135:\rf) -- (225:\rf) -- (315:\rf) -- cycle;
        \draw (45:\rf) -- (225:\rf);
        \draw[fill=white] (45:\rf) circle (\rd);
        \draw[fill=black] (135:\rf) circle (\rd);
        \draw[fill=white] (225:\rf) circle (\rd);
        \draw[fill=white] (315:\rf) circle (\rd);
    \end{tikzpicture}\quad
    \begin{tikzpicture}[scale=\sc]
        \draw (45:\rf) -- (135:\rf) -- (225:\rf) -- (315:\rf) -- cycle;
        \draw (135:\rf) -- (315:\rf);        
        \draw[fill=white] (45:\rf) circle (\rd);
        \draw[fill=black] (135:\rf) circle (\rd);
        \draw[fill=white] (225:\rf) circle (\rd);
        \draw[fill=white] (315:\rf) circle (\rd);
    \end{tikzpicture}\quad\quad
    %
    % five
    \begin{tikzpicture}[scale=\sc]
        \draw (\lv:\rv) -- ( \lv+\pa:\rv) -- (\lv+2*\pa:\rv) -- (\lv+3*\pa:\rv) -- (\lv+4*\pa:\rv)  -- cycle;
        \draw (\lv+6*\pa:\rv) -- (\lv+ 8*\pa:\rv);
        \draw (\lv+6*\pa:\rv) -- (\lv+ 9*\pa:\rv);
        \draw[fill=white] (\lv:\rv) circle (\rd);
        \draw[fill=black] (\lv+\pa:\rv) circle (\rd);
        \draw[fill=white] (\lv+2*\pa:\rv) circle (\rd);
        \draw[fill=white] (\lv+3*\pa:\rv) circle (\rd);
        \draw[fill=white] (\lv+4*\pa:\rv) circle (\rd);
    \end{tikzpicture}    \quad
    \begin{tikzpicture}[scale=\sc]
        \draw (\lv:\rv) -- ( \lv+\pa:\rv) -- (\lv+2*\pa:\rv) -- (\lv+3*\pa:\rv) -- (\lv+4*\pa:\rv)  -- cycle;
        \draw (\lv+2*\pa:\rv) -- (\lv+ 4*\pa:\rv);
        \draw (\lv+2*\pa:\rv) -- (\lv+ 5*\pa:\rv);
        \draw[fill=white] (\lv:\rv) circle (\rd);
        \draw[fill=black] (\lv+\pa:\rv) circle (\rd);
        \draw[fill=white] (\lv+2*\pa:\rv) circle (\rd);
        \draw[fill=white] (\lv+3*\pa:\rv) circle (\rd);
        \draw[fill=white] (\lv+4*\pa:\rv) circle (\rd);
    \end{tikzpicture}\quad
    \begin{tikzpicture}[scale=\sc]
        \draw (\lv:\rv) -- ( \lv+\pa:\rv) -- (\lv+2*\pa:\rv) -- (\lv+3*\pa:\rv) -- (\lv+4*\pa:\rv)  -- cycle;
        \draw (\lv+3*\pa:\rv) -- (\lv+ 5*\pa:\rv);
        \draw (\lv+3*\pa:\rv) -- (\lv+ 6*\pa:\rv);
        \draw[fill=white] (\lv:\rv) circle (\rd);
        \draw[fill=black] (\lv+\pa:\rv) circle (\rd);
        \draw[fill=white] (\lv+2*\pa:\rv) circle (\rd);
        \draw[fill=white] (\lv+3*\pa:\rv) circle (\rd);
        \draw[fill=white] (\lv+4*\pa:\rv) circle (\rd);
    \end{tikzpicture}\quad
    \begin{tikzpicture}[scale=\sc]
        \draw (\lv:\rv) -- ( \lv+\pa:\rv) -- (\lv+2*\pa:\rv) -- (\lv+3*\pa:\rv) -- (\lv+4*\pa:\rv)  -- cycle;
        \draw (\lv+4*\pa:\rv) -- (\lv+ 6*\pa:\rv);
        \draw (\lv+4*\pa:\rv) -- (\lv+ 7*\pa:\rv);
        \draw[fill=white] (\lv:\rv) circle (\rd);
        \draw[fill=black] (\lv+\pa:\rv) circle (\rd);
        \draw[fill=white] (\lv+2*\pa:\rv) circle (\rd);
        \draw[fill=white] (\lv+3*\pa:\rv) circle (\rd);
        \draw[fill=white] (\lv+4*\pa:\rv) circle (\rd);
    \end{tikzpicture}\quad
    \begin{tikzpicture}[scale=\sc]
        \draw (\lv:\rv) -- ( \lv+\pa:\rv) -- (\lv+2*\pa:\rv) -- (\lv+3*\pa:\rv) -- (\lv+4*\pa:\rv)  -- cycle;
        \draw (\lv+5*\pa:\rv) -- (\lv+ 7*\pa:\rv);
        \draw (\lv+5*\pa:\rv) -- (\lv+ 8*\pa:\rv);
        \draw[fill=white] (\lv:\rv) circle (\rd);
        \draw[fill=black] (\lv+\pa:\rv) circle (\rd);
        \draw[fill=white] (\lv+2*\pa:\rv) circle (\rd);
        \draw[fill=white] (\lv+3*\pa:\rv) circle (\rd);
        \draw[fill=white] (\lv+4*\pa:\rv) circle (\rd);
    \end{tikzpicture}\quad
\end{equation*}
% \begin{center}
%     \includegraphics[scale=\fscale]{figs/triangulations_Examples4_lr.pdf}
% \end{center}
Note, the polygons are drawn so that the polygon has a    marked vertex in a fixed position and the single edge object is considered a `$2$-gon' triangulation with zero triangles.

We can construct a magma from triangulations as follows.  
The generator, $\eps_\ptn$ is the above `$2$-gon' (with the top vertex marked), denoted 
\[
\eps_\ptn=
\begin{tikzpicture}[scale=1,baseline=-3pt]
    \def\ro{0.2} 
    \draw (90:\ro) -- (270:\ro);
    \draw[fill=white] (270:\ro) circle (2pt);
    \draw[fill=black] (90:\ro) circle (2pt);
\end{tikzpicture}
%\raisebox{-0.4\height}{\includegraphics[scale=\fscale]{figs/triagulationGen_B_lr.pdf}}
\]
which is defined to have  \val one. 
The \val of a triangulation of a regular $n$-gon for $n>2$ is   $n-1$ or $ (\text{number of triangles})+1$.

We represent the product\footnote{This product appears in Conway and Coxeter \cite{Conway:1973aa} as a way of constructing frieze patterns and in Brown \cite{Brown:1964aa}}, $t_1\st_\ptn t_2$, of  two triangulations $t_1$ and $t_2$  schematically  as
\begin{equation}\label{triangulations}
\def\BL{0.5cm}
    \begin{tikzpicture}[line width=0.75pt,scale=0.5,baseline=\BL,>=Stealth] 
    \def\A{(1,3)} \def\B{(2,0)} \def\C{(3,3)}
    \def\L{200} \def\R{-20}
        %  \draw [help lines, gray!50] (0,0) grid(6,3);
        \fill [color_left,draw=black] \A .. controls +(\L:2cm) and +(\L:2cm) .. \B;
        \fill [color_left,draw=black]  \B .. controls +(\R:2cm) and +(\R:2cm) .. \C;
        \draw \B -- \A -- \C -- cycle;
        \fill [black] \A circle (6pt) ;
        \node [anchor=south] at \A {$a$};
        \fill [white,draw=black] \C circle (6pt);
        \node [anchor=south] at \C {$b$};
        \node [anchor=north] at \B {$t_1$};
    \end{tikzpicture}
    \quad\star_\ptn\quad 
    \begin{tikzpicture}[line width=0.75pt,scale=0.5,baseline=\BL] 
    \def\A{(1,3)} \def\B{(2,0)} \def\C{(3,3)}
    \def\L{200} \def\R{-20}
        %  \draw [help lines, gray!50] (0,0) grid(6,3);
        \fill [color_right,draw=black] \A .. controls +(\L:2cm) and +(\L:2cm) .. \B;
        \fill [color_right,draw=black]  \B .. controls +(\R:2cm) and +(\R:2cm) .. \C;
        \draw \B -- \A -- \C -- cycle;
        \fill [black] \A circle (6pt) ;
        \node [anchor=south] at \A {$c$};
        \fill [white,draw=black] \C circle (6pt);
        \node [anchor=south] at \C {$d$};
        \node [anchor=north] at \B {$t_2$};
    \end{tikzpicture}
    \quad = \quad
        \begin{tikzpicture}[line width=0.75pt,scale=0.5,baseline=\BL,>=Stealth] 
        \def\A{(2.2,2.8)} \def\B{(0,1)} \def\C{(3,1)} \def\D{(6,1)}\def\E{(3.8,2.8)}
        \def\F{(3,-0.3)} \def\G{(3,0.2)}
        % \draw [help lines, gray!50] (0,0) grid(6,3);
        \fill [color_left,draw=black] \A .. controls +(135:2cm) and +(135:2cm) .. \B;
        \fill [color_left,draw=black]  \B .. controls +(270:2cm) and +(270:2cm) .. \C;
        \draw \B-- \A; \draw \B -- \C;
        \draw \C -- \A;
        \draw [orange,line width = 2pt] \A -- \E;
        \fill [color_right,draw=black] \C .. controls +(270:2cm) and +(270:2cm) .. \D;
        \fill [color_right,draw=black]  \D .. controls +(45:2cm) and +(45:2cm) .. \E;  
        \draw \E-- \D; \draw \D -- \C;
        \draw \C -- \E;
        \fill [black] \A circle (6pt) ;
        \node [anchor=south] at \A {$a$};
        \fill [white,draw=black] \E circle (6pt);
        \node [anchor=south] at \E {$d$};
        \fill [white,draw=black] \C circle (6pt);
        \node [anchor=north] at \F{$bc$};
        \draw [->] \F -- \G;

    \end{tikzpicture}
\end{equation}
% \begin{equation}\label{triangulations}
%     \eqfig{\fscale}{triangulations_lr.pdf}
% \end{equation} 
which represents how  the vertices $a$ and  $d$ are joined by a new edge (shown orange) and the vertices $b$ and $c$ are combined into a single vertex.
The vertex $a$ remains the only marked vertex.

For example, the product  of two generators  is 
\begin{equation*}
\def\BL{-3pt}  \def\rd{2pt}\def\ro{0.9} \def\rt{0.35}  
\begin{tikzpicture}[ scale=1,baseline=\BL] 
    \def\ro{0.2} 
    \draw (90:\ro) -- (270:\ro);
    \draw[fill=white] (270:\ro) circle (2pt);
    \draw[fill=black] (90:\ro) circle (2pt);
\end{tikzpicture}
\quad\star_\ptn \quad
\begin{tikzpicture}[ scale=1,baseline=\BL] 
    \def\ro{0.2} 
    \draw (90:\ro) -- (270:\ro);
    \draw[fill=white] (270:\ro) circle (\rd);
    \draw[fill=black] (90:\ro) circle (\rd);
\end{tikzpicture}
\quad = \quad
\begin{tikzpicture}[ scale=1,baseline=\BL] 
    \draw (30:\rt) -- (150:\rt) -- (-90:\rt) -- cycle;
    \draw[fill=white] (30:\rt) circle (\rd);
    \draw[fill=black] (150:\rt) circle (\rd);
    \draw[fill=white] (-90:\rt) circle (\rd);
\end{tikzpicture}
\end{equation*}
% \begin{center}
%   \includegraphics[scale=\fscale]{figs/triangulationProdExprod22_lr.pdf}
% \end{center}
and the product $(\eps_\ptn\st_\ptn\eps_\ptn)\st_\ptn (\eps_\ptn\st_\ptn\eps_\ptn) $ is
\begin{equation*}
\def\BL{-3pt}  \def\rd{2pt}\def\ro{0.9} \def\rt{0.35}  
\begin{tikzpicture}[ scale=1,baseline=\BL] 
    \draw (30:\rt) -- (150:\rt) -- (-90:\rt) -- cycle;
    \draw[fill=white] (30:\rt) circle (\rd);
    \draw[fill=black] (150:\rt) circle (\rd);
    \draw[fill=white] (-90:\rt) circle (\rd);
\end{tikzpicture}
\quad\star_\ptn \quad
\begin{tikzpicture}[ scale=1,baseline=\BL] 
    \draw (30:\rt) -- (150:\rt) -- (-90:\rt) -- cycle;
    \draw[fill=white] (30:\rt) circle (\rd);
    \draw[fill=black] (150:\rt) circle (\rd);
    \draw[fill=white] (-90:\rt) circle (\rd);
\end{tikzpicture}
\quad = \quad
\begin{tikzpicture}[ scale=0.5,baseline= 6pt] 
\def\A{(1.2,1)} \def\B{(0.5,0)} \def\C{(2,-0.2)} \def\D{(3.5,0)} \def\E{(2.8,1)} 
\def\rd{4pt}
 \draw [orange] \A -- \E;
    \draw \A -- \B -- \C -- cycle;
    \draw \C -- \D -- \E -- cycle;
    \draw[fill=white] \B circle (\rd);
     \draw[fill=white] \B circle (\rd);
      \draw[fill=white] \C circle (\rd);
       \draw[fill=white] \D circle (\rd);
        \draw[fill=white] \E circle (\rd);
   
    \draw[fill=black] \A circle (\rd);
\end{tikzpicture}
\quad \leadsto \quad
    \def\pa{72} \def\lv{54} \def\rd{4pt} \def\sc{0.5}
    \def\ro{0.9}  \def\rt{1.2}  \def\rf{1.27}  \def\rv{1}  
    \begin{tikzpicture}[scale=\sc,baseline= 0pt]
        \draw (\lv:\rv) -- ( \lv+\pa:\rv) -- (\lv+2*\pa:\rv) -- (\lv+3*\pa:\rv) -- (\lv+4*\pa:\rv)  -- cycle;
        \draw (\lv+3*\pa:\rv) -- (\lv+ 5*\pa:\rv);
        \draw (\lv+3*\pa:\rv) -- (\lv+ 6*\pa:\rv);
        \draw[fill=white] (\lv:\rv) circle (\rd);
        \draw[fill=black] (\lv+\pa:\rv) circle (\rd);
        \draw[fill=white] (\lv+2*\pa:\rv) circle (\rd);
        \draw[fill=white] (\lv+3*\pa:\rv) circle (\rd);
        \draw[fill=white] (\lv+4*\pa:\rv) circle (\rd);
    \end{tikzpicture}
\end{equation*}
% \begin{center}
%   \includegraphics[scale=\fscale]{figs/triangulationProdExprod33_lr.pdf} 
% \end{center}
where the product representation is shown in more conventional form in the rightmost diagram (see \fref{fam_pt} in the Appendix for more examples).

For some families identifying the geometry (or symbols) that represent the generator is simple (eg.\ the vertex for Dyck paths) whilst for others  less so (usually when the generator is the `empty' object).  
For the latter case the following equivalent process defines the product. Let $\ffm^+=\ffm\setminus \set{\eps}$, then do the following:
\begin{enumerate} 
	
\item Define the object: 	$\eps\st\eps$ 
\item  For all $f\in \ffm^+  $ define the object: $\eps \st f$
\item  For all $f\in \ffm^+  $ define the object: $f\st \eps$
\item  For all $f_1,f_2\in \ffm^+  $ define the object: $  f_1\st f_2 $.
 
\end{enumerate}
An alternative strategy is to add some auxiliary geometry or mark to the objects which is uniquely associated with the generator. 
This geometry is not strictly part of the the  standard definition of the set of objects but used to clarify the magma structure. 
It can also be used when considering the embedding bijections discussed in \secref{sec_embedd}.

%==============================================
\section{Factorisation in magmas}
%==============================================
\label{sec_factor}

In this section we consider the general problem of factorisation in magmas and how it is related to any norm defined on the magma. 
The existence of a norm will imply any recursive factorisation of an element of a magma will terminate. 
The latter property, in conjunction with the injectivity of the product,  will be used in the proof of \thmref{thm_free} which provides  an equivalent characterisation of a free magma. 
In this combinatorial context \thmref{thm_free} provides a more direct way of determining if a magma is free or not.
To this end we  make the following definition.
\begin{definition}[Irreducible elements, unique factorisation] \label{def_irred}
Let  $(\magma,\star)$ be a magma with product map
$\st:\magma\times \magma\to  \magma $.
The range of the map $\st$ is called the set of \textbf{reducible elements} and usually denoted by $\magp$. 
The elements of the set $\mprime= \magma\setminus\magp$ are called  \textbf{irreducible elements} and the set $\mprime$ called the \textbf{set of irreducibles}.   
If  the product map is injective then we will call the magma a \textbf{unique factorisation magma}.
\end{definition}
Note, there may be no irreducible elements, also if $\magma$ is finite then the product cannot be injective. 
The proof of \thmref{thm_free} will require an injective product map. 
Thus, to simplify this section we will restrict the discussion to unique factorisation magmas.

Consider an arbitrary unique factorisation magma   $(\magma,\st)$. 
For all reducible elements  $m\in \magp$ there exists exactly one factorisation  into  two  parts: $m= m_1\st m_2 $.
Call $m_1$ a \textbf{left factor} and $m_2$ a \textbf{right   factor}.  
Define $\pdni$  recursively as 
\begin{equation}\label{eq_ffin}
\pdni(m)=\begin{cases}
\bigl(\pdni(m_1),\pdni(m_2)\bigr) & \text{if $m\in \magp$ with $ m_1\st m_2 = m $ }\\ 
   m    & \text{if $m \in \mprime$. }
\end{cases}
\end{equation}

Of primary interest is when the recursion   \eqref{eq_ffin} terminates. 
Note, if \eqref{eq_ffin} terminates then it  defines a map of $\magma$ into the Cartesian magma's base set.

This motivates the following definition.
\begin{definition}[Finite decomposition]
    Let $(\magma,\st)$ be a unique factorisation  magma.  
    If for all elements,   $m\in \magp$,  the recursion \eqref{eq_ffin} terminates  then $(\magma,\st)$ will be called a \textbf{finite decomposition} magma.
    If $(\magma,\st)$ is a finite decomposition magma then
       $\pdni(m)$ will be called the \textbf{decomposition} of $m$. 
\end{definition}

\newcommand{\dd}{\iddots}
\begin{figure}[ht]
    \centering
    \begin{subfigure}[b]{0.3\textwidth}
        \centering
        \begingroup\makeatletter\def\f@size{6}\check@mathfonts
        \begin{equation*}
            \begin{array}{c|cccccc}
                \st  &1  &  2 &  3 &  4&  5&   \dots \\
                \hline 
                1  &5  &  7 &  10 &  3&  16&  22   \\
                2  &6  &  9 &  4 &  15&  21&    \dd\\
                3  &8  &  4 &  14 &  20&  27 &    \dd  \\
                4  &11  &  13 &  19 & 26  &   &    \dd \\
                5  &12  &  18 &  25  &   &   &    \dd  \\
                \vdots   &17  &  24 &  \dd   &  \dd  &  \dd  &   \dd 
            \end{array} 
        \end{equation*}
        \endgroup
    \caption{  i) Not a finite decomposition magma.    ii) Not  a unique factorisation magma.   iii) Two irreducible elements. \\ }
    \end{subfigure}
    \hfill
    \begin{subfigure}[b]{0.3\textwidth}
        \centering
        \begingroup\makeatletter\def\f@size{6}\check@mathfonts
        \begin{equation*}
            \begin{array}{c|cccccc}
                \st  &1  &  2 &  3 &  4&  5&   \dots \\
                \hline 
                1  &3  &  4 &  7 &  11&  16&  22   \\
                2  &5  &  1 &  10 &  15&  21&    \dd\\
                3  &6  &  9 &  14 &  20&  27 &    \dd  \\
                4  &3  &  13 &  19 & 26  &   &    \dd \\
                5  &12  &  18 &  25  &   &   &    \dd  \\
                \vdots   &17  &  24 &  \dd   &  \dd  &  \dd  &   \dd 
            \end{array} 
        \end{equation*}
        \endgroup
    \caption{   i) A finite decomposition magma.   ii)  Not  a unique factorisation magma.   ii) Two irreducible elements.\\ }
    \end{subfigure}
    \hfill
    \begin{subfigure}[b]{0.3\textwidth}
        \centering
        \begingroup\makeatletter\def\f@size{6}\check@mathfonts
        \begin{equation*}
            \begin{array}{c|cccccc}
                \st  &1  &  2 &  3 &  4&  5&   \dots \\
                \hline 
                1  &3  &  4 &  7 &  11&  16&  22   \\
                2  &5  &  1 &  10 &  15&  21&    \dd\\
                3  &6  &  9 &  14 &  20&  27 &    \dd  \\
                4  &8  &  13 &  19 & 26  &   &    \dd \\
                5  &12  &  18 &  25  &   &   &    \dd  \\
                \vdots   &17  &  24 &  \dd   &  \dd  &  \dd  &   \dd 
            \end{array} 
        \end{equation*}
        \endgroup
    \caption{  i) A finite decomposition magma.   ii) A unique factorisation magma.  iii)  One irreducible element. }
    \label{fig:y equals x}
    \end{subfigure}
    \caption{The magmas above have base set $\mathbb{N}$ and product rule defined by the table. Only a portion of the table is shown explicitly.  The rest of the table is filled by the remaining elements of $\mathbb{N}$ consecutively in the diagonal pattern illustrated (ie.\ the north-east diagonals  starting with  $12$, $13$, $14, \cdots$.  }
    \label{fig:magmaEx}
\end{figure}

The recursion terminates  if all factors always result in an   irreducible element after a finite number of iterations of \eqref{eq_ffin}.  
There are several  mechanisms that would result in the recursion \eqref{eq_ffin} not terminating. 
For example, if there are no irreducible elements, or, if at some stage of the recursion a factor of some element $m$  is $m$ itself 
ie.\ $\pdni(m) =\dots (\pdni(m) , \pdni(m') )\dots $ resulting in an infinitely nested tuple.  Three example magmas, showing different factorisation properties, are given in \figref{fig:magmaEx}.
Another example is the  Cartesian magma, $(\canmag_X,\ast)$, for which it is clear it  is a unique factorisation magma and that the generators (ie.\ the elements of $X$) are the only irreducible elements.

If $(\magma,\st)$ has a \val   then the recursion must terminate  as given by the following proposition.
\begin{prop}\label{thm_fff}
   Let   $(\magma,\st)$  be a unique factorisation magma with non-empty set of irreducibles. 
   Then  $(\magma,\st)$ is a \vald magma  if and only if   $(\magma,\st)$ is a finite  decomposition magma. 
\end{prop}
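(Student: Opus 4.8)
The plan is to prove the two implications separately, with super-additivity of the norm powering one direction and finiteness of the decomposition powering the other.

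For the forward implication I would assume $(\magma,\st)$ is a normed magma and show the recursion \eqref{eq_ffin} always terminates. The key observation is that, because $\norm{\cdot}$ takes values in $\nat=\set{1,2,3,\dots}$, every element has norm at least one; hence for any reducible $m=m_1\st m_2$ super-additivity gives $\norm{m}\ge\norm{m_1}+\norm{m_2}\ge\norm{m_1}+1$ and likewise $\norm{m}\ge\norm{m_2}+1$, so each factor has \emph{strictly} smaller norm than $m$. I would then view the unfolding of $\pdni(m)$ as a rooted binary tree whose internal nodes are the reducible elements encountered and whose leaves are irreducibles. Along every root-to-node path the norm strictly decreases, and a strictly decreasing sequence of positive integers starting at $\norm{m}$ has length at most $\norm{m}$; thus the tree has depth bounded by $\norm{m}$. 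Since the tree is binary (finite branching) and of bounded depth, it is finite, so the recursion terminates and $(\magma,\st)$ is a finite decomposition magma.

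For the converse I would assume $(\magma,\st)$ is a finite decomposition magma and construct a norm explicitly. For each $m$, finiteness of the decomposition means $\pdni(m)$ is a finite nested tuple all of whose entries are irreducibles; define $\norm{m}$ to be the number of irreducible entries (leaves) occurring in $\pdni(m)$. This is a well-defined positive integer by hypothesis, and it is at least one since an irreducible $m$ has $\pdni(m)=m$ and hence $\norm{m}=1$. To check the defining inequality of a norm I would use the recursive clause of \eqref{eq_ffin}: for reducible $m=m_1\st m_2$ we have $\pdni(m)=\bigl(\pdni(m_1),\pdni(m_2)\bigr)$, so the leaves of $\pdni(m)$ are exactly the leaves of $\pdni(m_1)$ together with those of $\pdni(m_2)$, giving $\norm{m}=\norm{m_1}+\norm{m_2}$. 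This is in fact additive, hence \emph{a fortiori} super-additive, so $\norm{\cdot}$ is a norm and $(\magma,\st)$ is a normed magma. Uniqueness of the factorisation $m=m_1\st m_2$, guaranteed because $\magma$ is a unique factorisation magma, is what makes $\pdni$ and therefore this leaf-count unambiguous.

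The two verifications are routine; the one place to take care is the descent step in the forward direction, where I must make precise the passage from ``the norm strictly decreases on each factorisation'' to ``the whole recursion terminates.'' The cleanest way to avoid any appeal to K\"onig's lemma is the bounded-depth argument above: strict decrease of a $\nat$-valued quantity caps the depth by $\norm{m}$, and a binary tree of bounded depth is automatically finite. The non-emptiness of $\mprime$ is used only to guarantee there is somewhere for the recursion to stop, so that the leaf-count in the converse is meaningful; I would also remark that the hypothesis is mild, since any normed magma automatically has irreducibles, an element of minimal norm being unable to be a product.
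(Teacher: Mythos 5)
Your proposal is correct and follows essentially the same route as the paper's proof: the forward direction rests on the fact that super-additivity of a $\nat$-valued norm forces each factor to have strictly smaller norm, and the converse constructs the norm from the finite decomposition (your leaf count of $\pdni(m)$ is exactly the paper's recursive definition $\norm{\eps}=1$, $\norm{m_1\st m_2}=\norm{m_1}+\norm{m_2}$). The only cosmetic difference is that you make the termination step precise by bounding the depth of the factorisation tree by $\norm{m}$, whereas the paper argues the descent must stop by appealing to \lemref{thm_lenlem} (elements of minimal norm are irreducible) — your closing remark that a normed magma automatically has irreducibles is precisely that lemma, so nothing is lost.
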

This proposition is in fact true for an arbitrary (ie.\ not necessarily a unique factorisation) magma  but since we do not require this more general result we dispense with the more notationally complex proof (the right-hand side of \eqref{eq_ffin} has to be replaced by the set of all pairs whose product is $m$).
To prove   \propref{thm_fff} we will use the following lemma:
\begin{lem}\label{thm_lenlem}
    Let $(\magma,\st)$ be a \vald magma  and  
    let  $X_{\min}\subset\magma$ be the set of elements with minimal \valns.  
    Then $X_{\min}$ is non-empty and all elements of $X_{\min}$ are irreducible.
\end{lem}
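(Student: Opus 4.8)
The plan is to establish the two assertions of the lemma separately: first that $X_{\min}$ is non-empty, and then that every element attaining the minimal norm is irreducible. Both rest on two facts recorded in \defref{def_norm}: that $\norm{\cdot}$ takes values in $\nat=\set{1,2,3,\dots}$, so in particular every element of $\magma$ has norm at least $1$; and that $\norm{\cdot}$ is super-additive, $\norm{m_1\st m_2}\ge\norm{m_1}+\norm{m_2}$.

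For non-emptiness I would argue from well-ordering. Since $\magma$ is non-empty, the set of attained norm values $\set{\norm{m}\suchthat m\in\magma}$ is a non-empty subset of $\nat$. The natural numbers are well-ordered, so this set has a least element, say $n_0$. Any $m\in\magma$ with $\norm{m}=n_0$ then witnesses that $X_{\min}$ is non-empty; indeed $X_{\min}$ is precisely the set of elements of norm $n_0$.

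For irreducibility I would proceed by contradiction. Suppose some $m\in X_{\min}$ is reducible, i.e.\ $m\in\magp$, so that $m=m_1\st m_2$ for some $m_1,m_2\in\magma$. Super-additivity gives $\norm{m}=\norm{m_1\st m_2}\ge\norm{m_1}+\norm{m_2}$. Because every element of $\magma$ has norm at least $1$, we have $\norm{m_2}\ge 1$, and hence $\norm{m}\ge\norm{m_1}+1>\norm{m_1}$. Thus $m_1$ is an element of $\magma$ whose norm is strictly smaller than the minimal norm $\norm{m}=n_0$, which is absurd. (Symmetrically one gets $\norm{m}>\norm{m_2}$.) Therefore no element of $X_{\min}$ can be reducible, which is the claim.

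The argument is short, and what I would flag as the crux rather than a genuine obstacle is the \emph{strictness} of the inequality $\norm{m}>\norm{m_1}$. This strictness does not follow from super-additivity alone: it depends essentially on the codomain being $\nat=\set{1,2,3,\dots}$ rather than $\set{0,1,2,\dots}$, so that each factor contributes at least $1$ to the norm of a product. Were norm zero permitted, a reducible element could share the minimal norm with one of its factors and the conclusion would fail. No further hypotheses (finiteness, additivity, or unique factorisation) are needed for this lemma.
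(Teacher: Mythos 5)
Your proof is correct and follows essentially the same route as the paper's: well-ordering of $\nat$ for non-emptiness, and super-additivity plus $\norm{m_i}\ge 1$ to force a reducible element's factors strictly below the minimum, a contradiction. Your closing remark about the strictness hinging on the norm being a \emph{positive} integer is exactly the point the paper makes in its note following the lemma.
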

\begin{proof}[Proof of Lemma]
    The base set $\magma$ is assumed non-empty  and hence the \val has non-empty range $A\subseteq\mathbb{N}$. 
    Since   $\mathbb{N}$ is  a well ordered set the subset $A$ has a least element, $r$, and thus the  pre-image set of $r$,  $X_{\min}$, is not empty.
    Assume $m\in\magp\cap X_{\min}$. 
    Since $m\in\magp$ it has a pre-image pair  $(m_1,m_2)$  ie.\ $\st:(m_1,m_2)\mapsto m$, and  by assumption $m\in X_{\min}$ hence $\norm{m}$ is minimal. 
    But $\norm{m}\ge  \norm{m_1}+\norm{m_2}$ with $\norm{m_1},\norm{m_2}>0$  so   $\norm{m}$ is not minimal --  a contradiction. Thus $\magp\cap X_{\min}$ is empty. 
\end{proof} 
Note, the converse is false: a normed magma may have irreducible elements that do not have minimal norm. This lemma also shows why the norm has to be a positive integer.

\begin{proof}[Proof of \propref{thm_fff}]

    \textit{Forward:} If $(\magma,\st)$ has a \valns, by  \lemref{thm_lenlem}, the set of irreducibles, $\mprime $   is non-empty and all elements of minimal \val are in $\mprime$ . 
    Let $R$ be the range of the \valns. 
    Since $\mathbb{N}$ is well ordered the subset $R$ has a minimal element, say $r_0$.
    If $m\in \mprime$ the recursion terminates. 
    If $m\not\in \mprime$ then     $m=m_1 \star m_2$ (for unique $m_1$ and $m_2$) and $\norm{m}=\norm{m_1\star m_2}\ge \norm{m_1} +\norm{m_2}$ and  since the \val takes values in $\mathbb{N}$ we have that $\norm{m_1}<\norm{m_1\star m_2}$ and $\norm{m_2}<\norm{m_1\star m_2}$. 
    Thus every factor of $m$ has strictly smaller \val than $m$. 
    This process continues until a factor is an element of $\mprime$ (in which case the recursion terminates) or the \val of the factor is $r_0$. 
    In the latter case, by \lemref{thm_lenlem} the factor must be in $\mprime$ (and hence the recursion terminates). Thus in all cases the recursion terminates and hence $(\magma,\st)$ is a finite  decomposition magma.

    \textit{Converse:} If $(\magma,\st)$ is a finite  decomposition magma then we can construct a norm as follows. 
    Define a map $\norm{\cdot}: \magma\to\mathbb{N}$ as follows: 
    i) If $m\in \mprime$ then $\norm{m}=1$ and ii) if   $m= m_1\st m_2$ then  by definition, $ \norm{m_1\st m_2}=\norm{m_1}+\norm{m_2} $. Thus $\norm{\cdot}\in\nat$, is additive and is finite (since the decomposition of $m$ is finite). 
    Thus $\norm{\cdot}$ is a norm  and thus $(\magma,\st)$ is a \vald  magma.

\end{proof}

We can now state an  equivalent characterisation of free magmas.
 \begin{theorem}\label{thm_free}
    If   $(\magma,\st)$  is a unique factorisation \vald  magma with non-empty finite set of irreducibles,  
    then $(\magma,\st)$ is   a free magma   generated by the irreducible elements.
\end{theorem}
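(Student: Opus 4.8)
The plan is to verify the universal mapping principle of \defref{def_free} directly, taking the set of generators to be the irreducibles $X=\mprime$. Since $(\magma,\st)$ is a unique factorisation normed magma with non-empty set of irreducibles, \propref{thm_fff} guarantees it is a finite decomposition magma, so the recursion \eqref{eq_ffin} terminates on every element; this is exactly the structural fact that makes the recursive construction of the morphism well founded. Fix any set $\mathcal{Y}$ with a bijection $i:\mathcal{Y}\to X$, let $(\magman,\bullet)$ be an arbitrary magma and $f:\mathcal{Y}\to\magman$ an arbitrary map. I must produce a unique morphism $\theta:\magma\to\magman$ with $\theta\circ i=f$.

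First I would construct $\theta$ by induction on the norm $\norm{m}$. For the base case, \lemref{thm_lenlem} tells us that every element of minimal norm is irreducible, hence lies in $X$, so setting $\theta(m)=f(i^{-1}(m))$ for all $m\in X$ covers these. For the inductive step, if $m\in\magp$ then unique factorisation gives a single pair $(m_1,m_2)$ with $m=m_1\st m_2$; as shown in the proof of \propref{thm_fff} both factors satisfy $\norm{m_1}<\norm{m}$ and $\norm{m_2}<\norm{m}$ (because the norm is super-additive and strictly positive), so $\theta(m_1)$ and $\theta(m_2)$ are already defined, and I set $\theta(m)=\theta(m_1)\bullet\theta(m_2)$. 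Injectivity of $\st$ is what makes this unambiguous: the factorisation used in the recursion is the only one available, so $\theta$ is well defined.

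Next I would check that $\theta$ is a morphism. For any $m,m'\in\magma$ the element $m\st m'$ is reducible, and injectivity of $\st$ forces its unique factorisation to be exactly $(m,m')$; hence by the defining recursion $\theta(m\st m')=\theta(m)\bullet\theta(m')$, which is the morphism condition. Commutativity $\theta\circ i=f$ holds by the base-case definition. For uniqueness, suppose $\theta'$ is any morphism with $\theta'\circ i=f$; then $\theta'$ agrees with $\theta$ on $X$, and a second induction on the norm shows that if $\theta'$ and $\theta$ agree on the (strictly smaller-norm) factors of a reducible $m=m_1\st m_2$, then $\theta'(m)=\theta'(m_1)\bullet\theta'(m_2)=\theta(m_1)\bullet\theta(m_2)=\theta(m)$, so $\theta'=\theta$ everywhere. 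Having exhibited the required unique $\theta$ for every $(\magman,\bullet)$ and $f$, \defref{def_free} concludes that $(\magma,\st)$ is free on $X=\mprime$.

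The main obstacle, and the only place the hypotheses do real work, is establishing that the recursive definition of $\theta$ is simultaneously well founded and unambiguous. Well-foundedness is secured by the norm through \propref{thm_fff} and the strict drop $\norm{m_1},\norm{m_2}<\norm{m}$, while unambiguity is secured by unique factorisation; the non-emptiness of $\mprime$ is needed so that \lemref{thm_lenlem} supplies a genuine base case. Everything else (the morphism identity and uniqueness) is then a routine consequence of the injectivity of $\st$ together with induction on the norm.
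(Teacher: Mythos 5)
Your proposal is correct and follows essentially the same route as the paper: define $\theta$ on the irreducibles via $f\circ i^{-1}$ and extend recursively through the unique factorisation, with \propref{thm_fff} (via the norm) guaranteeing termination. The only difference is one of explicitness — you spell out the induction on the norm, the observation that injectivity of $\st$ makes $(m,m')$ the unique factorisation of $m\st m'$ (which is what yields the morphism identity), and the uniqueness of $\theta$ among all morphisms satisfying $f=\theta\circ i$, all of which the paper's proof asserts more tersely.
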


\thmref{thm_free} shows  that the following is sufficient to show a set $ \ffm$  with a product, $\st$, is a free magma:
\begin{itemize}
    \item Define   a product map $\st:\ffm\times\ffm\to \ffm$  and show it is injective. 
    \item Define    a  map $\norm{\cdot} : \ffm\to \mathbb{N}$  and show $\norm{\cdot}$ is additive over the product.
    \item Determine the range of the product map, $\ffm^+$, and hence  the set of generators  $\ffm^0=\ffm\setminus \ffm^+$ and $p=|\ffm^0|$.
\end{itemize}

The appendix lists several Catalan families, defining their product and a norm. 
In most cases it  is straightforward to check the product is injective (usually seen by the way the Catalan object factorises into a unique pair) and that the norm is additive. 

\begin{proof}[Proof of \thmref{thm_free}]
The proof is a direct proof based on \defref{def_free}. 
Given $(\magma,\st)$ the range of the product map defines $\magp$ and thus the non-empty set of irreducibles, $\mprime=\magma\setminus \magp$.
Let $\mathcal{Y}$ be any set such that $|\mathcal{Y}|=|\mprime|$ and $i: \mathcal{Y}\to \mprime $ be any bijection. Let $(\magman,\bullet)$ be any magma and $f$ any map $f:\mathcal{Y}\to \magman$. 
Define a map $\theta :\magma\to \mathcal{Y}$ as follows:
The value of $\theta$ for all elements in $ \mprime$ is defined  via 
\begin{equation}\label{eq_t1gen}
  \theta(i(y))=f(y), \qquad y\in \mathcal{Y}\,.  
\end{equation}
The value of $\theta$ for all elements   $m\in\magp$ is defined recursively by 
\begin{equation}\label{eq_t1rec}
\theta(m)=\theta(m_1)\bullet \theta(m_2)
\end{equation}
where $m = m_1\st m_2$. 
This uniquely defines $\theta$ for all   $m\in\magma$ and any $f$ as for all $m\in\magp$, $m$ uniquely factorises $m = m_1\st m_2$ (since $(\magma,\st)$ is a unique factorisation magma).
Furthermore  the existence of a norm implies the recursion  in the definition of $\theta(m)=\theta(m_1)\bullet \theta(m_2)$ will always terminate  (by \propref{thm_fff})  on an irreducible element (where $\theta$ is already defined) and thus the multiplication in $(\magman,\bullet)$ is well defined. 
Thus,   $i$ and $\mathcal{Y}$ exist  and for all maps $f$  and all magmas $(\magman,\bullet)$,  $\theta$ is a uniquely defined map. Furthermore, the recursive part of the definition $\theta(m)=\theta(m_1)\bullet \theta(m_2)$ shows that $\theta(m_1\st m_2)=\theta(m_1)\bullet \theta(m_2)$, that is, $\theta$ is also a (unique) magma morphism. Thus, by \defref{def_free}, $(\magma,\st)$ is free.
\end{proof}

This motivates the following definition.
\begin{definition}[Catalan  normed magma]\label{def_catmag}
Let $(\ffm,\st)$ be a normed magma. If
\begin{enumerate}
    \item the magma is free, and
    \item the norm is additive, and
    \item there are $p\in\nat$ irreducible elements,  and
    \item  the  irreducible elements all have the same norm (conventionally the norm is one),
\end{enumerate}
 then   $(\ffm,\st)$ is called a \textbf{$\boldsymbol{p}$-Catalan normed magma}. If $p=1$ it will be  called a \textbf{Catalan normed magma}.
\end{definition}
%
% The norm of the generators are conventionally set to one. 
We conclude this section by using \thmref{thm_free} to prove the two Catalan families considered above, Dyck paths and triangulations, are free magmas. 
These two proofs should be compared with the `from the definition' proof of  \propref{prop_cartfree} (ie.\ that the Cartesian magma is free). 

\medskip 
\noindent\textit{Dyck Paths -- \fref{fam_dp}.} \\
\textit{Product:} The product defined by \eqref{eq_dyck_prod} is injective: take any Dyck path $d\ne \circ$. Then the rightmost step of  $d$  to leave the surface always exists and is unique and  thus the two factors $d_1$ and $d_2$ are unique. \\
\textit{Norm:} The map \val of a path is defined as  $\norm{d}=(\text{number of up steps in $d$})+1$. This is well defined   and   in all cases  is additive: 
Let $2n_1$ (respec.\ $2n_2$) be the number of steps in $d_1$ (respec.\ $d_2$).
The number of steps in $d_1\st_\dpn d_2$ is $2(n_1+n_2)+2$. Thus $\norm{d_i}=n_i+1$, $\norm{d_1\st_\dpn\, d_2}=(n_1 +n_2+1)+1$ and thus $\norm{d_1\st_\dpn\, d_2}=\norm{d_1}+\norm{d_2} $. \\
\textit{Generator:} There is only one irreducible element in \fref{fam_dp}: the element with no rightmost up step, that is, the single vertex $\circ$.  Thus \fref{fam_dp} has one generator, $\eps_\dpn=\circ$.
Thus, by \thmref{thm_free} and \defref{def_catmag}, \fref{fam_dp} is a Catalan magma.  

\medskip 
\noindent\textit{Triangulations -- \fref{fam_pt}:}  \\
\textit{Product:} The product defined by \eqref{triangulations} is clearly injective: if $t$ is any triangulation with at least one triangle, then the triangle $a-d-bc$ to the right of the marked vertex   always exists, is unique, and uniquely splits   $t$ into a left, $t_1$ and right triangulation, $t_2$. \\
\textit{Norm:} The \val of a triangulation of a regular $n$-gon for $n>2$ is   $n-1$ (and $\norm{\TransformHoriz}=1)$.  The \val is well defined and adds:  if $\norm{t_1}=n_1-1$,  $\norm{t_2}=n_2-1$ then $t_1\st_\ptn t_2$ is an $((n_1+n_2-2)+1)$-gon and thus $\norm{t_1\st_\ptn t_2}=(n_1+n_2-1) -1 =\norm{t_1}+\norm{t_2}$. \\
\textit{Generator:} There is only one irreducible element in \fref{fam_pt}: the element with no triangle to the right of the marked vertex, that is, the single  edge $\TransformHoriz$.  Thus \fref{fam_pt} has one generator, $\eps_\ptn=\,\,\TransformHoriz$.
Thus, by \thmref{thm_free} and \defref{def_catmag}, \fref{fam_pt} is a Catalan magma.

%==============================================
\section{Free magma isomorphisms and a universal Catalan bijection}
%==============================================
\label{sec_unibij}

It is well known that any free algebraic structures defined by a universal mapping principle are isomorphic. Algebraically this is usually the end of the line: pick the simplest free structure and work with that. Thus all Catalan magmas are isomorphic with, arguably, the Cartesian magma having one of the simplest products.

However, in this combinatorial context a  large part of the interest in Catalan structures is in the bijections between them. 
The bijections range from simple (eg.\ between Dyck paths and matching brackets) to the very complex  (eg.\ between Dyck paths and Kepler Towers \cite{Knuth:2005aa}).  Thus the combinatorial interest is in the nature of the   numerous algorithms defining the isomorphisms.

We will restate the isomorphism theorem and give the  standard proof. The reason for doing this is that the theorem is constructive and shows the overall structure of the  isomorphism \emph{does not depend on the details of  any particular free magma}. Thus the theorem will provide a universal bijection between any pair of Catalan families. Furthermore, being an isomorphism, it also respects the recursive structure of all Catalan families, an essential property of all ``nice'' bijections. Thus we begin with the isomorphism theorem and its proof.

\begin{prop} \label{uni_iso}
Given magmas $(F,\st)$ and $(G,\bullet)$ and set $Y$  with maps $i:Y\to F$ and $j:Y\to G$ each satisfying respective free magma universal mapping diagrams  
\begin{equation}\label{eq_fgp}
 \begin{tikzcd}
  F    \arrow[r, dotted,"\theta"]   & N \\
   Y \arrow[u,"i"] \arrow[ru,"f"'] &                           
\end{tikzcd}   \qquad 
 \begin{tikzcd}
  G    \arrow[r,dotted, "\phi"]   & N' \\
   Y \arrow[u ,"j"] \arrow[ru,"g"'] &                           
\end{tikzcd} 
\end{equation}
there is a unique   magma isomorphism $\Gamma : F\to G$ such that $\Gamma\circ i =j $ and $\Gamma^{-1} \circ j=i $.
\end{prop}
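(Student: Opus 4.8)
The plan is to construct $\Gamma$ together with a candidate inverse by two applications of the free-magma universal mapping property (\defref{def_free}), and then to show the two maps are mutually inverse using the uniqueness clause of that property. First I would apply the UMP of $F$ to the target magma $(G,\bullet)$ and the map $j:Y\to G$: this is exactly the left-hand diagram of \eqref{eq_fgp} with $N=G$ and $f=j$, so since $(F,\st)$ is free on $i$ there is a unique magma morphism $\Gamma:F\to G$ with $\Gamma\circ i=j$. Symmetrically, applying the UMP of $G$ to the magma $(F,\st)$ and the map $i:Y\to F$ produces a unique morphism $\Delta:G\to F$ with $\Delta\circ j=i$. It then remains only to identify $\Delta$ with $\Gamma^{-1}$.

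The key step is to establish $\Delta\circ\Gamma=\id_F$ and $\Gamma\circ\Delta=\id_G$ by a uniqueness argument rather than elementwise. Consider the UMP of $F$ applied to the target magma $F$ itself with the map $i:Y\to F$: by freeness there is a \emph{unique} magma morphism $F\to F$ through which $i$ factors over $i$. Now $\id_F$ is such a morphism, since $\id_F\circ i=i$; but the composite $\Delta\circ\Gamma$ is also a magma morphism $F\to F$ (being a composite of morphisms), and it satisfies $(\Delta\circ\Gamma)\circ i=\Delta\circ(\Gamma\circ i)=\Delta\circ j=i$. By the uniqueness clause the two morphisms must coincide, so $\Delta\circ\Gamma=\id_F$. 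The identical argument with the roles of $F$ and $G$ interchanged gives $\Gamma\circ\Delta=\id_G$.

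Hence $\Gamma$ is a bijection with two-sided inverse $\Delta$, so $\Gamma^{-1}=\Delta$ and therefore $\Gamma^{-1}\circ j=\Delta\circ j=i$, as required; since $\Gamma$ is a morphism whose inverse is also a morphism, it is a magma isomorphism. Uniqueness of $\Gamma$ as a morphism with $\Gamma\circ i=j$ is already built in, being precisely the uniqueness asserted by the UMP of $F$. I do not expect any serious obstacle: this is the standard ``free objects are unique up to a unique isomorphism'' diagram chase, and the only point requiring care is remembering that it is the self-map uniqueness (the universal property of $F$ applied to $F$ itself) that forces the composites to be identities, so that one never needs to argue about individual elements of the base sets.
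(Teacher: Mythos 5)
Your proposal is correct and follows essentially the same route as the paper: construct $\Gamma$ and $\Delta$ ($\theta$ and $\phi$ in the paper) from the two universal mapping diagrams with $N=G$, $f=j$ and $N'=F$, $g=i$, then force the composites to be identities. You are in fact slightly more careful than the paper at the key step, since you explicitly invoke the uniqueness clause of the UMP of $F$ applied to $F$ itself to pass from $(\Delta\circ\Gamma)\circ i=i$ to $\Delta\circ\Gamma=\id_F$, whereas the paper writes ``hence'' without spelling out that this uniqueness is what justifies the conclusion.
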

Note, $\Gamma$ is unique, given $i$ and $j$, however $i$ and $j$ are are only unique when $|Y|=1$.
\begin{proof}
Since the diagrams \eqref{eq_fgp} commute for all $N$, $f$ and $g$ we can make the following choices: $f=j$, $N=G$, $g=i$ and $N'=F$. This gives two commutative diagrams which are drawn together as
\begin{equation}\label{eq_commtri}
 \begin{tikzcd}
  F    \arrow[r,dotted, "\theta"]   & G  \arrow[r,dotted,"\phi"] & F \\
 &   Y \arrow[lu,"i"]  \arrow[u,"j"] \arrow[ru,"i"'] &     \\        \end{tikzcd} 
\end{equation}
Commutativity gives, $j=\theta\circ i$ and $i=\phi\circ j$. Thus $i=(\phi \circ \theta)\circ i $ and hence $\phi \circ \theta = \id_{F}$. Similarly,  $j=( \theta\circ \phi )\circ j $ and hence $\theta\circ \phi = \id_{G}$. Thus $\Gamma=\theta  $ and $\Gamma^{-1}=  \phi $  are   isomorphisms.
\end{proof}

In the case $|Y|=1$   the two maps $i$ and $j$ ``point'' to the respective generators, $\eps_1\in F$ and $\eps_2\in G$, in the respective magmas $F$ and $G$. The values of the isomorphisms $\theta$ and $\phi$ are defined in the proof of  \thmref{thm_free}, that is, recursively from  $F$  to $G$
\begin{subequations}\label{eq_ftog}
\begin{align}
\theta(\eps_1)&=\eps_2\,,\\
    \theta(m)&=\theta(m_1)\bullet\theta(m_1)\quad \text{where $ m=m_1\st m_2$,}
\end{align}
\end{subequations}
 and from  $G$ to $F$
 \begin{subequations}\label{eq_gtof}
\begin{align}
    \phi(\eps_2)&=\eps_1\,,\\
    \phi(p)&=\phi(p_1)\st\phi(p_1)\quad \text{where $ p=p_1\bullet p_2$.}
\end{align}
\end{subequations}
From an algebraic perspective the maps are expected, however from a combinatorial perspective of trying to make sense of the large diversity of bijections between Catalan families they are   significant.  
The maps defined by equations \eqref{eq_ftog} (or \eqref{eq_gtof})   say that between \emph{any} pair of Catalan families the \emph{same} algorithm can be used to define a bijection, furthermore this bijection respects the recursive structure of both families. Since the bijection arises out of the universal mapping principle defining free magmas we will call it a universal bijection and give it a formal definition.

 \begin{definition}[Universal Bijection]
\label{def_unibij}
    \noindent Let $(\ffm_{i},\st_i)$ and $(\ffm_{j},\st_j)$ be free magmas with generator  sets $X_i$ and $X_j$ respectively, with  $|X_i|=|X_j|$. 
    Let $\sigma: X_i\to X_j$ be any bijection. 
    Define the   map  $\unimap_{i,j}: \ffm_i\to \ffm_j$   as  follows: For all $x\in X_i$, $\unimap_{i,j}(x)=\sigma(x)$. For all $u\in \ffm_i\setminus X_i$
    \begin{enumerate}
        \item Decompose $u$ into a product of   generators $\eps_i\in X_i$.
        \item In the decomposition of $u$ replace every occurrence of $\eps_i$ with $\sigma(\eps_i)$ and every occurrence of  $\st_i$ with   $\st_j$ to give an expression $w(u)$.
        \item The value of $\unimap_{i,j}(u)$ is defined to be  $w(u)$,  that is, evaluate all the products in $w(u)$ to give an element of $\ffm_{j}$.  
    \end{enumerate} 

\end{definition}

We then have the simple consequence following from \eqref{eq_ftog}:
\begin{cor} \label{thm_gamma}
Let $\unimap_{i,j}: \ffm_i\to \ffm_j$ be the map of \defref{def_unibij}, then  $\unimap_{i,j}$ is  a free  magma  isomorphism.
\end{cor}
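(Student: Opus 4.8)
The plan is to identify the explicitly-described map $\unimap_{i,j}$ with the abstract isomorphism $\Gamma$ furnished by \propref{uni_iso}, and then read off that $\unimap_{i,j}$ is an isomorphism. Concretely, take $Y=X_i$, let $i$ be the inclusion $X_i\hookrightarrow\ffm_i$, and let $j=\sigma$ regarded as a map $X_i\to X_j\subseteq\ffm_j$. Since $(\ffm_i,\st_i)$ and $(\ffm_j,\st_j)$ are free with generator sets $X_i$ and $X_j$ of equal cardinality, both satisfy the universal mapping diagrams \eqref{eq_fgp} over this common $Y$ (the map $j$ is a valid universal map because $\sigma$ is a bijection onto the generators of $\ffm_j$). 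Hence \propref{uni_iso} yields a unique magma isomorphism $\Gamma:\ffm_i\to\ffm_j$ with $\Gamma\circ i=j$, i.e. $\Gamma(x)=\sigma(x)$ for all $x\in X_i$. It then suffices to prove $\unimap_{i,j}=\Gamma$.

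First I would check that $\unimap_{i,j}$ is well defined. Because $(\ffm_i,\st_i)$ is free it is isomorphic to a Cartesian magma and hence is a unique factorisation, finite decomposition magma, so the decomposition map $\pdni$ of \eqref{eq_ffin} terminates (cf. \propref{thm_fff}). Thus each $u\in\ffm_i\setminus X_i$ has a unique, finite decomposition into generators, the substituted expression $w(u)$ is uniquely determined, and its evaluation in $\ffm_j$ is a single well-defined element. Next I would verify the morphism property $\unimap_{i,j}(m_1\st_i m_2)=\unimap_{i,j}(m_1)\st_j\unimap_{i,j}(m_2)$: since factorisation is unique, the decomposition of $m_1\st_i m_2$ is exactly the product of the decompositions of $m_1$ and $m_2$, so $w(m_1\st_i m_2)=w(m_1)\st_j w(m_2)$, and evaluation of products commutes with the outer $\st_j$.

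The identification $\unimap_{i,j}=\Gamma$ then follows from uniqueness in the universal mapping principle: the recursion \eqref{eq_t1gen}--\eqref{eq_t1rec} in the proof of \thmref{thm_free} shows that $\Gamma$ is the \emph{unique} magma morphism $\ffm_i\to\ffm_j$ restricting to $\sigma$ on $X_i$. Having just shown that $\unimap_{i,j}$ is a morphism with $\unimap_{i,j}|_{X_i}=\sigma$, this uniqueness forces $\unimap_{i,j}=\Gamma$, whence $\unimap_{i,j}$ is an isomorphism.

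I expect the one genuinely non-routine step to be establishing the morphism property, i.e. matching the operational ``decompose, substitute, re-evaluate'' recipe of \defref{def_unibij} with the recursive clause $\theta(m)=\theta(m_1)\bullet\theta(m_2)$. The crux is that substitution of generators and of the product symbol commutes with re-evaluation; I would make this precise by induction on $\norm{u}$, using that any factor $u_k$ of $u=u_1\st_i u_2$ satisfies $\norm{u_k}<\norm{u}$ (as in the proof of \propref{thm_fff}) so that the inductive hypothesis applies to $u_1$ and $u_2$. The same induction moreover yields a direct, self-contained proof of bijectivity without invoking \propref{uni_iso}: forming $\unimap_{j,i}$ from $\sigma^{-1}$ and checking $\unimap_{j,i}\circ\unimap_{i,j}=\id_{\ffm_i}$ and $\unimap_{i,j}\circ\unimap_{j,i}=\id_{\ffm_j}$ first on generators and then propagating through the morphism property.
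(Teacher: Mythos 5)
Your proposal is correct and follows essentially the same route as the paper: the paper treats the corollary as an immediate consequence of \propref{uni_iso} together with the recursion \eqref{eq_ftog} defining the universal isomorphism, which is exactly your identification $\unimap_{i,j}=\Gamma$ via the uniqueness clause of the mapping principle. You merely make explicit the details the paper leaves implicit — well-definedness of the decomposition step and the verification that the ``decompose, substitute, multiply'' recipe satisfies the morphism recursion \eqref{eq_t1rec} — which is a faithful filling-in rather than a different argument.
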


The map  $\unimap_{i,j}$  can written schematically    as
\begin{equation}\label{eq_unibi}
f \qquad \stackrel{\text{\tiny decompose}}{\longrightarrow} \qquad  \mathop{\text{\tiny substitute} }^{ \eps_i\to \eps_j}_{\star_i\to \star_j}  \qquad \stackbin{\text{\tiny multiply}}{\longrightarrow}\qquad g\,.
\end{equation}
where $ \eps_j=\sigma(\eps_i)$.
Since $\unimap_{i,j}$ is an isomorphism we get two outcomes: firstly, it gives us a bijection between the base sets  of $\ffm_i$ and $\ffm_j$, and secondly, that the bijection is recursive, that is
if $u=u_1\st_i u_2 $ then 
$\unimap_{i,j}(u)=\unimap_{i,j}(u_1)\st_j \unimap_{i,j}(u_2)$.   
ie.\  if the images under $\unimap_{i,j}$ are already known for  the two factors then decomposition is not necessary -- only a single factorisation is required (and a single multiplication). 
 
We now consider  several examples to illustrate  the  bijection/isomorphism $\unimap_{i,j}$. We will use the convention that if it is clear from the context the subscripts $i$ and $j$ will be omitted.
The first examples use    the two   Catalan families discussed above: Dyck paths and triangulations and is given in a little more detail. This is followed by several examples where more detail (product definitions etc.) can be obtained from the Appendix.

Consider a Dyck path and decompose the  path down to a product of generators:
\begin{equation*}
\begin{tikzpicture}[style_size3]
    \def\U{++(1,1)} \def\D{++(1,-1)}
    \draw [help lines, gray!50] (0,0) grid(6,2);
    \draw  (0,0)   -- \U -- \D --  \U -- \U  -- \D -- \D;  
    \path[draw,fill=white] (0,0) circle (4pt) \U circle (4pt) \D \U circle (4pt) \U circle (4pt) ;
\end{tikzpicture}
\quad = \quad
\begin{tikzpicture}[style_size3]
    \def\U{++(1,1)} \def\D{++(1,-1)}
    \draw [help lines, gray!50] (0,0) grid(6,2);
    \draw  (0,0)   -- \U -- \D ;
    \draw [orange,line width =1pt] (2,0) --  \U;
    \draw (3,1) -- \U  -- \D ; 
    \draw [orange,line width =1pt] (5,1) --  \D;
    \path[draw,fill=white] (0,0) circle (4pt) \U circle (4pt) \D \U circle (4pt) \U circle (4pt) ;
\end{tikzpicture}      
 \quad = \quad
 \begin{tikzpicture}[style_size3]
    \def\U{++(1,1)} \def\D{++(1,-1)}
    \draw [help lines, gray!50] (0,0) grid(2,1);
    \draw  (0,0)   -- \U -- \D ;
    \path[draw,fill=white] (0,0) circle (4pt) \U circle (4pt) ;
\end{tikzpicture}   
 \,\star_\dpn \,
  \begin{tikzpicture}[style_size3]
    \def\U{++(1,1)} \def\D{++(1,-1)}
    \draw [help lines, gray!50] (0,0) grid(2,1);
    \draw  (0,0)   -- \U -- \D ;
    \path[draw,fill=white] (0,0) circle (4pt) \U circle (4pt) ;
\end{tikzpicture} 
 \quad = \quad
 (\circ \star_\dpn \circ) \star_\dpn (\circ \star_\dpn \circ)
\end{equation*}
%   \begin{center}
%  	\includegraphics[scale=\fscale]{figs/DyckPath_to_Tri_lr.pdf}
%  \end{center}
then change generators 
\[
\eps_\dpn=\circ\quad \to \quad \trigen=\eps_\ptn
\]
and product rule $\star_\dpn\to \star_\ptn$, that is,
% \begin{equation*}
%     (\circ \star_\dpn \circ)\star_\dpn  (\circ \star_\dpn  \circ) 
%   \quad  \mapsto \quad 
% \trigen
% \end{equation*}
\begin{equation*}
(\circ \star_\dpn \circ)\star_\dpn (\circ \star_\dpn \circ) 
\quad  \mapsto \quad 
\left( \trigen \, \star_\ptn \, \trigen \right)
\, \star_\ptn \,
\left( \trigen \, \star_\ptn \, \trigen \right)
\end{equation*}
% \begin{center}
%   \includegraphics[scale=\fscale]{figs/DyckPath_to_Tri_prodA1_lr}	
% \end{center} 
then   multiply:
\begin{equation*}
\def\BL{-0.2cm}  \def\rd{2pt}\def\ro{0.9} \def\rt{0.35}  
\left( \trigen \, \star_\ptn \, \trigen \right)
\, \star_\ptn \,
\left( \trigen \, \star_\ptn \, \trigen \right)
\quad = \quad
\begin{tikzpicture}[ scale=1,baseline=\BL] 
%   \draw [help lines, gray!50] (0,0) grid(1,1);
    \draw (30:\rt) -- (150:\rt) -- (-90:\rt) -- cycle;
    \draw[fill=white] (30:\rt) circle (\rd);
    \draw[fill=black] (150:\rt) circle (\rd);
    \draw[fill=white] (-90:\rt) circle (\rd);
\end{tikzpicture}
\,\star_\ptn \,
 \begin{tikzpicture}[ scale=1,baseline=\BL] 
    \draw (30:\rt) -- (150:\rt) -- (-90:\rt) -- cycle;
    \draw[fill=white] (30:\rt) circle (\rd);
    \draw[fill=black] (150:\rt) circle (\rd);
    \draw[fill=white] (-90:\rt) circle (\rd);
\end{tikzpicture}
\quad = \quad
    \begin{tikzpicture}[scale=0.5,baseline= -0.2cm]
    %   \draw [help lines, gray!50] (0,0) grid(1,1);
        \def\pa{72} \def\lv{54} \def\rd{4pt}  
    \def\ro{0.9}  \def\rt{1.2}  \def\rf{1.27}  \def\rv{1}  
        \draw (\lv:\rv) -- ( \lv+\pa:\rv) -- (\lv+2*\pa:\rv) -- (\lv+3*\pa:\rv) -- (\lv+4*\pa:\rv)  -- cycle;
        \draw (\lv+3*\pa:\rv) -- (\lv+ 5*\pa:\rv);
        \draw (\lv+3*\pa:\rv) -- (\lv+ 6*\pa:\rv);
        \draw[fill=white] (\lv:\rv) circle (\rd);
        \draw[fill=black] (\lv+\pa:\rv) circle (\rd);
        \draw[fill=white] (\lv+2*\pa:\rv) circle (\rd);
        \draw[fill=white] (\lv+3*\pa:\rv) circle (\rd);
        \draw[fill=white] (\lv+4*\pa:\rv) circle (\rd);
    \end{tikzpicture}
\end{equation*}
% \begin{center}
%   \includegraphics[scale=\fscale]{figs/DyckPath_to_Tri_prodA2_lr.pdf}	
% \end{center} 
which gives the bijection  
\begin{equation*}
    \begin{tikzpicture}[scale=0.5,baseline= 0.5cm]
    \def\U{++(1,1)} \def\D{++(1,-1)}
    \draw [help lines, gray!50] (0,0) grid(6,2);
    \draw[line width=0.5pt]  (0,0)   -- \U -- \D --  \U -- \U  -- \D -- \D;  
    \path[draw,fill=white] (0,0) circle (4pt) \U circle (4pt) \D \U circle (4pt) \U circle (4pt) ;
\end{tikzpicture} 
\quad\mapsto\quad
    \begin{tikzpicture}[scale=0.5,baseline= 0cm]
        % \draw [help lines, gray!50] (0,0) grid(1,1);
        \def\pa{72} \def\lv{54} \def\rd{4pt}  
    \def\ro{0.9}  \def\rt{1.2}  \def\rf{1.27}  \def\rv{1}  
        \draw (\lv:\rv) -- ( \lv+\pa:\rv) -- (\lv+2*\pa:\rv) -- (\lv+3*\pa:\rv) -- (\lv+4*\pa:\rv)  -- cycle;
        \draw (\lv+3*\pa:\rv) -- (\lv+ 5*\pa:\rv);
        \draw (\lv+3*\pa:\rv) -- (\lv+ 6*\pa:\rv);
        \draw[fill=white] (\lv:\rv) circle (\rd);
        \draw[fill=black] (\lv+\pa:\rv) circle (\rd);
        \draw[fill=white] (\lv+2*\pa:\rv) circle (\rd);
        \draw[fill=white] (\lv+3*\pa:\rv) circle (\rd);
        \draw[fill=white] (\lv+4*\pa:\rv) circle (\rd);
    \end{tikzpicture}
 \end{equation*}
% \begin{center}
%  	 \includegraphics[scale=\fscale]{figs/DyckPath_to_Tri_prod_bij_lr}
% \end{center}
Similarly, if we perform the same multiplications for matching brackets (see family \fref{fam_mb} in the appendix), we get
\[ 
(\emptyset\st_{\ref{fam_mb}} \emptyset)\st_{\ref{fam_mb}} (\emptyset\st_{\ref{fam_mb}} \emptyset) 
=\{\} \st_{\ref{fam_mb}}\{\}=\{\}\{\{\}\}
\]
(where $\eps_{\ref{fam_mb}}=\emptyset$, the empty expression),
or for Catalan floor plans (family \fref{fam_fp} in the appendix))
 \begin{equation*}
     (\floorgen \star \floorgen) \star (\floorgen \star \floorgen)
     \,=\,
     \tikz\draw (0,0) rectangle (0.5,0.5); 
     \,\star\,
     \tikz\draw (0,0) rectangle (0.5,0.5); 
     \,=\,
      \tikz\draw (0,0) rectangle (1,0.5) (0,1) rectangle (0.5,0.5) (1,1) rectangle (0.5,0.5); 
 \end{equation*}
% \[ 
% (\edge \st_{\ref{fam_fp}} \edge)\st_{\ref{fam_fp}} (\edge\st_{\ref{fam_fp}} \edge)
% = \fbox{\phantom{x}} \st_{\ref{fam_fp}} \fbox{\phantom{x}}= \raisebox{-0.3\height}{\includegraphics[scale=0.5]{figs/floorPlanExB_lr.pdf}}
% \]
(the marks have been removed from the generator geometry on the right) or for nested matchings (family \fref{fam_ld} in the appendix),
 \begin{equation*}
     (\matchgen \star \matchgen) \star (\matchgen \star \matchgen)
 \,=\, 
\begin{tikzpicture}[line width=0.5pt,scale=0.5] 
\def\S{++(1,0)} \def\R{4pt}
    \draw (0,0) -- (3,0);
    \fill (0,0) circle (\R) \S circle (\R) \S circle (\R);
    \draw  (1,0) arc [start angle=180, end angle = 0, radius=0.5];
\end{tikzpicture}
\star
\begin{tikzpicture}[line width=0.5pt,scale=0.5] 
\def\S{++(1,0)} \def\R{4pt}
    \draw (0,0) -- (3,0);
    \fill (0,0) circle (\R) \S circle (\R) \S circle (\R);
    \draw  (1,0) arc [start angle=180, end angle = 0, radius=0.5];
\end{tikzpicture}
\quad = \quad
\begin{tikzpicture}[line width=0.5pt,scale=0.5] 
\def\S{++(1,0)} \def\R{4pt} \def\C{circle (\R)}
    \draw (0,0) -- (7,0);
    \fill (0,0) \C \S \C \S \C \C \S \C \S \C \S \C \S \C   ;
    \draw  (1,0) arc [start angle=180, end angle = 0, radius=0.5];
    \draw  (3,0) arc [start angle=180, end angle = 0, radius=1.5];
    \draw  (4,0) arc [start angle=180, end angle = 0, radius=0.5];
\end{tikzpicture}
\end{equation*}
% \[ 
% (\raisebox{0ex}{\includegraphics[scale=0.6]{figs/nestedMatchingsGen_lr.pdf}} \st_{\ref{fam_fp}} \raisebox{0ex}{\includegraphics[scale=0.6]{figs/nestedMatchingsGen_lr.pdf}})\st_{\ref{fam_fp}} (\raisebox{0ex}{\includegraphics[scale=0.6]{figs/nestedMatchingsGen_lr.pdf}}\st_{\ref{fam_fp}} \raisebox{0ex}{\includegraphics[scale=0.6]{figs/nestedMatchingsGen_lr.pdf}})
% = \raisebox{-0.5ex}{\includegraphics[scale=2]{figs/nestedMatchingEx2g_lr.pdf}}
% \st_{\ref{fam_fp}} 
% \raisebox{-0.5ex}{\includegraphics[scale=2]{figs/nestedMatchingEx2g_lr.pdf}}
% = 
% \raisebox{-0.6ex}{ \includegraphics[scale=2]{figs/nestedMatchingEx2g_A2_lr.pdf} }
% \]
Thus we have the bijections:
 \begin{equation*}
 \def\BL{0.5cm}
     \begin{tikzpicture}[scale=0.5,line width=0.5pt,baseline=\BL]
    \def\U{++(1,1)} \def\D{++(1,-1)}
    \draw [help lines, gray!50] (0,0) grid(6,2);
    \draw   (0,0)   -- \U -- \D --  \U -- \U  -- \D -- \D;  
    \path[draw,fill=white] (0,0) circle (4pt) \U circle (4pt) \D \U circle (4pt) \U circle (4pt) ;
\end{tikzpicture}
\,\longleftrightarrow \,
\begin{tikzpicture}[scale=0.75,baseline= 0cm]
    % \draw [help lines, gray!50] (0,0) grid(1,1);
    \def\pa{72} \def\lv{54} \def\rd{3pt}  
    \def\ro{0.9}  \def\rt{1.2}  \def\rf{1.27}  \def\rv{1}  
    \draw (\lv:\rv) -- ( \lv+\pa:\rv) -- (\lv+2*\pa:\rv) -- (\lv+3*\pa:\rv) -- (\lv+4*\pa:\rv)  -- cycle;
    \draw (\lv+3*\pa:\rv) -- (\lv+ 5*\pa:\rv);
    \draw (\lv+3*\pa:\rv) -- (\lv+ 6*\pa:\rv);
    \draw[fill=white] (\lv:\rv) circle (\rd);
    \draw[fill=black] (\lv+\pa:\rv) circle (\rd);
    \draw[fill=white] (\lv+2*\pa:\rv) circle (\rd);
    \draw[fill=white] (\lv+3*\pa:\rv) circle (\rd);
    \draw[fill=white] (\lv+4*\pa:\rv) circle (\rd);
\end{tikzpicture}
\,\longleftrightarrow \,
\{\}\{\{\}\}
\,\longleftrightarrow \,
\begin{tikzpicture}[baseline=\BL]
    \draw (0,0) rectangle (1,0.5) (0,1) rectangle (0.5,0.5) (1,1) rectangle (0.5,0.5); 
\end{tikzpicture}
\,\longleftrightarrow \,
 \begin{tikzpicture}[line width=0.5pt,scale=0.5,baseline=0cm] 
\def\S{++(1,0)} \def\R{4pt} \def\C{circle (\R)}
    \draw (0,0) -- (7,0);
    \fill (0,0) \C \S \C \S \C \C \S \C \S \C \S \C \S \C   ;
    \draw  (1,0) arc [start angle=180, end angle = 0, radius=0.5];
    \draw  (3,0) arc [start angle=180, end angle = 0, radius=1.5];
    \draw  (4,0) arc [start angle=180, end angle = 0, radius=0.5];
\end{tikzpicture}
\end{equation*}
% \begin{center}
%  	 \includegraphics[scale=\fscale]{figs/DyckPath_to_Tri_prod_bij_A2_lr.pdf}
% \end{center}

Some well known Catalan bijections are the same map  as defined by $\unimap_{i,j}$ but the algorithm stated to define the codomain element given a domain element can be dramatically different.  
For example, the bijection from Dyck paths to Staircase polygons (family \fref{fam_sp} in the appendix) defined by Delest and Viennot \cite{delest84} (where staircase polygons are called polyominoes) used the heights of the peaks and valleys of a path to define the heights of the staircase columns (ie.\ number of cells) and their overlaps (ie.\ number of adjacent cells), for example: 
\begin{align*}
\begin{tikzpicture}[style_size3,>=Stealth,baseline=0cm]
\def\U{++(1,1)} \def\D{ ++(1,-1) }
\draw [help lines, gray!50] (0,0) grid(10,2);
\draw  (0,0)   -- \U -- \D --  \U -- \U --\D -- \U -- \D -- \D -- \U -- \D;  
\draw[dashed ] (1,1) edge[->]  (1,0) (1,-0.5) node {1};
\draw[dashed ] (2,0) edge[->]  (2,-1) (2,-1.5) node {1};
\draw[dashed ] (4,2) edge[->]  (4,0) (4,-0.5) node {2};
\draw[dashed ] (5,1) edge[->]  (5,-1) (5,-1.5) node {2};
\draw[dashed ] (6,2) edge[->]  (6,0) (6,-0.5) node {2};
\draw[dashed ] (8,0) edge[->]  (8,-1) (8,-1.5) node {1};
\draw[dashed ] (9,1) edge[->]  (9,0) (9,-0.5) node {1};
\path (0,0) pic{circle_white} -- (1,1) pic{circle_white} -- (3,1) pic{circle_white} -- (4,2) pic{circle_white} -- (6,2) pic{circle_white} -- (9,1) pic{circle_white};
\end{tikzpicture}
\quad\leadsto\quad
1122211
& \leadsto\quad
\begin{tikzpicture}[style_size3,>=Stealth,baseline=0cm]
\draw (0,0) rectangle (1,1);
\draw (1.5,0) edge[dashed,->] (1.5,1);
\draw (2,0) grid (3,2);
\draw (3.5,0) edge[dashed,->] (3.5,2);
\draw (4,0) grid (5,2);
\draw (5.5,0) edge[dashed,->] (5.5,2);
\draw (6,0) rectangle (7,1);
\draw (0.5,-0.5)  node {1} (1.5,-0.5) node{1}  (2.5,-0.5) node{2} (3.5,-0.5) node{2} (4.5,-0.5) node{2} (5.5,-0.5) node{1} (6.5,-0.5) node{1};
\end{tikzpicture} \\
&\leadsto\quad
\begin{tikzpicture}[style_size3,>=Stealth,baseline=0cm]
\draw (0,0) grid (3,1) (1,1) grid (4,2);
\end{tikzpicture}
\end{align*}
%
% \begin{equation}\label{eq:dvbij}
%   \raisebox{-0.5\height}{  \includegraphics[scale=0.8]{figs/pathToStaircaseBij_A1_lr.pdf}}
% \end{equation}
whilst factorising the path,

\begin{align*}
\begin{tikzpicture}[style_size3,>=Stealth,baseline=0cm]
\def\U{++(1,1)} \def\D{ ++(1,-1) }
\draw [help lines, gray!50] (0,0) grid(10,2);
\draw  (0,0)   -- \U -- \D --  \U -- \U --\D -- \U -- \D -- \D -- \U -- \D;  
\path (0,0) pic{circle_white} -- (1,1) pic{circle_white} -- (3,1) pic{circle_white} -- (4,2) pic{circle_white} -- (6,2) pic{circle_white} -- (9,1) pic{circle_white};
\end{tikzpicture}
& \quad = \quad ((\circ\star_{\ref{fam_dp}}\circ )\star_{\ref{fam_dp}}((\circ \star_{\ref{fam_dp}} \circ )\star_{\ref{fam_dp}} \circ ))\star_{\ref{fam_dp}} \circ \\
\intertext{and using $\unimap_{{\ref{fam_dp}},{\ref{fam_pt}}}$ gives}
((\circ\star_{\ref{fam_dp}}\circ )\star_{\ref{fam_dp}}((\circ \star_{\ref{fam_dp}} \circ )\star_{\ref{fam_dp}} \circ ))\star_{\ref{fam_dp}} \circ 
& \quad\mapsto\quad
((\gensp\star_{\ref{fam_pt}}\gensp )\star_{\ref{fam_pt}}((\gensp \star_{\ref{fam_pt}} \gensp )\star_{\ref{fam_pt}} \gensp ))\star_{\ref{fam_pt}} \gensp \\
& \\
&\quad = \quad\left(
\begin{tikzpicture}[baseline=0.15cm] 
\draw (0,0) rectangle (0.5,0.5);
\end{tikzpicture} 
\star_{{\ref{fam_pt}}}
\left(
\begin{tikzpicture}[baseline=0.15cm] 
\draw (0,0) rectangle (0.5,0.5);
\end{tikzpicture}
\star_{{\ref{fam_pt}}}\gensp
\right)\right) 
\star_{{\ref{fam_pt}}}\gensp \\
& \\
& \quad=\quad
\begin{tikzpicture}[style_size3,>=Stealth,baseline=0cm]
\draw (0,0) grid (3,1) (1,1) grid (4,2);
\end{tikzpicture}
\end{align*}
%
% \begin{center}
%     \includegraphics[scale=\fscale]{figs/pathToStaircaseBij_A2_lr.pdf}
% \end{center}
that is, the same staircase (the staircase product definition is given in the Appendix).
The proof of the equivalence of the definitions can be seen combinatorially by comparing  the embedding bijections discussed in   \secref{sec_embedd}.

%=============================================
\section{Opposite, reverse and reflected families}
%=============================================
\label{sec_opprev}

We now consider new products defined  by combining a given product $\st: \magma\times\magma\to \magma$ with a bijection $\gamma : \magp\to \magp$ whose domain is the set of reducible elements, $\magp$, of $\st$. 
If necessary it can be extended to $\magma$ by defining $\gamma(\eps)=\eps$ for all $\eps\in\mprime $.
Thus we define the product $\bst^\gamma$ as
\begin{equation}
    \bst^\gamma: \magma\times\magma\to \magma,\quad  \quad u\bst^\gamma v:= \gamma(u\st v)\,.
\end{equation}
If $\st$ is injective then since $\gamma$ is a bijection,  $\bst^\gamma$ will also be injective and thus $(\magma,\bst^\gamma)$ is also a unique factorisation magma. 

Choosing $\gamma$ appropriately can show how different products defined on the same base set (eg.\ left and right factorisation of Dyck paths) are related. 
% They will also be useful in understanding embedding bijections discussed in \secref{sec_embedd}.

We are primarily interested in three examples of $\gamma$:
\begin{enumerate}

\item The \textbf{opposite} map. This map is  defined by $\opm(u\st v)=v\st u$ giving the product
    \[
        \opp{\st}\,:\,    (u,  v)\mapsto  v\st u\,,
    \]
    called the \textbf{opposite product}. 
    Since the opposite map is defined for every magma, every Catalan family, $\ffm_i$ has an  \textbf{opposite  family}, denoted $\opp{\ffm}_i$. 
    For example, for Dyck paths
    \begin{equation*}
\def\S{0.3}
\begin{tikzpicture}[scale=\S]
    \def\U{++(1,1)} \def\D{++(1,-1)}
    \draw [help lines, gray!50] (0,0) grid(2,1);
    \draw[line width=1pt]  (0,0)   -- \U -- \D  ;  
\end{tikzpicture}
\,\star \,
\begin{tikzpicture}[scale=\S]
    \def\U{++(1,1)} \def\D{++(1,-1)}
    \draw [help lines, gray!50] (0,0) grid(4,1);
    \draw[line width=1pt]  (0,0)   -- \U -- \D --  \U --  \D;  
\end{tikzpicture}
\quad = \quad 
\begin{tikzpicture}[scale=\S]
    \def\U{++(1,1)} \def\D{++(1,-1)}
    \draw [help lines, gray!50] (0,0) grid(8,2);
    \draw[line width=1pt]  (0,0)   -- \U -- \D --  \U -- \U -- \D -- \U -- \D -- \D;  
\end{tikzpicture}
\quad\text{and}\quad 
\begin{tikzpicture}[scale=\S]
    \def\U{++(1,1)} \def\D{++(1,-1)}
    \draw [help lines, gray!50] (0,0) grid(2,1);
    \draw[line width=1pt]  (0,0)   -- \U -- \D  ;  
\end{tikzpicture}
\, \star^\text{op} \,
\begin{tikzpicture}[scale=\S]
    \def\U{++(1,1)} \def\D{++(1,-1)}
    \draw [help lines, gray!50] (0,0) grid(4,1);
    \draw[line width=1pt]  (0,0)   -- \U -- \D --  \U --  \D;  
\end{tikzpicture}
\qquad = \qquad 
\begin{tikzpicture}[scale=\S]
    \def\U{++(1,1)} \def\D{++(1,-1)}
    \draw [help lines, gray!50] (0,0) grid(8,2);
    \draw[line width=1pt]  (0,0)   -- \U -- \D --  \U -- \D -- \U -- \U -- \D -- \D;  
\end{tikzpicture}
\end{equation*}    
    % \begin{center}
   	%      \includegraphics[scale=\fscale]{figs/oppositeProd_art1_lr.pdf}
    % \end{center}
    Note, the opposite map, as defined above, is not a   morphism: $\opm(u\st v)\ne \opm(u)\opp{\st} \opm(v)$ since it is not defined recursively.
    The identity map $\id \,:\, \ffm\to \opp{\ffm}\,;\, u\mapsto u$ is however an anti-isomorphism:  $\id(u\st v)=\id(v)\opp{\st}\id(u)$.

\item The \textbf{reverse} map. This map is defined recursively by
    \begin{equation}\label{eq_revmap}
        \revm (u\st v)=\revm(v)\st \revm(u)\,.
    \end{equation}
    (with $\revm(\eps)=\eps$,  a generator) which    defines the reverse magma
    \[
        \st^\text{rev}\,:\,    (u,  v)\mapsto   \revm(v)\st \revm(u)\,.
    \]    
    The reverse map, $\text{rev} : \magma\to \magma$, is an anti-isomorphism.
    Every Catalan family, $\ffm_i$ has a  \textbf{reverse family}, denoted $\revp{\ffm}_i$.  For example, for Dyck paths
\begin{align*}
\def\S{0.3}
    \begin{tikzpicture}[scale=\S]
        \def\U{++(1,1)} \def\D{++(1,-1)}
        \draw [help lines, gray!50] (0,0) grid(2,1);
        \draw[line width=1pt]  (0,0)   -- \U -- \D  ;  
    \end{tikzpicture}
     \,\star^\text{rev} \,
    \begin{tikzpicture}[scale=\S]
        \def\U{++(1,1)} \def\D{++(1,-1)}
        \draw [help lines, gray!50] (0,0) grid(4,1);
        \draw[line width=1pt]  (0,0)   -- \U -- \D --  \U --  \D;  
    \end{tikzpicture} 
    &=  
    \text{rev}
    \left[ 
        (\circ \star \circ) \star ((\circ \star \circ )\star \circ )
    \right] \\
     &= (\circ \star (\circ \star \circ))\star (\circ \star \circ )\\
     &=
\def\S{0.3}
      \begin{tikzpicture}[scale=\S]
        \def\U{++(1,1)} \def\D{++(1,-1)}
        \draw [help lines, gray!50] (0,0) grid(8,2);
        \draw[line width=1pt]  (0,0)   -- \U --\U -- \D -- \D -- \U --  \U --  \D -- \D; 
      \end{tikzpicture}
\end{align*}
%      \begin{align*}
%   	    & \raisebox{-0.5\height}{\includegraphics[scale=1]{figs/oppositeProd_art2_lr.pdf}}\\
%   	     &=\text{rev}\bigl[(\circ\star\circ)\star( (\circ\star\circ)\star \circ)\bigr] = (\circ\star (\circ\star\circ))\star  (\circ\star\circ)\\
%   	      & =\raisebox{-0.5\height}{\includegraphics[scale=1]{figs/oppositeProd_art3_lr.pdf}}
% \end{align*}

\item  \textbf{Reflection} maps. 
    A reflection map, $\refm$,  is motivated by Catalan families that have some sort of geometrical structure for which a vertical or horizontal geometrical reflection can be well defined. 
    For families constructed on circles the reflected structure is (if well defined) the original structure read clockwise, but drawn counter clockwise.  As a product it is denoted by $\refp{\st}$, that is
    \[
        \refp{\st}\,:\,    (u,  v)\mapsto \refm(u\st v)\,.
    \]
    It is only well defined for certain Catalan families.
    Examples of the reflection maps for  Dyck paths and a binary trees, are (illustrated with one example)
\begin{equation*}
\def\S{0.25}
\text{ref}\left(
\begin{tikzpicture}[scale=\S]
    \def\U{++(1,1)} \def\D{++(1,-1)}
    \draw [help lines, gray!50] (0,0) grid(6,2);
    \draw[line width=1pt]  (0,0)   -- \U -- \D -- \U -- \U -- \D -- \D ;  
\end{tikzpicture}\right)
 \quad = \quad
\begin{tikzpicture}[scale=\S]
    \def\U{++(1,1)} \def\D{++(1,-1)}
    \draw [help lines, gray!50] (0,0) grid(6,2);
    \draw[line width=1pt]  (0,0)   -- \U -- \U -- \D -- \D -- \U -- \D ;  
\end{tikzpicture}
\qquad\text{and}\qquad
\text{ref}\left(
 \begin{tikzpicture}[scale=\S]
    \def\L{++(-1,-1)} \def\R{++(1,-1)}\def\Z{8pt}
    % \draw [help lines, gray!50] (0,0) grid(3,2);
    \draw[line width=1pt]  (2,2) -- \L (2,2) --\R (1,1) -- \L (1,1) -- \R  ;  
    \fill[white,draw=black,line width=1pt] (2,2) circle (\Z) ;
    \fill (1,1)  circle (\Z) (0,0) circle (\Z)  (2,0) circle (\Z) (3,1) circle (\Z)  ;  
\end{tikzpicture}\right)
 \quad = \quad
 \begin{tikzpicture}[scale=\S]
    \def\L{++(-1,-1)} \def\R{++(1,-1)}\def\Z{8pt}
    % \draw [help lines, gray!50] (0,0) grid(3,2);
    \draw[line width=1pt]  (1,2) -- \L (1,2) --\R (2,1) -- \L (2,1) -- \R  ;  
    \fill[ white,draw=black,line width=1pt] (1,2)  circle (\Z) ;
     \fill (0,1) circle (\Z)  (1,0) circle (\Z)  (2,1) circle (\Z) (3,0) circle (\Z)  ; 
\end{tikzpicture}
\end{equation*}
    % \begin{center}
    %     \includegraphics[scale=0.8]{figs/refmap_art1_lr.pdf}
    % \end{center}
    defined by a  geometrical reflection across a vertical line.
    If the  Catalan family $\ffm_i$ has a reflection map defined  then the family with the same base set but with the reflection product will be called the  \textbf{reflection family}, denoted $\refp{\ffm}_i$. 
\end{enumerate}

Of primary interest is the relationship between the above  families and any other product that might be defined on the same base set. 
For example, many Catalan families have a left and right version of the product. 
If we start from, say the right product, is it or its opposite/reverse/reflected version related to the left product? \noindent We will consider two examples:

\vspace{1em}\noindent\emph{Dyck Paths.}
It is well know that Dyck paths can be factored in two ways giving rise to two different magma products, that is,  a left-product, which in this section we will denote $\lst$  and a right product, $\rst$. 
 Thus,  schematically, if $d_1$ and $d_2$ are two Dyck paths, then the two products are
 \begin{equation*}
    \begin{tikzpicture}[line width=0.75pt,scale=1] 
        \draw [gray!50,line width=0.25pt] (-0.5,0) -- (4.5,0);
        \fill [color_left,draw=black] (0,0) arc [radius=0.75,start angle = 180, end angle =0] ;
        \draw (0,0) -- (1.5,0);
        \draw[color=orange, line width=1pt] (1.5,0) -- (2,0.5);
        \fill [color_right,draw=black] (2,0.5) arc [radius=0.75,start angle = 180, end angle =0] ;
        \draw (2,0.5) -- (3.5,0.5);
        \draw[color=orange, line width=1pt] (3.5,0.5) -- (4,0);
        \node [below ] at (2,-0.25) {$d_1\star_R d_2$};
    \end{tikzpicture}
    \hspace{2cm}
    \begin{tikzpicture}[line width=0.75pt,scale=1] 
         \draw [gray!50,line width=0.25pt] (-0.5,0) -- (4.5,0);
        \draw[color=orange, line width=1pt] (0,0) -- (0.5,0.5);
        \fill [color_left,draw=black] (0.5,0.5) arc [radius=0.75,start angle = 180, end angle =0] ;
         \draw (0.5,0.5)-- (2,0.5);
        \draw[color=orange, line width=1pt] (2,0.5) -- (2.5,0);
        \fill [color_right,draw=black] (2.5,0) arc [radius=0.75,start angle = 180, end angle =0] ;
       \draw (2.5,0) -- (4,0);
        \node [below ] at (2,-0.25) {$d_1\star_L d_2$};
    \end{tikzpicture}
\end{equation*}
% \begin{center}
%  \includegraphics[scale=\fscale]{figs/DyckPathProdLeftRight_lr.pdf} 
% \end{center}
Clearly these two products define two different free magmas (with the same base set), denote these  $ (\text{\fref{fam_dp}},\st_L)$ and $(\text{\fref{fam_dp}},\st_R)$. 
 
It would appear the the left product and right products are related by a vertical reflection, however it is not quite as simple. In fact the relationship between the left and right Dyck path products is
\begin{equation}
    \opp{\st}_L=\refp{\st}_R
\end{equation}
or equivalently,  $ \st_L=\opp{(\refp{\st}_R)}$. 
This is because a reflection swaps $d_1$ and $d_2$ which the opposite product swaps back: $\refm{(d_1\rst d_2)}=\refm{(d_2)}\lst \refm{(d_1)}=\opm( \refm{(d_1)}\lst \refm{(d_2)})$.

\vspace{1em}\noindent\emph{Complete Binary Trees.}
For complete binary trees there is no natural left and right products but there is an  obvious reflection map obtained by `reflecting' the tree across a vertical axis through the root. 
This is equivalent to recursively swapping left and right sub-trees, that is,  we can define the reflection map recursively by
\begin{equation} \label{eq_treeref}
     \refm{(t_1\st t_2)} =\refm{(t_2)}\st \refm{(t_1)}\quad\text{and}\quad \refm{(\circ )} = \circ
\end{equation}
where $\circ$ is the generator  and $\st$ the product for \fref{fam_bt}. From this definition its clear that for the complete binary tree product
\begin{equation}\label{eq_treerefp}
    \refp{\st} = \revp{\st}\ne \opp{\st} \,.
\end{equation}
 
%=============================================
\section{Embedding bijections}
\label{sec_embedd}
%=============================================
%
 
% In order to use the universal bijection one has to first factorise an object.  
% Factorisation of a Catalan object is clearly a recursive algorithm. 
% Whilst such an algorithm can always be programmed it is also desirable to implement the process `by hand'. 
% For some Catalan families this a substantially easier that others. For example, the factorisation of a binary tree is trivial whilst for other families, such as Kepler towers it is not.

% However, it may be that a family with a complicated factorisation algorithm may have a simple bijection to a family where the factorisation is a lot simpler.

In this section we address the problem of factorisation in a particular way, which is to   embed  the   objects of one family (whose factorisation is simple) into the objects of another (whose factorisation may be  more difficult).  
% We will define a process which can always be implemented for any pair of families. 

Embedding bijections have previously been defined   (for example p58 of \cite{Stanley:2015aa} and \cite{Comtet:1974aa}) however, in those cases the primary aim   is prove that the given set of objects is indeed Catalan by establishing a bijection to a known Catalan family.
Here the primary  motivation is factorisation. 
If the family is `geometrical' (ie.\ defined by sets of points rather than by, say,  a sequence) we assume we know the product rule  but now want, if possible, a geometrical algorithm for factorisation.
A geometric family with one of the simplest    factorisation algorithms is the family of  complete binary trees (CBTs) -- see \fref{fam_bt} in the appendix. 
For this reason we will focus on embedding CBTs into other geometric families (there is also a possible Category Theory motivation -- see below).
% \footnote{CBTs are also closely related to the definition of a product in Category Theory.}. 

Tree traversal of the embedded CBT will give any of the three decomposition   representations \eqref{eq_prodrep1}, \eqref{eq_prodrep2} and \eqref{eq_prodrep3}   as illustrated in \figref{fig:treeTraversal}.
\begin{figure}[ht]
    \centering
\begin{tikzpicture}[>=Stealth]
\def\in{++(0,-0.5)}
\def\pre{++(-0.5,0)}
\def\post{++(0.5,0)}
%nodes
\coordinate (root) at (1,2);
\coordinate (L) at (0,1);
\coordinate (R) at (2,1);
\coordinate (RL) at (1,0);
\coordinate (RR) at (3,0);
% \draw[help lines] (0,0) grid (3,3);
\draw (root) -- (L) (root)--(R)--(RR) (R)--(RL);
\fill[white,draw=black] (root) circle (4pt);
\fill[white,draw=black] (L) circle (3pt);
\fill[white,draw=black] (R) circle (3pt);
\fill[white,draw=black] (RL) circle (3pt);
\fill[white,draw=black] (RR) circle (3pt);
\path (root)  node[left,inner sep=5pt] {$\star$};
\path (R) node[left,inner sep=5pt] {$\star$};
\path (L) node[below,inner sep=5pt] {$\epsilon$}; 
\path (RL) node[below,inner sep=5pt] {$\epsilon$}; 
\path (RR) node[below,inner sep=5pt] {$\epsilon$}; 
\draw[blue!50,dashed,line width=1pt] (0.25,2) 
.. controls (0 ,1.5) and (-0.5,1.5)..   (-0.5,1 )  
.. controls (-0.5 ,0.8) and (-0.6,0.5)..   (0,0.5)  
.. controls (0.5,0.5) and (0.65,1.5).. (1,1.5) 
.. controls (1.2,1.5) and (1.5,1.5).. (1.5,1) 
.. controls (1.5,0.5) and (0.5,0.5).. (0.5,0) 
.. controls (0.5,-0.1) and (0.5,-0.5).. (1 ,-0.5) 
.. controls (1.5,-0.5) and (1.5, 0.5).. (2 , 0.5)
.. controls (2.5,0.5) and (2,-0.5).. (3 ,-0.5)
.. controls (3.4,-0.5) and (3.5,-0.2).. (3.5,0) 
.. controls (3.5,0.5) and (2 ,1.5).. (1.75 ,2);
\draw[line width=1pt,blue] (1.75,2) edge [->] (1.68,2.1);
\path (1,-1) node {$\star\epsilon\star\epsilon\epsilon $};
\end{tikzpicture}
\hspace{1cm}
\begin{tikzpicture}[>=Stealth]
\def\in{++(0,-0.5)}
\def\pre{++(-0.5,0)}
\def\post{++(0.5,0)}
%nodes
\coordinate (root) at (1,2);
\coordinate (L) at (0,1);
\coordinate (R) at (2,1);
\coordinate (RL) at (1,0);
\coordinate (RR) at (3,0);
% \draw[help lines] (0,0) grid (3,3);
\draw (root) -- (L) (root)--(R)--(RR) (R)--(RL);
\fill[white,draw=black] (root) circle (4pt);
\fill[white,draw=black] (L) circle (3pt);
\fill[white,draw=black] (R) circle (3pt);
\fill[white,draw=black] (RL) circle (3pt);
\fill[white,draw=black] (RR) circle (3pt);

\path (root)  node[left,inner sep=7pt] {$($};
\path (root)  node[below,inner sep=5pt] {$\star$};
\path (root)  node[right,inner sep=7pt] {$)$};
\path (R) node[left,inner sep=7pt] {$($};
\path (R) node[below,inner sep=5pt] {$\star$};
\path (R) node[right,inner sep=7pt] {$)$};
\path (L) node[below,inner sep=5pt] {$\epsilon$}; 
\path (RL) node[below,inner sep=5pt] {$\epsilon$}; 
\path (RR) node[below,inner sep=5pt] {$\epsilon$}; 
\draw[blue!50,dashed,line width=1pt] (0.25,2) 
.. controls (0 ,1.5) and (-0.5,1.5)..   (-0.5,1 )  
.. controls (-0.5 ,0.8) and (-0.6,0.5)..   (0,0.5)  
.. controls (0.5,0.5) and (0.65,1.5).. (1,1.5) 
.. controls (1.2,1.5) and (1.5,1.5).. (1.5,1) 
.. controls (1.5,0.5) and (0.5,0.5).. (0.5,0) 
.. controls (0.5,-0.1) and (0.5,-0.5).. (1 ,-0.5) 
.. controls (1.5,-0.5) and (1.5, 0.5).. (2 , 0.5)
.. controls (2.5,0.5) and (2,-0.5).. (3 ,-0.5)
.. controls (3.4,-0.5) and (3.5,-0.2).. (3.5,0) 
.. controls (3.5,0.5) and (2 ,1.5).. (1.75 ,2);
\draw[line width=1pt,blue] (1.75,2) edge [->] (1.68,2.1);
\path (1,-1) node {$(\epsilon\star(\epsilon\star\epsilon)) $};
\end{tikzpicture}
\hspace{1cm}
\begin{tikzpicture}[>=Stealth]
\def\in{++(0,-0.5)}
\def\pre{++(-0.5,0)}
\def\post{++(0.5,0)}
%nodes
\coordinate (root) at (1,2);
\coordinate (L) at (0,1);
\coordinate (R) at (2,1);
\coordinate (RL) at (1,0);
\coordinate (RR) at (3,0);
% \draw[help lines] (0,0) grid (3,3);
\draw (root) -- (L) (root)--(R)--(RR) (R)--(RL);
\fill[white,draw=black] (root) circle (4pt);
\fill[white,draw=black] (L) circle (3pt);
\fill[white,draw=black] (R) circle (3pt);
\fill[white,draw=black] (RL) circle (3pt);
\fill[white,draw=black] (RR) circle (3pt);

\path (root)  node[right,inner sep=5pt] {$\star$};

\path (R) node[right,inner sep=5pt] {$\star$};

\path (L) node[below,inner sep=5pt] {$\epsilon$}; 
\path (RL) node[below,inner sep=5pt] {$\epsilon$}; 
\path (RR) node[below,inner sep=5pt] {$\epsilon$}; 
\draw[blue!50,dashed,line width=1pt] (0.25,2) 
.. controls (0 ,1.5) and (-0.5,1.5)..   (-0.5,1 )  
.. controls (-0.5 ,0.8) and (-0.6,0.5)..   (0,0.5)  
.. controls (0.5,0.5) and (0.65,1.5).. (1,1.5) 
.. controls (1.2,1.5) and (1.5,1.5).. (1.5,1) 
.. controls (1.5,0.5) and (0.5,0.5).. (0.5,0) 
.. controls (0.5,-0.1) and (0.5,-0.5).. (1 ,-0.5) 
.. controls (1.5,-0.5) and (1.5, 0.5).. (2 , 0.5)
.. controls (2.5,0.5) and (2,-0.5).. (3 ,-0.5)
.. controls (3.4,-0.5) and (3.5,-0.2).. (3.5,0) 
.. controls (3.5,0.5) and (2 ,1.5).. (1.75 ,2);
\draw[line width=1pt,blue] (1.75,2) edge [->] (1.68,2.1);
\path (1,-1) node {$\epsilon\epsilon\epsilon\star\star $};
\end{tikzpicture}
    \caption{Counter-clockwise traversal of complete binary trees and the three ways to write the product: (left) pre-fix form, (middle) in-fix form and (right) post-fix form}
    \label{fig:treeTraversal}
\end{figure}
These three forms correspond to placing the product symbol $\star$, in one of  three positions relative to an internal  node of the tree: the \textbf{pre-fix position} (to the left), the \textbf{in-fix position} (below) and the \textbf{post-fix position} (to the right). For the in-fix order product representation, brackets must also be inserted (using the pre- and post-fix positions) to remove ambiguity. The generator $\eps$ is placed below each leaf.

Even though the embedding algorithm can be defined for any pair of  families, currently the embedding gives a simpler factorisation process only in some cases\footnote{There are over  22791 pairs - not all of which have been checked.}.
The case where the Catalan family is not geometrical (eg.\ a sequence) will be discussed further below.

%=================================================================
\subsection*{Geometrical families}
%=================================================================
\label{sec_geofam}

\newcommand{\abet}{\mathbb{A}}
\newcommand{\mon}{\mathcal{W}}
\newcommand{\monin}{\mathbb{W}^\text{in}}
\newcommand{\monpre}{\mathbb{W}^\text{pre}}
\newcommand{\monpost}{\mathbb{W}^\text{post}}
\newcommand{\embed}{\alpha}

\newcommand{\prodgeo}{\mathbb{P}}
\newcommand{\gengeo}{\mathbb{G}}
\newcommand{\geomap}{\Gamma}
\newcommand{\geopart}{\Pi}

\newcommand{\pshape}{\rho}
\newcommand{\gshape}{\gamma}
\newcommand{\smap}{\Theta}

In this section we take a more informal approach, defining by examples rather than with formal definitions.
We assume all free magmas $(\ffm,\st)$, have a single generator and the elements have some sort of ``geometrical'' representation -- defined by sets of points rather than sequences of numbers. 

If the set of objects is geometrical, the definition of the  magma generator and product gives us two important sets of information: 1) from the geometrical definition of the generator, we know  which parts of the geometry of the object are associated with the generators and 2) from the product definition we know which parts of the geometry of the object are added each time two factors are multiplied. 

For example, for CBTs, the product definition is:
\begin{equation}\label{eq_treescpro}
    \begin{tikzpicture}[style_size3,baseline=0.75cm]
        % \draw [help lines, gray!50] (0,0) grid(3,3);
        \fill [color_left,draw=black] (2,3) -- (3,0) -- (0,1) -- cycle;
        \fill [white,draw=black] (2,3) pic{circle_white} ;
    \end{tikzpicture}
    \qquad\star\quad
    \begin{tikzpicture}[style_size3,baseline=0.75cm]
        % \draw [help lines, gray!50] (0,0) grid(3,3);
        \fill [color_right,draw=black] (1,3) -- (0,0) -- (3,1) -- cycle;
        \fill [white,draw=black] (1,3) pic{circle_white} ;
    \end{tikzpicture}
    \qquad=\quad
    \begin{tikzpicture}[style_size3  ,baseline=0.75cm]
        %   \draw [help lines, gray!50] (0,0) grid(7,4);
        %
        \draw[orange, line width=1pt] (3.5,4) -- (2,3) (3.5,4) -- (5,3);
        \path (3.5,4) pic{cbt_root_orange} ;
        \node at (3.5,4.5) {$a$}; 
        \fill [color_left,draw=black] (2,3) -- (3,0) -- (0,1) -- cycle;
        \fill [draw=black] (2,3) pic{circle_white} ;
        \node at (1.5,3) {$b$};
        \fill [color_right,draw=black] (5,3) -- (4,0) -- (7,1) -- cycle;
        \fill [draw=black] (5,3) pic{circle_white}  ;
        \node at (5.5,3) {$c$};
    \end{tikzpicture}
\end{equation}
% \begin{equation} 
%     \begin{tikzpicture}[scale=.5,line width=0.5pt,baseline=0.75cm]
%         % \draw [help lines, gray!50] (0,0) grid(3,3);
%         \fill [color_left,draw=black] (2,3) -- (3,0) -- (0,1) -- cycle;
%         \fill [white,draw=black] (2,3) circle (5pt);
%     \end{tikzpicture}
%     \quad\star\quad
%     \begin{tikzpicture}[scale=.5,line width=0.5pt,baseline=0.75cm]
%         % \draw [help lines, gray!50] (0,0) grid(3,3);
%         \fill [color_right,draw=black] (1,3) -- (0,0) -- (3,1) -- cycle;
%         \fill [white,draw=black] (1,3) circle (5pt);
%     \end{tikzpicture}
%     \quad=\quad 
%     \begin{tikzpicture}[scale=.5,line width=0.5pt,baseline=0.75cm]
%         %   \draw [help lines, gray!50] (0,0) grid(7,4);
%         %
%         \draw[orange, line width=1pt] (3.5,4) -- (2,3) (3.5,4) -- (5,3);
%         \fill [white, draw=orange] (3.5,4) circle (5pt) ;
%         \node[above] at (3.5,4) {$a$};
%         %
%         \fill [color_left,draw=black] (2,3) -- (3,0) -- (0,1) -- cycle;
%         \fill [draw=black] (2,3) circle (5pt);
%         \node[above] at (2,3) {$b$};
%         %
%         \fill [color_right,draw=black] (5,3) -- (4,0) -- (7,1) -- cycle;
%         \fill [draw=black] (5,3) circle (5pt) ;
%         \node[above] at (5,3) {$c$};
%     \end{tikzpicture}  
% % \begin{equation}\label{eq_treesgep}
% %      \eqfig{0.7}{cbt-prod-geo.pdf}
% \end{equation}
%
from which it is clear that the new geometry added by the product is the root node, $a$, and the two edges $(a,b)$ and $(a,c)$.  The generator geometry, $\eps=\tikz{\path pic{gencbt}}$  is a leaf node of the tree.
 
In order to clearly state which parts of the object arise from the generators and which from the products it is necessary to choose a particular way of drawing (representing) the objects of the family.
For any particular Catalan family there may be several very similar  ways of  drawing  the objects.  
Thus, for Dyck paths all the   diagrams below clearly  define the \emph{same} path:
\begin{center}
\begin{tikzpicture}[scale=0.4]
    \def\ln{0.9} \def\gp{0.1}
    \def\UPF{-- ++(1,1)    }
    \def\DNF{-- ++(1,-1)    }
    \def\UP{-- ++(\ln,\ln) ++(\gp,\gp)  }
    \def\DN{-- ++(\ln,-\ln) ++(\gp,-\gp)  }
    \def\UPC{  -- ++(\ln,\ln) ++(\gp,\gp) [fill=black!10] circle [radius=8pt ]  }
    \def\DNC{  -- ++(\ln,-\ln) ++(\gp,-\gp) circle [radius=8pt]   }
    \def\UPD{-- ++(1, 1)   ++(-0.2,0) -- ++( 0.4,0)  ++( -0.2,0)}
    \def\DND{-- ++(1,-1)    ++(-0.2,0) -- ++( 0.4,0)  ++( -0.2,0) }
    \def\UPB{-- ++(1,1)   circle [radius=4pt] [fill=black] }
    \def\DNB{-- ++(1,-1)   circle [radius=4pt] [fill=black] }
    \def\UW{  ++(1,1)   circle [radius=5pt]  }
    \def\DW{  ++(1,-1)   circle [radius=5pt]   }
    \path [draw,line width=1pt,cap=rounded] (0,0)  \UP \DN \UP \UP \DN \DN ;
    \path [draw,line width=1pt] (8,0)  \UPF \DNF \UPF \UPF \DNF \DNF;
    \path  [fill=white,draw=black]  (8,0) \UW ++(1,-1) \UW \UW ++(1,-1) ++(1,-1) ;
    %
    % \path [draw,line width=1pt] (8,0)  circle [radius=4pt]   \UPC \DNC \UPC \UPC \DNC \DNC ;
    \path [draw,line width=1pt] (16,0)  \UPF  \DNF  \UPD \UPF  \DND \DNF  ;
    \path [draw,line width=1pt] (24,0)  circle [radius=4pt] [fill=black] \UPB \DNB \UPB \UPB \DNB \DNB ;
%   \path [line width=1pt,draw] (0,0) [stepsty] -- ++(1,1) [fill=blue] circle [ radius=9pt] [draw] -- ++(1,-1) -- ++(1,1) -- ++(1, 1) -- ++(1,-1) -- ++(1,-1);;
%   \draw [shide] circle [ radius=9pt];
%   \draw [stepsty] -- ++(1,-1) -- ++(1,1) -- ++(1, 1) -- ++(1,-1) -- ++(1,-1); 
\end{tikzpicture} 
\end{center}
similarly, for staircase polygons (family \fref{fam_sp} in the appendix) all the diagrams below represent the same staircase polygon:
\begin{center}
    \def\Sp{1cm}
    \begin{tikzpicture}[style_size1,baseline=0cm] 
        \def\R{ rectangle ++(0.5,0.5)} \def\T{++(0,-0.5)}
        % \draw [help lines, gray!50] (0,0) grid(4,2);
        \draw  (0 ,0)  \R \T \R;    
        \draw (0,0.5) \R \T \R \T \R \T \R;
    \end{tikzpicture}
    \hspace{\Sp}
    \begin{tikzpicture}[line width=0.75pt,scale=1,baseline=0cm]  
        % \draw [help lines, gray!50] (0,0) grid(4,2);
        \draw (0,0) -- (0,1) --(2,1);
        \draw (0,0) -- (1,0) -- (1,0.5) -- (2,0.5) -- (2,1);
    \end{tikzpicture}
    \hspace{\Sp}
    \begin{tikzpicture}[line width=0.75pt,scale=1,baseline=0cm] 
        \def\R{ rectangle ++(0.5,0.5)} \def\T{++(0,-0.5)} \def\D{2pt}
        % \draw [help lines, gray!50] (0,0) grid(4,2);
        \draw  (0 ,0)  \R \T \R;    
        \draw (0,0.5) \R \T \R ;
        \draw (1,0.5) rectangle (2,1);
        \fill (0,0) circle (\D) (0,0.5) circle (\D) (0,1) circle (\D);
        \fill (0,1) circle (\D) (0.5,1) circle (\D) (1,1) circle (\D) (1.5,1) circle (\D);
    \end{tikzpicture}
     \newcommand{\Lc}[2]{\draw (#1,#2) -- (#1-0.4,#2) (#1,#2) -- (#1 ,#2+0.4);}
    \hspace{\Sp}
\begin{tikzpicture}[line width=0.75pt,scale=1,baseline=0cm]  
    \def\R{ rectangle ++(0.5,0.5)} \def\T{++(0,-0.5)} \def\D{2pt} % \draw [help lines, gray!50] (0,0) grid(4,2);
    \fill[draw=white,line width=1pt]  (0,0.25) circle (\D) (0,0.75) circle (\D) (0.25,1) circle (\D);
    \fill[draw=white,line width=1pt]  (0.75,1) circle (\D) (1.25,1) circle (\D) (1.75,1) circle (\D);
    \draw (0,0) --(0,0.4) (0,0.5) -- (0,0.9);
    \draw (0,1) -- (0.4,1) (0.5,1) -- (0.9,1) (1,1) -- (1.4,1) (1.5,1) -- (1.9,1);
    \draw [white,line width=2pt] (0.4,0.4) -- (2,0.4);
    \Lc{0.5}{0} \Lc{1}{0} \Lc{0.5}{0.5} \Lc{1}{0.5} 
    \draw (2,0.5) -- (1.1,0.5) (2,0.5) -- (2,0.9);
\end{tikzpicture}
\end{center}
% \begin{center}
%     \includegraphics[height=1cm]{figtemp/staricase_equiv.png}
% \end{center}
In both examples the diagrams only differ by trivial bijections.
Clearly a given family can be considered an  equivalence class of diagrams and thus we are   free to  choose the most convenient  canonical representative. 

For this section we will choose a canonical form such that the points of the object  partition into subsets, each subset corresponding to elements of either generator geometry or product geometry. 
We will call this canonical form the \textbf{magma form}.
% which is informally defined as the form of the object that arises by defining the product such that the points contributed by the generator geometry and the points contributed by the product geometry remain disjoint. 
% To simplify the discussion below we assume the magma product has been defined so that the partitioning of the points of the object is clear. 
For example, for the nested matchings (\fref{fam_ld} in the appendix) the generator geometry arises from $\eps=\tikz{\path pic{gennm}}$  and the product definition is 
\begin{equation}\label{eq_mproddef}   
\def\bsl{0pt}
\begin{tikzpicture}[style_size4,baseline=\bsl] 
    \fill [color_left,draw=black] (1,0) arc [radius=1,start angle = 180, end angle =0];
    \draw (0,0) -- (4,0);
    \fill (0,0) pic{circle_black}  ;
    \node[below,anchor=mid  ] at (0,-0.7) {$a$};
    % \fill[color=white] (4,0) pic{circle_black} ;
    \node[below,anchor=mid ] at (4,-0.7) {$b$};
    \end{tikzpicture}
\star 
\begin{tikzpicture}[style_size4,baseline=\bsl] 
    \fill [color_right,draw=black] (6,0) arc [radius=1,start angle = 180, end angle =0];
    \draw (5,0) -- (9,0);
    \fill (5,0) pic{circle_black}  ;
    \node[below,anchor=mid  ] at (5,-0.7) {$c$};
    % \fill[color=white] (9,0) pic{circle_white} ;
    \node[below,anchor=mid  ] at (9,-0.7) {$d$};
\end{tikzpicture}
= 
\begin{tikzpicture}[style_size4,baseline=\bsl] 
    \node[below ,anchor=mid ] at (10,-0.7) {$a$};
    \fill [color_left,draw=black]  (11,0) arc [radius=1,start angle = 180, end angle =0];
    \fill [color_right,draw=black]  
        (15,0) arc [radius=1,start angle = 180, end angle =0];
    % \fill (5,0) circle [radius=4pt];
    \draw (10,0) -- (18,0);
    \fill (10,0) pic{circle_black}  ;   
    \draw[color=productColor,line width=1pt] (14,0) arc [start angle=180, end angle = 0, radius=2];
    \fill (14,0) pic{circle_black}   ;
    \node[below,anchor=mid  ] at (14,-0.7) {$bc$};
    \fill [color=productColor] (18,0) pic{circle_product} ;
    \draw [color=productColor,line width=1pt] (18,0) -- (19,0);
    % \fill[color=white] (19,0) pic{circle_black}  ;
    \node[below,anchor=mid  ] at (18,-0.7) {$d$};
    \node[below,anchor=mid  ] at (19,-0.7) {$e$};
\end{tikzpicture}
\end{equation}
with  the product geometry   shown in orange.  It is clear that the nested matching below (left) can be partitioned as shown below (right):
\begin{equation}\label{eq_nmparv} 
\begin{tikzpicture}[style_size3, node font=\small,inner sep=5pt] 
    \draw (0,0) -- (7,0);
    \foreach \x in {1,...,8} {
        \path (\x,0) ++(-1,0) node[below,color=gray]   {\x};
    };
        \foreach \x in {0,...,6} {
        \fill (\x,0) pic{circle_black};
    };
    \draw  (2,0) arc [start angle=0, end angle = 180, radius=0.5];
    \draw  (5,0) arc [start angle=0, end angle = 180, radius=0.5];
    \draw (6,0) arc [start angle=0, end angle = 180, radius=1.5];
\end{tikzpicture}
        \hspace{2cm}
\begin{tikzpicture}[style_size3, node font=\small,inner sep=5pt] 
    \fill (0,0) pic{circle_black} [draw] (0,0) -- +(0.8,0);
    \fill (1,0) pic{circle_black} [draw] (1,0) -- +(0.8,0);
    % start prod geo 
    \draw[size_prod,productColor]  (2,0) arc [start angle=0, end angle = 160, radius=0.5] ;
    \fill  (2,0) pic{circle_product} ;
    \path (2,0) [draw=productColor,line width=1.5pt]  -- +(0.8,0);
    %end prod geo
    \fill (3,0) pic{circle_black} [draw] (3,0) -- +(0.8,0);
     \fill (4,0) pic{circle_black} [draw] (4,0) -- +(0.8,0);
    \foreach \x in {1,...,8} {
        \path (\x,0) ++(-1,0) node[below,color=gray]   {\x};
    };
        % start prod geo 
    \draw[productColor,line width=1.5pt]  (5,0) arc [start angle=0, end angle = 160, radius=0.5] ;
    \fill  (5,0) pic{circle_product} ;
    \path (5,0) [draw=productColor,line width=1.5pt]  -- +(0.8,0);
    %end prod geo
    % \draw  (5,0) arc [start angle=0, end angle = 180, radius=0.5];
    % start prod geo 
    \draw[productColor,line width=1.5pt]  (6,0) arc [start angle=0, end angle = 172, radius=1.5] ;
    \fill  (6,0) pic{circle_product} ;
    \path (6,0) [draw=productColor,line width=1.5pt]  -- +(0.8,0);
    %end prod geo
    % \draw (6,0) arc [start angle=0, end angle = 180, radius=1.5];
\end{tikzpicture}
\end{equation}
where gaps in the geometry are used to clarify the partitioning (the labelling of the nodes is used below).
% \begin{center}
%     \includegraphics[width=8cm]{figtemp/nestematcg_geo.png}
% \end{center}

\newcommand{\scgenshape}[1]{
\begin{tikzpicture}[scale=#1]
 \fill (0.4,0) arc [radius=0.1,start angle = 180, end angle= 0]   ;
  \draw [thick] (0,0)   -- (1,0);
\end{tikzpicture}
}
\newcommand{\scprodshape}[1]{
\begin{tikzpicture}[scale=#1]
  \draw [thick] (0,0)   -- (1,0) -- (1,0.5);
   \fill (1,0) circle [radius=0.1]   ;
\end{tikzpicture}
}
% With labelled roots
\newcommand{\scgenshapelab}[2]{
\begin{tikzpicture}[scale=#1,baseline=-2ex]
 \fill (0.35,0) arc [radius=0.15,start angle = 180, end angle= 0]   ;
 \node [below] at  (0.5,0) {#2};
  \draw [thick] (0,0)   -- (1,0);
\end{tikzpicture}
}
\newcommand{\scvgenshapelab}[2]{
\begin{tikzpicture}[scale=#1,baseline=-4ex]
 \fill (0,0.35) arc [radius=0.15,start angle = 270, end angle= 90]   ;
  \node [left] at  (0,0.5) {#2};
  \draw [thick] (0,0)   -- (0,1);
\end{tikzpicture}
}
\newcommand{\scprodshapelab}[2]{
\begin{tikzpicture}[scale=#1,baseline=-0ex]
  \draw [thick] (0,0)   -- (1,0) -- (1,0.5);
   \fill (1,0) circle [radius=0.15]   ;
    \node [below] at (1,0) {#2};
\end{tikzpicture}
}

Another example are staircase polygons (\fref{fam_sp} in the Appendix). The generator geometry arises from  $\eps=\tikz{\path pic[scale=0.8]{gensp}}$ and the product definition is:
\begin{equation} \label{eq_scpmf} 
    \def\tsc{0.5}
    \def\bsl{2ex}
    \def\lwd{0.5pt}
    \begin{tikzpicture}[style_size3,baseline=\bsl]
      \path [fill,color_left,draw=black]  
      (0,0) -- (0,1) -- (1,2) -- (2,2) --(2,1) --(1,0)--++(-1,0);
    \end{tikzpicture}
   \quad \star\quad
    \begin{tikzpicture}[style_size3,baseline=\bsl]
    % \draw [help lines, gray!50] (0,0) grid(3,3);
      \path [fill,color_right,draw=black ] 
      (0,0) -- (0,1) -- (1,2) -- (2.5,2) --(2.5,1) --(1.5,0)--++(-1.5,0) ;
    \end{tikzpicture}
   \quad = \quad
    \begin{tikzpicture}[style_size3,baseline=\bsl]
    % \draw [help lines, gray!50] (0,0) grid(5,5);    
      \path [fill,color_left,draw=black ] 
        (0,0) -- (0,1) -- (1,2) -- (2,2) --(2,1) --(1,0) --++(-1,0);
      \path (2,1) [ draw=productColor, line width=1pt]  -- ++(1.5,0) --++(0,1);
        \path[fill=productColor]  (3.5,1) pic{circle_product} ;
      \path [fill,color_right,draw=black ]  
        (2,2) -- ++(0,1) -- ++(1,1) -- ++(1.5,0) --++(0,-1) -- ++(-1 ,-1) --++(-1.5,0);
    \end{tikzpicture}
\end{equation}
% \begin{equation}\label{eq_scpmf} 
%     \def\tsc{0.5}
%     \def\bsl{2ex}
%     \def\lwd{0.5pt}
%     \begin{tikzpicture}[scale=\tsc,baseline=\bsl]
%       \path [fill,color=red!30,draw=black, line width=\lwd]  
%       (0,0) -- (0,1) -- (1,2) -- (2,2) --(2,1) --(1,0)--++(-1,0);
%     \end{tikzpicture}
%     \star
%     \begin{tikzpicture}[scale=\tsc,baseline=\bsl]
%     % \draw [help lines, gray!50] (0,0) grid(3,3);
%       \path [fill,color=blue!30,draw=black, line width=\lwd] 
%       (0,0) -- (0,1) -- (1,2) -- (2.5,2) --(2.5,1) --(1.5,0)--++(-1.5,0) ;
%     \end{tikzpicture}
%     =
%     \begin{tikzpicture}[scale=\tsc,baseline=\bsl]
%     % \draw [help lines, gray!50] (0,0) grid(5,5);    
%       \path [fill,color=red!30,draw=black, line width=\lwd] 
%         (0,0) -- (0,1) -- (1,2) -- (2,2) --(2,1) --(1,0) --++(-1,0);
%       \path (2,1) [ draw=orange, line width=1pt]  -- ++(1.5,0) --++(0,1);
%         \path[fill=orange]  (3.5,1) circle [radius=4pt];
%       \path [fill,color=blue!30,draw=black, line width=\lwd]  
%         (2,2) -- ++(0,1) -- ++(1,1) -- ++(1.5,0) --++(0,-1) -- ++(-1 ,-1) --++(-1.5,0);
%     \end{tikzpicture}
% \end{equation}
%
with  the product geometry   shown in orange. 
This leads to a staircase  magma form as shown in the example below with the conventional form (left) and magma form (right):
\begin{equation}\label{eq_spmf}
\begin{tikzpicture}[style_size2,baseline=0cm]
    \draw (0,0) rectangle (1,1);
    \draw (0,1) rectangle (1,2);
    \draw (1,1) rectangle (2,2);
%   \draw [help lines, gray!50] (0,0) grid(6,4);
\end{tikzpicture}
\hspace{3cm}
\begin{tikzpicture}[style_size2,inner sep=2pt,baseline=0cm]
\node (a) at (0,0){ };
\node (b) at (0,1){ };
\node (c) at (0,2){ };
\node (d) at (1,2){ };
\node (e) at (1,1){ };
\node (f) at (1,0){ };
\node (g) at (2,1){ };
\node (h) at (2,2){ };

% \draw  (a) -- (b); 
\fill  (a) pic[rotate=90,scale=0.7]{gensp};
% \draw  (b) -- (c); 
\fill (b) pic[rotate=90,scale=0.7]{gensp};
% \draw  (c) -- (d); 
\fill (c) pic[rotate=0,scale=0.7]{gensp};
% \draw  (d) -- (h); 
\fill (d) pic[rotate=0,scale=0.7]{gensp};

\draw[productColor] (b)--(1,1)--(d);
\draw[productColor]  (a)--(f)-- ++(0,0.8);
 \draw[productColor]  (e) ++(0.2,0) --(g)--(h);
\begin{scope}[inner sep=4pt,color=gray,node font=\small] 
\path (barycentric cs:a=1,b=1)  node[left]  {1};
\path  (barycentric cs:b=1,c=1)  node[left]  {2};
\path  (barycentric cs:c=1,d=1)  node[above]  {3};
\path  (barycentric cs:d=1,h=1)  node[above]  {4};

\path (g) node [right]{5} ;
\path (e) node [anchor=north west]{6} ;
\path (f) node [right]{7} ;
\end{scope}

\fill (1,1) pic{circle_product};
\fill (1,0) pic{circle_product};
\fill (2,1) pic{circle_product};

    %   \draw [help lines, gray!50] (0,0) grid(6,4);
    \end{tikzpicture}
\end{equation}
Again, gaps in the geometry are used to clarify the partitioning and the labels will be used below. 
Note, for the staircase family the generators can be drawn vertically or horizontally.
% \begin{center}
%     \includegraphics[width=5cm]{figtemp/staircase_magmaform.png}
% \end{center}

\newcommand{\cbtprod}[2]{
\begin{tikzpicture}[scale=#1,baseline=5pt]
  \draw (0,0) -- (1,1) -- (2,0);
  \fill (1,1) circle [radius=7pt];
  \node[left] at (1,1) {#2};
\end{tikzpicture}
}
\newcommand{\cbtgen}[2]{
\begin{tikzpicture}[scale=#1,baseline=5pt]
  \fill (1,1) circle [radius=7pt];
  \node[below] (dot)  at (1,1) {#2};
\end{tikzpicture}
}

Thus, for any  geometrical family  we assume that the product definition is such that the  new  geometry is point-wise disjoint from the geometry of the left and right factors. 
% The geometry associated with the generators is   denoted $\gengeo_\ffm(m)$. 
If the object is defined using an ``empty''   generator we will assume the generator can still be associated with a particular position in the diagram and thus can be represented by a single point (which might be drawn as a circle for clarity). 

For any given object, $m$ in some Catalan family $ \ffm$,  let   $\prodgeo_\ffm(m)$ denote  the set of disjoint parts of \textbf{product geometry} of $m$ and let   $\gengeo_\ffm(m)$ denote  the set of disjoint parts of \textbf{generator geometry} of $m$.  
% Rather than give a formal definition of the sets  we illustrate   them with three examples.
We illustrate these sets with several examples.

The first example is the family of CBTs.  From the product definition \eqref{eq_treescpro},
%
% \begin{equation}\ 
%     \begin{tikzpicture}[scale=.5,line width=0.5pt,baseline=0.75cm]
%         % \draw [help lines, gray!50] (0,0) grid(3,3);
%         \fill [color_left,draw=black] (2,3) -- (3,0) -- (0,1) -- cycle;
%         \fill [white,draw=black] (2,3) circle (5pt);
%     \end{tikzpicture}
%     \quad\star\quad
%     \begin{tikzpicture}[scale=.5,line width=0.5pt,baseline=0.75cm]
%         % \draw [help lines, gray!50] (0,0) grid(3,3);
%         \fill [color_right,draw=black] (1,3) -- (0,0) -- (3,1) -- cycle;
%         \fill [white,draw=black] (1,3) circle (5pt);
%     \end{tikzpicture}
%     \quad=\quad 
%     \begin{tikzpicture}[scale=.5,line width=0.5pt,baseline=0.75cm]
%         %   \draw [help lines, gray!50] (0,0) grid(7,4);
%         %
%         \draw[orange, line width=1pt] (3.5,4) -- (2,3) (3.5,4) -- (5,3);
%         \fill [white, draw=orange] (3.5,4) circle (5pt) ;
%         \node[above] at (3.5,4) {$a$};
%         %
%         \fill [color_left,draw=black] (2,3) -- (3,0) -- (0,1) -- cycle;
%         \fill [draw=black] (2,3) circle (5pt);
%         \node[above] at (2,3) {$b$};
%         %
%         \fill [color_right,draw=black] (5,3) -- (4,0) -- (7,1) -- cycle;
%         \fill [draw=black] (5,3) circle (5pt) ;
%         \node[above] at (5,3) {$c$};
%     \end{tikzpicture}  
% % \begin{equation}\label{eq_treesgep}
% %      \eqfig{0.7}{cbt-prod-geo.pdf}
% \end{equation}
%
it is clear that the new geometry added by the product is the root node, $a$, and the two edges $(a,b)$ and $(a,c)$. 
The generator geometry is a leaf node. The elements of the  sets, $\gengeo_\ffm(m)$ and 
$\prodgeo_\ffm(m)$ are the product and generator geometry at all levels of factorisation. For example, for the CBT on the left, the partitioning is shown on the right
\begin{equation}\label{eq_treesgepff3} 
\begin{tikzpicture}[style_size2,baseline=1cm,inner sep=6pt]
    % \def\L{++(-1,-1)} \def\R{++(1,-1)}\def\Z{4pt}
%  \draw [help lines, gray!50] (0,-1) grid(6,4);
\coordinate  (a) at (1,3)  ;
\coordinate (b) at (0,2) ;
\coordinate (c) at (2,2) ;
\coordinate (d) at (1,1) ;
\coordinate (e) at (0,0) ;
\coordinate (f) at (2,0) ;
\coordinate (g) at (3,1) ; 

\begin{scope}[color=gray,anchor=east ]
\node[ circle] (na) at (1,3) {1};
\node   (nb) at (0,2){2};
\node  (nc) at (2,2){3};
\node (nd) at (1,1){4};
\node  (ne) at (0,0){5};
\node  (nf) at (2,0){6};
\node  (ng) at (3,1){7};
\end{scope}

\draw (a) -- (c);\fill (c) pic{circle_black};
\draw (c) -- (d);\fill (d) pic{circle_black};
\draw (c) -- (g);\fill (g) pic{circle_black};
\draw (d) -- (e);\fill (e) pic{circle_black};
\draw (d) -- (f);\fill (f) pic{circle_black};
\fill (b) pic{circle_black};

\draw   (a)--(b);\fill[white,draw=black, line width=1pt] (a) pic{circle_black};

\end{tikzpicture}
   \hspace{3cm}
\begin{tikzpicture}[style_size2,baseline=1cm,  >={Stealth[length=1.5mm]},inner sep=5pt  ]
%  \draw [help lines, gray!50] (0,-1) grid(6,4);
\coordinate  (a) at (1,3)  ;
\coordinate (b) at (0,2) ;
\coordinate (c) at (2,2) ;
\coordinate (d) at (1,1) ;
\coordinate (e) at (0,0) ;
\coordinate (f) at (2,0) ;
\coordinate (g) at (3,1) ; 

\node (ap) at (1,3) {} ;
\node (bp) at (0,2){} ;
\node (cp) at (2,2){} ;
\node (dp) at (1,1){} ;
\node (ep) at (0,0){} ;
\node (fp) at (2,0){} ;
\node (gp) at (3,1){} ; 

\begin{scope}[color=gray ]
\node[left,circle] (na) at (1,3) {1};
\node[left] (nb) at (0,2){2};
\node[left] (nc) at (2,2){3};
\node[left](nd) at (1,1){4};
\node[left] (ne) at (0,0){5};
\node[left] (nf) at (2,0){6};
\node[left] (ng) at (3,1){7};
\end{scope}

\draw[productColor] (a) -- (cp);
\draw[productColor] (c) -- (dp);
\draw[productColor] (c) -- (gp);
\draw[productColor] (d) -- (ep);
\draw[productColor] (d) -- (fp);

\draw[productColor]  (a)--(bp);

\fill (b) pic{circle_black};
\fill  (g) pic{circle_black};
\fill (e) pic{circle_black};
\fill  (f) pic{circle_black};

\fill (a) pic{circle_product};
\fill (c) pic{circle_product};
\fill (d) pic{circle_product};

\end{tikzpicture}
\end{equation}
%
% \begin{equation}\label{eq_treesgepff}
%     \includegraphics[width=4cm]{figtemp/cbt_prodgeo.png}
% \end{equation}
and thus the partition sets are
\tikzset{tree_prod/.pic={
\draw[style_size4] (0,0) -- (1,1) -- (2,0) ;
\path[style_size4] (1,1) pic{circle_product};
}}
\begin{equation}  
\prodgeo_\ffm = \Bigl\{ 
\tikz{\path[productColor] (0,0) pic{tree_prod};
\path[style_size4,  node font=\small,color=gray] (1,1) node[above] {1};
},\,
\tikz{\path[productColor]  (0,0) pic{tree_prod};
\path[style_size4,  node font=\small,color=gray] (1,1) node[above] {3};
},\,
\tikz{\path[productColor]  (0,0) pic{tree_prod};
\path[style_size4,  node font=\small,color=gray] (1,1) node[above] {4};
}
\Bigr\}
\qquad\text{ and }\qquad
%     \prodgeo_\ffm = 
% \bigl\{ \tikz{\path (0,0) pic{tree_prod};},
% \cbtprod{0.3}{\tiny 1},\, \cbtprod{0.3}{\tiny 3},\, \cbtprod{0.3}{\tiny 4} 
% \bigr\}
% \qquad\text{ and }\qquad
 \gengeo_\ffm =\Bigl\{
\tikz{\path[style_size4,  node font=\small,color=gray] 
pic{circle_black}  node[above]{2}; },\,
\tikz{\path[style_size4,  node font=\small,color=gray] 
pic{circle_black}  node[above]{5}; },\,
\tikz{\path[style_size4,  node font=\small,color=gray] 
pic{circle_black}  node[above]{6}; },\,
\tikz{\path[style_size4,  node font=\small,color=gray] 
 pic{circle_black}  node[above]{7}; }
\Bigr\}\,.
\end{equation}

For the second example consider the family of nested matchings, with product given by \eqref{eq_mproddef}. We see that the product geometry is the arc $(bc,d)$ and edge $(d,e)$.  
%========================================
% Staircase geo
%========================================
Thus for the nested matching of \eqref{eq_nmparv} we get the partition sets 
\tikzset{nm_prod/.pic={
% \draw[productColor,style_size4] (0,0) -- (1,1) -- (2,0) ;
% \path[style_size4] (1,1) pic{circle_product};
\draw[productColor,style_size4]  (0,0) arc [start angle=0, end angle = 160, radius=1] ;
\fill[style_size4]  (0,0) pic{circle_product} ;
\path (0,0) [draw=productColor,style_size4]  -- +(2,0);
}}
\begin{equation}  
\prodgeo_\ffm = \Bigl\{ 
\tikz{\path[style_size4 ] (0,0) pic{nm_prod};
\path[style_size4,node font=\small,color=gray ] (0,0) node[anchor=south west] {3};
},\,
\tikz{\path[style_size4 ] (0,0) pic{nm_prod};
\path[style_size4,node font=\small,color=gray,baseline=1cm] (0,0) node[anchor=south west] {6};
},\,
\tikz{\path[style_size4 ] (0,0) pic{nm_prod};
\path[style_size4,node font=\small,color=gray ] (0,0) node[anchor=south west] {7};
}
\Bigr\}
\qquad\text{ and }\qquad
 \gengeo_\ffm =\Bigl\{
\tikz{\path[style_size4,  node font=\small] 
pic{gennm}  node[above,color=gray]{1}; },\,
\tikz{\path[style_size4,  node font=\small ] 
pic{gennm}  node[above,color=gray]{2}; },\,
\tikz{\path[style_size4,  node font=\small ] 
pic{gennm}  node[above,color=gray]{4}; },\,
\tikz{\path[style_size4,  node font=\small ] 
 pic{gennm}  node[above,color=gray]{5}; }
\Bigr\}\,.
\end{equation}%
For the third example consider  staircase polygons. 
From the product definition, \eqref{eq_scpmf},
and generator definition $\eps =\tikz{\path[style_size4  ] 
 pic{gensp} } $, 
the partition sets of \eqref{eq_spmf} are
\tikzset{sp_prod/.pic={
\draw[productColor,style_size4]  (0,0) -- (1,0) -- (1,1) ;
\fill[style_size4]  (1,0) pic{circle_product} ;
% \path (0,0) [draw=productColor,style_size4]  -- +(2,0);
}}
\begin{equation}  
\prodgeo_\ffm = \Bigl\{ 
\tikz{\path[style_size4 ] (0,0) pic{sp_prod};
\path[style_size4,node font=\small,color=gray ] (1,0) node[anchor=south west] {5};
},\,
\tikz{\path[style_size4 ] (0,0) pic{sp_prod};
\path[style_size4,node font=\small,color=gray,baseline=1cm] (1,0) node[anchor=south west] {6};
},\,
\tikz{\path[style_size4 ] (0,0) pic{sp_prod};
\path[style_size4,node font=\small,color=gray ] (1,0) node[anchor=south west] {7};
}
\Bigr\}
\qquad\text{ and }\qquad
 \gengeo_\ffm =\Bigl\{
\tikz{\path[style_size4,  node font=\small]   
pic{gensp} ++(0.72,0) node[above,color=gray]{1}; },\,
\tikz{\path[style_size4,  node font=\small ] 
pic{gensp} ++(0.72,0) node[above,color=gray]{2}; },\,
\tikz{\path[style_size4,  node font=\small ] 
pic{gensp} ++(0.72,0) node[above,color=gray]{3}; },\,
\tikz{\path[style_size4,  node font=\small ] 
 pic{gensp} ++(0.72,0) node[above,color=gray]{4}; }
\Bigr\}\,.
\end{equation}
%
% \prodgeo_\ffm = 
%   \{
%  \scprodshapelab{0.5}{\tiny 5}  ,
%  \scprodshapelab{0.5}{\tiny 6}  ,
%  \scprodshapelab{0.5}{\tiny 7}  
% \}
% \qquad\text{ and }\qquad
% \gengeo_\ffm =
% \{
% \mbox{  \raisebox{-0.75\height}{\scvgenshapelab{0.5}{\tiny 1}} },
% \mbox{  \raisebox{-0.75\height}{\scvgenshapelab{0.5}{\tiny 2}} },
% \mbox{  \raisebox{-0.75\height}{\scgenshapelab{0.5}{\tiny 3}} },
% \mbox{  \raisebox{-0.75\height}{\scgenshapelab{0.5}{\tiny 4}} }
% \}\,.
% \]
%
From these examples it is clear that the partition sets are repetitions  of  two  basic `shapes' (possibly rotated and/or scaled), one, denoted $\pshape(\ffm)$, associated with $\prodgeo_\ffm $  and  other, denoted $\gshape(\ffm)$, with  $\gengeo_\ffm $. Each object $m\in\ffm$ is thus constructed   by a particular number of  suitable `shape preserving' transformations of each of the two shapes (the number being related to the norm of the object).

For each shape we will choose (or mark) one point. 
The marked point in $\pshape$ will be called the \textbf{root} of $\pshape$ and the marked point in $\gshape$ will be called the \textbf{root} of $\gshape$. 
% The root point and leaf point will be called the \textbf{mark}.
For the above three examples the choices are as follows.
For the complete binary tree the marked shapes are
\begin{equation}\label{eq_cbtmarks}
    \gshape_\bullet = \bullet
\qquad\text{and}\qquad  
\pshape_\bullet = 
\tikz{\draw[style_size4] (0,0) -- (1,1) -- (2,0); \path[style_size4]   (1,1) pic{circle_black};}
\end{equation}
where the marked point is shown as a solid circle and the $\bullet$ subscript indicates the shape has a mark.
For the nested matchings the marked shapes  are
\begin{equation}\label{eq_nmmarks}
   \gshape_\bullet =\nmgenshape 
\qquad\text{and}\qquad  
\pshape_\bullet =\nmprodshape   
\end{equation}
For the staircase polygons we choose
\begin{equation}\label{eq_spmarks}
    \gshape_\bullet =\begin{tikzpicture}[scale=1]
 \fill (0,0) pic[scale=1,line width=1pt]{gensp}   ;
%   \draw [thick] (0,0)   -- (1,0);
\end{tikzpicture}
% \raisebox{-0.75\height}{\scgenshapelab{xx} }  
\qquad\text{and}\qquad  
\pshape_\bullet =\scprodshape{1}  
\end{equation}

% The above two examples show that the sets $\gengeo_\ffm $ and $\prodgeo_\ffm $ partition the object $m$ and this partition is equivalent to the existence of a map
% $$
% \smap_m: m\to \set{ \gshape_\bullet ,\pshape_\bullet } 
% $$ 
% where each subsets of points of $m$ must be mapped by a similarity transformation to an element of  $\set{ \gshape_\bullet ,\pshape_\bullet } $.

If an object $m$ is a product $m=m_1\star m_2$, then each factor can be considered as an object which, if not a generator, will be a product and hence have product geometry and hence have a root. These roots will be called \textbf{sub-roots}.
 
 The marked shapes are   used to define the embedding of a CBT into an object. 
 This is done by  recursively embedding triples
 \footnote{This notation comes from the Category Theory definition of a product diagram. },
 $\triple$, into the geometry of the Catalan object $m$.
The symbol $P$ denotes the root of the product geometry. 
The embedding is defined  as follows:
If $m=m_1\st m_2$   then 
    \begin{itemize}
        \item attach $P$ to the root of the product geometry of $\st$, and
        \item the left arrow of the triple points from $P$ to the sub-root of the product geometry of $m_1$, and
        \item the right arrow of the triple points from $P$ to the sub-root of the product geometry of $m_2$.
    \end{itemize} 
% \parbox{14cm}{
% \begin{itemize}
%     \item if $m=m_1\st m_2$ and $m_1\ne \eps$ and $m_2\ne \eps$ then 
%     \begin{itemize}
%         \item attach $P$ to the root of the product geometry of $\st$, and
%         \item the left arrow of the triple points from $P$ to the sub-root of the product geometry of $m_1$, and
%         \item the right arrow of the triple points from $P$ to the sub-root of the product geometry of $m_2$.
%     \end{itemize} 
%     \item if $m=m_1\st \eps$  and $m_1\ne\eps$ then the right arrow points to the leaf of the generator geometry of $\eps$  and the left arrow of the triple points from $P$ to the sub-root of the product geometry of $m_1$.
%     \item if $m=\eps \st m_2$  and $m_2\ne\eps$ then the left arrow points to the leaf of the generator geometry of $\eps$  and the right arrow of the triple points from $P$ to the subroot of the product geometry of $m_1$.
%     \item if $m=\eps \st \eps $    then the left arrow points to the leaf of the generator geometry of the left $\eps$  and   the right arrow points to the leaf of the generator geometry of the right $\eps$.
% \end{itemize}
% }\\
 By definition, the  $L$ labelled arrow always points to the sub-root in the left factor   and similarly the $R$ labelled arrow always points to the sub-root in the right factor of the product.  
The $\triple$ embedding can thus  be represented schematically by drawing $P$ on the root of the product geometry with the left and right arrows drawn from $P$ to the respective left and right sub-roots.

For example, for CBTs it is (by design) trivial to perform the embedding.
From the product definition we get the embedding:
\begin{align} 
    \begin{tikzpicture}[scale=.4,line width=0.5pt,baseline=0.75cm]
        % \draw [help lines, gray!50] (0,0) grid(3,3);
        \fill [color_left,draw=black] (2,3) -- (3,0) -- (0,1) -- cycle;
        \fill [white,draw=black] (2,3) circle (5pt);
    \end{tikzpicture}
    \qquad\star\quad
    \begin{tikzpicture}[scale=.5,line width=0.5pt,baseline=0.75cm]
        % \draw [help lines, gray!50] (0,0) grid(3,3);
        \fill [color_right,draw=black] (1,3) -- (0,0) -- (3,1) -- cycle;
        \fill [white,draw=black] (1,3) circle (5pt);
    \end{tikzpicture}
    \quad&=\quad 
    \begin{tikzpicture}[scale=.5,line width=0.5pt,baseline=0.75cm]
        %   \draw [help lines, gray!50] (0,0) grid(7,4);
        %
        \draw[orange, line width=1pt] (3.5,4) -- (2,3) (3.5,4) -- (5,3);
        \fill [white, draw=orange] (3.5,4) circle (5pt) ;
        \fill [color_left,draw=black] (2,3) -- (3,0) -- (0,1) -- cycle;
        \fill [draw=black] (2,3) circle (5pt);
        \fill [color_right,draw=black] (5,3) -- (4,0) -- (7,1) -- cycle;
        \fill [draw=black] (5,3) circle (5pt) ;
    \end{tikzpicture} \notag   \\
    &\\
   \qquad& \leadsto\qquad
    \begin{tikzpicture}[scale=.5,line width=0.5pt,baseline=0.75cm,>={Stealth[length=2mm]}  ]
        %  \draw [help lines, gray!50] (0,0) grid(7,4);
        %
         \node[color=tripleColor] (a) at (3.5,4){$ P $};
        \node (b) at (2,3) {.};
        \node (c) at (5,3) {.};
        \draw[tripleColor, line width=1pt,->] (a) -- (b) ;
        \draw[tripleColor, line width=1pt,->]  (a) -- (c);
        % \fill [white, draw=orange] (3.5,4) circle (5pt) ;
        %
        \fill [color_left,draw=black] (2,3) -- (3,0) -- (0,1) -- cycle;
        \fill [draw=black] (2,3) circle (5pt);
        \fill [color_right,draw=black] (5,3) -- (4,0) -- (7,1) -- cycle;
        \fill [draw=black] (5,3) circle (5pt) ;
        \node[color=tripleColor] at (2.5,4) {$\scriptstyle L$};
        \node[color=tripleColor] at (4.5,4) {$\scriptstyle R$};
    \end{tikzpicture} 
\end{align}
Repeating the embedding recursively  gives, for example: 
\begin{equation}\label{eq_treescprro55}
\triple\quad  \hookrightarrow\quad
\begin{tikzpicture}[style_size2,baseline=0cm]
    \def\L{++(-1,-1)} \def\R{++(1,-1)}\def\Z{4pt}
    % \draw [help lines, gray!50] (0,0) grid(6,4);
    \draw   (1,2) -- \L (1,2) --\R (2,1) -- \L (2,1) -- \R  (1,0) -- \R (1,0) -- \L;  
    \fill[draw=black]  (0,1) circle (\Z)  (1,0) circle (\Z)  (2,1) circle (\Z) (3,0) circle (\Z)  ; 
    \fill[draw=black]  (0,-1) circle (\Z) (2,-1) circle (\Z);
    \fill[white, draw=black] (1,2)  circle (\Z);
\end{tikzpicture}
\hspace{1cm} = \hspace{1cm} 
\begin{tikzpicture}[tripleColor,fill=black,style_size2,baseline=0cm,>={Stealth[length=1.5mm]}]
    \def\R{ pic{circle_nocolor} } \def\P{$  P$}
    % \draw [help lines, gray!50] (0,0) grid(6,4);
    \begin{scope}[inner sep=1pt]
     \node (a) at (1,2) {\P};
    \node (b) at (0,1) {.};
    \node (c) at (2,1) {\P};
    \node (d) at (1,0) {\P};
    \node (e) at (0,-1) {.};
    \node (f) at (2,-1) {.};
    \node (g) at (3,0) {.};
    \end{scope}
   
    \draw [->] (a) edge node[above] {$\scriptstyle L$} (b);
    \draw [->] (a) edge node[above] {$\scriptstyle R$} (c);
    \draw [->] (c) edge node[above] {$\scriptstyle L$} (d);
    \draw [->] (c) edge node[above] {$\scriptstyle R$} (g);
    \draw [->] (d) edge node[above] {$\scriptstyle L$} (e);
    \draw [->] (d) edge node[above] {$\scriptstyle R$} (f);
    \fill (b) \R;
    \fill  (e) \R;
    \fill  (f) \R; 
    \fill (g) \R;
\end{tikzpicture}
\end{equation}
The resulting structure is trivially an alternative  way of drawing CBTs
\footnote{However, this form shows that the tree can be considered to generate a free category where all  the triples $ A \leftarrow P \rightarrow B $ of the generating graph are product diagrams.}.
In the remainder of this section the $L$ and $R$ labels will generally be omitted when drawing the tree.

Since the recursive embedding of the triple, $\triple$, into a CBT is essentially the CBT itself, a recursive   embedding of the triple $\triple$ into an object, $m$, of some other family, is effectively an embedding of a CBT into that object. 
This recursive embedding of $\triple$ into some object $m$ giving rise to the CBT,  $t$, will be written:
\begin{equation}
    \triple\hookrightarrow m=t\,.
\end{equation}
If $m$ is a schematic form of a product definition then the notation is referring to how the product form defines the embedding of $\triple$, for example,   see \eqref{eq_nmtrdef}.

If $\unimap$ is the universal bijection between the family $\ffm$ containing $m$ and the CBT family, then clearly $\unimap(m)=t$. Thus the decomposition of $t$ gives, via $\unimap$, the decomposition of $m$. The decomposition of a CBT is then simple to obtain by using any of the three tree traversals  shown in \figref{fig:treeTraversal}.

In the recursive embedding \eqref{eq_treescpro}, the $L$ and $R$ labels are clearly superfluous (and thus can be omitted), however this is not always the case. 
In some embeddings   the orientation of the arrows (once embedded) is not always clearly left-to-right, thus the $L$ and $R$ arrow labels are necessary to make it clear which is the left pointing arrow and which the right and hence which are the left  and right  sub-trees of $t$.
%=========================

Dyck paths illustrate why it is necessary to use the $L$ and $R$ labels. The paths   are also  an example of when the choice of root on the product geometry is not immediately clear as the geometry  occurs in two places giving rise to some ambiguity as to  which point  to choose for the root. 
Using the recursive structure of the left and right sub-paths and noting the locations of the left and right product geometry in the sub-paths, as shown below,
\begin{equation}
\begin{tikzpicture}[style_size3,>=Stealth] 
\fill [color_left,draw=black] (0,0) arc [radius=3,start angle = 180, end angle =0] ;
    \fill [color_left,draw=black] (0,0) arc [radius=1,start angle = 180, end angle =0] ;
    \draw  (0,0) -- (6,0);
    \draw[color=black, line width=1pt] (2,0) -- (3,1);
    \fill [color_left,draw=black] (3,1) arc [radius=1,start angle = 180, end angle =0] ;
    \draw (3,1) -- (5,1);
    \draw[color=black, line width=1pt] (5,1) -- (6,0);
    
    \draw [color=orange, line width=1pt] (6,0)--(7,1);
    
    \fill [color_right,draw=black] (7,1) arc [radius=3,start angle = 180, end angle =0] ;
    \fill [color_right,draw=black] (7,1) arc [radius=1,start angle = 180, end angle =0] ;
    \draw  ((7,1) -- (13,1);
    \draw[color=black, line width=1pt] (9,1) -- (10,2);
    \fill [color_right,draw=black] (10,2) arc [radius=1,start angle = 180, end angle =0] ;
    \draw (10,2) -- (12,2);
    \draw[color=black, line width=1pt] (12,2) -- (13,1);
    
    \draw [color=orange, line width=1pt] (13,1) -- (14,0);

    \coordinate (D) at (5,1);
    \coordinate (E) at (6,0);
    \coordinate (G) at (2,0);
    \coordinate (F) at (3,1);
    \coordinate (H) at (3,-1);
    \coordinate (I) at (barycentric cs:G=1,F=1);
    \coordinate (J) at (barycentric cs:D=1,E=1);
    \draw[  color=gray,line width=0.5pt]  node[below] at (H) {left previous product geo.}   
    (H) edge [->, bend right=35] (I)  (H) edge[->, bend left=35] (J) ;
    
    \coordinate (a) at (9,1);
    \coordinate (b) at (10,2);
    \coordinate (c) at (12,2);
    \coordinate (d) at (13,1);
    \node[below,  color=gray,line width=0.5pt]   (e) at (12,-1) {right previous product geo.};
    \coordinate (h) at (12,-1);
    \coordinate (f) at (barycentric cs:a=1,b=1);
    \coordinate (g) at (barycentric cs:c=1,d=1);
    \draw[  color=gray,line width=0.5pt]   
    (h) edge [->, bend right=25] (f)  (h) edge[->, bend left=25] (g) ;

    \coordinate (r) at (6,0);
    \coordinate (s) at (7,1);
    \coordinate (t) at (13,1);
    \coordinate (u) at (14,0);
    \node[above,  color=gray,line width=0.5pt]   (v) at (12,4) {product geo.};
    \coordinate (w) at (barycentric cs:r=1,s=1);
    \coordinate (x) at (barycentric cs:t=1,u=1);
    \draw[  color=gray,line width=0.5pt]   
    (v) edge [->, bend right=45] (w)  (v) edge[->, bend left=45] (x) ;
    %
    % \draw [help lines, gray!60] (0,0) grid(16,4);
\end{tikzpicture}
\end{equation}
it should be clear that the right-most points of the the respective product geometries gives a simple choice for the embedding of $\triple$ as shown below:
\begin{equation}\label{eq_cbtPathem}
\begin{tikzpicture}[line width=0.5pt,scale=.5,>=Stealth,baseline=0.5cm] 
    \node (a) at (2,0){};
    \node (b) at (5,1){};
    \node (c) at (6,0){};
    \fill [color_left,draw=black] (0,0) arc [radius=1,start angle = 180, end angle =0] ;
    \draw  (0,0) -- (6,0);
    \draw[color=black] (2,0) -- (3,1);
    \fill [color_right,draw=black] (3,1) arc [radius=1,start angle = 180, end angle =0] ;
    \draw (3,1) -- (5,1);
    \draw[color=black] (5,1) -- (6,0);
    
    \fill (a) circle  (4pt);
    \fill (b) circle  (4pt);
    \fill (c) circle  (4pt);
    \node [below, color=gray,line width=0.5pt] (n) at (0,-1) {left sub-root};
    \node [below, color=gray,line width=0.5pt] (m) at (6,-1) {right sub-root};
    \node [above, color=gray,line width=0.5pt] (p) at (6,2) { root};
     \draw[  color=gray,line width=0.5pt]   
    (n) edge [->, bend left=15] (a)  (m) edge[->, bend left=15] (b)  (p) edge[->, bend left=45] (c);
    %
    % \draw [help lines, gray!60] (0,0) grid(7,4);
\end{tikzpicture}
\leadsto
\begin{tikzpicture}[line width=0.5pt,scale=.75,>={Stealth[length=2mm]},baseline=0.5cm,inner sep=1pt] 
    \node (a) at (2,0){};
    \node (b) at (5,1){};
    \node[color=blue,fill=white] (c) at (6,0){$P$};
    \fill [color_left,draw=black] (0,0) arc [radius=1,start angle = 180, end angle =0] ;
    \draw (0,0) -- (3,0);
 
    \draw[color=black] (2,0) -- (3,1);
    \fill [color_right,draw=black] (3,1) arc [radius=1,start angle = 180, end angle =0] ;
    \draw (3,1) -- (5,1);

    \fill (a) circle  (4pt);
    \fill (b) circle  (4pt);
    
    \draw[blue,->] (c)edge node[fill=white, inner sep=1pt] {$\scriptstyle L$}(a);
    \draw[blue,->] (c)edge node[fill=white, inner sep=1pt] {$\scriptstyle R$}(b);

    %
    % \draw [help lines, gray!60] (0,0) grid(7,4);
\end{tikzpicture}
\end{equation}
The labelling of the left and right arrows is important in obtaining the correct orientation of the resulting embedded CBT as shown by the following example:
\begin{align}\label{eq_dyckcbtembed}
 \begin{tikzpicture}[scale=.7,baseline=0.5cm]
    \draw [help lines, gray!60] (0,0) grid(6,2);  
    \coordinate (a) at (0,0)  ;
    \coordinate (b) at (1,1) ;
    \coordinate (c) at (2,2) ;
    \coordinate (d) at (3,1) ;
    \coordinate (e) at (4,0) ;
    \coordinate (f) at (5,1) ;
    \coordinate (g) at (6,0) ;   
    \draw (a)-- (b) -- (c) -- (d) -- (e) -- (f) -- (g);
\end{tikzpicture}
\quad&\Rightarrow\quad
\begin{tikzpicture}[scale=.8,>=Stealth,baseline=0.5cm,inner sep=0pt]
\def\C{circle (4pt)}
    \draw [help lines, gray!60] (0,0) grid(6,2); 
    \node (a) at (0,0) {.};
    \node (b) at (1,1){.};
    \node (c) at (2,2){.};
    \node[color=blue] (d) at (3,1){$P$};
    \node[color=blue] (e) at (4,0){$P$};
    \node (f) at (5,1){.};
    \node[color=blue] (g) at (6,0){$P$};
    \fill (a) \C (b) \C (c) \C (f) \C;
    \draw [blue,->] (g) edge node[fill=white, inner sep=1pt] {$\scriptstyle R$}  (f)  ;
    \draw [blue,->]  (g) edge node[fill=white, inner sep=1pt ] {$\scriptstyle L$}  (e);
    \draw [blue,->]  (e)edge node[fill=white, inner sep=1pt] {$\scriptstyle R$}(d);
    \draw [blue,->]  (d)edge node[fill=white, inner sep=1pt] {$\scriptstyle R$}(c);
    \draw [blue,->]  (d) edge node[fill=white, inner sep=1pt ] {$\scriptstyle L$} (b);
    \draw [blue,->]  (e) edge node[fill=white, inner sep=1pt ] {$\scriptstyle L$} (a);
\end{tikzpicture} \notag\\
&\notag\\
\quad& \Rightarrow\quad
\begin{tikzpicture}[scale=.7,baseline=1cm]
\def\C{circle (4pt)}
    % \draw [help lines, gray!60] (0,0) grid(6,2);  
        \coordinate (a) at (2,3) ;
        \coordinate (b) at (1,2) ;
        \coordinate (c) at (0,1) ;
        \coordinate (d) at (2,1) ;
        \coordinate (e) at (1,0) ;
        \coordinate (f) at (3,0) ;
        \coordinate (g) at (3,2) ;  
 \draw (a)--(b) (a)--(g);
 \draw (b)--(c) (b)--(d);  
  \draw (d)--(e) (d)--(f);
  \fill[white,draw=black] (a) \C ;
  \fill (b) \C (c) \C (d) \C (e) \C (f) \C (g) \C;
\end{tikzpicture}
\end{align}
Note the correct CBT (shown last) is a reflected version of the tree that would have resulted by a simple counter-clockwise rotation (to orientate vertically) of the embedded tree (shown second).
% \begin{equation}\label{eq_}
%     \includegraphics[width=14cm]{figtemp/dyckpath_embed.pdf}
% \end{equation}

% ========================
We illustrated the recursive $\triple$ embedding and resulting CBT  with a few more examples.
%
% ========================
\subsubsection*{Nested matchings -- \fref{fam_ld}}
For nested matchings, the product definition is shown in \eqref{eq_scpmf} and mark choices in \eqref{eq_nmmarks}, which leads  to an embedding of $\triple$ into the nested matching which can be represented schematically as
\begin{equation}\label{eq_nmtrdef}  
\def\bsl{0pt}
\triple \quad \hookrightarrow\quad  
\begin{tikzpicture}[line width=0.5pt,scale=0.5,>=Stealth,baseline=\bsl] 
    % \draw [help lines, gray!50] (0,-2) grid(9,4);
    \fill [color_left,draw=black] (1,0) arc [radius=1,start angle = 180, end angle =0];
    \fill [color_right,draw=black] (5,0) arc [radius=1,start angle = 180, end angle =0];
    \draw (0,0) -- (8,0);
    \draw[color=orange ] (4,0) arc [start angle=180, end angle = 0, radius=2];
    \draw [color=orange ] ( 8,0) -- ( 9,0);
    \fill[color=white] ( 9,0) circle [radius=4pt];
    \node (L) at  (3,0) { };
    \node (R) at (7,0) { } ;
    \node (P) at (8,0) { } ;
    \fill (L) circle [radius=4pt];
    \fill (R) circle [radius=4pt];
    \fill [color=blue] (P) circle [radius=5pt];   
    \draw[blue,line width=1pt,->]  (P) .. controls +(90:4cm) and +(90:4cm) .. (L);
    \draw[blue,line width=1pt,->]  (P) .. controls +(90:1cm) and +(90:2cm) .. (R);   
    \node[color=blue] at (5,3.5) {$\scriptstyle L$};
    \node[color=blue] at (6.5,1.25) {$\scriptstyle R$};
    \node[color=blue] at (8,-0.5) {$\scriptstyle P$};
\end{tikzpicture}
\end{equation}
Repeating the embedding recursively gives, for example,
\begin{align}  \label{eq_nmembex} 
\def\S{0.7}
\triple \hookrightarrow 
\begin{tikzpicture}[line width=1pt,scale=\S,>=Stealth] 
\def\R{4pt}
% \draw [help lines, gray!50] (0,0) grid (5,1);
\node (a) at (0,0) { };
\node (b) at (1,0) { };
\node (c) at (2,0) { };
\node (d) at (3,0) { };
\node (e) at (4,0) { };
\node (f) at (5,0) { };
\draw (0,0) -- (5,0);
\draw (b) arc [radius=0.5, start angle = 180, end angle =0];
\draw (d) arc [radius=0.5, start angle = 180, end angle =0];
\fill (a) circle (\R) (b) circle (\R) ;
\fill  (d) circle (\R)  ;
\end{tikzpicture} 
 &\Rightarrow 
\begin{tikzpicture} [style_size2,>={Stealth[length=1.5mm]}] 
\def\R{4pt}
%  \draw [help lines, gray!50] (0,0) grid (5,1);
\node (a) at (0,0) { };
\node (b) at (1,0) { };
\node (c) at (2,0) { };
\node (d) at (3,0) { };
\node (e) at (4,0) { };
\node (f) at (5,0) { };
\draw[color=gray!60]  (0,0) -- (5,0);
\draw[color=gray!60] (b) arc [radius=0.5, start angle = 180, end angle =0];
\draw[color=gray!60]  (d) arc [radius=0.5, start angle = 180, end angle =0];
\fill[color=gray!60]  (a) circle (\R) (b) circle (\R)  (d) circle (\R) ;
\fill[blue]  (c) circle (\R) (e) circle (\R);
\draw[blue,line width=1pt,->]  (c) .. controls +(90:2cm) and +(45:1cm) .. (a);
\draw[blue,line width=1pt,->]  (c) .. controls +(135:.75cm) and +(45:.5cm) .. (b);
\node[color=blue] at (1,1.3) {$\scriptstyle L$};
\node[color=blue] at (1.5,0.75) {$\scriptstyle R$};
\draw[blue,line width=1pt,->]  (e) .. controls +(90:2cm) and +(45:1cm) .. (c);
\draw[blue,line width=1pt,->]  (e) .. controls +(135:.75cm) and +(45:.5cm) .. (d);
\node[color=blue] at (3,1.3) {$\scriptstyle L$};
\node[color=blue] at (3.5,0.75) {$\scriptstyle R$};    
\end{tikzpicture} 
\notag\\
&\notag\\
&\Rightarrow 
\begin{tikzpicture}[style_size2,baseline=0.5cm,>={Stealth[length=1.5mm]},inner sep=1pt]
\def\R{ circle (4pt) }  
% \draw [help lines, gray!50] (0,0) grid(3,2);
\node (a) at (2,2) {\small .};
\node (b) at (1,1) {.};
\node (c) at (0,0) {.};
\node (d) at (2,0) {.};
\node (e) at (3,1) {.};
\draw[blue ] [->] (a) -- (b);
\draw[blue ] [->] (a) -- (e);
\draw[blue ] [->] (b) -- (c);
\draw[blue ] [->] (b) -- (d);
\fill[blue ] (a) \R;
\fill[blue ](b) \R;
\fill[black ](c) \R; 
\fill[black ](d) \R;
\fill[black ](e) \R;
\end{tikzpicture}
\end{align}
% \begin{equation}\label{eq_}
%     \includegraphics[width=10cm]{figtemp/nmtriprecembed.png}
% \end{equation}
Note, attaching the $L$ and $R$ labels in \eqref{eq_nmtrdef} and \eqref{eq_nmembex} makes it clear how the CBT is orientated. In-fix tree traversal gives
\begin{equation*}
\begin{tikzpicture}  [style_size2,baseline=0.5cm,  >={Stealth[length=1.5mm]},inner sep= 1pt  ]
  \def\L{++(-1,-1)} \def\R{++(1,-1)}
\def\R{ circle (4pt) }  
% \draw [help lines, gray!50] (0,0) grid(3,2);
\node (a) at (2,2) {\small .};
\node (b) at (1,1) {.};
\node (c) at (0,0) {.};
\node (d) at (2,0) {.};
\node (e) at (3,1) {.};
\draw[blue ] [->] (a) -- (b);
\draw[blue ] [->] (a) -- (e);
\draw[blue ] [->] (b) -- (c);
\draw[blue ] [->] (b) -- (d);
\fill[blue ] (a) \R;
\fill[blue ](b) \R;
\fill[black ](c) \R; 
\fill[black ](d) \R;
\fill[black ](e) \R;
\end{tikzpicture}
\quad\mapsto \quad (\eps\star\eps)\star\eps
\end{equation*}
and thus the decompostion
\begin{equation*}
\begin{tikzpicture}[line width=1pt,scale=0.7,>=Stealth,baseline=0cm] 
\def\R{4pt}
    % \draw [help lines, gray!50] (0,0) grid (5,1);
    \node (a) at (0,0) { };
    \node (b) at (1,0) { };
    \node (c) at (2,0) { };
    \node (d) at (3,0) { };
    \node (e) at (4,0) { };
    \node (f) at (5,0) { };
    \draw (0,0) -- (5,0);
    \draw (b) arc [radius=0.5, start angle = 180, end angle =0];
    \draw (d) arc [radius=0.5, start angle = 180, end angle =0];
    \fill (a) circle (\R) (b) circle (\R) ;
    \fill  (d) circle (\R)  ;
\end{tikzpicture}
\quad = \quad 
(\nmgenshape  \star \nmgenshape )\star \nmgenshape \,.
\end{equation*}
From  example \eqref{eq_nmembex}, it is clear the embedded CBT internal nodes (and root)  always occur at the right ends of  the arcs   and the leaves always occur   (except for the leftmost) at the left end of an arc. This rule gives a simple way to embed the CBT in any nested matching.

%=========================================== 
\subsubsection*{Triangulations -- \fref{triangulations}}
Considering the product geometry   of the left and right factors gives the $\triple$ embedding:
\begin{align}\label{cbtEmbedTri7}
\begin{tikzpicture}[style_size3,baseline=0cm,>=Stealth] 
\coordinate (A)  at (2.2,2.8);
    \coordinate (B)  at (0,1);
    \coordinate (C)  at (3,1);
    \coordinate (D) at  (6,1);
    \coordinate (E) at  (3.8,2.8);
    \coordinate (F)  at (3,3);
    \coordinate (G)  at (3,4);
    \coordinate (H)  at (3, -1);
    \coordinate (I)  at (2.5,2);
    \coordinate (J)  at (3.5,2);
    \fill [color_left,draw=black] (A).. controls +(135:2cm) and +(135:2cm) .. (B);
    \fill [color_left,draw=black]  (B) .. controls +(270:2cm) and +(270:2cm) .. (C);
    \draw (B)-- (A); \draw (B)-- (C);
    \draw (C) -- (A);
    \draw [orange,line width = 2pt] (A) -- (E);
    \draw[color=gray,line width=0.5pt] node[above] at (G) {product geo.}  (G) edge [->] (F);     
    \fill [color_right,draw=black] (C) .. controls +(270:2cm) and +(270:2cm) .. (D);
    \fill [color_right,draw=black]  (D) .. controls +(45:2cm) and +(45:2cm) .. (E);  
    \draw (E)-- (D); \draw (D) -- (C);
    \draw (C) -- (E);
    \fill [black] (A) circle (6pt) ;
    \fill [white,draw=black] (E) circle (6pt);
    \fill [white,draw=black] (C) circle (6pt);  
    \draw[  color=gray,line width=0.5pt]  node[below] at (H) {left/right product geo.}   
    (H) edge [->, bend left=45] (I)  (H) edge[->, bend right=45] (J) ;
    % \draw [help lines, gray!60] (0,0) grid(6,4);
\end{tikzpicture}
    & \Longrightarrow
\begin{tikzpicture}[style_size3,baseline=0cm,>=Stealth] 
        \coordinate (A) at (2.2,2.8);
        \coordinate (B)  at (0,1);
        \coordinate (C)  at (3,1);
        \coordinate (D) at  (6,1);
        \coordinate (E) at  (3.8,2.8);
        \coordinate (F)  at (3,3);
        \coordinate (G)  at (3,4);
        \coordinate (H)  at (0, -1);
        \coordinate (I)  at (2.5,2);
        \coordinate (J)  at (3.5,2);
        \coordinate (K)  at (1, -1);
        \coordinate (L)  at (6, -1);
        \fill [color_left,draw=black] (A).. controls +(135:2cm) and +(135:2cm) .. (B);
        \fill [color_left,draw=black]  (B) .. controls +(270:2cm) and +(270:2cm) .. (C);
        \draw (B)-- (A); \draw (B)-- (C);
        \draw (C) -- (A);
        \draw [orange,line width = 2pt] (A) -- (E);
        \draw[color=gray,line width=0.5pt] node[above] at (G) {root}  (G) edge [->] (F);        
        \fill [color_right,draw=black] (C) .. controls +(270:2cm) and +(270:2cm) .. (D);
        \fill [color_right,draw=black]  (D) .. controls +(45:2cm) and +(45:2cm) .. (E);  
        \draw (E)-- (D); \draw (D) -- (C);
        \draw (C) -- (E);
        \fill [black] (A) circle (6pt) ;
        \fill [white,draw=black] (E) circle (6pt);
        \fill [white,draw=black] (C) circle (6pt);  
        \draw[  color=gray,line width=0.5pt]  node[below] at (H) {left sub-root}   ;
        \draw[  color=gray,line width=0.5pt] (H) edge [->, bend left=45] (I) ;
        \draw[  color=gray,line width=0.5pt]  node[below] at (L) {right sub-root}   ;
        \draw[  color=gray,line width=0.5pt] (L) edge[->, bend right=45] (J) ;
        % \draw [help lines, gray!60] (0,0) grid(6,4);
\end{tikzpicture} \notag\\
      &  \Longrightarrow
\begin{tikzpicture}[style_size3,baseline=0cm,>=Stealth] 
    \coordinate (A)  at (1, 4);
    \coordinate (B)  at (0,1 );
    \coordinate (C)  at (3,1);
    \coordinate (D) at  (6,1);
    \coordinate (E) at  (5,4);
    \coordinate (F)  at (3,3);
    \coordinate (G)  at (3,4);
    \def\Ag{20}
    \fill [color_left,draw=black] (A).. controls +(160:2cm) and +(160:2cm) .. (B);
    \fill [color_left,draw=black]  (B) .. controls +(-90:2cm) and +(-90:2cm) .. (C);
    \draw (B)-- (A); \draw (B)-- (C);
    \draw (C) -- (A);
    \draw [orange,line width = 2pt] (A) -- (E);
    % \draw[color=gray,line width=0.5pt] node[above] at (G) {root}  (G) edge [->] (F); 
    \fill [color_right,draw=black] (C) .. controls +(270:2cm) and +(270:2cm) .. (D);
    \fill [color_right,draw=black]  (D) .. controls +(20:2cm) and +(20:2cm) .. (E);  
    \draw (E)-- (D); \draw (D) -- (C);
    \draw (C) -- (E);
    \fill [black] (A) circle (6pt) ;
    \fill [white,draw=black] (E) circle (6pt);
    \fill [white,draw=black] (C) circle (6pt);  
    \coordinate (lsr) at (barycentric cs:A=1,C=1);
    \coordinate (rsr) at (barycentric cs:E=1,C=1);
    \node[color=blue,fill=white,inner sep=1pt] (R) at (3,4){$P$};
    \node (S) at (2 ,2){};
    \node (T) at (4 ,2){};
    % \node[color=orange,fill=white,inner sep=1pt] at (H) {$P$};
    \draw[->,blue] (R) -- (lsr);
    \draw[->,blue] (R) -- (rsr);
    \path (2.,3.5) node [blue] {$\scriptstyle L$};
    \path (4.0,3.5) node [blue] {$\scriptstyle R$};
    %   \draw [help lines, gray!60] (0,0) grid(6,4);
\end{tikzpicture}
\end{align}
% \begin{equation}\label{eq_}
%     \includegraphics[width=10cm]{figtemp/triag_embed.png}
% \end{equation}
An example of recursively applying the embedding  $\triple$ is shown below  which results in the CBT shown below right:
\begin{align}\label{eq_cbttriemb}
\triple  \quad\hookrightarrow \quad
 \begin{tikzpicture}[style_size2,baseline= 1.5cm,>=Stealth]
    \def\A{circle (4pt)}
    \coordinate (A) at (1,4);
    \coordinate (B) at (0,2);
    \coordinate (C) at (2,0);
    \coordinate (D) at (4,2);
    \coordinate (E) at (3,4);
    \draw (A) -- (B) -- (C) -- (D) -- (E) -- (A);
    \draw (B) -- (E) (B) -- (D);
    \fill (A) \A;
    \fill[white,draw=black] (B) \A (C) \A (D) \A (E) \A;        
    \node (b) at (barycentric cs:A=1,B=1) { };
    \node (d) at (barycentric cs:E=1,D=1) {};  
    \node (f) at (barycentric cs:B=1,C=1) {}; 
    \node (g) at (barycentric cs:D=1,C=1) {};   
    \end{tikzpicture}
\quad & \mapsto \quad 
\begin{tikzpicture}[style_size2,baseline= 1.5cm,>=Stealth]
    \def\A{circle (4pt)}
    \coordinate (A) at (1,4);
    \coordinate (B) at (0,2);
    \coordinate (C) at (2,0);
    \coordinate (D) at (4,2);
    \coordinate (E) at (3,4);
    \draw (A) -- (B) -- (C) -- (D) -- (E) -- (A);
    \draw (B) -- (E) (B) -- (D);
    \fill (A) \A;
    \fill[white,draw=black] (B) \A (C) \A (D) \A (E) \A;        
    \node[color=blue,fill=white,inner sep=1pt ] (a) at (barycentric cs:A=1,E=1) {$P$};
    \node (b) at (barycentric cs:A=1,B=1) { };
    \node[color=blue,fill=white,inner sep=1pt ] (c) at (barycentric cs:B=1,E=1) {$P$};    
    \node (d) at (barycentric cs:E=1,D=1) {};  
    \node[color=blue,fill=white,inner sep=1pt ] (e) at (barycentric cs:B=1,D=1) {$P$};  
    \node (f) at (barycentric cs:B=1,C=1) {}; 
    \node (g) at (barycentric cs:D=1,C=1) {}; 
    \draw[ blue] (a) edge [->] node[fill=white,inner sep=1pt,above] {$\scriptstyle L$} (b);
    \draw[ blue] (a) edge [->] node[ inner sep=1pt,right] {$\scriptstyle R$} (c);
    \draw[ blue] (c) edge [->] node[fill=white,inner sep=1pt,above] {$\scriptstyle R$} (d);
    \draw[ blue] (c) edge [->] node[ inner sep=1pt,left] {$\scriptstyle L$} (e);    
    \draw[ blue] (e) edge [->] node[fill=white,inner sep=1pt,above] {$\scriptstyle R$} (g);
    \draw[ blue] (e) edge [->] node[fill=white, inner sep=1pt,above] {$\scriptstyle L$} (f);    
    \end{tikzpicture} \notag\\
&\notag\\
&\mapsto \quad 
\begin{tikzpicture}[style_size2,baseline=1.0cm,>=Stealth]
        \def\R{circle (3pt)}
        \node[blue] (a) at (1,3) {};
        \node (b) at (0,2){};
        \node (c) at (2,2){};
        \node (d) at (1,1){};
        \node (e) at (0,0){};
        \node (f) at (2,0){};
        \node (g) at (3,1){};
        \draw[blue] [->] (a)--(b);\fill[white,draw=blue] (a) \R;
        \draw[blue] [->] (a)--(b);\fill[ draw=black] (b) \R;
        \draw[blue] [->] (a)--(c);\fill[blue] (c) \R;
        \draw[blue] [->] (c)--(d);\fill[blue] (d) \R;
        \draw[blue] [->] (c)--(g);\fill[ draw=black] (g) \R;
        \draw[blue] [->] (d)--(e);\fill[ draw=black] (e) \R;
        \draw[blue] [->] (d)--(f);\fill[ draw=black] (f) \R;
    \end{tikzpicture}
\end{align}

This embedding is a well known embedding of a CBT into a triangulation and thus   shows that the  previously known embedding can be deduced from the magma structure of triangulations.

\subsubsection*{Staircase polygons -- \fref{fam_sp}}
% The staircase polygons illustrate a family where the position of the left and right sub-roots does not necessarily coincide with a  point of the product geometry. 
The position of the root and sub-roots is determined by the `necks' of the structure. 
A neck is the common segment occurring between   two consecutive columns of the polygon which overlap by exactly one cell -- the left side of the leftmost lowest cell is also a neck. 
Three necks occur in the following example:
\begin{equation*}
\begin{tikzpicture}[scale=0.5,line width =1pt,baseline=0.5cm]
% \draw [help lines, gray!50] (0,0) grid(3,2); 
\draw (0,-1) rectangle (1,0);
\draw (0,0) rectangle (1,1);
\draw (1,0) rectangle (2,1);
\draw (1,1) rectangle (2,2);
\draw (2,1) rectangle (3,2);
\end{tikzpicture}
\hspace{1cm}\longrightarrow\hspace{1cm}
\begin{tikzpicture}[scale=0.5,line width =1pt,draw=gray!50,baseline=0.5cm]
% \draw [help lines, gray!50] (0,0) grid(3,2); 
\draw (0,-1) rectangle (1,0);
\draw (0,0) rectangle (1,1);
\draw (1,0) rectangle (2,1);
\draw (1,1) rectangle (2,2);
\draw (2,1) rectangle (3,2);
\draw [line width=2pt,blue] (0,-1) circle (2pt) -- (0,0) circle (2pt);
\draw [line width=2pt,blue] (1,0) circle (2pt) -- (1,1) circle (2pt);
\draw [line width=2pt,blue] (2,1) circle (2pt) -- (2,2) circle (2pt);
\end{tikzpicture}
\end{equation*}
The polygons factorise at the rightmost neck. This neck defines the product geometry and hence the root. 
The neck in the left factor (respec.\ right factor) defines the left sub-root (respec. right sub-root) as illustrated below:
\begin{equation}  
\begin{tikzpicture}[scale=0.5,baseline=0cm,>=Stealth ]
% \draw [help lines, gray!50] (0,0) grid(10,8);    
\path [fill,color_left,draw=black, line width=0.75pt] 
(0,0) -- (0,1) -- (3,4) -- (5,4) --(5,3) --(2,0) -- cycle;
\draw (1.5,0) -- (1.5,2.5)  (1,2) -- (4,2);
\draw (1.5,1.5) -- (3.5,1.5) -- (3.5,2) ;
\draw[line width=2pt] (1.5,1.5) -- (1.5,2) ; %neck
\fill (3.5,1.5) circle (3pt);
\path (5,3.5) [ draw=orange, line width=1pt]  --  (7,3.5) -- (7,4); % product geo
\path[fill=orange]  (7,3.5) circle [radius=4pt]; % product geo
\path [fill,color_right,draw=black, line width=0.75pt]  
(5,4) -- (7,4) -- (10,7) -- (10,8) -- (8,8) -- (5,5) -- cycle;
\draw[line width=2pt] (5,3.5) -- (5,4) ; %neck
\draw (7.5,4.5) -- (7.5,7.5);
\draw (6,6) -- (9,6);
\draw (7.5,5.5) -- (8.5,5.5) -- (8.5,6);
\fill (8.5,5.5) circle (3pt);
\draw[line width=2pt] (7.5,5.5) -- (7.5,6) ; %neck
\node (L) at (3.5,1.5){};
\node  (P) at (7,3.5) { };
\node (U) at (8.5,5.5){};

\draw[ blue,line width=1pt] (P) edge[->, bend left=35] node [below]{$\scriptstyle L$}(L);
\draw[ blue,line width=1pt](P) edge[->, bend right=35] node [below] {$\scriptstyle R$} (U);
\node[blue,right]  (Pt) at (7,3.2) {$P$};

\node[right, color=gray] at (9,5.5) {right sub-root};
\node[right, color=gray] at (8,3) {root};
\node[right, color=gray] at (3.5,0.6) {left sub-root};

\end{tikzpicture}
\end{equation}
For example,
\begin{equation*}
\triple\quad\hookrightarrow\quad
\begin{tikzpicture}[line width=1pt,scale=0.7,baseline=1.0cm] 
% \draw [help lines, gray!50] (0,0) grid(3,3);  
%
\coordinate (1) at (0,0);
\coordinate (2) at (0,1);
\coordinate (3) at (0,2);
\coordinate (4) at (1,2);
\coordinate (5) at (1,3);
\coordinate (6) at (2,3);
\coordinate (7) at (3,3);
\coordinate (8) at (3,2);
\coordinate (9) at (2,2);
\coordinate (10) at (2,1);
\coordinate (11) at (2,0);
\coordinate (12) at (1,0);

\draw (1) -- (3) -- (4) -- (5) -- (7) -- (8) -- (9) -- (11) -- cycle;
\draw  (12) -- (4) (2)--(10) (9) -- (6) (4) -- (9);

\end{tikzpicture}
\quad\longrightarrow\quad
\begin{tikzpicture}[line width=1pt,>={Stealth[length=2mm]},scale=0.7,baseline=1.0cm] 
\def\R{3pt}
% \draw [help lines, gray!50] (0,0) grid(3,3);  
%
\coordinate (1) at (0,0);
\coordinate (2) at (0,1);
\coordinate (3) at (0,2);
\coordinate (4) at (1,2);
\coordinate (5) at (1,3);
\coordinate (6) at (2,3);
\coordinate (7) at (3,3);
\coordinate (8) at (3,2);
\coordinate (9) at (2,2);
\coordinate (10) at (2,1);
\coordinate (11) at (2,0);
\coordinate (12) at (1,0);

\node (a) at (barycentric cs:1=1,2=1){};
\node (b) at (barycentric cs:2=1,3=1){};
\node (c) at (barycentric cs:3=1,4=1){};
\node (d) at (barycentric cs:4=1,5=1){};
\node (e) at (barycentric cs:5=1,6=1){};
\node (f) at (barycentric cs:6=1,7=1){};

\node (n1) at (0,0){};
\node (n2) at (0,1){};
\node (n3) at (0,2){};
\node (n4) at (1,2){};
\node (n5) at (1,3){};
\node (n6) at (2,3){};
\node (n7) at (3,3){};

\node (g) at (1,1){};
\node (h) at (2,2){};
\node (i) at (8){};
\node (j) at (10){};
\node (k) at (11){};

\draw[gray!50,line width=0.5pt] (1) -- (3) -- (4) -- (5) -- (7) -- (8) -- (9) -- (11) -- cycle;
\draw[gray!50,line width=0.5pt]   (12) -- (4) (2)--(10) (9) -- (6) (4) -- (9);
 
\path (n1)    edge   (n2) ; 
\fill (a) circle (\R);
\path (n2)    edge   (n3) ; 
\fill (b) circle (\R);
\path (n3)    edge   (n4) ; 
\fill (c) circle (\R);
\path (n4)    edge   (n5) ; 
\fill (d) circle (\R);
\path (n5)    edge   (n6) ; 
\fill (e) circle (\R);
\path (n6)    edge   (n7) ; 
\fill (f) circle (\R);

\begin{scope}[blue]
\fill (11) circle (\R);
\draw[->] (11)--(a);
\draw[->] (11)--(j);

\fill (1,1) circle (\R);
\draw[->] (1,1)--(b);
\draw[->] (1,1)--(c);

\fill (2,2) circle (\R);
\draw[->] (h)--(d);
\draw[->] (h)--(e);

\fill (8) circle (\R);
\draw  (8) edge[->] node [right, fill=white,inner sep=1pt] {$\scriptstyle R$}   (f);
\draw  (8)  edge[->] node [right, fill=white,inner sep=1pt] {$\scriptstyle L$}  (k);

\fill (10) circle (\R);
\draw[->] (10)--(h);
\draw[->] (10)--(g);

\end{scope}

\end{tikzpicture}
\quad\longrightarrow\quad
\begin{tikzpicture}[line width=0.75pt,scale=0.5,baseline=1.0cm]
% \draw [help lines, gray!50] (0,0) grid(5,4);  
\coordinate (1) at (3,4);
\coordinate (2) at (2,3);
\coordinate (3) at (1,2);
\coordinate (4) at (3,2);
\coordinate (5) at (2,1);
\coordinate (6) at (1,0);
\coordinate (7) at (2.5,0);
\coordinate (8) at (4,1);
\coordinate (9) at (3.5,0);
\coordinate (10) at (5,0);
\coordinate (11) at (4,3);

\draw (1)-- (2) -- (3) (1)--(11);
\draw (2) -- (4) -- (8) -- (9);
\draw (4) -- (5) -- (6) ;
\draw (5) -- (7) (8) -- (10);

\foreach \p in {2,...,11} \fill (\p) circle (5pt); 
\fill[white,draw=black] (1) circle (5pt);

\end{tikzpicture}
\end{equation*}
% \begin{equation}\label{eq_}
%     \includegraphics[width=14cm]{figtemp/sctripemb.png}
% \end{equation}
 
% \noindent\emph{Planar Trees -- \fref{???}}
% \begin{equation}\label{eq_}
%     \includegraphics[width=14cm]{figtemp/planartree_embed.png}
% \end{equation}
% For planar trees the root of the product geometry is represented by a `dash' to avoid confusing it with a node of the tree.

%=================================================================
\subsubsection*{Tree traversal embeddings}
%=================================================================

Assume we have objects $m$ and $n$ each from a different Catalan family but in universal bijection $m=\unimap_{i,j}(n) $.  Then $m$ and $n$ biject to the  same CBT, $t=\unimap_{\ref{fam_bt},i}(m)=\unimap_{\ref{fam_bt},j}(n)$.
% , embedding  $m\hookleftarrow t \hookrightarrow  n$.
It is then formally possible to use $t$ as an intermediate object to effect the  embedding of $m$ directly into  $n$ or vice versa.  
It is rare that this produces a ``natural'' embedded bijection, however there is  at least  one situation where some utility is obtained.
If parts of the generator and product geometry of $m$ can be uniquely associated with any of the pre-fix, in-fix or post-fix positions of $t$ and similarly for $n$, then one of the tree traversals (see \figref{fig:treeTraversal}) will effect the embedded  bijection. 

We will illustrate this  by showing how a Dyck path is embedded in a triangulation. 
This is a well known bijection (see \cite{Stanley:2015aa})  but now we see that it can be deduced from the magma structure of the objects.
First note the up steps, denoted $u$, of a Dyck path are uniquely associated with the in-fix position of the embedded CBT and the down steps with the post-fix position, for example:
\begin{equation}\label{eq_tte1}
\begin{tikzpicture}[style_size1,baseline=0.5cm,inner sep =1pt,>=Stealth]
\draw [help lines, gray!50] (0,0) grid(6,2);  
\coordinate (a) at (0,0)  ;
\coordinate (b) at (1,1) ;
\coordinate (c) at (2,2) ;
\coordinate (d) at (3,1) ;
\coordinate (e) at (4,0) ;
\coordinate (f) at (5,1) ;
\coordinate (g) at (6,0) ;   
\draw[line width=0.5pt] (a)-- (b) -- (c) -- (d) -- (e) -- (f) -- (g);

\node  (na) at (a) { };

 \node (nb) at (b) {\phantom{f}};
 \node  (nc) at (c) {\phantom{f}};
 \node  (nd) at (d) {\phantom{f}};
 \node  (nf) at (f) {\phantom{f}};
 \node (ng) at (g) {\phantom{f}};
 \node  (ne) at (e) {\phantom{f}};

\begin{scope}[blue,dashed,line width=1.5pt] 
\draw(ng)--(nf);
\draw  (ng)--(ne);
\draw  (ne)--(nd);
\draw  (nd)--(nc);
\draw  (nd)--(nb);
\draw  (ne)--(na);
\end{scope}
\begin{scope}[fill=white,draw=blue,line width=1pt,radius=3pt]
\fill [white,draw=blue] (a) circle;
\fill[white,draw=blue] (b) circle;
\fill[white,draw=blue] (c) circle;
\fill[white,draw=blue] (d) circle;
\fill[white,draw=blue] (e) circle;
\fill[white,draw=blue] (g) circle;
\fill[white,draw=blue] (f) circle;       
\end{scope}

\node[above,inner sep=5pt] (gf) at (barycentric cs:ng=3,nf=1) {$d$ };
\node[above,inner sep=5pt] (de) at (barycentric cs:ne=3,nd=1) {$d$ };
\node[above,inner sep=5pt] (dc) at (barycentric cs:nd=3,nc=1) {$d$ };

\node[inner sep=1pt,fill=white] (ab) at (barycentric cs:na=1,nb=1) {$u$};
\node[inner sep=5pt] (edb) at (barycentric cs:ne=5,nd=1) {};
\draw[->,red,line width=1pt,dashed] (ab)-- (edb);

\node[inner sep=1pt,fill=white] (bc) at (barycentric cs:nb=1,nc=1) {$u$};
\node[inner sep=5pt] (dcb) at (barycentric cs:nd=5,nc=1) {};
\draw[->,red,line width=1pt,dashed] (bc)-- (dcb);

\node[inner sep=1pt,fill=white] (ef) at (barycentric cs:ne=1,nf=1) {$u$};
\node[inner sep=5pt] (gfb) at (barycentric cs:ng=5,nf=1) {};
\draw[->,red,line width=1pt,dashed] (ef)-- (gfb);

        \node[right] at (1,-1) {Up step, $u$ to in-fix position};
        \node[right] at (1,-1.5) {Down step, $d$ to post-fix position};

    %  \coordinate (lsr) at (barycentric cs:A=1,C=1);
\end{tikzpicture}
\quad\Rightarrow\quad
\begin{tikzpicture}[style_size2,baseline=1cm,inner sep=1pt ]
\def\C{circle (4pt)}

    \node[circle,draw=blue,fill=white] (a) at (2,3) {\phantom{.}};
    \node [circle,draw=blue,fill=white] (b) at (1,2) {\phantom{.}};
    \node[circle,draw=blue,fill=white] (c) at (0,1) {\phantom{.}};
    \node[circle,draw=blue,fill=white] (d) at (2,1) {\phantom{.}};
    \node[circle,draw=blue,fill=white] (e) at (1,0) {\phantom{.}};
    \node[circle,draw=blue,fill=white] (f) at (3,0) {\phantom{.}};
    \node[circle,draw=blue,fill=white] (g) at (3,2) {\phantom{.}};
        
    \draw [blue](a)--(b) (a)--(g);
    \draw [blue](b)--(c) (b)--(d);  
    \draw[blue] (d)--(e) (d)--(f);
    \fill[white,draw=black] (a) \C ;

    \node[right,inner sep=7pt] at (a) {d};
    \node[right,inner sep=7pt] at (b) {d};
    \node[right,inner sep=7pt] at (d) {d};
    
    \node[below,inner sep=8pt] at (b) {u};
    \node[below,inner sep=8pt] at (d) {u};
    \node[below,inner sep=8pt] at (a) {u};
    \draw[->,color=blue!50,dashed] 
    (1.5,3.5) .. controls +(-135:1cm) and +(90:1cm) .. 
    (-1,0) .. controls +(-90:1cm) and +(180:1cm) .. (2,-1);
    
    % \draw [help lines, gray!60] (0,0) grid(6,4);  
\end{tikzpicture}
\end{equation}
Anti-clockwise traversal gives the well known bijection from a CBT to a Dyck word and hence trivially a Dyck path.

Secondly, for triangulations, if the chords (and the edge right adjacent to the marked node) are oriented  by adding an arrow  away from the node first encountered whilst traversing anti-clockwise around the outside of the triangulation then we can uniquely associate parts of the triangulation with the in-fix and pre-fix positions of the embedded CBT. 
Using the embedding \eqref{eq_cbttriemb} it can be seen that the corner of the triangle opposite an internal node of the CBT is uniquely associated with the in-fix position and the post-fix position is uniquely associated with the end of the chord pointed to by the orientation arrow, for example:
\begin{equation}\label{eq_tte2}
\begin{tikzpicture}[scale=1,baseline=1cm,>=Stealth,color_cbt,line width=1pt]
\def\R{circle (3pt)}
\node (a) at (1,3) {};
\node (b) at (0,2){};
\node (c) at (2,2){};
\node (d) at (1,1){};
\node (e) at (0,0){};
\node (f) at (2,0){};
\node (g) at (3,1){};

\draw   (a)--(b);\fill[white, draw=blue,line width=1pt] (a) \R;
\draw   (a)--(b); 
\draw   (a)--(c);\fill[  color_cbt] (c) \R;
\draw   (c)--(d);\fill[  color_cbt] (d) \R;
\draw  (c)--(g); 
\draw  (d)--(e); 
\draw   (d)--(f);

\fill[red] (1 ,2.7) circle (2pt);  
\fill[red] (2 ,1.7) circle (2pt);   
\fill[red] (1 ,0.7) circle (2pt);  
\end{tikzpicture}
\qquad \leftrightarrow \qquad    
\begin{tikzpicture}[scale=1,baseline= 2cm,>={Stealth[length=2mm]}]
\def\A{circle (4pt)}
\coordinate (A) at (1,4);
\coordinate (B) at (0,2);
\coordinate (C) at (2,0);
\coordinate (D) at (4,2);
\coordinate (E) at (3,4);

\draw (A) -- (B) -- (C) -- (D) -- (E) -- (A);
\draw (B) -- (E) (B) -- (D);
\draw[->] (A) -- (barycentric cs:A=3,E=1) ;
\draw[->] (B) -- (barycentric cs:B=3,E=1) ;
\draw[->] (B) -- (barycentric cs:B=3,D=1) ;
\fill (A) \A;
\fill[white,draw=black] (B) \A (C) \A (D) \A (E) \A;

% CBT nodes    
\node[color=blue,fill=white,inner sep=3pt ] (a) at (barycentric cs:A=1,E=1) { };

\node (b) at (barycentric cs:A=1,B=1) {  };
\node[color=blue,fill=white,inner sep=1pt ] (c) at (barycentric cs:B=1,E=1) { };

\node (d) at (barycentric cs:E=1,D=1) {} ;  
\node[color=blue,fill=white,inner sep=1pt ] (e) at (barycentric cs:B=1,D=1) { };  

\node (f) at (barycentric cs:B=1,C=1) { }; 
\node (g) at (barycentric cs:D=1,C=1) { }; 

% CBT edges
\draw[ color_cbt,line width=1pt] (a) edge     (b);
\draw[ color_cbt,line width=1pt] (a) edge     (c);
\draw[ color_cbt,line width=1pt] (c) edge    (d);
\draw[ color_cbt,line width=1pt] (c) edge    (e);    
\draw[ color_cbt,line width=1pt] (e) edge     (g);
\draw[ color_cbt,line width=1pt] (e) edge    (f);   

% red nodes
\node (An) at (1,4) {.};
\node[inner sep = 3pt] (Bn)   at (barycentric cs:a=1,B=5) {};
\fill [red] (Bn) circle (2pt) ;
\node[inner sep = 3pt] (Cn)   at (barycentric cs:e=1,C=5) {};
\fill [red] (Cn) circle (2pt) ;
\node[inner sep = 3pt] (Dn)   at (barycentric cs:c=1,D=5) {};
\fill [red] (Dn) circle (2pt) ;
% red arrows
\draw[->,red,dashed] (a) -- (Bn);
\draw[->,red,dashed] (c) -- (Dn);
\draw[->,red,dashed] (e) -- (Cn);
%
% light blue nodes
\node[inner sep = 1pt] (En)   at (barycentric cs:A=1,E=5) {};
\fill [blue!30] (En) circle (2pt) ;
\node[inner sep = 1pt] (Em)   at (barycentric cs:B=1,E=5) {};
\fill [blue!30] (Em) circle (2pt) ;
\node[inner sep = 1pt] (Dm)   at (barycentric cs:B=1,D=5) {};
\fill [blue!30] (Dm) circle (2pt) ;

\draw[->,blue!30,dashed] (a) .. controls +(35:.4cm) and +(145:.4cm) .. (En);
\draw[->,blue!30,dashed] (c) .. controls +(35:.4cm) and +(145:.4cm) .. (Em);
\draw[->,blue!30,dashed] (e) .. controls +(35:.4cm) and +(145:.4cm) .. (Dm); 
\fill[blue!30] (5,4) circle (2pt);
\fill[red] (5,3.5) circle (2pt);
\node[right,inner sep = 3pt] at (5,4) {Post-fix association};
\node[right,inner sep = 3pt] at (5,3.5) {In-fix association};

\fill[blue!30] (1.3,1) circle (2pt);  
\fill[blue!30] (2.3,2) circle (2pt);   
\fill[blue!30] (1.3,3) circle (2pt);   
 % blue nodes
\fill[color_cbt] (e) circle (3pt);
\fill[color_cbt] (c) circle (3pt);
\fill[white, draw=blue,line width=1pt] (a) circle (3pt);

%  \draw [help lines, gray!60] (0,0) grid(4,4);   

\end{tikzpicture}
\end{equation}
% \begin{equation}
%     \includegraphics[width=8cm]{figtemp/treetravbij_B.png}
% \end{equation}
Combining \eqref{eq_tte1} and \eqref{eq_tte2} gives the traversal embedded bijection between triangulations and Dyck words (and hence paths). For example
\begin{equation*}
 \begin{tikzpicture}[scale=1,baseline= 2cm,>=Stealth]
    \def\A{circle (4pt)}
    \coordinate (A) at (1,4);
    \coordinate (B) at (0,2);
    \coordinate (C) at (2,0);
    \coordinate (D) at (4,2);
    \coordinate (E) at (3,4);

    \draw (A) -- (B) -- (C) -- (D) -- (E) -- (A);
    \draw (B) -- (E) (B) -- (D);
        \draw[->] (A) -- (barycentric cs:A=3,E=1) ;
    \draw[->] (B) -- (barycentric cs:B=3,E=1) ;
    \draw[->] (B) -- (barycentric cs:B=3,D=1) ;
    \fill (A) \A;
    \fill[white,draw=black] (B) \A (C) \A (D) \A (E) \A;
    
    % CBT nodes    
    \coordinate(a) at (barycentric cs:A=1,E=1) ;
    % \fill[white, draw=blue,line width=1pt] (a) circle (3pt);
    \coordinate (b) at (barycentric cs:A=1,B=1) ;
    \coordinate (c) at (barycentric cs:B=1,E=1) ;
    % \fill[blue] (c) circle (3pt);
    \coordinate (d) at (barycentric cs:E=1,D=1) ;  
    \coordinate (e) at (barycentric cs:B=1,D=1) ;  
    % \fill[blue] (e) circle (3pt);
    \coordinate (f) at (barycentric cs:B=1,C=1) ; 
    \coordinate (g) at (barycentric cs:D=1,C=1) ;

    % red nodes
    \node (An) at (1,4) {.};
    \node[inner sep = 3pt] (Bn)   at (barycentric cs:a=1,B=5) {};
    \fill [red] (Bn) circle (2pt) ;
    \node[inner sep = 3pt] (Cn)   at (barycentric cs:e=1,C=5) {};
    \fill [red] (Cn) circle (2pt) ;
    \node[inner sep = 3pt] (Dn)   at (barycentric cs:c=1,D=5) {};
    \fill [red] (Dn) circle (2pt) ;
 
    % light blue nodes
    \node[inner sep = 1pt] (En)   at (barycentric cs:A=1,E=5) {};
    \fill [blue!30] (En) circle (2pt) ;
    \node[inner sep = 1pt] (Em)   at (barycentric cs:B=1,E=7) {};
    \fill [blue!30] (Em) circle (2pt) ;
    \node[inner sep = 1pt] (Dm)   at (barycentric cs:B=1,D=6) {};
    \fill [blue!30] (Dm) circle (2pt) ;
    
    \path  (-0.22,2.3) node {$u$};
    \path  (2.3,-0.2) node {$u$};
    \path  (4.0,1.6) node {$d$};
    \path  (4.0,2.4) node {$u$};
    \path  (3.4,3.8) node {$d$};
    \path  (3,4.4) node {$d$};

    % \draw[->,blue!30,dashed,line width=1pt] (0.5,4.4) .. controls +(200:2cm) and +(145:2cm) .. (0,-0.4); 
%  \fill [color_right,draw=black] (0,0) arc [radius=0.75,start angle = 180, end angle =0] ;
  \draw [|->,blue!30,dashed,line width=1pt] (0.3,4.2) arc [radius=2.7cm, start angle=130,delta angle=290];
    
% \draw [help lines, gray!60] (0,0) grid(4,4);   

    \end{tikzpicture}
    \leftrightarrow
    uududd
    \leftrightarrow
\begin{tikzpicture}[scale=.8,baseline=0.5cm]
    \draw [help lines, gray!60] (0,0) grid(6,2);  
    \coordinate (a) at (0,0)  ;
    \coordinate (b) at (1,1) ;
    \coordinate (c) at (2,2) ;
    \coordinate (d) at (3,1) ;
    \coordinate (e) at (4,2) ;
    \coordinate (f) at (5,1) ;
    \coordinate (g) at (6,0) ;   
    \draw (a)-- (b) -- (c) -- (d) -- (e) -- (f) -- (g);
\end{tikzpicture}    
\end{equation*}

As a final example we show how the CBT embedding into a 321-avoiding permutation leads to the Dyck path embedding of Rotem \cite{Rotem:1975aa}. From the product definition in the Appendix (family \fref{fam_ap}) we get the simplified representation 
\begin{equation*}
    \begin{tikzpicture}[scale=0.6,line width=1pt,baseline = 0.5cm]
\fill [color_left,draw=black]  (0,0) rectangle (2,2) ;
\end{tikzpicture}
\star
\begin{tikzpicture}[scale=0.6,line width=1pt,baseline = 0.5cm]
\fill [color_right,draw=black]  (0,0) rectangle (3,3) ;
\end{tikzpicture}
=
\begin{tikzpicture}[scale=0.6,line width=1pt,baseline = 0.5cm]
\draw [help lines, gray ] grid(6,6);
\fill[orange, opacity=0.5] (2,3) rectangle (6,4);
\fill[orange, opacity=0.5] (5,0) rectangle (6,3);
\draw (0,0) rectangle (6,6);
\fill [color_left,draw=black]  (0,4) rectangle (2,6) ;
\fill [color_right,draw=black]  (2,0) rectangle (5,3) ;
\draw[dashed] (0,6) -- (6,0);
\coordinate (P) at (6,0) ;
\coordinate (L) at (5,0);
\coordinate (R) at (2,4);
\fill (P) circle (6pt); 
\node (An) at (6,0) { };
\node[anchor=west] (Ar) at (7,-1) {Root };
\draw[  color=gray,line width=0.5pt]  (Ar) edge [->, bend left=15] (An) ;
\fill[white, draw=black] (L) circle (6pt); 
\node (Ln) at (5,0) { };
\node[anchor=east] (Lr) at (3,-1) {Left sub-root };
\draw[  color=gray,line width=0.5pt]  (Lr) edge [->, bend right=15] (Ln) ;
\fill[white, draw=black]  (R) circle (6pt); 
\node (Rn) at (2,4) { };
\node[anchor=west] (Rr) at (3,7) {Right sub-root };
\draw[  color=gray,line width=0.5pt]  (Rr) edge [->, bend right=15] (Rn) ;
\end{tikzpicture}
\end{equation*}
which more clearly shows the ``product'' geometry  (in orange) ie.\ the new row a dot is added and the new column created by the final cascaded dot. From this we choose the root of the product geometry to be the south-east corner of the permutation, giving
\begin{equation*}
\triple \qquad \hookrightarrow \qquad
\begin{tikzpicture}[scale=0.6,line width=1pt,baseline = 1.5cm,>={Stealth[length=2mm]}]
\draw [help lines, gray ] grid(6,6);
\draw (0,0) rectangle (6,6);
\fill [color_left,draw=black]  (0,4) rectangle (2,6) ;
\fill [color_right,draw=black]  (2,0) rectangle (5,3) ;
\coordinate (P) at (6,0) ;
\coordinate (L) at (5,0);
\coordinate (R) at (2,4);
\fill[white, draw=black] (L) circle (6pt); 
\node (Ln) at (5,0) { };
\fill[white, draw=black]  (R) circle (6pt); 
\node (Rn) at (2,4) { };
\draw[color_cbt] (P) node[circle, fill=white,inner sep = 1pt] (An)   {$\scriptstyle P$};
\draw[color_cbt,->] (An) edge (Ln);
\draw[color_cbt,->] (An)edge (Rn);
\draw[color_cbt] (5.5,1) node[fill=white, inner sep=1pt]  {$\scriptstyle L$} ;
\draw[color_cbt] (5.5,-0.5) node[fill=white, inner sep=1pt]  {$\scriptstyle R$} ;
\end{tikzpicture}
\end{equation*}
As an example consider the permutation $1342$ which results in the CBT embedding
\begin{equation*}
\triple \hookrightarrow
\begin{tikzpicture}[>=Stealth,style_size2,baseline=1cm,] 

\draw [help lines, gray ] grid(4,4);
\draw[fill=white,gray] (1.5,0.5) circle (4pt)  (3.5,1.5) circle (4pt)  (2.5,2.5) circle (4pt) (0.5,3.5) circle (4pt) ;
\coordinate (A) at (4,0) ;
\coordinate (B) at (3,0) ;
\node (Bn) at (B){\phantom{.}};
\node[circle,color_cbt,fill=white,inner sep=1pt] (An) at (A)  {$P$};
 \draw [color_cbt,->,line width=1pt]  (An) edge node[below,  inner sep=2pt] {$\scriptstyle R$}(B);
\coordinate (C) at (1,3);

 \node  (Cn) at (C) {};
  \draw [color_cbt,->,line width=1pt]  (An) edge node[  fill=white, inner sep=4pt] {$\scriptstyle L$}(C);
\coordinate (D) at (2,0);

\node (Dn) at (D) {\phantom{.}};
\draw [color_cbt,->,line width=1pt]  (Bn) edge  (D);
\coordinate (E) at (1,0);
\node (En) at (E) {\phantom{.}};
\draw [color_cbt,->,line width=1pt]  (Dn) edge  (E);
\coordinate (F) at (1,1);
\node (Fn) at (F) {\phantom{.}};
\draw [color_cbt,->,line width=1pt]  (Dn) edge  (F);
\coordinate (G) at (1,2);
\node (Gn) at (G) {\phantom{.}};
\draw [color_cbt,->,line width=1pt]  (Bn) edge  (G);
\coordinate (H) at (0,3);
\node (Hn) at (H) {\phantom{.}};
\draw [color_cbt,->,line width=1pt]  (Cn) edge  (H);
\coordinate (I) at (0,4);
\node (In) at (I) {\phantom{.}};
\draw [blue,->,line width=1pt]  (Cn) edge  (I);
%
%
% \node[above] at (A) {A};
% \node[above] at (B) {B};
% \node[above] at (C) {C};
% \node[above] at (D) {D};
% \node[above] at (E) {E};
% \node[above] at (F) {F};
% \node[above] at (G) {G};
% \node[above] at (H) {H};
% \node[above] at (I) {I};
\end{tikzpicture}
\quad \leadsto \quad 
\begin{tikzpicture}[baseline=1cm,style_size2]
\def\C{circle (4pt)}
% \draw [help lines, gray ] grid(5,3);
\coordinate (1) at (0,1) ;
\coordinate (2) at (1,2) ;
\coordinate (3) at (2,1) ;
\coordinate (4) at (3,3) ;
\coordinate (5) at (3,1) ;
\coordinate (6) at (4,2) ;
\coordinate (7) at (4,0) ;
\coordinate (8) at (5,1) ;  
\coordinate (9) at (6,0) ;  
 \draw (4)--(2) (4)--(6);
 \draw (2)--(1) (2)--(3);  
  \draw (6)--(5) (6)--(8);
   \draw (8)--(7) (8)--(9);
  \fill[white,draw=black] (1) \C ;
  \fill (2) \C (3) \C (4) \C (5) \C (6) \C (7) \C (8) \C (9) \C;
\end{tikzpicture}
\end{equation*}
From the counter clockwise traversal of the  CBT we get the factorisation
\begin{equation}\label{eq_perfact}
  1342= (\emptyset \st  \emptyset) \star (\emptyset\star(\emptyset\star\emptyset))\,.
\end{equation}
where $\emptyset$ is the 321-avoiding generator (the empty permutation).
Comparing this example with the Dyck path example \eqref{eq_dyckcbtembed} shows clearly the bijection to the Dyck path $uduuuddd$  (which can also be obtained by multiply out the product \eqref{eq_perfact} using the Dyck path generator).

\subsection*{Sequence families}
%=================================================================

If the Catalan object is a sequence, for example frieze patterns \fref{fam_fz},  then associating a CBT with the elements of the sequence may not be  of much utility as the factorisation process may be sufficiently simple to perform on the sequence itself.  
For example, for matching brackets (or Dyck words) finding the rightmost bracket matching the last bracket (to the right) is straightforward. 

However not all `sequence' families are this easy to factorise. In these cases there may exist a simple version of the universal bijection to some geometric family and then for the geometric family one can construct the CBT embedding and hence obtain the factorisation.  
Clearly this is the case for matching brackets: biject to Dyck paths and then use the CBT tree embedding \eqref{eq_dyckcbtembed}.  
A less trivial example is the two row  standard tableaux family (\fref{fam_st} in the appendix), considered as a pair of sequences:
\[
(\text{top row},\text{bottom row})=(t_1\dots t_n,b_1\dots b_n)\,.
\]
In this case the universal  bijection to Dyck paths takes a simple  form: the top row indexes the up steps and the bottom row indexes the down steps. Thus the   CBT embedding (and hence decomposition) of 
\begin{equation*}
\begin{tabular}{|c|c|c|}
    \hline
    1 & 2 & 4\\
    \hline
    3 & 5 & 6\\
    \hline
\end{tabular}
\end{equation*}
becomes
\begin{align*}
\begin{tabular}{|c|c|c|}
\hline  
1 &  2 & 4\\
\hline  
3 &  5 & 6 \\
\hline 
\end{tabular} 
\quad  \leftrightarrow \quad 
\begin{tikzpicture}[style_size3,baseline=0.4cm]  
\def\E{0.2}
\draw [help lines,color=gray!30] (0,0) grid (6,2);
\draw (0,0) edge node[above,color=gray] {1} (1,1) 
(1,1) edge node[above,color=gray] {2} (2,2) 
(2,2) edge node[above,color=gray] {3} ( 3,1) 
( 3,1) edge node[above,color=gray] {4} (4,2) 
(4,2) edge node[above,color=gray] {5} (5,1) 
(5,1) edge node[above,color=gray] {6} (6,0);
\end{tikzpicture}
\quad \Rightarrow\quad  &
\begin{tikzpicture}[style_size3,baseline=0.4cm,inner sep = 2pt]  
\def\E{0.2}
\draw [help lines,color=gray!30] (0,0) grid (6,2);
\draw (0,0) edge (1,1) 
(1,1) edge  (2,2) 
(2,2) edge  ( 3,1) 
( 3,1) edge  (4,2) 
(4,2) edge    (5,1) 
(5,1) edge   (6,0);
\draw[tripleColor] (6+\E,0) edge node[above] {$\scriptstyle R$} (5+\E,1) (5+\E,1) -- (4+\E,2)
(6+\E,0) edge node[above] {$\scriptstyle L$} (0+\E,0) (5+\E,1) -- (1+\E,1) (3+\E,1) -- (2+\E,2);
\begin{scope}[fill=tripleColor]
\path (6+\E,0) pic{circle_nocolor};
\path (5+\E,1) pic{circle_nocolor}; 
\path (3+\E,1) pic{circle_nocolor};
\end{scope}
\end{tikzpicture} \\
& \\
\quad  \Rightarrow  &
\quad 
\begin{tikzpicture}[style_size3,baseline=0.4cm]  
%   \draw [help lines,color=gray!30] (0,0) grid (3,3);
\draw[tripleColor]   (1,3) -- (2,2) -- (1,1) -- (0,0) (2,2)--(3,1) (1,3) -- (0,2) (1,1) -- (2,0);
\begin{scope}[fill=tripleColor]
\path (1 ,3) pic{circle_nocolor};
\path (1 ,1) pic{circle_nocolor}; 
\path (2 ,2) pic{circle_nocolor};
\end{scope}
\end{tikzpicture}
\quad  = 
\quad\bullet  \star  (\,  (\, \bullet  \star \bullet  )\star  \bullet   )\\
& \\
\quad  \Rightarrow    \qquad 
\begin{tabular}{|c|c|c|}
\hline  
1 &  2 & 4\\
\hline  
3 &  5 & 6 \\
\hline 
\end{tabular} \quad 
= & \quad 
\emptyset \star ((\emptyset\star\emptyset)\star\emptyset)
% \begin{tikzpicture}[ baseline=0.15cm] \path (0,0) pic[scale=0.6,radius=2pt]{gentrt};  \end{tikzpicture}\star
% \Bigl(\,
% \Bigl(\,
% \begin{tikzpicture}[ baseline=0.15cm] \path (0,0) pic[scale=0.6,radius=2pt]{gentrt};  \end{tikzpicture}
% \star
% \begin{tikzpicture}[ baseline=0.15cm] \path (0,0) pic[scale=0.6,radius=2pt]{gentrt};  \end{tikzpicture}
% \,\Bigr)
% \star
% \begin{tikzpicture}[ baseline=0.15cm] \path (0,0) pic[scale=0.6,radius=2pt]{gentrt};  \end{tikzpicture}
% \Bigr)\,.
\end{align*}
Another example is frieze sequences. 
These biject to  $n$-gon triangulations \cite{Conway:1973aa}: the frieze sequence is the same as the sequence obtained by a counter clockwise traversal of the polygon noting the degree of each vertex (less one). 
For example, if only a single level of factorisation is required then using the    $n$-gon triangulation bijection  gives
\begin{equation*}
 221412\quad \leftrightarrow\quad
\begin{tikzpicture}[style_size2,inner sep = 6pt,baseline =0.6cm,, node font=\small]
%  \draw[help lines] (0,0) grid (3,2);
  \coordinate (a) at (1,0)  ;
 \coordinate (b) at (0,1)  ;
 \coordinate (c) at (1,2)  ;
 \coordinate (d) at (2,2)  ;
 \coordinate(e) at (3,1) ;
 \coordinate (f) at (2,0) ;
 \draw (1,0) -- (0,1) -- (1,2) -- (2,2) -- (3,1) -- (2,0) -- cycle;
 \path (c) pic{circle_black};
\begin{scope}[color=gray!80]
 \node[below] (an) at (a) {1};
 \node[left] (bn) at (b) {2};
 \node[above] (cn) at (c) {2};
 \node[above] (dn) at (d) {2};
 \node[right] (en) at (e) {1};
 \node[below] (fn) at (f) {4};
\end{scope}
 \draw (b) -- (f);
 \draw (c) -- (f);
 \draw (d) -- (f);
\end{tikzpicture}
\quad = \quad 
\begin{tikzpicture}[style_size2,inner sep = 6pt,baseline =0.6cm,, node font=\small]
%   \draw[help lines] (0,0) grid (3,2);
  \coordinate (a) at (1,0)  ;
 \coordinate (b) at (0,1)  ;
 \coordinate (c) at (1,2)  ;
 \coordinate (d) at (3,2)  ;
 \coordinate(e) at (4,1) ;
 \coordinate (f) at (2,0) ;
  \coordinate (f2) at (3,0) ;
 \draw (a) -- (b) -- (c) -- (f) -- cycle;
 \draw (f2) -- (d) -- (e) -- cycle;
 
 \node (m) at (2.4,1){$\star$};
 
 \path (c) pic{circle_black};
  \path (f2) pic{circle_black};
   \begin{scope}[color=gray!80]
 \node[below] (an) at (a) {1};
 \node[left] (bn) at (b) {2};
 \node[above] (cn) at (c) {1};
 \node[above] (dn) at (d) {1};
 \node[right] (en) at (e) {1};
 \node[below] (fn) at (f) {2};
  \node[below] (fn2) at (f2) {1};
  \end{scope}
 \draw (b) -- (f);
 \draw (c) -- (f);
 
\end{tikzpicture}
\quad \Rightarrow \quad 1212\star 111
\end{equation*}
%
% \begin{align*}
%     &221412 \mapsto \raisebox{-0.5\height}{\includegraphics[scale=1]{figs/friezeEmd2-.pdf}}\\
%     &  = 1212\st 111\,.
%     %  (00\st(00\st00))\st(00\st00)
% \end{align*}
or, for a full decomposition the embedded CBT can be used:
\begin{align*}
221412   \quad\mapsto\quad   
\begin{tikzpicture}[style_size2,inner sep = 4pt,baseline =0.6cm,, node font=\small]  
%  \draw[help lines] (0,0) grid (3,2);
  \coordinate (a) at (1,0)  ;
 \coordinate (b) at (0,1)  ;
 \coordinate (c) at (1,2)  ;
 \coordinate (d) at (2,2)  ;
 \coordinate(e) at (3,1) ;
 \coordinate (f) at (2,0) ;
 \draw (1,0) -- (0,1) -- (1,2) -- (2,2) -- (3,1) -- (2,0) -- cycle;
 \path (c) pic{circle_black};
 \begin{scope}[color=gray!80]
  \node[below ] (an) at (a) {1};
 \node[left] (bn) at (b) {2};
 \node[above] (cn) at (c) {2};
 \node[above] (dn) at (d) {2};
 \node[right] (en) at (e) {1};
 \node[below] (fn) at (f) {4};
 \end{scope}
 \draw (b) -- (f);
 \draw (c) -- (f);
 \draw (d) -- (f);
 \coordinate (t4) at (barycentric cs:c=1,d=1) ;
 \coordinate (t3) at (barycentric cs:d=1,f=1) ;
 \coordinate (t2) at (barycentric cs:c=1,f=1) ;
 \coordinate (t1) at (barycentric cs:f=1,b=1) ;
  \coordinate (t5) at (barycentric cs:d=1,e=1) ;
 \coordinate (t6) at (barycentric cs:e=1,f=1) ;
 \coordinate (t7) at (barycentric cs:f=1,a=1) ;
 \coordinate (t8) at (barycentric cs:a=1,b=1) ;
  \coordinate (t9) at (barycentric cs:b=1,c=1) ;
 \draw[tripleColor] (t4) -- (t3) ;
\draw[tripleColor] (t4) -- (t2) ;
\draw[tripleColor] (t3) -- (t5) ;
\draw[tripleColor] (t3) -- (t6) ;
\draw[tripleColor] (t2) -- (t1) ;
\draw[tripleColor] (t2) -- (t9) ;
\draw[tripleColor] (t1) -- (t7) ;
\draw[tripleColor] (t1) -- (t8) ;
\path[fill=tripleColor] (t4)  pic{circle_nocolor} ;
\path[fill=tripleColor] (t3)  pic{circle_nocolor} ;
\path[fill=tripleColor] (t2)  pic{circle_nocolor} ;
\path[fill=tripleColor] (t1)  pic{circle_nocolor} ;
\end{tikzpicture}
\quad \mapsto\quad  
                 &  
                 \begin{tikzpicture}[style_size4,tripleColor,baseline =0.6cm ]  
% \draw[help lines] (0,0) grid (7,4);
\draw (0,1) -- (1,2) -- (4,4) -- (6,2) -- (7,1);
\draw (1,2) -- (2,1) (2,1) -- (3,0) (2,1) -- (1,0) (6,2) -- (5,1);
\path (4,4) pic{circle_nocolor}  
(1,2) pic{circle_nocolor}  
(0,1) pic{circle_nocolor}
(2,1) pic{circle_nocolor}
(1,0) pic{circle_nocolor}
(3,0) pic{circle_nocolor} 
(6,2) pic{circle_nocolor} 
(5,1) pic{circle_nocolor} 
(7,1) pic{circle_nocolor} 
;
\end{tikzpicture}\\
    \quad\mapsto\quad & (\bullet \star (\bullet \star \bullet))\star (\bullet\star\bullet)\\
    % \quad\mapsto\quad & (00 \star (00 \star 00))\star (00\star 00)\\
\Rightarrow \quad 221412   \quad= \quad &     (00 \star (00 \star 00))\star (00\star 00) \;.
\end{align*}
%

% This process can be restated as an algorithm acting directly on the sequence.
% where $00$  is the frieze pattern generator. 
% Similarly to the tableau, if only a single level of factorisation is required then the triangulation   can be factored and each factor mapped back to a frieze sequence.

%============================================= 
\section{Narayana connection}  
 %=============================================
\label{sec_naray}

The distribution of  the number of right  (or left)  multiplications by the generator in the   decomposition of any element of some Catalan family $\ffm_i$ is of interest because, as will be shown below, it is given by the Narayana distribution. 
Let $N_{n,k}$ be the number of elements in $\canmag_\eps$ with norm $n$ and $k$ right multiplications by the generator.  
The $N_{n,k}$ distribution can also be considered to arise from a map 
$$
N_r:\canmag_{\eps}\to \mathbb{N}\,;\, w\mapsto N_r(w),
$$
where $N_r(w)$ is the number of right multiplications by the generator in the infix form of $w$, then $N_{n,k}=|\set{w\in \canmag_{\eps} : \norm{w}=n,\, N_r(w)=k}|$. 
In the following example the right multiplications  by the generator are boxed and the image under $N_r$ is shown:
\begin{align*}
	 \eps\st(\eps\st(\eps\st\fbox{$\eps$}))&\mapsto 1\\
	  (\eps\st(\eps\st\fbox{$\eps$}) )\st \fbox{$\eps$}&\mapsto 2\\
	 (\eps\st\fbox{$\eps$})\st(\eps\st\fbox{$\eps$})&\mapsto 2 \\
     \eps \st((\eps\st \fbox{$\eps$})\st \fbox{$\eps$})&\mapsto 2\\
	  ((\eps\st \fbox{$\eps$})\st \fbox{$\eps$})\st\fbox{$\eps$}&\mapsto 3. 
\end{align*} 
Thus $N_{4,1}=1$, $N_{4,2}=3$ and $N_{4,3}=1$.
The $N_r$ map on $\canmag_{\eps}$ is clearly additive over the product, that is, it is an additive morphism:
\begin{equation}
    N_r(w_1\st w_2) = N_r(w_1)+N_r( w_2)\,.
\end{equation}

From the counter-clockwise traversal of a complete binary tree -- see \figref{fig:treeTraversal} -- it is clear that right leaves correspond to right multiplications by the generator. 
Furthermore, from the embedding bijection of complete binary trees into Dyck paths (see \eqref{eq_cbtPathem}) it is clear that peaks in the path (ie.\ an up step immediately followed by a down step) correspond to right leaves in the tree and hence the number of peaks in a $2n$ step path is also given by $N_{n,k}$. 
Finally,   Narayana showed \cite{narayana:1959ix} that the number of $2n$ step paths with $k$ peaks  is given by  
\begin{equation}\label{eq_nara}
 N_{n,k}=\frac{1}{n-1}\binom{n-1}{k}\binom{n-1}{k-1}\,,\qquad 1\le k\le n-1\,.
\end{equation}
which tells us that $N_{n,k}$ is the Narayana distribution.

Since the universal map $\unimap: \ffm_i \to \ffm_j$ does not change the number of right multiplications the map, $N_r$  is invariant under the action of $\unimap$, that is,   for any pair $w\in\ffm_i$ and $u\in\ffm_j$ with $u=\unimap(w)$ we have that  
$$
N_r(w)=N_r(\unimap(w)).
$$
Thus we have the following result.
\begin{prop} \label{thm_nara}
Let $(\ffm_i,\st_i)$ be a Catalan magma with generator, $\eps_i$,  and $\ffm_{i,n}$ the subset of elements with \val $n$. Then the number of elements, $N_{n,k}$, in $\ffm_{i,n}$ whose decomposition has $k$ right multiplications by the generator,  is given by the Narayana distribution \eqref{eq_nara}.
\end{prop}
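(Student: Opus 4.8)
The plan is to reduce the claim to a single reference family by means of the universal isomorphism, and then to invoke Narayana's classical enumeration of Dyck paths by peaks. The whole argument is a synthesis of facts already assembled in this section; the only step needing genuine care is the identification of right multiplications with peaks.

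First I would verify that the statistic $N_r$ is transported unchanged by the universal bijection. Given any two Catalan magmas $(\ffm_i,\st_i)$ and $(\ffm_j,\st_j)$, Corollary~\ref{thm_gamma} says $\unimap_{i,j}$ is a free magma isomorphism; by \defref{def_catmag} both carry an additive norm with a single generator of norm one, and since $\unimap_{i,j}$ sends generator to generator and respects the product, an easy induction on $\norm{w}$ gives $\norm{\unimap_{i,j}(w)}=\norm{w}$. Moreover $\unimap_{i,j}$ acts on the decomposition $\pdni(w)$ of \eqref{eq_ffin} purely by the symbolic substitution $\eps_i\mapsto\eps_j$, $\st_i\mapsto\st_j$, so it carries every subproduct of the form $m_1\st_i\eps_i$ to one of the form $\unimap_{i,j}(m_1)\st_j\eps_j$ and conversely. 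Hence $N_r(\unimap_{i,j}(w))=N_r(w)$, and therefore the joint distribution $N_{n,k}$ is identical in every Catalan magma. It then suffices to compute it in one convenient family.

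Next I would carry out that computation in the Dyck path family \fref{fam_dp}. Here I claim right multiplications by the generator are in bijection with the peaks of the path (an up-step immediately followed by a down-step). The cleanest justification is structural: by the embedding \eqref{eq_cbtPathem} of the complete binary tree into the Dyck path, the right leaves of the embedded tree land exactly on the peaks, while by the counter-clockwise traversal of \figref{fig:treeTraversal} the right leaves are precisely the right-factor generators counted by $N_r$. Equivalently one argues directly from the product rule \eqref{eq_dyck_prod}: a right multiplication $w\st_\dpn\eps$ appends an up-step, the empty path, and a down-step, creating one new peak, and an induction on the norm shows these exhaust all peaks. Either way $N_r(d)$ equals the number of peaks of $d$. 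Matching indices, a Dyck path of norm $n$ has $n-1$ up-steps, i.e. semilength $n-1$, so $N_{n,k}$ is the number of semilength-$(n-1)$ Dyck paths with $k$ peaks; Narayana's theorem \cite{narayana:1959ix} then yields exactly \eqref{eq_nara}, $N_{n,k}=\frac{1}{n-1}\binom{n-1}{k}\binom{n-1}{k-1}$, which by the invariance above holds in $\ffm_i$ as well.

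The main obstacle is this middle step: rigorously establishing the peaks $\leftrightarrow$ right-multiplications correspondence. Because the Dyck product nests the added up/down pair inside the existing path, one must check that no peak is created except by a right multiplication by the (empty) generator and that distinct right multiplications yield distinct peaks. I expect to discharge this either through the structural CBT-embedding argument above, which makes the bijection manifest, or by a careful induction on $\norm{d}$ using \eqref{eq_dyck_prod}; the invariance and Narayana steps are then routine.
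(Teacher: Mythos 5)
Your proposal is correct and follows essentially the same route as the paper: the paper likewise identifies right multiplications with right leaves of the embedded CBT, transports these to peaks of Dyck paths via the embedding \eqref{eq_cbtPathem}, invokes Narayana's count of paths by peaks, and concludes by noting that $N_r$ is invariant under the universal bijection $\unimap$. Your explicit check of the norm/semilength index matching is a small clarification the paper glosses over, but the argument is the same.
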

We are interested in knowing whether  there is  any structure or pattern in the Catalan object that is directly associated with right multiplication and hence will have a Narayana distribution.
The previous examples: the right leaves of the complete binary tree and the number of  peaks  in a  Dyck path  \cite{sulanke:1998mz,sulanke:2002lc} are two such structures. 

The previous argument connecting peaks in Dyck paths to right multiplications is a specific instance of the following method: 
if the universal bijection   embeds $w$ into $u=\unimap(w)$ and we can   identify the Narayana structures in $w$, then their image under the embedding form of $\unimap$   will give the corresponding structure in the image object.

Using the embedding of the complete binary tree into a triangulation  -- illustrated in \eqref{cbtEmbedTri7} -- shows that the Narayana structure  is (schematically),
\begin{center}
\begin{tikzpicture}[>=Stealth]
\coordinate (A) at (0,0);
\coordinate (B) at (3,1);
\coordinate (C) at (2.7,-0.25);
\coordinate (M) at (barycentric cs:A=1,B=1);
\fill[blue,opacity=0.2] (A) -- (B) -- (C) -- cycle;
\draw[line width=1pt] (A) edge[->] (M) -- (B) (B) -- (C) ;
\draw[dashed] (A) -- (80:1cm) (A) -- (-80:1cm);
\draw[dashed] (B) -- ++(120:1cm) (C) -- ++(-120:1cm) ;
\draw (B) -- ++(140:1cm) (B) -- ++(160:1cm) ;
\draw (C) -- ++(-140:1cm) (C) -- ++(-160:1cm) ;
\draw[fill=white] (A) circle (3pt) node[anchor=east,inner sep = 6pt] {a} ;
\draw[fill=white] (B) circle (3pt) node[anchor=west,inner sep = 6pt] {b} ;
\draw[fill=white] (C) circle (3pt) node[anchor=west,inner sep = 6pt] {c} ;    
\end{tikzpicture}
\end{center}
that is, the polygon edge $cb$ which is defined by the existence of a  chord $ab$ that is incident on a node $b$ and adjacent to an earlier (traversing the polygon counter-clockwise from the marked node) node $a$. 
Note, the chord $ab$ can also be the polygon edge to the right of the marked vertex.

A related approach is to consider the schematic form of the magma product and to track the effect of a right multiplication.
For example, for the Catalan floor plans (\fref{fam_fp} in Appendix) a right multiplication by the generator (an edge) results in a rectangle spanning  the floor plan (ie.\ a new row), which upon further multiplication on the left  by an arbitrary floor plan,  moves the `row' (rectangle 3) to the right hand side of the floor plan, that is, 
\begin{equation*}
\begin{tikzpicture}[baseline=0.4cm]
\fill[color_left,draw=black] (0,0) rectangle (1,1) ; 
\node at (0.5,0.5) {1};
\end{tikzpicture}
\star
\Biggl(\,
 \begin{tikzpicture}[baseline=0.4cm]
\fill[color_right,draw=black] (0,0) rectangle (1.5,1) ; 
\node at (0.75,0.5) {2};
\end{tikzpicture}
\star
\gensp
\Biggr)
=
\begin{tikzpicture}[baseline=0.4cm]
\fill[color_left,draw=black] (0,0) rectangle (1,1) ; 
\node at (0.5,0.5) {1};
\end{tikzpicture}
\star
\begin{tikzpicture}[baseline=0.4cm]
\fill[color_right,draw=black] (0,0) rectangle (1.5,1); 
\fill[color_product,draw=black] (0,1) rectangle (1.5,1.5); 
  \node at (0.75  ,1.25) {3};
\fill[black] (1.5,1.25) circle (2pt);
\node at (0.75,0.5) {2};
\end{tikzpicture}
=
\begin{tikzpicture}[baseline=0.4cm]
\fill[color_left,draw=black] (0,0) rectangle (2,1) ; 
\node at (1,0.5) {1};
 \fill[color_right,draw=black] (0.5,1) rectangle (2,2); 
 \node at (1 ,1.5) {2};
 \fill[white, draw=black] (0.5,2) rectangle (2,2.5); 
  \node at (1 ,2.25) {3};
 \fill[black] (2,2.25) circle (2pt);
 \fill[color_product,draw=black] (0,1) rectangle (0.5,2.5); 
\end{tikzpicture}
\end{equation*}
%
% \begin{center}
%     \includegraphics[scale=\bbfscale]{figs/floorPlans_nara_lr.pdf}
% \end{center}
Multiplication on the right by an arbitrary plan does not change this right alignment. Furthermore, the only way to add a new row is by right multiplication by the generator.
Thus the right multiplication rectangle always attaches to the right side of the floor plan   at any stage of multiplication which shows that the Narayana structure corresponds to the  number of `rows' on the right side of the floor plan.

Other properties of the distribution can be seen in the magma structure.
There is an obvious symmetry $k\to n-k$ obtained algebraically from the binomial identity $\binom{a}{b}=\binom{a}{a-b}$ showing that
$
N_{n,k}=N_{n,n-k}\,.
$
The map $k\to n-k$  corresponds to reversing the decomposition word (ie.\ the map  defined by \eqref{eq_revmap}) thus replacing right multiplication by a generator by left multiplication by a generator. Thus the number of left generators $N_\ell$ takes  on the `complementary' value $N_\ell(w)=n-N_{r}(w)$ but  has the same distribution $N_{n,\ell}$ where  $n=\norm{w}$.

%=============================================
\section{The symbolic enumeration method and magmatisation} 
%=============================================
\label{sec_symbol}

In this section we briefly discuss the relation between single generator Catalan families considered as magmas and the symbolic enumeration method  for counting Catalan families. The latter method is  explained and illustrated in the book by Flajolet and Sedgewick \cite{flajolet:2001ys} and used in Goulden and Jackson \cite{goulden:1983yg}.  Bracketing words are discussed in Comtet \cite{Comtet:1974aa} and their representation of complete binary trees and polygon triangulations.

The symbolic method is primarily a technique for finding an equation satisfied by the  generating function  of a  combinatorial family  and thereby solving the enumeration problem. 
The generating function equation is obtained by a   translation of a ``set equation'' satisfied by the set of combinatorial objects, $\struct$ (or a set in bijection with the objects of the family). 
The set $\struct$ is defined as a solution to a set equation which is an equation  containing $\struct$,  sets of `atomic' objects $\set{z_1,z_2,\dots}$, disjoint unions  $\dot{+}$ , Cartesian products  $\times$,  markings etc. As will be seen in the examples below, in some sense the atoms ``generate'' the elements of the combinatorial set but they do not always correspond to the magma generator(s).
The symbolic method is well suited to finding the generating function  of a \textit{single  family} as a large amount of detail can be placed in the elements of $\struct$ which then translates  directly to a multi-variable generating function. 

Different families (with the same generating functions)  can have different defining set equations.  
For example, the set equation for complete binary trees \cite{flajolet:2001ys} can be written with atoms, $z_1=\Box$ (representing leaves) and $z_2=\bullet$ (representing internal nodes). The set $T$, in simple bijection with the trees (assuming trees are defined graphically), satisfies the set  equation
\begin{equation}\label{eq:tsseq}
 T = \set{\Box}\,\dot{+}\,\set{\bullet}\times T\times T
\end{equation}
and  begins
\begin{equation}\label{eq:tss}
    T=\set{\Box,\, (\bullet,\Box,\Box),\, (\bullet, \Box,(\bullet,\Box,\Box)),\,\dots }\,.
\end{equation}
For Dyck paths there are three atoms $z_1=\circ$ (representing  `left' vertices), $z_2=\usp$ (representing up steps) and $z_3=\dsp $ (representing  down steps). 
The set $D$, in simple bijection with the set of  Dyck paths (assuming paths are defined graphically), satisfies the set  equation  
\begin{equation}\label{eq:dsseq}
 D = \set{\circ}\,\dot{+}\,    D\times\set{\usp} \times D\times \set{\dsp}
\end{equation}
and  begins
\begin{equation}\label{eq:dss}
     D=\set{\circ,\, ( \circ,\usp,\circ,\dsp),\,  
 (\circ,\usp,( \circ,\usp,\circ,\dsp),\dsp),\,\cdots }\,.
\end{equation}
A given family can be associated with several different set equations, each equation defining a slightly different combinatorial set. For example, complete binary trees can equally well be associated with the set equation  
\begin{equation}\label{eq:tsseq2}
    T' = \set{\Box}\,\dot{+}\,T'\times \set{\bullet}\times T' 
\end{equation} 
with
\begin{equation}\label{eq:tss2}
    T'=\set{\Box,\, (\Box,\bullet,\Box),\, (\Box,\bullet, (\Box,\bullet,\Box)),\,\cdots }\, 
\end{equation}
Clearly, in this case, there is a simple bijection relating $T$ and $T'$.
Because of this type of  flexibility, the symbolic enumeration method  can  result in many different (but similar) symbolic expressions representing the \emph{same} combinatorial family and thus bijecting one Catalan family to another using the elements defined by the associated set equation becomes  sensitive to the precise details of the set equation.

The situation with a magma is different -- it is an algebraic structure.  
It \textit{assumes} there exists a base set and then defines a product map on top of the base set. Furthermore,   there is a natural notion of  maps \textit{between} magmas (the morphisms). 
The magma product map then determines the properties (is it free?, what are the irreducible elements? etc.). 
The morphisms give  us relations between different families ie.\  recursive bijections.
In the single generator Catalan case, each family is an isomorphic concrete representation of the set of elements of an algebraic structure (a free magma generated by a single element of the base set). 
The standard form of the symbolic method lacks the notion of irreducible elements (the `atoms' may or may not be magma generators) and of morphisms (constraining the maps \textit{between} families with the same generating functions). 
 
 It is however constructive to think of the set equations arising in the symbolic method as equations that   define the elements of the base set. 
 One can then place a product structure on top of this base set. 
 For example,   for complete binary trees, rather than starting with a graphical definition of the base set (and hence define the product as with $\ffm_{\ref{fam_bt}}$), we can start with the base set defined by the set equation, \eqref{eq:tsseq}, then define a  product $\st_1: T\times T\to T$   as 
 \[
  t_1\st_1 t_2 = (\bullet, t_1,t_2)\qquad \text{for all $t_1,t_2\in T$} 
 \]
 and define the norm as the number of $\Box$'s in the element.  
 Then  the pair $(T,\st_1)$ is   a Catalan magma (proved by showing \thmref{thm_free} is satisfied) with generator $\Box$ (the only irreducible element in $T$). 
 Thus in this case one of the atoms is the generator whilst the other is  part of the construction defining  the elements of the base set.  
 
Similarly, for the Dyck path example with base set $D$ given by \eqref{eq:dss}, a possible  product is
 \[
  d_1\st_2 d_2 = ( d_1,\usp, d_2, \dsp) 
 \]
with  norm defined as  the number of $\circ$'s in the element. Then the pair $(D,\st_2)$  is a Catalan magma with generator $\circ$ (the only irreducible element in $D$), again,  the two other atoms $\usp$ and $\dsp$ are not generators but only part of the definition of the base set. Clearly in both these examples the definition of the products is closely related to the Cartesian product  in the set equations as both are related to the recursive structure of the family.
 
By adding products to the base sets defined by the symbolic method (and proving they are Catalan magmas)  they have been effectively ``magmatised''.
The notion of an isomorphism between them is now well defined  and can be used to define  bijections between the base sets. 
For example, using  $T$ and $D$ the universal bijection, \defref{def_unibij}, gives,
\begin{equation}\label{eq:tdssb}
 (\bullet, \Box,(\bullet,\Box,\Box))=\Box\st_1 (\Box\st_1 \Box))
\quad \mapsto\quad  
\circ\st_2 (\circ\st_2 \circ)=(\circ,\usp,( \circ,\usp,\circ,\dsp),\dsp)   
\end{equation}
thus $(\bullet, \Box,(\bullet,\Box,\Box))$ bijects to $(\circ,\usp,( \circ,\usp,\circ,\dsp),\dsp)$ as expected. 

Note, changing the details of the complete binary tree set equation eg.\ \eqref{eq:tsseq} to \eqref{eq:tsseq2}, changes the product definition in a minor way -- it becomes $t_1\st_1 t_2 = ( t_1,\bullet,t_2)$ --  and similarly  changes the details of the bijection \eqref{eq:tdssb} in a minor way (note, the form of the left element changes to $( \Box,\bullet,(\Box,\bullet,\Box)) )$. 
 
In the  set equations \eqref{eq:tsseq} and \eqref{eq:dsseq} the base set only appears twice in each Cartesian product. 
In other applications of the symbolic method higher order Cartesian products occur as well as other set constructions. It is of interest to see how the ideas associated with free magmas carry across to these cases. 
Some progress with this generalisation has been made in \cite{Brak:2018aa} where a unary map is added in addition to a product map to study Motzkin and Schr\"oder families.

%=============================================
\section{Acknowledgement}
%=============================================

 I would like to   thank the Australian Research Council (ARC) and the Centre of Excellence for Mathematics and Statistics of Complex Systems (MASCOS) for financial support. I would also like to acknowledge the contributions of A.\ Alvey-Price, V.\ Chockalingam and N.\ Mahony. I would also  like to thank V.\ Vatter for useful  feedback and finally, X.\ Viennot who introduced me to Catalan numbers many years ago.

\newpage

%=============================================
\section{Appendix}
%=============================================

In this appendix we list several Catalan families and a list of information defining their magma structure. 

We use the convention of Stanley \cite{stanley:1999vw2,Stanley:2015aa}  of giving a brief description of the family followed by the list of the  five   objects with \val four. If the problem (or trivially equivalent) occurs in \cite{Stanley:2015aa} then it is referenced in the section header.

For each family the following information is itemised:
\begin{itemize}

\item The generator of the magma. \\
-- It is not strictly necessary to define exactly what $\eps$ maps to, so long as the left and right multiplication by $\eps$ is defined (and the element $\eps\st\eps$).
\item A schematic definition of the product. \\
	-- We will use the same symbol, $\st$ for the product for every family, but of course, the product is different in each case, so strictly a different  product symbol should be used in each case.

\item  \Valns. \\
-- In all cases the \val of the generator is one: $\norm{\eps}=1 $. This, together with the additive property (see \defref{def_norm}) defines the \val of every object. 
However, the \val will map to a conventional or convenient parameter associated with the objects in each particular family. This correspondence is stated here.
\item Examples of the product. \\
-- Products where the LHS or RHS is a generator can be poorly defined when using only the schematic diagram. 
An alternative way to define the product is to consider each case separately as illustrated in the  binary tree family \fref{fam_bt}. 
However, doing this in every family can obscure the simplicity of the general case product, thus we clarify the product when one side is a generator by giving  examples. 
For several of the examples the geometry (if appropriate) associated with the generator is emphasised as is the product geometry. 
 
\end{itemize}
Given the above data for any family it is then possible to biject  any  object of one family to its image in another family using the universal bijection \eqref{eq_unibi}.

\newpage

%%=============================================
%% Catalan Familie
%%=============================================

\section*{Catalan Families}

\setcounter{family}{0}

\begin{itemize}
    \item[\fref{fam_cm}] -- Cartesian magma
    \item[\fref{fam_mb}] -- Matching brackets and Dyck words
    \item[\fref{fam_nc}] -- Non-crossing chords -– the circular form of nested matchings
    \item[\fref{fam_bt}] -- Complete Binary Trees and Binary Trees
    \item[\fref{fam_pt}] -- Planar Trees
    \item[\fref{fam_ld}] -- Nested matchings or Link Diagrams
    \item[\fref{fam_np}] -- Non-crossing partitions
    \item[\fref{fam_dp}] -- Dyck paths
    \item[\fref{fam_pt}] -- Polygon triangulations
    \item[\fref{fam_ap}] -- 321-avoiding permutations
    \item[\fref{fam_sp}] -- Staircase polygons
    % \item[\fref{fam_hs}] -- Pyramid of heaps of segments
    \item[\fref{fam_st}] -- Two row standard tableaux
    \item[\fref{fam_fp}] -- Floor Plans
    \item[\fref{fam_fz}] -- Frieze Patterns
    
\end{itemize}

%=============================================
%  Matching brackets/Dyck words
%=============================================
\newcommand{\stan}[1]{(Ex#1 of \cite{Stanley:2015aa})}
\newcommand{\tstan}[2]{(Ex#1, #2 of \cite{Stanley:2015aa})}
\newcommand{\apphead}[1]{ \subsection{\textbf{#1}  \hspace{3em}}}

\newcommand{\tempnp}{\vspace{2em}}

\tempnp

\raggedbottom

%>>>>>>>>>>>>>>>>>>>>>>>>>>>>>>>>>>>>>>>>>>>>>>>>>>>>>>>>>>>>>>>>>>>>>>>>>>
 \begin{family}[Cartesian magma] \label{fam_cm} 
%>>>>>>>>>>>>>>>>>>>>>>>>>>>>>>>>>>>>>>>>>>>>>>>>>>>>>>>>>>>>>>>>>>>>>>>>>>

 \addcontentsline{toca}{subsection}{\fref{fam_cm}: Cartesian Magma}
 
The single generator  Cartesian magma is the set of nested 2-tuples defined in \defref{def_catmag} with $X=\set{\aeps}$.
\begin{equation*}
     (\aeps,(\aeps,(\aeps,\aeps))), 
     \quad
     ((\aeps,(\aeps,\aeps) ),\aeps),
    \quad
    (\aeps, ((\aeps,\aeps),\aeps)),  
    \quad
    (((\aeps,\aeps),\aeps ), \aeps), 
    \quad
    ((\aeps,\aeps),(\aeps,\aeps)) \,.
\end{equation*}
\begin{itemize}
\item Generator: The symbol $\eps$.
\item Product: $m_1\st m_2= (m_1,m_2)$.
\item Norm: Number of occurrences of $\eps$ in the nested 2-tuple.
\item Examples:
\[
\eps\st\eps = (\aeps,\aeps)
\]
\[
(\aeps,\aeps)\st\aeps=((\aeps,\aeps),\aeps)
\]
\[
\aeps\st(\aeps,\aeps)  =(\aeps,(\aeps,\aeps))
\]

\end{itemize}
Note, the product is as close to syntactic concatenation of $m_1$ and $m_2$ as possible.

\end{family}
\tempnp
%>>>>>>>>>>>>>>>>>>>>>>>>>>>>>>>>>>>>>>>>>>>>>>>>>>>>>>>>>>>>>>>>>>>>>>>>>>
 \begin{family}[Matching brackets and Dyck words  \tstan{77}{211}] \label{fam_mb} 
 %>>>>>>>>>>>>>>>>>>>>>>>>>>>>>>>>>>>>>>>>>>>>>>>>>>>>>>>>>>>>>>>>>>>>>>>>>>
 
\addcontentsline{toca}{subsection}{\fref{fam_mb}: Matching brackets and Dyck words}

A language in the alphabet containing the two symbols $\{$ and $\}$ (or $x$ and $\bar{x}$ for Dyck words), such that there is an equal number of left as right brackets and  the  brackets match  ``pairwise'' (ie.\ every prefix  of the word contains no  more  right brackets than left).
%\[
%	  \eps\st(\eps\st(\eps\st\eps))   \qquad (\eps\st(\eps\st\eps) )\st \eps \qquad 
%	  (\eps\st(\eps\st\eps) )\st \eps   \qquad  \eps\st(\eps\st(\eps\st\eps))  \qquad   
% (\eps\st\eps)\st(\eps\st\eps)    
% \]
\[ 
 \{\}\{\{\}\}\qquad \{\{\}\{\}\}  \qquad \{\{\{\}\}\}\qquad \{\{\}\}\{\} \qquad \{\}\{\}\{\}
\]
\begin{itemize}
\item Generator: $\eps $ = $\emptyset\qquad$ (ie.\ the empty word)

\item Product:  $b_1 \st  b_2 =  b_1\{\,b_2\, \}\qquad $  ie.\ word concatenation (and add two  brackets).
\item $\text{\Valns}=\text{(Half the number of brackets)}+1$.
\item Examples:
\[
\emptyset\st\emptyset = \{\}
\]
\[
\{\}\st\emptyset=\{\}\{\}
\]
\[
\emptyset\st\{\} =\{\{\}\}
\]
\[
\{\}\st\{\} =\{\}\{\{\}\}
\]

% \item Narayana parameter: The number of $\{\}$ subwords.
\end{itemize}

% \begin{list}{$\bullet\,\,F_{\arabic{family}}$:}{\usecounter{family}}

\end{family}

%=============================================

\tempnp
%>>>>>>>>>>>>>>>>>>>>>>>>>>>>>>>>>>>>>>>>>>>>>>>>>>>>>>>>>>>>>>>>>>>>>>>>>>
\begin{family}[Non-crossing chords -- the circular form of nested matchings \stan{59}]
%>>>>>>>>>>>>>>>>>>>>>>>>>>>>>>>>>>>>>>>>>>>>>>>>>>>>>>>>>>>>>>>>>>>>>>>>>>
\label{fam_nc} 

\addcontentsline{toca}{subsection}{\fref{fam_nc}: Non-crossing chords}

A circle  with $2n$ nodes and one marked segment of the circumference. Each node is joined to  exactly one other node by a chord such that the chords do not intersect. 
\begin{equation*}
\begin{tikzpicture}
\coordinate (A) at (60:1cm);
\coordinate (B) at  (0:1cm);
\coordinate (C) at  (-60:1cm);
\coordinate (D)  at (120:1cm);
\coordinate (E)  at (180:1cm);
\coordinate (F)  at (240:1cm);
\draw (0,0) circle (1cm) (90:1cm) pic{tick2};
\fill[black] (60:1cm) circle (2pt)  (0:1cm) circle (2pt) (-60:1cm) circle (2pt) ;
\fill[black] (120:1cm) circle (2pt)  
(180:1cm) circle (2pt) 
(240:1cm) circle (2pt) ;
\draw[black] (A) -- (D)  (B)--(E) (C)--(F);
\end{tikzpicture}
\qquad
\begin{tikzpicture}
\coordinate (A) at (60:1cm);
\coordinate (B) at  (0:1cm);
\coordinate (C) at  (-60:1cm);
\coordinate (D)  at (120:1cm);
\coordinate (E)  at (180:1cm);
\coordinate (F)  at (240:1cm);
\draw (0,0) circle (1cm) (90:1cm) pic{tick2};
\fill[black] (60:1cm) circle (2pt)  (0:1cm) circle (2pt) (-60:1cm) circle (2pt) ;
\fill[black] (120:1cm) circle (2pt)  
(180:1cm) circle (2pt) 
(240:1cm) circle (2pt) ;
\draw[black] (A) -- (B)  (C)--(D) (E)--(F);
\end{tikzpicture}
\qquad
\begin{tikzpicture}
\coordinate (A) at (60:1cm);
\coordinate (B) at  (0:1cm);
\coordinate (C) at  (-60:1cm);
\coordinate (D)  at (120:1cm);
\coordinate (E)  at (180:1cm);
\coordinate (F)  at (240:1cm);
\draw (0,0) circle (1cm) (90:1cm) pic{tick2};
\fill[black] (60:1cm) circle (2pt)  (0:1cm) circle (2pt) (-60:1cm) circle (2pt) ;
\fill[black] (120:1cm) circle (2pt)  
(180:1cm) circle (2pt) 
(240:1cm) circle (2pt) ;
\draw[black] (A) -- (D)  (B)--(C) (E)--(F);
\end{tikzpicture}
\qquad
\begin{tikzpicture}
\coordinate (A) at (60:1cm);
\coordinate (B) at  (0:1cm);
\coordinate (C) at  (-60:1cm);
\coordinate (D)  at (120:1cm);
\coordinate (E)  at (180:1cm);
\coordinate (F)  at (240:1cm);
\draw (0,0) circle (1cm) (90:1cm) pic{tick2};
\fill[black] (60:1cm) circle (2pt)  (0:1cm) circle (2pt) (-60:1cm) circle (2pt) ;
\fill[black] (120:1cm) circle (2pt)  
(180:1cm) circle (2pt) 
(240:1cm) circle (2pt) ;
\draw[black] (A) -- (F)  (B)--(C) (D)--(E);
\end{tikzpicture}
\qquad
\begin{tikzpicture}
\coordinate (A) at (60:1cm);
\coordinate (B) at  (0:1cm);
\coordinate (C) at  (-60:1cm);
\coordinate (D)  at (120:1cm);
\coordinate (E)  at (180:1cm);
\coordinate (F)  at (240:1cm);
\draw (0,0) circle (1cm) (90:1cm) pic{tick2};
\fill[black] (60:1cm) circle (2pt)  (0:1cm) circle (2pt) (-60:1cm) circle (2pt) ;
\fill[black] (120:1cm) circle (2pt)  
(180:1cm) circle (2pt) 
(240:1cm) circle (2pt) ;
\draw[black] (A) -- (B)  (C)--(F) (E)--(D);
\end{tikzpicture}
\end{equation*} 
%
% \begin{center}
%  \includegraphics[scale=\fscale]{figs/non-crossingChordsExample_lr.pdf}
% %\includegraphics{figs/ex-ncc}	
% \end{center}

\begin{itemize}
\item Generator: $\eps=\tikz{\path pic{genncc}} $
\item Product:   
 \begin{equation*}
\begin{tikzpicture}[baseline=0cm] 
\coordinate (A) at (34.23:1cm);
\coordinate (B) at  (145.77:1cm);

\coordinate (C) at  (-50:1cm);
\coordinate (D)  at (-130:1cm);

\draw (0,0) circle (1cm) (90:1cm) pic{tick2};

\begin{scope}
\draw[clip] (0,0) circle (1cm); 
\fill[color_left,draw=black ] (0,0) circle (1cm);
\fill[white, draw=black,densely dashed,line width=1pt] (0,1.25) circle (1.05cm);
\end{scope}

\fill (A) circle (3pt) (B) circle (3pt);

\path (A) node[anchor=south west] {$b$};
\path (B) node[anchor=south east] {$a$};

\end{tikzpicture}
\quad\star\quad
\begin{tikzpicture}[baseline=0cm]  
\coordinate (A) at (34.23:1cm);
\coordinate (B) at  (145.77:1cm);

\coordinate (C) at  (-50:1cm);
\coordinate (D)  at (-130:1cm);

\draw (0,0) circle (1cm) (90:1cm) pic{tick2};

\begin{scope}
\draw[clip] (0,0) circle (1cm); 
\fill[color_right,draw=black ] (0,0) circle (1cm);
\fill[white, draw=black,densely dashed,line width=1pt] (0,1.25) circle (1.05cm);
\end{scope}

\fill (A) circle (3pt) (B) circle (3pt);
\path (A) node[anchor=south west] {$d$};
\path (B) node[anchor=south east] {$c$};
\end{tikzpicture}
\quad =\quad
\begin{tikzpicture}[baseline=0cm]  
\coordinate (A) at (50.23:1cm);
\coordinate (B) at  (116.77:1cm);

\coordinate (C) at  (-55:1cm);
\coordinate (D)  at (-116:1cm);

\coordinate (E) at  (70:1cm);
\coordinate (F)  at (-90:1cm);

\draw (0,0) circle (1cm) (90:1cm) pic{tick2};

\begin{scope}
\draw[clip] (0,0) circle (1cm); 

\fill[color_left,draw=black,densely dashed] (0,-2) to [bend left =25] (0,2) -- (-3,2) -- (-3,-2) -- cycle;
\fill[color_right,draw=black,densely dashed] (0.1,-2) to [bend right =25] (0.25,2) -- (3,2) -- (3,-2) -- cycle;
 
\end{scope}

\fill (A) circle (3pt) (B) circle (3pt);
\fill (C) circle (3pt) (D) circle (3pt);

\fill[orange] (E) circle (3pt) (F) circle (3pt) ;
\draw[orange] (E)--(F);

\path (A) node[anchor=south west] {$d$};
\path (B) node[anchor=south east] {$a$};
\path (C) node[anchor=north west] {$c$};
\path (D) node[anchor=north east] {$b$};

\end{tikzpicture}
 \end{equation*}
% \begin{center}
% \includegraphics[scale=\fscale]{figs/non-crossingChords_lr.pdf} 
% \end{center}

% \item Product:  \raisebox{-2 ex}{ \includegraphics[width=7cm]{figtemp/non-crossingChordsProd.png}}

\item $\text{\Valns}=\text{(Number of chords)}+1$ 

\item Examples:
\begin{align*}
\begin{tikzpicture} \path pic{genncc}; \end{tikzpicture}
\quad \star \quad
\begin{tikzpicture} \path[scale=2] pic{genncc}; \end{tikzpicture}
&    \quad = \quad 
\begin{tikzpicture}
    \coordinate (A) at (45:0.5cm) ;
    \coordinate (B) at (45-180:0.5cm);
    \path (135:0.5cm) pic{tick};
    \draw  (0,0) circle (0.5cm);
    \draw[orange,line width=1pt] (A) -- (B);
    \fill (A) circle (2pt) (B) circle (2pt);
\end{tikzpicture}\\
& \\
\begin{tikzpicture}
    \coordinate (A) at (45:0.5cm) ;
    \coordinate (B) at (-135:0.5cm);
    \path (135:0.5cm) pic{tick};
    \draw  (0,0) circle (0.5cm);
    \draw  (A) -- (B);
    \fill (A) circle (2pt) (B) circle (2pt);
\end{tikzpicture}
\quad \star \quad
\begin{tikzpicture} \path pic{genncc}; \end{tikzpicture}
&    \quad = \quad 
\begin{tikzpicture}
    \coordinate (A) at (63:0.5cm) ;
    \coordinate (B) at (-9:0.5cm);
    \coordinate (C) at (-81:0.5cm) ;
    \coordinate (D) at (-153:0.5cm);
    \path (135:0.5cm) pic{tick};
    \draw  (0,0) circle (0.5cm);
    \draw (C) -- (D) ;
    \draw[orange,line width=1pt] (A) -- (B);
    \fill (A) circle (2pt) (B) circle (2pt);
    \fill (D) circle (2pt) (C) circle (2pt);    
\end{tikzpicture}\\
& \\
\begin{tikzpicture} \path pic{genncc}; \end{tikzpicture}
\quad \star \quad
\begin{tikzpicture}
    \coordinate (A) at (45:0.5cm) ;
    \coordinate (B) at (-135:0.5cm);
    \path (135:0.5cm) pic{tick};
    \draw  (0,0) circle (0.5cm);
    \draw  (A) -- (B);
    \fill (A) circle (2pt) (B) circle (2pt);
\end{tikzpicture}
&    \quad = \quad 
\begin{tikzpicture}
    \coordinate (A) at (90:0.5cm) ;
    \coordinate (B) at (0:0.5cm);
    \coordinate (C) at (-90:0.5cm) ;
    \coordinate (D) at (-180:0.5cm);
    \path (135:0.5cm) pic{tick};
    \draw  (0,0) circle (0.5cm);
    \draw[orange,line width=1pt] (A) -- (D) ;
    \draw (C) -- (B);
    \fill (A) circle (2pt) (B) circle (2pt);
    \fill (D) circle (2pt) (C) circle (2pt);    
\end{tikzpicture}\\
& \\
\begin{tikzpicture}
    \coordinate (A) at (15:0.5cm) ;
    \coordinate (B) at (255:0.5cm);
    \path (135:0.5cm) pic{tick};
    \draw  (0,0) circle (0.5cm);
    \draw  (A) -- (B);
    \fill (A) circle (2pt) (B) circle (2pt);
\end{tikzpicture}
\quad \star \quad
\begin{tikzpicture}
    \coordinate (A) at (15:0.5cm) ;
    \coordinate (B) at (255:0.5cm);
    \path (135:0.5cm) pic{tick};
    \draw  (0,0) circle (0.5cm);
    \draw  (A) -- (B);
    \fill (A) circle (2pt) (B) circle (2pt);
\end{tikzpicture}
&    \quad = \quad 
\begin{tikzpicture}
    \coordinate (A) at (45:0.5cm) ;
    \coordinate (B) at (-15:0.5cm);
    \coordinate (C) at (-75:0.5cm) ;
    \coordinate (D) at (-135:0.5cm);
    \coordinate (E) at (-195:0.5cm) ;
    \coordinate (F) at (-255.5:0.5cm);    
    \path (135:0.5cm) pic{tick};
    \draw  (0,0) circle (0.5cm);
    \draw[orange,line width=1pt] (F) -- (C);
    \draw (A) -- (B) (E) -- (D);
    \fill (A) circle (2pt) (B) circle (2pt);
    \fill (D) circle (2pt) (C) circle (2pt);    
    \fill (E) circle (2pt) (F) circle (2pt);  
\end{tikzpicture}\\
\end{align*}
% \begin{center}
% 	\includegraphics[scale=\fscale]{figs/non-crossingChordsProdEx_lr.pdf}
% \end{center}

% \item Narayana parameter: The number of  chords $ab$ that have no other nodes (or the mark) on the arc of the circle between $a$ and $b$ (going counter clockwise from $a$ to $b$).
 
% \item Embedded Complete Binary Tree:
% \begin{itemize}
%     \item[] $\rho : \text{root}\to \text{centre of chord $rr^\prime $} $
%     \item[] $\lambda : \text{leaf}\to \text{centre of arc between black and white nodes} $
% \end{itemize}
% \begin{center}
% 	\includegraphics[scale=\fscale]{figs/non-crossingChords_CBT.pdf}
% \end{center}

\end{itemize}
\end{family}
%=============================================
%  Complete Binary tree
%=============================================

%=============================================

\tempnp

% \filbreak
%>>>>>>>>>>>>>>>>>>>>>>>>>>>>>>>>>>>>>>>>>>>>>>>>>>>>>>>>>>>>>>>>>>>>>>>>>>
\begin{family}[Complete binary trees and binary trees \tstan{
4}{5}] 
%>>>>>>>>>>>>>>>>>>>>>>>>>>>>>>>>>>>>>>>>>>>>>>>>>>>>>>>>>>>>>>>>>>>>>>>>>>
\label{fam_bt}

 \addcontentsline{toca}{subsection}{\fref{fam_bt}: Complete Binary trees and Binary trees}

Binary trees are trivially related to complete binary trees: just delete the leaves. 
The root of a complete binary tree is denoted by a triangle, the internal nodes with solid circles and the leaves with white circles.
\begin{equation*}
\def\gap{0.5cm}
\begin{tikzpicture}[scale=0.5,baseline=0cm]
\coordinate (1) at (4,2);
\coordinate (2) at (2,1);
\coordinate (3) at (0,0);
\coordinate (4) at (3,0);
\coordinate (5) at (5,0);
\coordinate (6) at (6,1);
\coordinate (7) at (8,0);
%
% \draw [help lines] (0,0) grid (8,2);
\draw (1) -- (2) -- (3);
% \path (1) pic{root};
\fill (2) circle (5pt) (3) circle (5pt);
\path (3) pic[transform shape]{lleaf} pic[transform shape]{rleaf} (2)  pic[transform shape]{rleaf} (1) pic[scale=0.15]{root} (1) pic[transform shape]{rleaf};
\end{tikzpicture}
 \hspace{\gap}
\begin{tikzpicture}[scale=0.5,baseline=0cm]
\coordinate (1) at (4,2);
\coordinate (2) at (2,1);
\coordinate (3) at (0,0);
\coordinate (4) at (3,0);
\coordinate (5) at (5,0);
\coordinate (6) at (6,1);
\coordinate (7) at (8,0);
%
% \draw [help lines] (0,0) grid (8,2);
\draw (1) -- (2) -- (4);
% \path (1) pic{root};
\fill (2) circle (5pt) (4) circle (5pt);
\path (4) pic[transform shape]{lleaf} pic[transform shape]{rleaf} (2)  pic[transform shape]{lleaf} (1) pic[scale=0.15]{root} (1) pic[transform shape]{rleaf};
\end{tikzpicture}
 \hspace{\gap}
\begin{tikzpicture}[scale=0.5,baseline=0cm]
\coordinate (1) at (4,2);
\coordinate (2) at (2,1);
\coordinate (3) at (0,0);
\coordinate (4) at (3,0);
\coordinate (5) at (5,0);
\coordinate (6) at (6,1);
\coordinate (7) at (8,0);
%
% \draw [help lines] (0,0) grid (8,2);
\draw (1) -- (2) (1) -- (6);
% \path (1) pic{root};
\fill (2) circle (5pt) (6) circle (5pt);
\path (2) pic[transform shape]{lleaf} pic[transform shape]{rleaf} 
(6)  pic[transform shape]{lleaf} pic[transform shape]{rleaf} 
(1) pic[scale=0.15]{root};
\end{tikzpicture}
 \hspace{\gap}
\begin{tikzpicture}[scale=0.5,baseline=0cm]
\coordinate (1) at (4,2);
\coordinate (2) at (2,1);
\coordinate (3) at (0,0);
\coordinate (4) at (3,0);
\coordinate (5) at (5,0);
\coordinate (6) at (6,1);
\coordinate (7) at (8,0);
%
% \draw [help lines] (0,0) grid (8,2);
\draw (1) -- (6) -- (5);
% \path (1) pic{root};
\fill (6) circle (5pt) (5) circle (5pt);
\path (5) pic[transform shape]{lleaf} pic[transform shape]{rleaf} 
(6)  pic[transform shape]{rleaf} 
(1) pic[scale=0.15]{root} 
(1) pic[transform shape]{lleaf};
\end{tikzpicture}
 \hspace{\gap}
\begin{tikzpicture}[scale=0.5,baseline=0cm]
\coordinate (1) at (4,2);
\coordinate (2) at (2,1);
\coordinate (3) at (0,0);
\coordinate (4) at (3,0);
\coordinate (5) at (5,0);
\coordinate (6) at (6,1);
\coordinate (7) at (8,0);
%
% \draw [help lines] (0,0) grid (8,2);
\draw (1) -- (6) -- (7);
% \path (1) pic{root};
\fill (6) circle (5pt) (7) circle (5pt);
\path (7) pic[transform shape]{lleaf} pic[transform shape]{rleaf} 
(6)  pic[transform shape]{lleaf} 
(1) pic[scale=0.15]{root} 
(1) pic[transform shape]{lleaf};
\end{tikzpicture}
\end{equation*}
%
% \begin{center}
%  \includegraphics[scale=\fscale]{figs/CBTExamples_lr.pdf}
% %\includegraphics{figs/ex-cbt}	
% \end{center}

\begin{itemize}
\item Generator: $\eps = \circ$ (a leaf). %\raisebox{-0.25ex}{\includegraphics[scale=\fscale]{figs/cbt-gen.pdf}} 

\item Product\cite{Loday:2012aa}:
% \footnote{This is the original magma product -- see Appendix C of  \cite{Loday:2012aa} }%

\begin{equation}
    \begin{tikzpicture}[style_size3,baseline=0.75cm]
        % \draw [help lines, gray!50] (0,0) grid(3,3);
        \fill [color_left,draw=black] (2,3) -- (3,0) -- (0,1) -- cycle;
        \fill   (2,3) pic[scale=0.2]{root} ;
    \end{tikzpicture}
    \qquad\star\quad
    \begin{tikzpicture}[style_size3,baseline=0.75cm]
        % \draw [help lines, gray!50] (0,0) grid(3,3);
        \fill [color_right,draw=black] (1,3) -- (0,0) -- (3,1) -- cycle;
        \fill  (1,3) pic[scale=0.2]{root} ;
    \end{tikzpicture}
    \qquad=\quad
    \begin{tikzpicture}[style_size3  ,baseline=0.75cm]
        %   \draw [help lines, gray!50] (0,0) grid(7,4);
        %
        \draw[orange, line width=1pt] (3.5,4) -- (2,3) (3.5,4) -- (5,3);
        \path (3.5,4) pic[scale=0.2,fill=orange]{root} ;
        \node at (3.5,4.5) {$a$}; 
        \fill [color_left,draw=black] (2,3) -- (3,0) -- (0,1) -- cycle;
        \fill [draw=black] (2,3) pic{circle_black} ;
        \node at (1.5,3) {$b$};
        \fill [color_right,draw=black] (5,3) -- (4,0) -- (7,1) -- cycle;
        \fill [draw=black] (5,3) pic{circle_black}  ;
        \node at (5.5,3) {$c$};
    \end{tikzpicture}
\end{equation}

% \begin{center}
%     \includegraphics[scale=\fscale]{figs/cbt-prod_lr.pdf}
% \end{center}
% \raisebox{-4ex}{}
\item $\text{\Valns}=\text{Number of leaves.}$  
 
\item Examples:
\begin{align*}
\circ\quad \star \quad \circ  
& \quad=\quad      
\begin{tikzpicture}[scale=0.5]
\coordinate (1) at (4,2); \coordinate (2) at (2,1); \coordinate (3) at (0,0); 
\coordinate (4) at (3,0); \coordinate (5) at (5,0); \coordinate (6) at (6,1);
\coordinate (7) at (8,0);
\path   (1) pic[scale=0.15,fill=orange]{root} 
(1) pic[transform shape,draw=orange,fill=white ]{rleaf} 
pic[transform shape,draw=orange,fill=white ]{lleaf};
\end{tikzpicture} \\ 
&\\
\circ\quad  \star \quad 
\begin{tikzpicture}[scale=0.5]
\coordinate (1) at (4,2); \coordinate (2) at (2,1); \coordinate (3) at (0,0); 
\coordinate (4) at (3,0); \coordinate (5) at (5,0); \coordinate (6) at (6,1);
\coordinate (7) at (8,0);
\path   (1) pic[scale=0.15]{root} (1) pic[transform shape,fill=white]{rleaf} pic[transform shape,fill=white]{lleaf};
\end{tikzpicture}
& \quad=\quad    
\begin{tikzpicture}[scale=0.5]
\coordinate (1) at (4,2); \coordinate (2) at (2,1); \coordinate (3) at (0,0); 
\coordinate (4) at (3,0); \coordinate (5) at (5,0); \coordinate (6) at (6,1);
\coordinate (7) at (8,0);
\draw[orange] (1) -- (6)  ;
% \path (1) pic{root};
\fill (6) circle (5pt)  ;
\path (6) pic[transform shape,fill=white ]{lleaf} pic[transform shape,draw=orange,fill=white ]{rleaf} 
(1) pic[transform shape,draw=orange,fill=white  ]{lleaf} pic[scale=0.15,fill=orange]{root}  ;
\end{tikzpicture}  \\ 
&\\
\begin{tikzpicture}[scale=0.5]
\coordinate (1) at (4,2); \coordinate (2) at (2,1); \coordinate (3) at (0,0); 
\coordinate (4) at (3,0); \coordinate (5) at (5,0); \coordinate (6) at (6,1);
\coordinate (7) at (8,0);
\path   (1) pic[scale=0.15]{root} (1) pic[transform shape,fill=white]{rleaf} pic[transform shape,fill=white]{lleaf};
\end{tikzpicture}
\quad \star \quad \circ  
& \quad=\quad      
\begin{tikzpicture}[scale=0.5]
\coordinate (1) at (4,2); \coordinate (2) at (2,1); \coordinate (3) at (0,0); 
\coordinate (4) at (3,0); \coordinate (5) at (5,0); \coordinate (6) at (6,1);
\coordinate (7) at (8,0);
\draw[orange] (1) -- (2)  ;
% \path (1) pic{root};
\fill (2) circle (5pt)  ;
\path (2) pic[transform shape,draw=orange,fill=white ]{lleaf} pic[transform shape,draw=orange,fill=white ]{rleaf} 
(1) pic[transform shape,draw=orange ]{rleaf} pic[scale=0.15,fill=orange]{root}  ;
\end{tikzpicture} \\ 
&\\
\begin{tikzpicture}[scale=0.5]
\coordinate (1) at (4,2); \coordinate (2) at (2,1); \coordinate (3) at (0,0); 
\coordinate (4) at (3,0); \coordinate (5) at (5,0); \coordinate (6) at (6,1);
\coordinate (7) at (8,0);
\path   (1) pic[scale=0.15]{root} (1) pic[transform shape,fill=white]{rleaf} pic[transform shape,fill=white]{lleaf};
\end{tikzpicture}
\quad \star \quad 
\begin{tikzpicture}[scale=0.5]
\coordinate (1) at (4,2); \coordinate (2) at (2,1); \coordinate (3) at (0,0); 
\coordinate (4) at (3,0); \coordinate (5) at (5,0); \coordinate (6) at (6,1);
\coordinate (7) at (8,0);
\path   (1) pic[scale=0.15]{root} (1) pic[transform shape,fill=white]{rleaf} pic[transform shape,fill=white]{lleaf};
\end{tikzpicture}
& \quad=\quad      
\begin{tikzpicture}[scale=0.5]
\coordinate (1) at (4,2); \coordinate (2) at (2,1); \coordinate (3) at (0,0);
\coordinate (4) at (3,0); \coordinate (5) at (5,0); \coordinate (6) at (6,1);
\coordinate (7) at (8,0);
%
% \draw [help lines] (0,0) grid (8,2);
\draw[orange] (1) -- (2) (1) -- (6);
% \path (1) pic{root};
\fill (2) circle (5pt) (6) circle (5pt);
\path (2) pic[transform shape,draw=orange,fill=white ]{lleaf} pic[transform shape,draw=orange,fill=white ]{rleaf} 
(6)  pic[transform shape,fill=white]{lleaf} pic[transform shape,fill=white]{rleaf} 
(1) pic[scale=0.15,fill=orange]{root};
\end{tikzpicture}
\end{align*} 
% \begin{center}
% 	\includegraphics[scale=\fscale]{figs/CBT-Prod-Examples_lr.pdf}
% \end{center}

% \item Narayana parameter: Number of  right leaves. 
\end{itemize}

%=============================================
%  Binary trees
%=============================================

\end{family}

\tempnp
% \filbreak
%>>>>>>>>>>>>>>>>>>>>>>>>>>>>>>>>>>>>>>>>>>>>>>>>>>>>>>>>>>>>>>>>>>>>>>>>>>
\begin{family}[Planar trees \tstan{6}]
%>>>>>>>>>>>>>>>>>>>>>>>>>>>>>>>>>>>>>>>>>>>>>>>>>>>>>>>>>>>>>>>>>>>>>>>>>>
\label{fam_ps}

\addcontentsline{toca}{subsection}{\fref{fam_ps}: Planar Trees }

A graph with a marked vertex (the root) and no cycles (ie.\ a unique path from the root to  every leaf (degree one vertex)). The planar tree is conventionally drawn with the root at the top.
\begin{center}
\def\gap{0.5cm}
\begin{tikzpicture}[scale=0.5,baseline=0cm]
    \path pic[scale=0.15]{root} pic[transform shape,fill=black]{cleaf} ++(0,-1) pic[transform shape]{cleaf} ++(0,-1) pic[transform shape,fill=black]{cleaf};
\end{tikzpicture}
\hspace{\gap}
\begin{tikzpicture}[scale=0.5,baseline=0cm]
    \path pic[scale=0.15]{root} pic[transform shape,fill=black]{cleaf} ++(0,-1) pic[transform shape,fill=black]{lleaf}   pic[transform shape,fill=black]{rleaf};
\end{tikzpicture}
\hspace{\gap}
\begin{tikzpicture}[scale=0.5,baseline=0cm]
    \path pic[scale=0.15]{root} pic[transform shape,fill=black]{lleaf}   pic[transform shape,fill=black]{rleaf}  ++(0.5,-1) pic[transform shape,fill=black]{cleaf};
\end{tikzpicture}
\hspace{\gap}
\begin{tikzpicture}[scale=0.5,baseline=0cm]
    \path pic[scale=0.15]{root} pic[transform shape,fill=black]{rleaf}   pic[transform shape,fill=black]{lleaf}  ++(-0.5,-1) pic[transform shape,fill=black]{cleaf};
\end{tikzpicture}
\hspace{\gap}
\begin{tikzpicture}[scale=0.5,baseline=0cm]
    \path pic[scale=0.15]{root} pic[transform shape,fill=black]{rleaf}   pic[transform shape,fill=black]{lleaf}   pic[transform shape,fill=black]{cleaf};
\end{tikzpicture}
\end{center}
% \begin{center}
% \includegraphics[scale=\fscale]{figs/planarTrees-Examples_lr.pdf}
% %\includegraphics{figs/ex-pt}	
% \end{center}

\begin{itemize}
\item Generator: $\eps = \bullet$ (ie.\ a non-root node).

\item Product:  
\begin{equation*}
\begin{tikzpicture}[style_size3,baseline=0.75cm]
    % \draw [help lines, gray!50] (0,0) grid(3,3);
    \fill [color_left,draw=black] (2,3) -- (3,0) -- (0,1) -- cycle;
    \fill   (2,3) pic[scale=0.2]{root} ;
\end{tikzpicture}
\qquad\star\quad
\begin{tikzpicture}[style_size3,baseline=0.75cm]
    % \draw [help lines, gray!50] (0,0) grid(3,3);
    \fill [color_right,draw=black] (1,3) -- (0,0) -- (3,1) -- cycle;
    \fill  (1,3) pic[scale=0.2]{root} ;
\end{tikzpicture}
\qquad=\quad
\begin{tikzpicture}[style_size3  ,baseline=0.75cm]
    \draw[orange, line width=1pt]  (3.5,4) -- (6,3);
    \fill [color_left,draw=black] (3.5,4) -- (4.5,1) -- (1.5,2) -- cycle;
    \fill [color_right,draw=black] (6,3) -- (5,0) -- (8,1) -- cycle;
    \fill [draw=black] (6,3) pic{circle_black}  ;
     \path (3.5,4) pic[scale=0.2 ]{root} ;
\end{tikzpicture}
\end{equation*}
% \begin{center}
%     \includegraphics[scale=\fscale]{figs/planarTree-prod_lr.pdf}
% \end{center}

\item $\text{\Valns}=\text{Number of non-root nodes.}$
\item Examples:
\begin{align*}
    \begin{tikzpicture}[scale=0.5,baseline=0cm]
    \fill circle (5pt);
    \end{tikzpicture}
    \quad \star \quad
    \begin{tikzpicture}[scale=0.5,baseline=0cm]
    \fill circle (5pt);
    \end{tikzpicture}
    & \quad= \quad 
    \begin{tikzpicture}[scale=0.5,baseline=0cm]
    \path pic[transform shape,fill=black,draw=orange]{cleaf}  pic[scale=0.15]{root};
    \end{tikzpicture} \\
    &\\
    \begin{tikzpicture}[scale=0.5,baseline=0cm]
    \fill circle (5pt);
    \end{tikzpicture}
    \quad \star \quad
    \begin{tikzpicture}[scale=0.5,baseline=0cm]
    \path pic[scale=0.15]{root} pic[transform shape,fill=black ]{cleaf};
    \end{tikzpicture} 
    & \quad= \quad 
    \begin{tikzpicture}[scale=0.5,baseline=0cm]
    \path pic[scale=0.15]{root} pic[transform shape,fill=black,draw=orange ]{cleaf} 
    ++(0,-1) pic[transform shape,fill=black ]{cleaf} ;
    \end{tikzpicture} \\
        &\\
    \begin{tikzpicture}[scale=0.5,baseline=0cm]
    \path pic[scale=0.15]{root} pic[transform shape,fill=black ]{cleaf};
    \end{tikzpicture} 
    \quad \star \quad
    \begin{tikzpicture}[scale=0.5,baseline=0cm]
    \fill circle (5pt);
    \end{tikzpicture}
    & \quad= \quad 
    \begin{tikzpicture}[scale=0.5,baseline=0cm]
    \path pic[transform shape,fill=black ]{cleaf} 
     pic[transform shape,fill=black,draw=orange ]{rleaf} pic[scale=0.15]{root} ;
    \end{tikzpicture} \\
        &\\
    \begin{tikzpicture}[scale=0.5,baseline=0cm]
    \path pic[scale=0.15]{root} pic[transform shape,fill=black ]{cleaf};
    \end{tikzpicture} 
    \quad \star \quad
    \begin{tikzpicture}[scale=0.5,baseline=0cm]
    \path pic[scale=0.15]{root} pic[transform shape,fill=black ]{cleaf};
    \end{tikzpicture} 
    & \quad= \quad 
    \begin{tikzpicture}[scale=0.5,baseline=0cm]
    \path pic[scale=0.15]{root} 
     pic[transform shape,fill=black ]{rleaf} ++(0.5,-1) pic[transform shape,fill=black,draw=orange  ]{cleaf} pic[transform shape,fill=black ]{cleaf}  ;
    \end{tikzpicture} \\
\end{align*}
% \begin{center}
% 	\includegraphics[scale=\fscale]{figs/planarTrees-Prod-Examples_lr.pdf}
% \end{center}

% \item Narayana parameter:  Number of leaf nodes.

\end{itemize}
%=============================================
% Nested matchings or Link Diagrams
%=============================================
\end{family}

%=============================================

\tempnp
% \filbreak 
%>>>>>>>>>>>>>>>>>>>>>>>>>>>>>>>>>>>>>>>>>>>>>>>>>>>>>>>>>>>>>>>>>>>>>>>>>>
\begin{family}[Nested matchings or link diagrams \stan{61}]
%>>>>>>>>>>>>>>>>>>>>>>>>>>>>>>>>>>>>>>>>>>>>>>>>>>>>>>>>>>>>>>>>>>>>>>>>>>
\label{fam_ld}

 \addcontentsline{toca}{subsection}{\fref{fam_ld}: Nested matchings or Link Diagrams}

A line of $2n$ nodes with the  last not draw (ie.\ is drawn as a point).   connected pairwise by $n-1$ links such that the arcs representing the links (always drawn above the line) do not intersect.  Note, conventionally the first and last nodes are   drawn, but omitting the last simplifies the product definition.
\begin{center}
\def\gap{1cm}
\begin{tikzpicture}[style_size3] 
\def\S{++(1,0)} \def\R{4pt} \def\C{circle  }
    \draw (0,0) -- (7,0);
        \foreach \x in {0,...,6} {
        \fill (\x,0) pic{circle_black}  ;
    };
    \draw  (1,0) arc [start angle=180, end angle = 0, radius=2.5];
    \draw  (2,0) arc [start angle=180, end angle = 0, radius=1.5];
    \draw  (3,0) arc [start angle=180, end angle = 0, radius=0.5];
\end{tikzpicture}
\hspace{\gap} %------------------
\begin{tikzpicture}[style_size3] 
\def\S{++(1,0)} \def\R{4pt} \def\C{circle  }
    \draw (0,0) -- (7,0);
        \foreach \x in {0,...,6} {
        \fill (\x,0) pic{circle_black}  ;
    };
    \draw  (1,0) arc [start angle=180, end angle = 0, radius=2.5];
    \draw  (2,0) arc [start angle=180, end angle = 0, radius=0.5];
    \draw  (4,0) arc [start angle=180, end angle = 0, radius=0.5];
\end{tikzpicture}
\end{center}
\begin{center}
\def\gap{1cm}

\begin{tikzpicture}[style_size3] 
\def\S{++(1,0)} \def\R{4pt} \def\C{circle  }
    \draw (0,0) -- (7,0);
        \foreach \x in {0,...,6} {
        \fill (\x,0) pic{circle_black}  ;
    };
    \draw  (1,0) arc [start angle=180, end angle = 0, radius=0.5];
    \draw  (3,0) arc [start angle=180, end angle = 0, radius=1.5];
    \draw  (4,0) arc [start angle=180, end angle = 0, radius=0.5];
\end{tikzpicture}
\hspace{\gap} %------------------
\begin{tikzpicture}[style_size3] 
\def\S{++(1,0)} \def\R{4pt} \def\C{circle  }
    \draw (0,0) -- (7,0);
        \foreach \x in {0,...,6} {
        \fill (\x,0) pic{circle_black}  ;
    };
    \draw  (1,0) arc [start angle=180, end angle = 0, radius=1.5];
    \draw  (2,0) arc [start angle=180, end angle = 0, radius=0.5];
    \draw  (5,0) arc [start angle=180, end angle = 0, radius=0.5];
\end{tikzpicture}
\hspace{\gap} %------------------
\begin{tikzpicture}[style_size3] 
\def\S{++(1,0)} \def\R{4pt} \def\C{circle  }
    \draw (0,0) -- (7,0);
        \foreach \x in {0,...,6} {
        \fill (\x,0) pic{circle_black}  ;
    };
    \draw  (1,0) arc [start angle=180, end angle = 0, radius=0.5];
    \draw  (3,0) arc [start angle=180, end angle = 0, radius=0.5];
    \draw  (5,0) arc [start angle=180, end angle = 0, radius=0.5];
\end{tikzpicture}
% \hspace{\gap} %------------------
\end{center}
% \begin{center}
%     \includegraphics[scale=\fscale]{figs/nestedMatchingsExamples_lrr.pdf}
% \end{center}

\newcommand{\nmgen}{
\begin{tikzpicture}[baseline={([yshift=-.5ex]current bounding box.center)}]
  \draw (0,0) -- (0,1) -- (4,1) -- (4,0) -- (0,0);
\draw[fill] (0,0) circle [radius=0.05];
\end{tikzpicture}
}

\begin{itemize}

\item Generator: $\eps =\gennm$

% $\eps=\raisebox{0ex}{\includegraphics[scale=\fscale]{figs/nestedMatchingsGen_lr.pdf}}$

\item Product:

\begin{equation}   
\def\bsl{0pt}
\begin{tikzpicture}[style_size4,baseline=\bsl] 
    \fill [color_left,draw=black] (1,0) arc [radius=1,start angle = 180, end angle =0];
    \draw (0,0) -- (4,0);
    \fill (0,0) pic{circle_black}  ;
    \node[below,anchor=mid  ] at (0,-0.7) {$a$};
    % \fill[color=white] (4,0) pic{circle_black} ;
    \node[below,anchor=mid ] at (4,-0.7) {$b$};
    \end{tikzpicture}
\star 
\begin{tikzpicture}[style_size4,baseline=\bsl] 
    \fill [color_right,draw=black] (6,0) arc [radius=1,start angle = 180, end angle =0];
    \draw (5,0) -- (9,0);
    \fill (5,0) pic{circle_black}  ;
    \node[below,anchor=mid  ] at (5,-0.7) {$c$};
    % \fill[color=white] (9,0) pic{circle_white} ;
    \node[below,anchor=mid  ] at (9,-0.7) {$d$};
\end{tikzpicture}
= 
\begin{tikzpicture}[style_size4,baseline=\bsl] 
    \node[below ,anchor=mid ] at (10,-0.7) {$a$};
    \fill [color_left,draw=black]  (11,0) arc [radius=1,start angle = 180, end angle =0];
    \fill [color_right,draw=black]  
        (15,0) arc [radius=1,start angle = 180, end angle =0];
    % \fill (5,0) circle [radius=4pt];
    \draw (10,0) -- (18,0);
    \fill (10,0) pic{circle_black}  ;   
    \draw[color=productColor,line width=1pt] (14,0) arc [start angle=180, end angle = 0, radius=2];
    \fill (14,0) pic{circle_black}   ;
    \node[below,anchor=mid  ] at (14,-0.7) {$bc$};
    \fill [color=productColor] (18,0) pic{circle_product} ;
    \draw [color=productColor,line width=1pt] (18,0) -- (19,0);
    % \fill[color=white] (19,0) pic{circle_black}  ;
    \node[below,anchor=mid  ] at (18,-0.7) {$d$};
    \node[below,anchor=mid  ] at (19,-0.7) {$e$};
\end{tikzpicture}
\end{equation}
% \begin{center}
%     \includegraphics[scale=\fscale]{figs/nestedMatchings_lr.pdf}
% \end{center}
Nodes $b$ and $c$ become a single node.
\item $\text{\Valns}=\text{(Number of links)}+1=\text{Half the number of nodes.}$

\item Examples:

\begin{align*}
\gennm\star \gennm &= 
\begin{tikzpicture}[style_size3] 
\def\S{++(1,0)} \def\R{4pt} \def\C{circle  }
\draw[orange]  (1,0) arc [start angle=180, end angle = 0, radius=0.5];  
\draw (0,0) -- (3,0);
\foreach \x in {0,...,1} {
\fill (\x,0) pic{circle_black}  ;
};
\fill[orange, draw=orange] (2,0) circle (\R)   -- (3,0);
\end{tikzpicture}\\
& \\
%==========================
\begin{tikzpicture}[style_size3] 
\def\S{++(1,0)} \def\R{4pt} \def\C{circle  }
\draw  (1,0) arc [start angle=180, end angle = 0, radius=0.5];  
\draw (0,0) -- (3,0);
\foreach \x in {0,...,2} {
\fill (\x,0) pic{circle_black}  ;
};
\end{tikzpicture}\star \gennm & = 
\begin{tikzpicture}[style_size3] 
\def\S{++(1,0)} \def\R{4pt} \def\C{circle  }
\draw[orange]  (3,0) arc [start angle=180, end angle = 0, radius=0.5];  
\draw  (1,0) arc [start angle=180, end angle = 0, radius=0.5];
\draw (0,0) -- (4,0);
\foreach \x in {0,...,4} {
\fill (\x,0) pic{circle_black}  ;
};
\fill[orange,draw=orange] (4,0) circle (\R)   -- (5,0);
\end{tikzpicture}\\
& \\
%==========================
\gennm \star \begin{tikzpicture}[style_size3] 
\def\S{++(1,0)} \def\R{4pt} \def\C{circle  }
\draw  (1,0) arc [start angle=180, end angle = 0, radius=0.5];  
\draw (0,0) -- (3,0);
\foreach \x in {0,...,2} {
\fill (\x,0) pic{circle_black}  ;
};
\end{tikzpicture} & = 
\begin{tikzpicture}[style_size3] 
\def\S{++(1,0)} \def\R{4pt} \def\C{circle  }
\draw  (2,0) arc [start angle=180, end angle = 0, radius=0.5];
\draw[orange]  (1,0) arc [start angle=180, end angle = 0, radius=1.5];  
\draw (0,0) -- (4,0);
\foreach \x in {0,...,4} {
\fill (\x,0) pic{circle_black}  ;
};
\fill[orange,draw=orange] (4,0) circle (\R)   -- (5,0);
\end{tikzpicture}\\
& \\
%==========================
\begin{tikzpicture}[style_size3] 
\def\S{++(1,0)} \def\R{4pt} \def\C{circle  }
\draw  (1,0) arc [start angle=180, end angle = 0, radius=0.5];  
\draw (0,0) -- (3,0);
\foreach \x in {0,...,2} {
\fill (\x,0) pic{circle_black}  ;
};
\end{tikzpicture} \star 
\begin{tikzpicture}[style_size3] 
\def\S{++(1,0)} \def\R{4pt} \def\C{circle  }
\draw  (1,0) arc [start angle=180, end angle = 0, radius=0.5];  
\draw (0,0) -- (3,0);
\foreach \x in {0,...,2} {
\fill (\x,0) pic{circle_black}  ;
};
\end{tikzpicture} & = 
\begin{tikzpicture}[style_size3] 
\def\S{++(1,0)} \def\R{4pt} \def\C{circle  }
\draw  (1,0) arc [start angle=180, end angle = 0, radius=0.5];
\draw  (4,0) arc [start angle=180, end angle = 0, radius=0.5];
\draw[orange]  (3,0) arc [start angle=180, end angle = 0, radius=1.5];  
\draw (0,0) -- (6,0);
\foreach \x in {0,...,6} {
\fill (\x,0) pic{circle_black}  ;
};
\fill[orange,draw=orange] (6,0) circle (\R)   -- (7,0);
\end{tikzpicture}
\end{align*}
% \begin{center}
% 	 \includegraphics[scale=\fscale]{figs/nestedMatchingsProdExamples_lr.pdf}
% \end{center}

% \item Narayana parameter: Number of links matching adjacent nodes.
% \item Embedded Complete Binary Tree:
%     \begin{itemize}
%         \item[] $\rho : \text{root}\to \text{centre of rightmost matching} $
%         \item[] $\lambda : \text{leaf}\to \text{centre edge between black and white nodes} $
%     \end{itemize}
%     \begin{center}
%     	\includegraphics[width=7cm]{figs/nestedMatchings_CBT.pdf}
%     \end{center}
\end{itemize}

\end{family}
\tempnp

%>>>>>>>>>>>>>>>>>>>>>>>>>>>>>>>>>>>>>>>>>>>>>>>>>>>>>>>>>>>>>>>>>>>>>>>>>>
\begin{family}[Non-crossing partitions \stan{159}]
%>>>>>>>>>>>>>>>>>>>>>>>>>>>>>>>>>>>>>>>>>>>>>>>>>>>>>>>>>>>>>>>>>>>>>>>>>>
\label{fam_np}

 \addcontentsline{toca}{subsection}{\fref{fam_np}: Non-crossing partitions}

A partition of $\set{1, 2, \dots,  n}$ is non-crossing if whenever four elements, $1\le a<  b < c < d\le n$, are such that $a$, $c$ are in the same block and $b$, $d$ are in the same block, then the two blocks coincide \cite{simion:2000lr}. We will use the marked circular representation of the partitions. The five non-crossing partitions with for $n=3$ are:
\begin{center}
\def\gap{1cm}
% 1
\begin{tikzpicture}
\coordinate (A) at (45:0.5cm) ;
\coordinate (B) at (-45:0.5cm);
\coordinate (C) at (-140:0.5cm);
\path (135:0.5cm) pic{tick};
\draw  (0,0) circle (0.5cm);
% \draw  (A) -- (B);
\fill (A) circle (3pt) (B) circle (3pt) (C) circle (3pt);
\end{tikzpicture}    
\hspace{\gap}
% 2
\begin{tikzpicture}
\coordinate (A) at (45:0.5cm) ;
\coordinate (B) at (-45:0.5cm);
\coordinate (C) at (-140:0.5cm);
\path (135:0.5cm) pic{tick};
\draw  (0,0) circle (0.5cm);
\draw  (A) -- (C);
\fill (A) circle (3pt) (B) circle (3pt) (C) circle (3pt);
\end{tikzpicture}    
\hspace{\gap}
% 3
\begin{tikzpicture}
\coordinate (A) at (45:0.5cm) ;
\coordinate (B) at (-45:0.5cm);
\coordinate (C) at (-140:0.5cm);
\path (135:0.5cm) pic{tick};
\draw  (0,0) circle (0.5cm);
\draw  (B) -- (C);
\fill (A) circle (3pt) (B) circle (3pt) (C) circle (3pt);
\end{tikzpicture}    
\hspace{\gap}
% 4
\begin{tikzpicture}
\coordinate (A) at (45:0.5cm) ;
\coordinate (B) at (-45:0.5cm);
\coordinate (C) at (-140:0.5cm);
\path (135:0.5cm) pic{tick};
\draw  (0,0) circle (0.5cm);
\draw  (A) -- (B);
\fill (A) circle (3pt) (B) circle (3pt) (C) circle (3pt);
\end{tikzpicture}    
\hspace{\gap}
% 5
\begin{tikzpicture}
\coordinate (A) at (45:0.5cm) ;
\coordinate (B) at (-45:0.5cm);
\coordinate (C) at (-140:0.5cm);
\path (135:0.5cm) pic{tick};
\draw  (0,0) circle (0.5cm);
\draw  (A) -- (B) -- (C) -- cycle;
\fill (A) circle (3pt) (B) circle (3pt) (C) circle (3pt);
\end{tikzpicture}    
\hspace{\gap}

\end{center}
% \begin{center}
%     \includegraphics[scale=\fscale]{figs/non-crossingPartitions_Examples_lr.pdf}
% \end{center}
To convert to a partition of $\set{1, 2, \dots,  n}$ assume the nodes are enumerated \emph{anti-clockwise} from the mark. Nodes joined by a chord are in the same block of the partition.

\begin{itemize}
\item Generator: 
$\eps = \genncp $ (a circle with no nodes).
%  \raisebox{-0.5\height}{\includegraphics[scale=\fscale]{figs/non-crossingPartitions-Prod_gen_lr.pdf}}$ (arc of circle).

In a general non-crossing partition the $i^\text{th}$ generator is the arc between node $i-1$ and $i$ (nodes labelled anti-clockwise). The arc between the last node and the mark is not a generator.

\item  Product:

\begin{equation*}
\begin{tikzpicture}[baseline=0cm,scale=0.75] 
\coordinate (A) at (34.23:1cm);
\coordinate (B) at  (145.77:1cm);

\coordinate (C) at  (-50:1cm);
\coordinate (D)  at (-130:1cm);

\draw (0,0) circle (1cm) (90:1cm) pic{tick2};

\begin{scope}
\draw[clip] (0,0) circle (1cm); 
\fill[color_left,draw=black ] (0,0) circle (1cm);
\fill[white, draw=black,densely dashed ] (0,1.25) circle (1.05cm);
\end{scope}

\fill (A) circle (3pt) (B) circle (3pt);

\path (0,-0.5)  node{$p_1$}; 

\end{tikzpicture}
\quad\star\quad
\begin{tikzpicture}[baseline=0cm,scale=0.75]  
\coordinate (A) at (34.23:1cm);
\coordinate (B) at  (145.77:1cm);

\coordinate (C) at  (-50:1cm);
\coordinate (D)  at (-130:1cm);

\draw (0,0) circle (1cm) (90:1cm) pic{tick2};

\begin{scope}
\draw[clip] (0,0) circle (1cm); 
\fill[color_right,draw=black ] (0,0) circle (1cm);
\fill[white, draw=black,densely dashed ] (0,1.25) circle (1.05cm);
\end{scope}

\fill (A) circle (3pt) (B) circle (3pt);
% \path (A) node[anchor=south west] {$d$};
% \path (B) node[anchor=south east] {$c$};

\path (0,-0.5)  node{$p_2$}; 
\end{tikzpicture}
\quad =\quad 
\left\{ 
\begin{array}{ll} %==============
\begin{tikzpicture}[baseline=0cm,scale=0.75]  
\coordinate (A) at (120:1cm);
\coordinate (B) at  (-20:1cm);
\draw (0,0) circle (1cm) (90:1cm) pic{tick2};
\begin{scope}
\draw[clip] (0,0) circle (1cm); 
\fill[color_left,draw=black,densely dashed] (A) to [bend right =25] (B) -- (5,-22) -- (-4,-1) -- cycle;
\end{scope}
\fill (A) circle (3pt) (B) circle (3pt);
\fill[orange] (45:1cm)   circle (3pt);
\end{tikzpicture} 
& \text{if $p_2=\emptyset$}\\
& \\
\begin{tikzpicture}[baseline=0cm,scale=0.75]  
\coordinate (A) at (50.23:1cm);
\coordinate (B) at  (116.77:1cm);

\coordinate (C) at  (-55:1cm);
\coordinate (D)  at (-116:1cm);

\coordinate (E) at  (70:1cm);
\coordinate (F)  at (-90:1cm);

\draw (0,0) circle (1cm) (90:1cm) pic{tick2};
\begin{scope}
\draw[clip] (0,0) circle (1cm); 
\fill[color_left,draw=black,densely dashed] (0,-2) to [bend left =25] (0,2) -- (-3,2) -- (-3,-2) -- cycle;
\fill[color_right,draw=black,densely dashed] (0.1,-2) to [bend right =25] (0.25,2) -- (3,2) -- (3,-2) -- cycle;
\end{scope}
\fill[orange] (F) circle (3pt)  ;
\draw[orange,line width=1pt] (A) to [bend right =25] (F);
\draw[orange,line width=1pt] (F) to [bend left =45] (C);

\fill (A) circle (3pt) (B) circle (3pt);
\fill (C) circle (3pt) (D) circle (3pt);

\end{tikzpicture} 
&   \text{if $p_2$ is  single block}\\
& \\
\begin{tikzpicture}[baseline=0cm,scale=0.75]  
\coordinate (A) at (70:1cm);
\coordinate (An) at (52:1.2cm);
\coordinate (B) at  (30:1cm);

\coordinate (C) at  ( 10:1cm);
\coordinate (D)  at (-70:1cm);

\coordinate (E) at  (-95:1cm);

\coordinate (F)  at (-120:1cm);
\coordinate (G)  at (110:1cm);

\draw (0,0) circle (1cm) (90:1cm) pic{tick2};
\begin{scope}
\draw[clip] (0,0) circle (1cm); 
\fill[color_left,draw=black,densely dashed] (F) to [bend left =25] (G) -- (-3,0)  -- cycle;
\fill[color_right,draw=black,densely dashed] (A) to [bend right =55] (B) -- (5,5)   -- cycle;
\fill[color_right,draw=black,densely dashed] (C) to [bend right =55] (D)  -- (5,-5) -- cycle;
\end{scope}
\fill[orange] (E) circle (3pt)  ;
\draw[orange,line width=1pt] (E) to [bend left =25] (A);
\draw[orange,line width=1pt] (E) to [bend left =45] (B);

\fill (A) circle (3pt) (B) circle (3pt);
\fill (C) circle (3pt) (D) circle (3pt);
\fill (F) circle (3pt) (G) circle (3pt);
\node  at (An)  {$b$};
\end{tikzpicture} 
&  \text{otherwise.}\\
\end{array}%==============
\right.
\end{equation*}
% \begin{equation*}
%     \raisebox{-0.5\height}{\includegraphics[origin=lt,scale=\fscale]{figs/non-crossingPartitions-Prod_LHS_lr.pdf}}\quad =\quad 
%     \begin{cases}
%         \raisebox{-0.5\height}{\includegraphics[origin=lc,scale=\fscale]{figs/non-crossingPartitions-Prod_RHS1_lr.pdf}} &
%         \text{if $p_2=\emptyset$,}\\
%         \raisebox{-0.5\height}{\includegraphics[origin=lc,scale=\fscale]{figs/non-crossingPartitions-Prod_RHS2_lr.pdf}} & 
%         \text{if $p_2$ is a single block,}\\
%         \raisebox{-0.5\height}{\includegraphics[origin=lc,scale=\fscale]{figs/non-crossingPartitions-Prod_RHS3_lr.pdf}}&
%         \text{otherwise.}
%     \end{cases}
% \end{equation*}
In the last case $b$ is the block in $p_2$ containing the last (reading anti-clockwise) node.
In the second and last  cases if the last block is a single node then there is only a single product (orange) line.

\item $\text{\Valns}=\text{(Number of nodes)}+1$

\item Examples:

\begin{align*}
\begin{tikzpicture} \path pic{genncp}; \end{tikzpicture}
\quad \star \quad
\begin{tikzpicture} \path[scale=2] pic{genncp}; \end{tikzpicture}
&    \quad = \quad 
\begin{tikzpicture}
    \coordinate (A) at (-45:0.5cm) ;
    \path (135:0.5cm) pic{tick};
    \draw  (0,0) circle (0.5cm);
    \fill[orange] (A) circle (3pt);
\end{tikzpicture}\\
& \\
\begin{tikzpicture}
    \coordinate (A) at (-45:0.5cm) ;
    \path (135:0.5cm) pic{tick};
    \draw  (0,0) circle (0.5cm);
    \fill  (A) circle (3pt);
\end{tikzpicture}
\quad \star \quad
\begin{tikzpicture} \path pic{genncp}; \end{tikzpicture}
&    \quad = \quad 
\begin{tikzpicture}
    \coordinate (A) at (60:0.5cm) ;
    \coordinate (B) at (240:0.5cm);
    \path (135:0.5cm) pic{tick};
    \draw  (0,0) circle (0.5cm);
    \fill[orange] (A) circle (3pt);   
    \fill  (B) circle (3pt);   
\end{tikzpicture}\\
& \\
\begin{tikzpicture} \path pic{genncp}; \end{tikzpicture}
\quad \star \quad
\begin{tikzpicture}
    \coordinate (A) at (-45:0.5cm) ;
    \path (135:0.5cm) pic{tick};
    \draw  (0,0) circle (0.5cm);
    \fill  (A) circle (3pt);
\end{tikzpicture}
&    \quad = \quad 
\begin{tikzpicture}
    \coordinate (A) at (60:0.5cm) ;
    \coordinate (B) at (240:0.5cm);
    \path (135:0.5cm) pic{tick};
    \draw  (0,0) circle (0.5cm);
    \fill[orange] (B) circle (3pt);   
    \draw[orange,line width=1pt] (A)-- (B);
    \fill  (A) circle (3pt);     
\end{tikzpicture}\\
& \\
\begin{tikzpicture}
    \coordinate (A) at (-45:0.5cm) ;
    \path (135:0.5cm) pic{tick};
    \draw  (0,0) circle (0.5cm);
    \fill  (A) circle (3pt);
\end{tikzpicture}
\quad \star \quad
\begin{tikzpicture}
    \coordinate (A) at (45:0.5cm) ;
    \coordinate (B) at (-45:0.5cm);
    \coordinate (C) at (-140:0.5cm);
    \path (135:0.5cm) pic{tick};
    \draw  (0,0) circle (0.5cm);
    \draw  (A) -- (B);
    \fill (A) circle (3pt) (B) circle (3pt) (C) circle (3pt);
\end{tikzpicture}
&    \quad = \quad 
\begin{tikzpicture}
    \coordinate (A) at (83.5:0.5cm) ;
    \coordinate (B) at (0:0.5cm);
    \coordinate (C) at (-45:0.5cm) ;
    \coordinate (D) at (-120:0.5cm);
    \coordinate (E) at ( 180:0.5cm) ;
    \path (135:0.5cm) pic{tick};
    \draw  (0,0) circle (0.5cm);
    \draw[orange,line width=1pt] (A) -- (D) (B)-- (D);
    \draw (A) -- (B)  ;
    \fill (A) circle (3pt) (B) circle (3pt);
    \fill[orange] (D) circle (3pt) ;
    \fill (C) circle (3pt);    
    \fill (E) circle (3pt) ;  
\end{tikzpicture}\\
\end{align*}

% \begin{center}
% 	\includegraphics[scale=\fscale]{figs/non-crossing_examples_lr.pdf}
% \end{center}

% \item Narayana parameter: If the chords are orientated with a arrow pointing away from the node first encountered in a counter clockwise traversal from the mark, then the Narayana parameter is the number of nodes with only incoming chords (or no chords).

\end{itemize}

%=============================================
%  Dyck paths
%=============================================
\end{family}

%=============================================

\tempnp
%>>>>>>>>>>>>>>>>>>>>>>>>>>>>>>>>>>>>>>>>>>>>>>>>>>>>>>>>>>>>>>>>>>>>>>>>>>
\begin{family}[Dyck paths \stan{30}]
%>>>>>>>>>>>>>>>>>>>>>>>>>>>>>>>>>>>>>>>>>>>>>>>>>>>>>>>>>>>>>>>>>>>>>>>>>>
\label{fam_dp}

 \addcontentsline{toca}{subsection}{\fref{fam_dp}: Dyck paths}

A Dyck path is a sequence of $2n$ steps: $n$ `up' steps and $n$ `down' steps such that the path starts and ends at the same height and does not step below the height  of the leftmost vertex. There are five paths with six steps.
\begin{center}
    \def\up{-- ++(1,1)}
    \def\dn{-- ++(1,-1)}
\begin{tikzpicture}[scale=0.3,line width=1pt]
\draw [help lines, gray!50] (0,0) grid(6,3);
% \draw[line width=0.5pt]  (0,0)    circle (4pt);    
\draw  (0,0)     \up   \dn \up   \dn \up \dn;  
\draw [help lines, gray!50] (8,0) grid(14,3);
\draw  (8,0)     \up   \dn \up   \up \dn \dn;  
\draw [help lines, gray!50] (16,0) grid(22,3);
\draw  (16,0)     \up   \up \dn   \dn \up \dn;  
\draw [help lines, gray!50] (24,0) grid(30,3);
\draw  (24,0)     \up   \up \dn   \up \dn \dn;  
\draw [help lines, gray!50] (32,0) grid(38,3);
\draw  (32,0)     \up   \up \up   \dn \dn \dn;  
\end{tikzpicture}
\end{center}
% \begin{center}
% \includegraphics[scale=\fscale]{figs/DyckPaths_Examples_lr.pdf}
% \end{center}
\begin{itemize}

\item Generator: $\eps=\circ$ (a vertex).

% $\eps = \raisebox{-1ex}{\includegraphics[scale=\figscale]{figs/magma-dp-gen-Layer_1.pdf}}$ 

\item Product:

\begin{equation}
    \begin{tikzpicture}[line width=0.75pt,scale=1] 
        \fill [color_left,draw=black] (0,0) arc [radius=0.75,start angle = 180, end angle =0] ;
        \draw (0,0) -- (1.5,0);
        \end{tikzpicture}
    \quad\star\quad 
    \begin{tikzpicture}[line width=0.75pt,scale=1] 
        \fill [color_right,draw=black] (0,0) arc [radius=0.75,start angle = 180, end angle =0] ;
        \draw (0,0) -- (1.5,0);
    \end{tikzpicture}
    \quad=\quad
    \begin{tikzpicture}[line width=0.75pt,scale=1] 
        \fill [color_left,draw=black] (0,0) arc [radius=0.75,start angle = 180, end angle =0] ;
        \draw (0,0) -- (1.5,0);
        \draw[color=orange, line width=1pt] (1.5,0) -- (2,0.5);
        \fill [color_right,draw=black] (2,0.5) arc [radius=0.75,start angle = 180, end angle =0] ;
        \draw (2,0.5) -- (3.5,0.5);
        \draw[color=orange, line width=1pt] (3.5,0.5) -- (4,0);
    \end{tikzpicture}
\end{equation}
% \begin{center}
% \includegraphics[scale=\fscale]{figs/DyckPathProd_lr.pdf}
% \end{center}

% \raisebox{-0.45ex}{\includegraphics[scale=\figscale]{figs/magma-dp-prod-Layer_1.pdf}} 

\item $\text{\Valns}= \text{(Number of up steps)}+1$.

\item Examples:

\begin{align*}
\tikz{\fill[scale=0.4, white,draw=black] circle (5pt);}\star
\tikz{\fill[scale=0.4, white,draw=black] circle (5pt);}& =
\begin{tikzpicture}[scale=0.4]
\def\up{-- ++(1,1)} \def\dn{-- ++(1,-1)}
\draw [help lines, gray!50] (0,0) grid(2,1);
\draw[orange,line width=1pt ]  (0,0)     \up   \dn  ; 
\fill[white,draw=black]  (0,0) circle (5pt) (1,1) circle (5pt) ;
\end{tikzpicture}\\
&\\
% ======================
\tikz{\fill[scale=0.4, white,draw=black] circle (5pt);}
\star\begin{tikzpicture}[scale=0.4]
\def\up{-- ++(1,1)} \def\dn{-- ++(1,-1)}
\draw [help lines, gray!50] (0,0) grid(2,1);
\draw   (0,0)  \up   \dn  ;  
\fill[white,draw=black]    circle (5pt) (1,1)   circle (5pt)  ;
\end{tikzpicture} & =
\begin{tikzpicture}[scale=0.4]
\def\up{-- ++(1,1)} \def\dn{-- ++(1,-1)}
\draw [help lines, gray!50] (0,0) grid(4,2);
\draw[orange,line width=1pt]  (0,0)     \up     ;  
\draw   (1,1)     \up   \dn  ;  
\fill[white,draw=black]    (1,1) circle (5pt)  (2,2) circle (5pt)  ;
\draw[orange,line width=1pt]  (3,1)     \dn     ;  
\fill[white,draw=black]     (0,0) circle (5pt)   ;
\end{tikzpicture}\\
&\\
% ======================
\begin{tikzpicture}[scale=0.4]
\def\up{-- ++(1,1)} \def\dn{-- ++(1,-1)}
\draw [help lines, gray!50] (0,0) grid(2,1);
\draw   (0,0)  \up   \dn  ;  
\fill[white,draw=black]    circle (5pt) (1,1)   circle (5pt)  ;
\end{tikzpicture}\star\circ & = 
\begin{tikzpicture}[scale=0.4]
\def\up{-- ++(1,1)} \def\dn{-- ++(1,-1)}
\draw [help lines, gray!50] (0,0) grid(4,1);
\draw   (0,0)  \up   \dn  ;  
\fill[white,draw=black]    circle (5pt)  (1,1)   circle (5pt) ;
\draw[orange,line width=1pt]  (2,0)     \up   \dn  ; 
\fill[white,draw=black]    (2,0) circle (5pt) (3,1) circle (5pt) ;
\end{tikzpicture}
\\
&\\
% ======================
\begin{tikzpicture}[scale=0.4]
\def\up{-- ++(1,1)} \def\dn{-- ++(1,-1)}
\draw [help lines, gray!50] (0,0) grid(2,1);
\draw   (0,0)  \up   \dn  ;  
\fill[white,draw=black]    circle (5pt) (1,1)   circle (5pt)  ;
\end{tikzpicture}\star
\begin{tikzpicture}[scale=0.4]
\def\up{-- ++(1,1)} \def\dn{-- ++(1,-1)}
\draw [help lines, gray!50] (0,0) grid(2,1);
\draw   (0,0)  \up   \dn  ;  
\fill[white,draw=black]    circle (5pt) (1,1)   circle (5pt)  ;
\end{tikzpicture}
& = 
\begin{tikzpicture}[scale=0.4]
\def\up{-- ++(1,1)} \def\dn{-- ++(1,-1)}
\draw [help lines, gray!50] (0,0) grid(6,2);
\draw   (0,0)  \up   \dn  ;  
\fill[white,draw=black]    circle (5pt) (1,1)   circle (5pt)  ;
\draw[orange,line width=1pt]   (2,0)  \up    ;  
\draw   (3,1)     \up   \dn  ; 
\draw[orange,line width=1pt]   (5,1)  \dn    ;  
\fill[white,draw=black]    (2,0) circle (5pt) (3,1) circle (5pt) (4,2)   circle (5pt) ;
\end{tikzpicture}
\end{align*}

% \begin{center}
% 	\includegraphics[scale=\fscale]{figs/DyckPath_Prod_Examples_lr.pdf}
% \end{center}

% \item Narayana parameter: Number of peaks (ie.\ an up step immediately followed by a down step)

% \item Embedded \emph{reflected} Complete Binary Tree
% \begin{center}
%  	 \includegraphics[scale=\fscale]{figs/DyckPathOppTree.pdf}
% \end{center}
% Tree for $\eps\st((\eps\st\eps)\st\eps)$\todo{remove}
\end{itemize}

%=============================================
%  Polygon triangulations
%=============================================

%=============================================
\end{family}

%=============================================

\tempnp
% \filbreak 
%>>>>>>>>>>>>>>>>>>>>>>>>>>>>>>>>>>>>>>>>>>>>>>>>>>>>>>>>>>>>>>>>>>>>>>>>>>
\begin{family}[Polygon triangulations \stan{1},  \cite{motzkin:1948lr}]
%>>>>>>>>>>>>>>>>>>>>>>>>>>>>>>>>>>>>>>>>>>>>>>>>>>>>>>>>>>>>>>>>>>>>>>>>>>
\label{fam_pt}

 \addcontentsline{toca}{subsection}{\fref{fam_pt}: Polygon triangulations}

A triangulated $n$-gon is a partition of an $n$ sided polygon into $n-2$ triangles by means of $n-3$ non-crossing chords.
There are five triangulations of a $5$-gon:
\begin{center}
    \def\pa{72}
    \def\lv{54}
    \def\rd{4pt}
    \def\sc{0.4}
    \def\ro{0.9} %two
    \def\rt{1.2} % three
    \def\rf{1.27} %four
    \def\rv{1} %five
\begin{tikzpicture}[style_size2]
        \draw (\lv:\rv) -- ( \lv+\pa:\rv) -- (\lv+2*\pa:\rv) -- (\lv+3*\pa:\rv) -- (\lv+4*\pa:\rv)  -- cycle;
        \draw (\lv+6*\pa:\rv) -- (\lv+ 8*\pa:\rv);
        \draw (\lv+6*\pa:\rv) -- (\lv+ 9*\pa:\rv);
        \draw[fill=white] (\lv:\rv) pic{circle_white};
        \draw[fill=black] (\lv+\pa:\rv) pic{circle_black}  ;
        \draw[fill=white] (\lv+2*\pa:\rv) pic{circle_white};
        \draw[fill=white] (\lv+3*\pa:\rv) pic{circle_white} ;
        \draw[fill=white] (\lv+4*\pa:\rv) pic{circle_white} ;
    \end{tikzpicture}    \quad
    \begin{tikzpicture}[style_size2]
        \draw (\lv:\rv) -- ( \lv+\pa:\rv) -- (\lv+2*\pa:\rv) -- (\lv+3*\pa:\rv) -- (\lv+4*\pa:\rv)  -- cycle;
        \draw (\lv+2*\pa:\rv) -- (\lv+ 4*\pa:\rv);
        \draw (\lv+2*\pa:\rv) -- (\lv+ 5*\pa:\rv);
        \draw[fill=white] (\lv:\rv) pic{circle_white};
        \draw[fill=black] (\lv+\pa:\rv) pic{circle_black} ;
        \draw[fill=white] (\lv+2*\pa:\rv) pic{circle_white};
        \draw[fill=white] (\lv+3*\pa:\rv) pic{circle_white};
        \draw[fill=white] (\lv+4*\pa:\rv) pic{circle_white};
    \end{tikzpicture}\quad
    \begin{tikzpicture}[style_size2]
        \draw (\lv:\rv) -- ( \lv+\pa:\rv) -- (\lv+2*\pa:\rv) -- (\lv+3*\pa:\rv) -- (\lv+4*\pa:\rv)  -- cycle;
        \draw (\lv+3*\pa:\rv) -- (\lv+ 5*\pa:\rv);
        \draw (\lv+3*\pa:\rv) -- (\lv+ 6*\pa:\rv);
        \draw[fill=white] (\lv:\rv) pic{circle_white};
        \draw[fill=black] (\lv+\pa:\rv) pic{circle_black} ;
        \draw[fill=white] (\lv+2*\pa:\rv) pic{circle_white};
        \draw[fill=white] (\lv+3*\pa:\rv) pic{circle_white};
        \draw[fill=white] (\lv+4*\pa:\rv) pic{circle_white};
    \end{tikzpicture}\quad
    \begin{tikzpicture}[style_size2]
        \draw (\lv:\rv) -- ( \lv+\pa:\rv) -- (\lv+2*\pa:\rv) -- (\lv+3*\pa:\rv) -- (\lv+4*\pa:\rv)  -- cycle;
        \draw (\lv+4*\pa:\rv) -- (\lv+ 6*\pa:\rv);
        \draw (\lv+4*\pa:\rv) -- (\lv+ 7*\pa:\rv);
        \draw[fill=white] (\lv:\rv) pic{circle_white};
        \draw[fill=black] (\lv+\pa:\rv) pic{circle_black} ;
        \draw[fill=white] (\lv+2*\pa:\rv) pic{circle_white};
        \draw[fill=white] (\lv+3*\pa:\rv) pic{circle_white};
        \draw[fill=white] (\lv+4*\pa:\rv) pic{circle_white};
    \end{tikzpicture}\quad
    \begin{tikzpicture}[style_size2]
        \draw (\lv:\rv) -- ( \lv+\pa:\rv) -- (\lv+2*\pa:\rv) -- (\lv+3*\pa:\rv) -- (\lv+4*\pa:\rv)  -- cycle;
        \draw (\lv+5*\pa:\rv) -- (\lv+ 7*\pa:\rv);
        \draw (\lv+5*\pa:\rv) -- (\lv+ 8*\pa:\rv);
        \draw[fill=white] (\lv:\rv) pic{circle_white};
        \draw[fill=black] (\lv+\pa:\rv) pic{circle_black} ;
        \draw[fill=white] (\lv+2*\pa:\rv) pic{circle_white};
        \draw[fill=white] (\lv+3*\pa:\rv) pic{circle_white};
        \draw[fill=white] (\lv+4*\pa:\rv) pic{circle_white};
    \end{tikzpicture}\ 
    
\end{center}
% \begin{center}
%     \includegraphics[scale=\fscale]{figs/triangulations_Examples_lr.pdf}
% \end{center}
The marked node is used in the product definition.

\begin{itemize}

\item Generator $\epsilon = \tikz[baseline=-0.1cm]{\path  pic{gentri};}$
% \raisebox{-2ex}{\includegraphics[scale=\fscale]{figs/triangulations_gen_lr.pdf}}

\item Product:  

\begin{equation}
\def\BL{0.5cm}
    \begin{tikzpicture}[style_size3, radius=5pt,baseline=\BL] 
    \def\A{(1,3)} \def\B{(2,0)} \def\C{(3,3)}
    \def\L{200} \def\R{-20}
        %  \draw [help lines, gray!50] (0,0) grid(6,3);
        \fill [color_left,draw=black] \A .. controls +(\L:2cm) and +(\L:2cm) .. \B;
        \fill [color_left,draw=black]  \B .. controls +(\R:2cm) and +(\R:2cm) .. \C;
        \draw \B -- \A -- \C -- cycle;
        \fill [black] \A circle ;
        \node [anchor=south] at \A {$a$};
        \fill [white,draw=black] \C circle  ;
        \node [anchor=south] at \C {$b$};
    \end{tikzpicture}
    \quad\star\quad 
    \begin{tikzpicture}[style_size3, radius=5pt,baseline=\BL] 
    \def\A{(1,3)} \def\B{(2,0)} \def\C{(3,3)}
    \def\L{200} \def\R{-20}
        %  \draw [help lines, gray!50] (0,0) grid(6,3);
        \fill [color_right,draw=black] \A .. controls +(\L:2cm) and +(\L:2cm) .. \B;
        \fill [color_right,draw=black]  \B .. controls +(\R:2cm) and +(\R:2cm) .. \C;
        \draw \B -- \A -- \C -- cycle;
        \fill [black] \A circle   ;
        \node [anchor=south] at \A {$c$};
        \fill [white,draw=black] \C circle  ;
        \node [anchor=south] at \C {$d$};
    \end{tikzpicture}
    \quad = \quad
        \begin{tikzpicture}[style_size3, radius=5pt,baseline=\BL,>={Stealth[length=2mm]}] 
        \def\A{(2.2,2.8)} \def\B{(0,1)} \def\C{(3,1)} \def\D{(6,1)}\def\E{(3.8,2.8)}
        \def\F{(3,-0.5)} \def\G{(3,0.2)}
        % \draw [help lines, gray!50] (0,0) grid(6,3);
        \fill [color_left,draw=black] \A .. controls +(135:2cm) and +(135:2cm) .. \B;
        \fill [color_left,draw=black]  \B .. controls +(270:2cm) and +(270:2cm) .. \C;
        \draw \B-- \A; \draw \B -- \C;
        \draw \C -- \A;
        \draw [orange,line width = 2pt] \A -- \E;
        \fill [color_right,draw=black] \C .. controls +(270:2cm) and +(270:2cm) .. \D;
        \fill [color_right,draw=black]  \D .. controls +(45:2cm) and +(45:2cm) .. \E;  
        \draw \E-- \D; \draw \D -- \C;
        \draw \C -- \E;
        \fill [black] \A circle  ;
        \node [anchor=south] at \A {$a$};
        \fill [white,draw=black] \E circle ;
        \node [anchor=south] at \E {$b$};
        \fill [white,draw=black] \C circle ;
        \node [anchor=north] at \F{$b=c$};
        \draw [->] \F -- \G;

    \end{tikzpicture}
\end{equation}

% \begin{center}
% \includegraphics[scale=\fscale]{figs/triangulations_lr.pdf}
% \end{center}

where nodes $b$ and $c$ are merged. This concatenation of triangulations appears in \cite{Conway:1973aa} in connection with frieze patterns -- $F_{\ref{fam_fz}}$.

\item $\text{\Valns}=\text{(Number of  triangles)}+1$ 

\item Examples:

\begin{align*}
 \def\rd{2pt}\def\ro{0.9} \def\rt{0.35}  
\tikz{\path pic{gentri};}
\quad\star \quad
\tikz{\path pic{gentri};}
\quad & = \quad
\begin{tikzpicture}[ scale=1 ] 
  \def\rd{2pt}\def\ro{0.9} \def\rt{0.35}  
    \draw (30:\rt) -- (150:\rt) -- (-90:\rt) -- cycle;
        \draw[orange,line width=1pt]  (30:\rt) -- (150:\rt) ;
    \draw[fill=white] (30:\rt) circle (\rd);
    \draw[fill=black] (150:\rt) circle (\rd);
    \draw[fill=white] (-90:\rt) circle (\rd);
\end{tikzpicture}
 \\
 &\\
 %========================
  \def\rd{2pt}\def\ro{0.9} \def\rt{0.35}  
\begin{tikzpicture}[ scale=1 ] 
  \def\rd{2pt}\def\ro{0.9} \def\rt{0.35}  
    \draw (30:\rt) -- (150:\rt) -- (-90:\rt) -- cycle;
    \draw[fill=white] (30:\rt) circle (\rd);
    \draw[fill=black] (150:\rt) circle (\rd);
    \draw[fill=white] (-90:\rt) circle (\rd);
\end{tikzpicture}
\quad\star \quad
\tikz{\path pic{gentri};}
\quad & = \quad
 \def\rd{2pt}\def\ro{0.9} \def\rt{0.35}  
\begin{tikzpicture}[ scale=1 ] 
  \def\rd{4pt} 
    \draw[scale=0.5] (0,0)--(1,0)--(1,1)--(0,1) -- cycle;
    \draw[scale=0.5] (0,1)--(1,0);
    \draw[orange,line width=1pt,scale=0.5] (0,1)--(1,1) ;
    \draw[scale=0.5,fill=white] (0,0) circle (\rd) (1,0) circle (\rd)  (1,1) circle (\rd)  ;
    \fill[scale=0.5,black] (0,1) circle (\rd);
\end{tikzpicture}\\
 &\\
 %==========================
  \def\rd{2pt}\def\ro{0.9} \def\rt{0.35}  
\tikz{\path pic{gentri};}
\quad\star \quad
\begin{tikzpicture}[ scale=1 ] 
  \def\rd{2pt}\def\ro{0.9} \def\rt{0.35}  
    \draw (30:\rt) -- (150:\rt) -- (-90:\rt) -- cycle;
    \draw[fill=white] (30:\rt) circle (\rd);
    \draw[fill=black] (150:\rt) circle (\rd);
    \draw[fill=white] (-90:\rt) circle (\rd);
\end{tikzpicture}
\quad & = \quad
\begin{tikzpicture}[ scale=1 ] 
  \def\rd{4pt} 
    \draw[scale=0.5] (0,0)--(1,0)--(1,1)--(0,1) -- cycle;
    \draw[scale=0.5] (0,0)--(1,1);
    \draw[orange,line width=1pt,scale=0.5] (0,1)--(1,1) ;
     \draw[scale=0.5,fill=white] (0,0) circle (\rd) (1,0) circle (\rd)  (1,1) circle (\rd)  ;
     \fill[scale=0.5,black] (0,1) circle (\rd);
\end{tikzpicture}\\
 &\\
 %==========================
\begin{tikzpicture}[ scale=1 ] 
  \def\rd{2pt}\def\ro{0.9} \def\rt{0.35}  
    \draw (30:\rt) -- (150:\rt) -- (-90:\rt) -- cycle;
    \draw[fill=white] (30:\rt) circle (\rd);
    \draw[fill=black] (150:\rt) circle (\rd);
    \draw[fill=white] (-90:\rt) circle (\rd);
\end{tikzpicture}
\quad\star  \quad
\begin{tikzpicture}[ scale=1 ] 
  \def\rd{2pt}\def\ro{0.9} \def\rt{0.35}  
    \draw (30:\rt) -- (150:\rt) -- (-90:\rt) -- cycle;
    \draw[fill=white] (30:\rt) circle (\rd);
    \draw[fill=black] (150:\rt) circle (\rd);
    \draw[fill=white] (-90:\rt) circle (\rd);
\end{tikzpicture}
\quad & = \quad
    \begin{tikzpicture}[scale=0.5] 
    \def\pa{72}
    \def\lv{54}
    \def\rd{4pt}
    \def\sc{0.4}
    \def\rd{4pt}
    \def\ro{0.9} %two
    \def\rt{1.2} % three
    \def\rf{1.27} %four
    \def\rv{1} %five
    \draw  (\lv:\rv) -- ( \lv+\pa:\rv) -- (\lv+2*\pa:\rv) -- (\lv+3*\pa:\rv) -- (\lv+4*\pa:\rv)  -- cycle;
    \draw (\lv+3*\pa:\rv) -- (\lv+ 5*\pa:\rv);
    \draw(\lv+3*\pa:\rv) -- (\lv+ 6*\pa:\rv);
    \draw[orange]  (\lv:\rv) -- ( \lv+\pa:\rv);
    \draw[fill=white] (\lv:\rv)  circle (\rd);
    \draw[fill=black] (\lv+\pa:\rv) circle (\rd) ;
    \draw[fill=white] (\lv+2*\pa:\rv) circle (\rd);
    \draw[fill=white] (\lv+3*\pa:\rv) circle (\rd);
    \draw[fill=white] (\lv+4*\pa:\rv) circle (\rd);
    \end{tikzpicture}
\end{align*}
% \begin{center}
% 	 \includegraphics[scale=\fscale]{figs/triangulations_Prod_Examples_lr.pdf}
% \end{center}

% \item Narayana parameter:   The number of polygon edges adjacent to a node which is also adjacent to a chord coming from an earlier node, ie.\ the edge $bc$ (or triangle $abc$) illustrated schematically:
% \begin{center}
%     \includegraphics[scale=\bbfscale]{figs/triag_nara.pdf}
% \end{center}
% This is discussed in more detail in \secref{sec_naray}.
% -- excluding the edge to the right of the marked node.
% number of triangles with two boundary sides and not containing.\todo{check} 

% \item Embedded Complete Binary Tree:
% \begin{itemize}
%     \item[] $\rho : \text{root}\to \text{centre of polygon edge $ab$} $
%     \item[] $\lambda : \text{leaf}\to \text{centre of polygon edge  (except $ab$)} $
% \end{itemize}
% \begin{center}
% 	\includegraphics[width=5cm]{figs/triangulations_CBT.pdf}
% \end{center}

\end{itemize}

%=============================================
%  
%=============================================
\end{family}

%=============================================

\tempnp
% \filbreak 
%>>>>>>>>>>>>>>>>>>>>>>>>>>>>>>>>>>>>>>>>>>>>>>>>>>>>>>>>>>>>>>>>>>>>>>>>>>
\begin{family}[321-avoiding permutations \stan{115}]
%>>>>>>>>>>>>>>>>>>>>>>>>>>>>>>>>>>>>>>>>>>>>>>>>>>>>>>>>>>>>>>>>>>>>>>>>>>
\label{fam_ap}

 \addcontentsline{toca}{subsection}{\fref{fam_ap}: 321-avoiding permutations}

A permutation $\sigma=\sigma_1\dots\sigma_n$ of $\set{1,\dots, n}$ is called a $321$-avoiding permutation if it does not contain the triple  $\sigma_k<\sigma_j<\sigma_i$ when $i<j<k$.  The five 321-avoiding permutations of $123$ are
\[
123,\quad 132 ,\quad 213,\quad 312,\quad 231\,,
\]
and as Rothe diagrams \cite{rothe:1800dj} (row $i$ has a dot in column $\sigma_i$ -- rows are labelled from the top):
\begin{center}
\def\gap{1cm} % 1
\begin{tikzpicture}[scale=0.5]
\draw grid (3,3);
\fill (0+0.5,2+0.5) circle (0.3cm);
\fill (2+0.5,0+0.5) circle (0.3cm);
\fill (1+0.5,1+0.5) circle (0.3cm);
\end{tikzpicture}
\hspace{\gap} % 2
\begin{tikzpicture}[scale=0.5]
\draw grid (3,3);
\fill (1+0.5,0+0.5) circle (0.3cm);
\fill (2+0.5,1+0.5) circle (0.3cm);
\fill (0+0.5,2+0.5) circle (0.3cm);
\end{tikzpicture}
\hspace{\gap} % 3
\begin{tikzpicture}[scale=0.5]
\draw grid (3,3);
\fill (2+0.5,0+0.5) circle (0.3cm);
\fill (0+0.5,1+0.5) circle (0.3cm);
\fill (1+0.5,2+0.5) circle (0.3cm);
\end{tikzpicture}
\hspace{\gap} % 4
\begin{tikzpicture}[scale=0.5]
\draw grid (3,3);
\fill (1+0.5,0+0.5) circle (0.3cm);
\fill (0+0.5,1+0.5) circle (0.3cm);
\fill (2+0.5,2+0.5) circle (0.3cm);
\end{tikzpicture}
\hspace{\gap} % 5
\begin{tikzpicture}[scale=0.5]
\draw grid (3,3);
\fill (0+0.5,0+0.5) circle (0.3cm);
\fill (2+0.5,1+0.5) circle (0.3cm);
\fill (1+0.5,2+0.5) circle (0.3cm);
\end{tikzpicture}
\end{center}
% \begin{center} 
% 	    \includegraphics[scale=0.6]{figs/avoidingExamples_lr.pdf}
% \end{center}

%\includegraphics{figs/ex-perm}
\begin{itemize}  
\item Generator: $\eps = \emptyset$   ( the empty permutation). 
% On occasion the generator will be represented by a $\circ$ for clarity.
% $\eps = \raisebox{-1ex}{\includegraphics[scale=\figscale]{figs/magma-ap-gen.pdf}}$ 

\item Product: The dots of a 321-avoiding permutation can be partitioned into two subsets: 1) those on or below the diagonal -- called ``black dots'' and 2) those above the diagonal -- called ``white dots''. The product of two permutations $p_1$ and $p_2$ is illustrated schematically below:
% TIKZ
\tikzset{dot_black/.pic={\fill circle (4pt); }}
\tikzset{dot_white/.pic={\fill [white,draw=black] circle (4pt); }}
\tikzset{dot_orange_dashed/.pic={\fill [white,draw=black ] circle (4pt); }}
\tikzset{dot_blue_dashed/.pic={\fill [blue!20, draw=black, dash pattern=on 1pt off 1pt] circle (4pt); }}
\tikzset{dot_orange/.pic={\fill [orange,draw=black] circle (4pt); }}
\tikzset{dash_fine/.style={dash pattern = on 1pt off 1pt}}
\begin{equation*}
%========
\begin{tikzpicture}[scale=0.5,line width=1pt,baseline = 0.5cm]
\fill [color_left,draw=black]  (0,0) rectangle (2,2) ;
\node at (1,-1) {$p_1$};
\end{tikzpicture}
\quad\star\quad
%========
\begin{tikzpicture}[scale=0.3,baseline = 1.5cm,line width=1pt]

\fill[blue!20,draw=black] (0,0) rectangle (8,8);

\draw [dashed,line width=0.5pt,gray] (0,8) -- (8,0);
\draw (1,7) .. controls (1,3) and (3,1) .. (7,1) 
pic [pos=0.2] {dot_black}  pic [pos=0.5] {dot_black}  pic [pos=0.8] {dot_black};

\draw (2,7) .. controls (5,7) and (7,5) .. (7,2) 
pic [pos=0.2] {dot_white}  pic [pos=0.5] {dot_white}  pic [pos=0.8] {dot_white};
\node at (4,-1) {$p_2$};
% \draw [help lines, gray!50] (0,0) grid(8,8);
\end{tikzpicture}
\quad=\quad
%========
\begin{tikzpicture}[scale=0.3,baseline = 2.5cm,line width=1pt,>={Stealth[length=1.5mm]}]
\coordinate (A) at (2.9,7 );
\coordinate (B) at (4.5,5.5);
\coordinate (C) at (6.5,3.5);

\fill[red!20,draw=black] (-4,9) rectangle (0,13);
\fill[blue!20,draw=black] (0,0) rectangle (8,8);

\draw [dash_fine,gray!50] (-4,13) -- (9,0);
\draw[ black] (-4,9) rectangle (0,13);
\draw[ black] (0,0) rectangle (8,8);

\draw [orange] (0,8) -- (0,9) -- (9,9) -- (9,0) -- (8,0);
\draw [orange] (8,9) -- (8,8) -- (9,8);

\draw (0,0) -- (-4,0) -- (-4,13) -- (9,13) -- (9,9);

\draw [dashed,line width=0.5pt,gray] (0,8) -- (8,0);
\draw (1,7) .. controls (1,3) and (3,1) .. (7,1) 
pic [pos=0.2] {dot_black}  pic [pos=0.5] {dot_black}  pic [pos=0.8] {dot_black};

\draw [line width =0.5pt,->] (A) -- (4,7);
\draw [line width =0.5pt,->] (B) -- (6,5.5);
\draw [line width =0.5pt,->] (C) -- (8,3.5);
\draw[line width =0.5pt, gray,dash_fine] (B) -- (4.5,7);
\draw[line width =0.5pt, gray,dash_fine] (C) -- (6.5,5.5);
\draw[line width =0.5pt, gray,dash_fine] (2.9,8.5) -- (A); 

\draw [dashed,gray] (1.5,7.5)   (A) pic {dot_blue_dashed}   (B) pic {dot_blue_dashed}   (C) pic {dot_blue_dashed}   (7,2);
 
\path 
(2.9,8.5) pic {dot_orange} 
(4.5,7) pic {dot_orange_dashed}
(6.5,5.5) pic {dot_orange_dashed}
(8.5,3.5) pic {dot_orange_dashed};
\node[inner sep = 7pt] (on) at (2.9,8.5) {};
\node (an) at (3.5,11) {Add dot};
\draw [->] (an) -- (on);
\node (ld) at (9.5,3.5) {};
\node[right ] (an1) at (9,5.5) { Cascade };
\node[right ] (an2) at (9,4.5) { white dots};
\node[below] (an3) at (4.4,-0.3) {Black dots unchanged};
% \draw [help lines, gray ] (-4,0) grid(13,13);
\end{tikzpicture}
\end{equation*}
% END TIKZ
\noindent The permutation $p_1\star p_2$ is defined as follows (referring the the diagram above):
\begin{enumerate}
\item The permutations $p_1$ and (a modified) $p_2$ are placed on the diagonal of the new permutation (as illustrated above).
    \item The dots of $p_1$ are unchanged.
    \item The black dots of $p_2$ are unchanged.
    % \item A new row is inserted between $p_1$ and $p_2$ and an dot  added to the new row in the same column as the leftmost white dot of $p_2$.
    \item A new row is inserted between $p_1$ and $p_2$ and a new column is added on the right.
    \item  Assume $p_2=\sigma_1\dots \sigma_n$ and $p_2$ contains $k$ white dots. Label the rows and columns of $p_2$, $1,\dots ,n$ where cell $(0,0)$ is in the north-west corner.
    \begin{itemize}
    \item Case $k=0$:  A new dot is added at the intersection of the new row and new column.
        \item  Case: $k>0$: The white dots of $p_2$ are ``cascaded'' to the right as follows:
    \begin{enumerate}
        \item   Let the  white dots have
        have row-column  labels: $$(i_1,\sigma_{i_1}),(i_2,\sigma_{i_2}),\dots, (i_k,\sigma_{i_k})$$ with $i_1<i_2<\dots < i_k$.
        \item The ``cascade'' process moves all the white dots to the right as follows: 
        \begin{itemize}
            \item For all dots in rows $i_1$ to $i_{k-1}$ move dot $(i_\alpha,\sigma_{i_\alpha})$ to $(i_\alpha,\sigma_{i_{\alpha+1}})$ (ie.\ same row but column of next white dot).
            \item Move white dot at $(i_k,\sigma_{i_k})$ to a cell in the same row but in the new right column..
        \end{itemize}
    \end{enumerate}
    \end{itemize}
\end{enumerate}

	\item $\text{\Valns}=\text{(number of dots) }+1 $ 
	
%  \filbreak 
	
\item Examples:
% \begin{center}
% 	\includegraphics[scale=\fscale]{figs/avoidingProdExamples_lr.pdf}
% \end{center}
\tikzset{dot_dashed/.pic={\draw [line width=0.5pt,gray, draw=black, dash pattern=on 1pt off 1pt] circle (4pt); }}
\begin{align*}
\emptyset 
\quad\star \quad \emptyset 
\quad & = \quad
\begin{tikzpicture}[scale=0.5]
\draw rectangle (1,1);
\fill[orange,draw=black] (0.5,0.5) circle (0.3cm);
\end{tikzpicture}
\\
&\\
\emptyset 
\quad\star \quad 
\begin{tikzpicture}[scale=0.5]
\draw rectangle (1,1);
\fill  (0.5,0.5) circle (0.3cm);
\end{tikzpicture}
\quad & = \quad
\begin{tikzpicture}[scale=0.5]
\draw grid (2,2);
\fill[orange,draw=black] (0.5,1.5) circle (0.3cm);
\fill[black,draw=black] (1.5,0.5) circle (0.3cm);
\end{tikzpicture}
\\
&\\
\begin{tikzpicture}[scale=0.5]
\draw rectangle (1,1);
\fill  (0.5,0.5) circle (0.3cm);
\end{tikzpicture}
\quad\star \quad 
\emptyset
\quad & = \quad
\begin{tikzpicture}[scale=0.5]
\draw grid (2,2);
\fill[black,draw=black] (0.5,1.5) circle (0.3cm);
\fill[orange,draw=black] (1.5,0.5) circle (0.3cm);
\end{tikzpicture}
\\
&\\
\begin{tikzpicture}[scale=0.5]
\draw rectangle (1,1);
\fill  (0.5,0.5) circle (0.3cm);
\end{tikzpicture}
\quad\star \quad 
\begin{tikzpicture}[scale=0.5]
\draw rectangle (1,1);
\fill  (0.5,0.5) circle (0.3cm);
\end{tikzpicture}
\quad & = \quad
\begin{tikzpicture}[scale=0.5]
\draw grid (3,3);
\fill[black,draw=black] (0.5,2.5) circle (0.3cm);
\fill[black,draw=black] (2.5,0.5) circle (0.3cm);
\fill[orange,draw=black] (1.5,1.5) circle (0.3cm);
\end{tikzpicture}
\\
&\\
% \epsilon\star 13526487=
\emptyset\quad\star\quad 
\begin{tikzpicture}[scale=0.4,baseline=1.5cm]
\def\H{++(0.5,0.5)}
\draw (0,0)rectangle(8,8) ;
\draw [dashed,gray!40,line width=0.5pt] (0,8) -- (8,0);
\path (0,7) \H pic{dot_black} (1,4) \H pic {dot_black} (3,2)\H   pic {dot_black} (6,0)\H pic {dot_black};
\path (2,6) \H pic{dot_white} (4,5) \H pic {dot_white} (5,3)\H   pic {dot_white} (7,1)\H   pic {dot_white};
\draw [help lines,gray!50] (0,0) grid (8,8);
\end{tikzpicture}
\quad & =\quad 
\begin{tikzpicture}[scale=0.4,baseline=1.5cm]
\def\H{++(0.5,0.5)}
\draw (0,0)rectangle(8,8) ;
\draw [dashed,gray!50,line width=0.5pt] (0,8) -- (8,0);
\path (0,7) \H pic{dot_black} (1,4) \H pic {dot_black} (3,2)\H   pic {dot_black} (6,0)\H pic {dot_black};
\path (2,6) \H pic{dot_dashed} (4,5) \H pic {dot_dashed} (5,3)\H   pic {dot_dashed} (7,1)\H   pic {dot_dashed};
\draw [orange] (0,8) rectangle (9,9) rectangle (8,0);
\path (2,8)\H pic{dot_orange};
\path (4,6) \H pic{dot_white} (5,5) \H pic {dot_white} (7,3)\H   pic {dot_white} (8,1)\H   pic {dot_white};
\draw [help lines,gray!50] (0,0) grid (9,9);
\end{tikzpicture}
\end{align*} 
% \item Narayana parameter: In the above Rothe diagrams it is    the number of dots on or below the diagonal (from the top left corner to the bottom right).

% \item Embedded complete binary tree
% \begin{center}
%  	 \includegraphics[scale=0.3]{figtemp/123avoidingSkel_01.jpeg}
% \end{center}

% \item Note, the product rules for all other three number  pattern avoiding permutations can be obtained from the 321 rule by various compositions  of i) a vertical reflection, ii) a horizontal reflection or iii) a single cyclic permutation of the product diagram \eqref{eq_avoid}. This gives six patterns which pair off with their corresponding opposite family. Thus, the opposite family to 321-avoiding permutations are 123-avoiding permutations (whose product rule is the vertical reflection of \eqref{eq_avoid}.
% \begin{center}
%  	 \includegraphics[scale=0.3]{figs/allAvoiding.jpeg}
% \end{center}
\end{itemize}

\end{family}
\tempnp

%=============================================
%  Staircase polygons
%=============================================

%>>>>>>>>>>>>>>>>>>>>>>>>>>>>>>>>>>>>>>>>>>>>>>>>>>>>>>>>>>>>>>>>>>>>>>>>>>
\begin{family}[Staircase polygons or parallelogram polyominoes \stan{57}]
%>>>>>>>>>>>>>>>>>>>>>>>>>>>>>>>>>>>>>>>>>>>>>>>>>>>>>>>>>>>>>>>>>>>>>>>>>>
\label{fam_sp}

\addcontentsline{toca}{subsection}{\fref{fam_sp}: Staircase polygons or parallelogram polyominoes}

A pair of $n$ step binomial paths
% \footnote{Called binomial paths as they are enumerated by binomial coefficients}
(a sequence of vertical and horizontal steps) which: 1) do not intersect  (ie.\ have no  vertices in common) except for the first and last vertex and 2) start and end at the same position. 
\begin{center}
\def\gap{1cm}
\begin{tikzpicture}[scale=0.75]
\draw (0,0) rectangle (1,1) (0,1) rectangle (1,2) (1,1) rectangle (2,2);
\end{tikzpicture}
\hspace{\gap}
\begin{tikzpicture}[scale=0.75]
\draw (0,0) grid (3,1);
\end{tikzpicture}
\hspace{\gap}
\begin{tikzpicture}[scale=0.75]
\draw (0,0) grid (1,3);
\end{tikzpicture}
\hspace{\gap}
\begin{tikzpicture}[scale=0.75]
\draw (0,0) grid (2,2);
\end{tikzpicture}
\hspace{\gap}
\begin{tikzpicture}[scale=0.75]
\draw (0,0) rectangle (1,1) (1,1) rectangle (2,2) (1,0) rectangle (2,1);
\end{tikzpicture}
\hspace{\gap}
\end{center} 
% \begin{center}
%     \includegraphics[scale=\fscale]{figs/staircase_Ex_lr.pdf}
% \end{center}
 
\begin{itemize}

\item Generator $\eps =  \stairgen  $ (an edge).  The mark (half circle) is used to highlight the location of the generators in the polygon.

% The generators correspond to certain corners of the staircase polygon, but on occasion, in figures they are offset a small amount for clarity. 

\item Product:

\begin{equation} 
    \def\tsc{0.5}
    \def\bsl{2ex}
    \def\lwd{0.5pt}
    \begin{tikzpicture}[style_size3,baseline=\bsl]
      \path [fill,color=red!30,draw=black]  
      (0,0) -- (0,1) -- (1,2) -- (2,2) --(2,1) --(1,0)--++(-1,0);
    \end{tikzpicture}
   \quad \star\quad
    \begin{tikzpicture}[style_size3,baseline=\bsl]
    % \draw [help lines, gray!50] (0,0) grid(3,3);
      \path [fill,color_right,draw=black ] 
      (0,0) -- (0,1) -- (1,2) -- (2.5,2) --(2.5,1) --(1.5,0)--++(-1.5,0) ;
    \end{tikzpicture}
   \quad = \quad
    \begin{tikzpicture}[style_size3,baseline=\bsl]
    % \draw [help lines, gray!50] (0,0) grid(5,5);    
      \path [fill,color=red!30,draw=black ] 
        (0,0) -- (0,1) -- (1,2) -- (2,2) --(2,1) --(1,0) --++(-1,0);
      \path (2,1) [ draw=orange, line width=1pt]  -- ++(1.5,0) --++(0,1);
        % \path[fill=orange]  (3.5,1) pic{circle_product} ;
      \path [fill,color_right,draw=black ]  
        (2,2) -- ++(0,1) -- ++(1,1) -- ++(1.5,0) --++(0,-1) -- ++(-1 ,-1) --++(-1.5,0);
    \end{tikzpicture}
\end{equation}

with the convention that if the left factor is empty then the generator edge is rotated vertically.  The orange edges signify that a single cell is added below each column of the right factor.
% \begin{center}
%  	 \includegraphics[scale=\fscale]{figs/staircase_All.pdf}
% \end{center}
% \filbreak 

\item $\text{\Valns}=\text{Number of steps in either binomial path} $ with $||\gensp||=1$. 
 
\item Examples:

\begin{align*}
 \gensp\quad\star\quad\gensp 
 &\quad=\quad 
 \begin{tikzpicture}[scale=0.75]
 \path pic[rotate=90,transform shape]{gensp} (0,1) pic[transform shape]{gensp} ;
 \draw[orange,line width=1pt] (0,0) -- (1,0) -- (1,1) ;
 \end{tikzpicture}
 %===========================
 \\
  & \\
  \begin{tikzpicture}[scale=0.75]
 \path pic[rotate=90,transform shape]{gensp} (0,1) pic[transform shape]{gensp} ;
 \draw  (0,0) -- (1,0) -- (1,1) ;
 \end{tikzpicture} \quad\star\quad\gensp 
 &\quad=\quad
  \begin{tikzpicture}[scale=0.75]
 \path pic[rotate=90,transform shape]{gensp} (0,1) pic[transform shape]{gensp} (1,1) pic[transform shape]{gensp} ;
\draw  (0,0)-- (1,0)--(1,1);
 \draw[orange,line width=1pt] (1,0) -- (2,0) -- (2,1) ;
 \end{tikzpicture}
 \\
  & \\
 %===========================
\gensp  \quad\star\quad   \begin{tikzpicture}[scale=0.75]
 \path pic[rotate=90,transform shape]{gensp} (0,1) pic[transform shape]{gensp} ;
 \draw  (0,0) -- (1,0) -- (1,1) ;
 \end{tikzpicture} 
 &\quad=\quad
  \begin{tikzpicture}[scale=0.75]
 \path pic[rotate=90,transform shape]{gensp} (0,1) pic[rotate=90,transform shape]{gensp} (0,2) pic[transform shape]{gensp} ;
\draw  (0,1)-- (1,1)--(1,2);
 \draw[orange,line width=1pt] (0,0) -- (1,0) -- (1,1) ;
 \end{tikzpicture}
 \\
 & \\
 %===========================
\begin{tikzpicture}[scale=0.75]
 \path pic[rotate=90,transform shape]{gensp} (0,1) pic[transform shape]{gensp} ;
 \draw  (0,0) -- (1,0) -- (1,1) ;
 \end{tikzpicture} 
\quad\star\quad  
 \begin{tikzpicture}[scale=0.75]
 \path pic[rotate=90,transform shape]{gensp} (0,1) pic[transform shape]{gensp} ;
 \draw  (0,0) -- (1,0) -- (1,1) ;
 \end{tikzpicture} 
 &\quad=\quad
\begin{tikzpicture}[scale=0.75]
\path pic[rotate=90,transform shape]{gensp} 
(0,1) pic[transform shape] {gensp} 
(1,1) pic[rotate=90,transform shape]{gensp}  
(1,2) pic[transform shape] {gensp};
\draw  (0,0)-- (1,0)--(1,1) -- (2,1) -- (2,2);
\draw[orange,line width=1pt] (1,0) -- (2,0) -- (2,1) ;
\end{tikzpicture}
\end{align*}

% \begin{center}
% \includegraphics[scale=\fscale]{figs/staircase_Examples_lr.pdf}
% \end{center}

% \item Narayana parameter:  number of columns.

% \item Embedded Complete Binary Tree:
% \begin{itemize}
%     \item[] $\rho : \text{root}\to \text{rightmost (then) bottom-most corner} $
%     \item[] $\lambda : \text{leaf}\to \text{generator mark} $
% \end{itemize}
% \begin{center}
% 	\includegraphics[width=5cm]{figs/staircase_CBT.pdf}
% \end{center}

\end{itemize}
 
% %=============================================
% Non-nested matchings
%=============================================
\end{family}

\tempnp
%>>>>>>>>>>>>>>>>>>>>>>>>>>>>>>>>>>>>>>>>>>>>>>>>>>>>>>>>>>>>>>>>>>>>>>>>>>
\begin{family}[Two row standard tableaux \stan{168}]
%>>>>>>>>>>>>>>>>>>>>>>>>>>>>>>>>>>>>>>>>>>>>>>>>>>>>>>>>>>>>>>>>>>>>>>>>>>
\label{fam_st}

 \addcontentsline{toca}{subsection}{\fref{fam_st}: Two row standard tableaux}

Two rows of $n$ square cells. The cells contain the integers $1$ to $2n$. 
Each integer occurs exactly once and the integers increase left to right along each row and increase down each column.
\begin{center}
\begin{tabular}{ |c|c|c| } 
 \hline
 1 & 3 & 5 \\ 
 \hline
2 & 4& 6 \\ 
 \hline
\end{tabular} \qquad
\begin{tabular}{ |c|c|c| } 
 \hline
 1 & 2 & 5 \\ 
 \hline
3 & 4& 6 \\ 
 \hline
\end{tabular}\qquad
\begin{tabular}{ |c|c|c| } 
 \hline
 1 & 2 & 3 \\ 
 \hline
4 & 5& 6 \\ 
 \hline
\end{tabular}\qquad
\begin{tabular}{ |c|c|c| } 
 \hline
 1 & 2 & 4 \\ 
 \hline
3 & 5 & 6 \\ 
 \hline
\end{tabular}\qquad
\begin{tabular}{ |c|c|c| } 
 \hline
 1 & 3 & 4 \\ 
 \hline
2 & 5 & 6 \\ 
 \hline
\end{tabular}
\end{center}
%  \begin{center}
% \includegraphics[scale=\figscale]{figs/magma-trt-ex-Layer_1_lr}
%  \end{center}
\begin{itemize}

\item Generator:  $\eps = \emptyset$ (the empty tableau: no column).

\item Product: 

\begin{equation*}
\begin{tikzpicture}[scale=0.75]
\fill[color_left] (0,0) rectangle (3,2);
\draw (0,0) grid (3,2);
\path (0.5,1.5) node {$1$} (2.5,0.5) node {$k$} ;
\end{tikzpicture}
\quad \star \quad 
\begin{tikzpicture}[scale=0.75]
\draw (0,0) grid (4,2);
\fill[color_right] (0,0) rectangle (4,2);
\path (0.5,1.5) node {$1$} (3.5,0.5) node {$\ell$} ;
\end{tikzpicture}
\quad = \quad 
\begin{tikzpicture}[baseline=0cm,scale=0.75]
\fill[color_left] (0,0) rectangle (3,2);
\fill[color_right] (3,0) rectangle (8,2);
\fill[orange] (3,1) rectangle (4,2);
\fill[orange] (7,0) rectangle (8,1);
\path (0.5,1.5) node {$1$} (2.5,0.5) node {$k$} ;
\path (4.5,1.5) node {$1$} (6.5,0.5) node {$\ell$} ;
\path (3.5,1.5) node {$0$} (4.5,1.5) node {$1$} 
(7.5,0.5) node {$\ell\!\!+\!\!1$} ;
\draw[decoration={brace,mirror},decorate]
  (3,-0.25) -- node[anchor = north] {add $k+1$ to each cell} (8,-0.25);
\draw (0,0) grid (8,2);
\end{tikzpicture}
 \end{equation*}
 If the left tableau is empty take $k=0$ and if the right tableau is empty take $\ell=0$.
% \begin{center}
%     \includegraphics[scale=\fscale]{figs/twoRowTableau_lr.pdf}
% \end{center}

\item $\text{\Valns}=\text{(Number of columns) }+1 $

\item Examples:

\begin{align*}
\emptyset\quad\star\quad\emptyset  
&\quad=\quad
\begin{tikzpicture}[scale=0.75]
\fill[orange] (0,0) rectangle (1,2);
\draw (0,0) grid (1,2);
\path (0.5,0.5) node {$2$} (0.5,1.5) node {$1$} ;
\end{tikzpicture}
\\
&\\
\begin{tikzpicture}[scale=0.75]
\draw (0,0) grid (1,2);
\path (0.5,0.5) node {$2$} (0.5,1.5) node {$1$} ;
\end{tikzpicture}
\quad\star\quad
\emptyset 
&\quad=\quad
\begin{tikzpicture}[scale=0.75]
\fill[orange] (1,0) rectangle (2,2);
\draw (0,0) grid (2,2);
\path (0.5,0.5) node {$2$} (0.5,1.5) node {$1$} ;
\path (1.5,0.5) node {$4$} (1.5,1.5) node {$3$} ;
\end{tikzpicture}
\\
&\\
\emptyset \quad\star\quad
\begin{tikzpicture}[scale=0.75]
\draw (0,0) grid (1,2);
\path (0.5,0.5) node {$2$} (0.5,1.5) node {$1$} ;
\end{tikzpicture}
&\quad=\quad
\begin{tikzpicture}[scale=0.75]
\fill[orange] (0,1) rectangle (1,2);
\fill[orange] (1,0) rectangle (2,1);
\draw (0,0) grid (2,2);
\path (0.5,0.5) node {$3$} (0.5,1.5) node {$1$} ;
\path (1.5,0.5) node {$4$} (1.5,1.5) node {$2$} ;
\end{tikzpicture}
\\
&\\
\begin{tikzpicture}[scale=0.75]
\draw (0,0) grid (1,2);
\path (0.5,0.5) node {$2$} (0.5,1.5) node {$1$} ;
\end{tikzpicture}
\quad\star\quad
\begin{tikzpicture}[scale=0.75]
\draw (0,0) grid (1,2);
\path (0.5,0.5) node {$2$} (0.5,1.5) node {$1$} ;
\end{tikzpicture}
&\quad=\quad
\begin{tikzpicture}[scale=0.75]
\fill[orange] (1,1) rectangle (2,2);
\fill[orange] (2,0) rectangle (3,1);
\draw (0,0) grid (3,2);
\path (0.5,0.5) node {$2$} (0.5,1.5) node {$1$} ;
\path (1.5,0.5) node {$5$} (1.5,1.5) node {$3$} ;
\path (2.5,0.5) node {$6$} (2.5,1.5) node {$4$} ;
\end{tikzpicture}
\end{align*}

% \begin{center}
% 	\includegraphics[scale=\fscale]{figs/twoRowTableauxExamples_lr.pdf}
% \end{center}

% \item Narayana parameter: Number of occurrences where the number $i$ occurs in the top row and $i+1$ in the bottom row.

% \item Embedded Canonical word:
%   \begin{itemize}
%       \item Write the numbers $1$ to $2n$ from left to right.
%       \item Each number from the top row corresponds to product, ``$\st$''.
%       \item Each number from the bottom row corresponds to a right bracket, ``$)$''.
%       \item This uniquely fixes the left brackets and the generators.
%   \end{itemize}
%   \begin{center}
% 	\includegraphics[scale=\fscale]{figs/twoRowTableauCanWord.pdf}
% \end{center}
 
\end{itemize}

%=============================================
%  Lukacewitz paths
%=============================================

\end{family}

%=============================================

% \begin{family}[Lukacewitz paths]\label{fam_lp}

% Lattice paths in the upper half plane where the step set is $\set{(1,j)\suchthat  j=-1,0,1,2\dots}$. The value of $j$ is the `jump height' of the step. The paths start and end on the $x$-axis.
% \begin{center}
% 	\includegraphics[scale=\figscale]{figs/magma-lp-ex-Layer_1.pdf}
% %\includegraphics{figs/ex-lp}
% \end{center}

% \begin{itemize}
% \item Generator: 
% $\eps = \raisebox{-1ex}{\includegraphics[scale=\figscale]{figs/magma-lp-gen-Layer_1.pdf}}$
% \item Product:  For $k\ge 0$:
% \begin{center}
% $\raisebox{-4ex}{\includegraphics[scale=\figscale]{figs/magma-lp-prod-LHS-Layer_1} }=
% \begin{cases}
% \raisebox{-4ex}{\includegraphics[scale=\figscale]{figs/magma-lp-prodC1-Layer_1} }  & \text{if $l_2=\eps$}\\
% &\\
% \raisebox{-4ex}{\includegraphics[scale=\figscale]{figs/magma-lp-prodC2-Layer_1} }  & \text{otherwise.}
% \end{cases}
% $		
% \end{center}

% \item $\text{\valns}=\text{(number of steps)}+1 $ 
% \item Example:
% \begin{center}
% 	\includegraphics[scale=\figscale]{figs/magma-lp-ex2-Layer_1.pdf}
% \end{center}

% \item Narayana parameter:
% \end{itemize}

% \end{family}

% %=============================================
% % Floor plans
% %=============================================
%=============================================
% \filbreak 
\tempnp
%>>>>>>>>>>>>>>>>>>>>>>>>>>>>>>>>>>>>>>>>>>>>>>>>>>>>>>>>>>>>>>>>>>>>>>>>>>
\begin{family}[Catalan floor plans  \cite{Beaton:2015aa}]
%>>>>>>>>>>>>>>>>>>>>>>>>>>>>>>>>>>>>>>>>>>>>>>>>>>>>>>>>>>>>>>>>>>>>>>>>>>
\label{fam_fp}

 \addcontentsline{toca}{subsection}{\fref{fam_fp}: Catalan Floor Plans}

Catalan floor plans are equivalence classes of rectangles containing rectangles. A size $n$ floor plan rectangle contains $n$ non-nested  rectangles -- called the `rooms' of the plan. The rooms are constrained by the junctions between the interior walls. Junctions of the two forms
 \begin{center}
\begin{tikzpicture}[scale=0.75]
\draw (0 ,0.5) -- (1,0.5) (0.5,0) -- (0.5,1) ;
\draw (2,0.5) --(2.5,0.5) (2.5,0) --( 2.5,1);
\end{tikzpicture}
 \end{center}
%
% \begin{center}
%     \includegraphics[scale=\fscale]{figs/forbidJunc_lr.pdf}
% \end{center}
are forbidden. For example, the following are forbidden floor plans
\begin{center}
\begin{tikzpicture}[scale=0.75]
\draw (0,0) rectangle (2,2) (0,1)--(1,1) (1,0) --(1,2);
\end{tikzpicture}
\hspace{1cm}
\begin{tikzpicture}[scale=0.75]
\draw (0,0) rectangle (2,2) (0,1)--(2,1) (1,0) --(1,2);
\end{tikzpicture}    
\end{center}
%
%  \begin{center}
%     \includegraphics[scale=\fscale]{figs/floorPlans_forbid_lr.pdf}
% \end{center}
 Any two floor plans are equivalent if one can be obtained from another by  sequences of wall ``slidings'': a horizontal  (respec.\ vertical)  wall can be moved up or down (respec.\ left or right) so long as it does not pass over or coincide with any other wall (or result in one of the forbidden junctions). For example the following are all equivalent. 
 \begin{center}
\begin{tikzpicture}[scale=0.75]
 \draw (0,0) rectangle (2,2) (0,1)--(2,1) ;
 \draw (0.5,1) -- (0.5,2) (1.5,0) -- (1.5,1);
 \end{tikzpicture}
 \hspace{1cm}
 \begin{tikzpicture}[scale=0.75]
 \draw (0,0) rectangle (2,2) (0,1)--(2,1) ;
   \draw (0.25,1) -- (0.25,2) (1.75,0) -- (1.75,1);
 \end{tikzpicture}
  \hspace{1cm}
 \begin{tikzpicture}[scale=0.75]
 \draw (0,0) rectangle (2,2) (0,1)--(2,1) ;
   \draw (0.75,1) -- (0.75,2) (1.25,0) -- (1.25,1);
 \end{tikzpicture}
 \end{center}
% \begin{center}
%     \includegraphics[scale=\fscale]{figs/floorPlans_equiv_lr.pdf}
% \end{center}
The rectangular shape of the outer wall rectangle is irrelevant.
 
Thus the five Catalan floor plans with three rooms are: 
\begin{center}
\begin{tikzpicture}[scale=0.75]
\draw (0,0) rectangle (2,2) (0,1)--(2,1) ;
\draw (1,0) -- (1,1)  ;
\end{tikzpicture}
\hspace{1cm}
\begin{tikzpicture}[scale=0.75]
\draw (0,0) rectangle (2,2) (1,0)--(1,2) ;
\draw (1,1) -- (2,1)  ;
\end{tikzpicture}
\hspace{1cm}
\begin{tikzpicture}[scale=0.75]
\draw (0,0) rectangle (2,2) (0,1)--(2,1) ;
\draw (1,1) -- (1,2)  ;
\end{tikzpicture}
\hspace{1cm}
\begin{tikzpicture}[scale=0.75]
\draw (0,0) grid (3,1)   ;
\end{tikzpicture}
\hspace{1cm}
\begin{tikzpicture}[scale=0.75]
\draw (0,0) grid (1,3)   ;
\end{tikzpicture}
\end{center}
% \begin{center}
% \includegraphics[scale=\fscale]{figs/floorPlans_Examples_lr.pdf}
% \end{center}

\newcommand{\medge}{\text{\rule[2pt]{12pt}{1pt}}}

\begin{itemize}

\item Generator: $\eps =\genfp$ (an edge). The mark (half circle) is used to highlight the location of the generators in the floor plan.

\item Product:  

\begin{equation*}
\begin{tikzpicture}[baseline=0.4cm]
\fill[color_left,draw=black] (0,0) rectangle (1,1) ; 
\node at (0.5,0.5) {1};
\end{tikzpicture}
\quad\star\quad
\begin{tikzpicture}[baseline=0.4cm]
\fill[color_right,draw=black] (0,0) rectangle (1.5,1) ; 
\node at (0.75,0.5) {2};
\end{tikzpicture}   
\quad = \quad
\begin{tikzpicture}[baseline=0.4cm]
\fill[color_left,draw=black] (0,0) rectangle (2,1) ; 
\node at (1,0.5) {1};
\fill[color_right,draw=black] (0.5,1) rectangle (2,2); 
\node at (1 ,1.5) {2};
\draw[orange,line width=1pt] (0,1) --(0,2) -- (0.5,2); 
\end{tikzpicture}
\end{equation*}

% \begin{center}
%     \includegraphics[scale=\bfscale]{figs/floorPlans_prod_lr.pdf}
% \end{center}
Thus, first add a new column (the product geometry) spanning the left side of $f_2$, then scale $f_1$ to fit beneath. 
If $f_1$ or $f_2$ are generators then we use the convention that if $f_1=\eps$ then the edge is scaled to fit the width of $f_2$ above and if $f_2=\eps$ then the edge is rotated vertically.

% \raisebox{-9.5ex}{\includegraphics[scale=\figscale]{figs/magma-at-prod-Layer_1.pdf}} 
\item $\text{\Valns}=\text{(Number of rectangles)}+1 $  

\filbreak

\item Examples:
\begin{align*}
\genfp \quad\star\quad \genfp 
&\quad = \quad 
\begin{tikzpicture}[scale=0.75]
\draw[orange,line width=1pt] (0,0) -- (0,1) -- (1,1);
\path pic[transform shape]{genfp} (1,0) pic[rotate=90,transform shape]{genfp} ;
\end{tikzpicture}
\\
 &\\
\genfp \quad\star\quad
\begin{tikzpicture}[scale=0.75]
\draw  (0,0) -- (0,1) -- (1,1);
\path pic[transform shape]{genfp} (1,0) pic[rotate=90,transform shape]{genfp} ;
\end{tikzpicture}
& \quad = \quad 
\begin{tikzpicture}[scale=0.75]
\draw[orange,line width=1pt] (0,0) -- (0,1) -- (1,1);
\draw (1,0) rectangle (2,1);
\path pic[transform shape]{genfp} (1,0) pic[transform shape]{genfp}  (2,0) pic[rotate=90,transform shape]{genfp} ;
\end{tikzpicture}
\\
 &\\
\begin{tikzpicture}[scale=0.75]
\draw  (0,0) -- (0,1) -- (1,1);
\path pic[transform shape]{genfp} (1,0) pic[rotate=90,transform shape]{genfp} ;
\end{tikzpicture}
\quad\star\quad  \genfp 
& \quad = \quad 
\begin{tikzpicture}[scale=0.75]
\draw[orange,line width=1pt] (0,1) -- (0,2) -- (1,2);
\draw (0,0) rectangle (1,1);
\path pic[transform shape]{genfp} (1,0) pic[rotate=90,transform shape]{genfp}  (1,1) pic[rotate=90,transform shape]{genfp} ;
\end{tikzpicture}
\\
 &\\
\begin{tikzpicture}[scale=0.75]
\draw  (0,0) -- (0,1) -- (1,1);
\path pic[transform shape]{genfp} (1,0) pic[rotate=90,transform shape]{genfp} ;
\end{tikzpicture}
\quad\star\quad
\begin{tikzpicture}[scale=0.75]
\draw  (0,0) -- (0,1) -- (1,1);
\path pic[transform shape]{genfp} (1,0) pic[rotate=90,transform shape]{genfp} ;
\end{tikzpicture}
& \quad = \quad 
\begin{tikzpicture}[scale=0.75]
\draw[orange,line width=1pt] (0,1) -- (0,2) -- (1,2);
\draw (0,0) rectangle (2,1);
\fill[white,draw=black ]  (0.9,0) arc[start angle=180,end angle=360,radius=0.1cm];  
\path  (2,0) pic[rotate=90,transform shape]{genfp} ;
\draw  (1,1) -- (1,2) -- (2,2);
\path (1,1) pic[transform shape]{genfp} (2,1) pic[rotate=90,transform shape]{genfp} ;
%
% \path pic{genfp} (1,0) pic[rotate=90]{genfp}  (1,1) pic[rotate=90]{genfp} ;
\end{tikzpicture}
\end{align*}
% \begin{center}
% 	\includegraphics[scale=\bfscale]{figs/floorPlans_Prod_Examples_lr.pdf}
% \end{center}

% \item Narayana parameter: Number of rows on the right side of the diagram.
\end{itemize}

%=============================================
%  Frieze Patterns
%=============================================

\end{family}

\tempnp
%>>>>>>>>>>>>>>>>>>>>>>>>>>>>>>>>>>>>>>>>>>>>>>>>>>>>>>>>>>>>>>>>>>>>>>>>>>
\begin{family}[Frieze patterns \stan{197}: \cite{Conway:1973aa,Moon:1963aa}]
%>>>>>>>>>>>>>>>>>>>>>>>>>>>>>>>>>>>>>>>>>>>>>>>>>>>>>>>>>>>>>>>>>>>>>>>>>>
\label{fam_fz}

 \addcontentsline{toca}{subsection}{\fref{fam_fz}: Frieze Patterns}

%Stanley Prob:197
\newcommand{\wh}{\!\!\!\!\!}
Positive integer sequences $a_1, a_2,\dots, a_{n}$ which generate  an  infinite  array by periodically translating the $n-1$ row rhombus (first and last  row all $1$'s),
 \begin{equation}
   \begin{array}{*{22}c}
\cdots & 1   &  \wh  & 1     & \wh   & \cdots     & \wh    & 1    &   \wh    & \cdots      &   \wh &   \wh    & \wh   & \wh     & \wh   & \wh        & \wh   & \wh      & \wh       & \wh      & \wh     & \wh    
 \\
\wh &  \cdots & a_1  &\wh    &  a_2  & \wh   & \cdots & \wh  & a_{n}    & \wh   &   \cdots  &   \wh & \wh  &   \wh &   \wh & \wh  & \wh   &   \wh & \wh & \wh   & \wh  & \wh 
 \\
\wh  & \wh &\cdots   &   b_1   & \wh   & b_2     & \wh    & \cdots    &   \wh         & b_{n}     &   \wh &   \cdots    & \wh   & \wh     & \wh   & \wh       & \wh   & \wh    & \wh       & \wh      & \wh     &\wh  
 \\
\wh &\wh   &  \wh  &\wh   &  \ddots     & \wh      & \wh    & \wh     &   \wh         & \wh      &   \ddots  &   \wh    & \wh   & \wh      & \wh   & \wh        & \wh   & \wh      & \wh       & \wh      & \wh     & \wh   
  \\
\wh &\wh    &  \wh  & \wh      & \cdots  & r_1     & \wh    & r_2    &   \wh     &   \cdots     & \wh      & r_{n } &  \wh   & \cdots & \wh       & \wh          & \wh   & \wh    & \wh       & \wh     & \wh     & \wh  
 \\
\wh & & \wh      &  \wh  &  \wh    & \cdots   &  1    & \wh    & 1    &   \wh         & \cdots      &   \wh &   1   & \wh   & \cdots     & \wh    &  \wh       & \wh   &  \wh     & \wh       &  \wh     & \wh  
 \\
%     & b_1  & b_2      & 
\end{array} 
%   \begin{array}{*{21}c}
% 1   &  \wh  & 1     & \wh   & 1     & \wh    & 1    &   \wh         & 1     &   \wh &   1   & \wh   & 1     & \wh   & 1       & \wh   & 1     & \wh       & 1     & \wh     & 1   
%  \\
% \wh &  a_1  &\wh    &  a_2  & \wh   & \cdots & \wh  & a_{n}    & \wh   &   a_1 &   \wh &  a_2  &   \wh &   \wh & \cdots  & \wh   &   \wh &a_{n-2} & \wh   & a_{n+1}  & \wh 
%  \\
% \wh &  \wh  & b_1   & \wh   & b_2     & \wh    & \cdots    &   \wh         & b_{n}     &   \wh &   b_1   & \wh   & b_2     & \wh   & \cdots       & \wh   & b_{n-2}     & \wh       & \wh      & \wh     &\wh  
%  \\
% \wh   &  \wh  &\wh   &  \ddots     & \wh      & \wh    & \wh     &   \wh         & \wh      &   \wh &   \wh    & \wh   & \wh      & \wh   & \wh        & \wh   & \wh      & \wh       & \wh      & \wh     & \wh   
%   \\
% \wh    &  \wh  & \wh      & \wh   & r_1     & \wh    & r_2    &   \wh         & \wh      &   \cdots &  \wh   & \wh   & r_{n }     & \wh   & r_1       & \wh   & \wh    & \wh       & \wh     & \wh     & \wh  
%  \\
%  & \wh      &  \wh  &  \wh    & \wh   &  1    & \wh    & 1    &   \wh         & \cdots      &   \wh &   1   & \wh   & 1     & \wh   &  \wh       & \wh   &  \wh     & \wh       &  \wh     & \wh  
%  \\
% %     & b_1  & b_2      & 
% \end{array} 
\end{equation}
to the left and right . The values of the sequences in rows  three to the second last row are  fixed by requiring 
 that   any quadrangle 
\begin{equation}
    \begin{array}{ccc}
       \wh  & r & \\
        s & \wh & t\\
         \wh  & u & 
    \end{array}
\end{equation}
of four neighbouring entries satisfies the `unimodular' property: $st-ru=1$. The unimodular quadrangles can always be iterated downwards, the constraint on the $a_i$'s is  that  row $n-1$ must all be   $1$'s.

 For $n=5$ there are only five sequences which satisfy the above constraint: 
\begin{equation}
    12213,\quad 22131, \quad 21312,\quad 13122,\quad 31221\,.
\end{equation}
The  sequence $12213$ defines the four row frieze pattern:
\begin{equation}
   \cdots   \begin{array}{*{14}c}
    1   & \wh   & 1     & \wh   & 1     & \wh   & 1     & \wh   & 1  & \wh      &   & \wh       &     &    \\
    \wh & 1     & \wh   & 2     & \wh   & 2     & \wh   & 1     & \wh   & 3     & \wh   &       & \wh        &    \\
    \wh &\wh    & 1     & \wh   & 3     & \wh   & 1     & \wh   & 2     & \wh   &   2    & \wh   & \wh        &       \\
    \wh &\wh    & \wh   & 1     & \wh   & 1     & \wh   & 1     & \wh   & 1   & \wh   &   1 & \wh        & 
\end{array}\cdots
\end{equation}

\begin{itemize}
\item Generator: $\eps = 00$.

\item Product\footnote{This result appears in \cite{Conway:1973aa} as a way of generating new patterns from old and is a consequence of the bijection to triangulations of $n$-gons --  \fref{fam_pt}.}
\[
a_1, a_2,\dots, a_{n}\,\st\, b_1, b_2,\dots, b_{m} = c_1, c_2,\dots, c_{n+m-1}
\]
where
\begin{equation}
    c_i=\begin{cases}
    a_1+1 & i=1\\
    a_i & 1<i<n\\ 
    a_{n}+ b_1+1  & i = n \\
    b_{i} &  n<i<n+m-1\\
    b_{m}+1 & i=n+m-1
    \end{cases}
\end{equation}

\item $\text{\Valns}=(\text{Length of sequence})-1$  

\item Examples:
\begin{align*}
    00\st 00 & = 111\\
    00\st 111 & = 1212\\
    111\st 00 & = 2121\\
    111\st 111 & = 21312\\
\end{align*}

 \end{itemize}
\end{family}

\newpage
\bibliographystyle{unsrt}  
\bibliography{refs}

\end{document}